\newtheorem{theorem}{Theorem}[section]
\newtheorem{lemma}[theorem]{Lemma}
\newtheorem{corollary}[theorem]{Corollary}
\newtheorem{proposition}[theorem]{Proposition}
\theoremstyle{definition}
\newtheorem{definition}[theorem]{Definition}
\newtheorem{example}[theorem]{Example}
\theoremstyle{remark}
\newtheorem{remark}[theorem]{Remark}
\numberwithin{equation}{section}
\def\bc{\mathbb C}
\def\bbf{\mathbb{F}}
\def\mcf{\mathcal{F}}
\def\mct{\mathcal{T}}
\def\mand{\quad\mbox{and}\quad}
\def\bp{\begin{pmatrix}}
\def\ep{\end{pmatrix}}
\def\sdi{\,\mbox{$\rhd\kern-.55em<$}\,}
\def\edots{\mathinner{\mkern1mu\raise1pt\hbox{.}\mkern2mu\raise4pt\hbox{.}\mkern2mu\raise7pt\vbox{\kern7pt\hbox{.}}\mkern1mu}}
\begin{document}

\title[Orthogonal triple flag variety]{An example of orthogonal triple flag variety of finite type}

\author{Toshihiko MATSUKI}

\thanks{Supported by JSPS Grant-in-Aid for Scientific Research (C) \# 22540016.}

\address{Department of Mathematics\\
        Faculty of Science\\
        Kyoto University\\
        Kyoto 606-8502, Japan}
\email{matsuki@math.kyoto-u.ac.jp}
\date{}

\begin{abstract}
Let $G$ be the split special orthogonal group of degree $2n+1$ over a field $\bbf$ of ${\rm char}\,\bbf\ne 2$. Then we describe $G$-orbits on the triple flag varieties $G/P\times G/P\times G/P$ and $G/P\times G/P\times G/B$ with respect to the diagonal action of $G$ where $P$ is a maximal parabolic subgroup of $G$ of the shape $(n,1,n)$ and $B$ is a Borel subgroup. As by-products, we also describe ${\rm GL}_n$-orbits on $G/B$, $Q_{2n}$-orbits on the full flag variety of ${\rm GL}_{2n}$ where $Q_{2n}$ is the fixed-point subgroup in ${\rm Sp}_{2n}$ of a nonzero vector in $\bbf^{2n}$ and $1\times {\rm Sp}_{2n}$-orbits on the full flag variety of ${\rm GL}_{2n+1}$. In the same way, we can also solve the same problem for ${\rm SO}_{2n}$ where the maximal parabolic subgroup $P$ is of the shape $(n,n)$.
\end{abstract}

\maketitle

\section{Introduction}

Let $G$ be a reductive algebraic group over a field $\bbf$ and let $P_1,\ldots,P_k$ be parabolic subgroups of $G$. Then we consider the diagonal action of $G$ on the multiple flag variety
$$\mathcal{M}=(G/P_1)\times\cdots\times(G/P_k).$$
We say $\mathcal{M}$ is of finite type if it has finite number of $G$-orbits when the field $\bbf$ is infinite.

In \cite{MWZ1}, Magyar, Weyman and Zelevinsky classified multiple flag varieties of finite type for ${\rm GL}_n(\bbf)$ with an arbitrary algebraically closed field $\bbf$ and described their orbit decompositions using quiver theory. In \cite{MWZ2}, they also solved the same problem for ${\rm Sp}_{2n}(\bbf)$. 

Consider a triple flag variety $\mathcal{M}=(G/P_1)\times (G/P_2)\times (G/P_3)$ and note that $G$-orbit decompostion on $\mathcal{M}$ is naturally identified with $P_3$-orbit decomposition on the double flag variety $\mathcal{D}=(G/P_1)\times (G/P_2)$. Littelmann (\cite{L}) classified double flag varieties $\mathcal{D}$ with open $B$-orbits for simple algebraic groups $G$. Here $P_1$ and $P_2$ are maximal parabolic subgroups of $G$ and $B$ a Borel subgroup of $G$. Suppose that $\mathcal{D}$ has an open $B$-orbit and that $\bbf$ is an algebraically closed field of ${\rm char}\,\bbf=0$. Then it follows from the theorem by
Brion (\cite{B}) and Vinberg (\cite{V}) that $|B\backslash \mathcal{D}|$ is finite.

We can see there are many open problems in this subject. One of them is an explicit description of orbit decomposition for each triple flag variety classified in \cite{L} (Table I). In this paper, we solve this problem for some typical orthogonal triple flag variety. It is interesting that we can describe orbits in our example over an arbitrary field of ${\rm char}\,\bbf\ne 2$ and so we can also compute the number of elements in each orbit when $\bbf$ is a finite field.

Let $\bbf$ be an arbitrary commutative field of ${\rm char}\,\bbf\ne 2$. Let $(\ ,\ )$ denote the symmetric bilinear form on $\bbf^{2n+1}$ defined by
$$(e_i,e_j)=\delta_{i,2n-i+2}$$
for $i,j=1,\ldots,2n+1$ where $e_1,\ldots,e_{2n+1}$ is the canonical basis of $\bbf^{2n+1}$. Define the special orthogonal group
$$G=\{g\in {\rm SL}_{2n+1}(\bbf) \mid (gu,gv)=(u,v)\mbox{ for all }u,v\in\bbf^{2n+1}\}$$
with respect to this form. Let us write $G={\rm SO}_{2n+1}(\bbf)$ in this paper. Let $M$ denote the variety consisting of all the maximal isotropic subspaces in $\bbf^{2n+1}$. Here a subspace $V$ in $\bbf^{2n+1}$ is called a maximal isotropic subspace if $\dim V=n$ and $(V,V)=\{0\}$. Then $M$ is a homogeneous space of $G$ and hence it is written as
$M\cong G/P$
where $P=\{g\in G\mid gU_0=U_0\}$ with $U_0=\bbf e_1\oplus\cdots\oplus \bbf e_n\in M$ is a maximal parabolic subgroup of $G$.

Let $M_0=\{V_1\subset\cdots\subset V_n\mid (V_n,V_n)=\{0\}\}\cong G/B$ denote the full flag variety of $G$. Here $B$ is the isotropy subgroup of the canonical full flag $\bbf e_1\subset\bbf e_1\oplus \bbf e_2\subset\cdots\subset \bbf e_1\oplus\cdots\oplus \bbf e_n$ in $M_0$ which is called a Borel subgroup of $G$. In this paper, we will describe $G$-orbits on $\mathcal{T}=M\times M\times M$ and $\mathcal{T}_0=M\times M\times M_0$ with respect to the diagonal action.

In the same way, we can also solve the problem for ${\rm SO}_{2n}(\bbf)$ (Section 1.5).

\begin{remark} \ (i) Similar problems were studied in \cite{KS}, \cite{FMS} and \cite{CN}. In particular, \cite{KS} (p.492) described ${\rm Sp}_{2n}(\mathbb{R})$-orbits on the variety consisting  of triples of Lagrangian subspaces in a real symplectic vector space. For each orbit in this decomposition, there corresponds a symmetric bilinear form and the ``Maslov index'' is naturally defined. So it is natural that there appear alternating forms in our results on ${\rm SO}_{2n+1}(\bbf)$-orbit decompositions of $\mathcal{T}$ (Theorem \ref{th1.4}) and $\mathcal{T}_0$ (Theorem \ref{th1.6}).

(ii) We may consider the action of the orthogonal group
$$\widetilde{G}={\rm O}_{2n+1}(\bbf)=\{g\in {\rm GL}_{2n+1}(\bbf) \mid (gu,gv)=(u,v)\mbox{ for all }u,v\in\bbf^{2n+1}\}$$
on $M$ and $M_0$. But since $\widetilde{G}=G\sqcup \{-g\mid g\in G\}$ and since $-I_{2n+1}$ acts trivially on $M$ and $M_0$, the $\widetilde{G}$-orbits are the same as the $G$-orbits.

(iii) The triple flag variety $\mathcal{T}_0$ has the maximum dimension among the triple flag varieties of ${\rm SO}_{2n+1}(\bbf)$ of finite type since
$$\dim \mathcal{T}_0={n(n+1)\over 2}+{n(n+1)\over 2}+n^2=n(2n+1)=\dim {\rm SO}_{2n+1}(\bbf).$$
\end{remark}

\subsection{$G$-orbits on $\mct=M\times M\times M$}

For $d=0,\ldots,n$, define $U_d=\bbf e_1\oplus\cdots\oplus\bbf e_{n-d}\oplus \bbf e_{n+2}\oplus\cdots\oplus\bbf e_{n+d+1}\in M$.
For a partition $n=a+b+c_++c_0+c_-$ of $n$ with nonnegative integers $a,b,c_+,c_0$ and $c_-$, define subspaces
\begin{align*}
U_{(\alpha)} & =\bbf e_1\oplus\cdots\oplus\bbf e_a, \quad
U_{(\beta)} =\bbf e_{2n-a-b+2}\oplus\cdots\oplus\bbf e_{2n-a+1}, \\
U_{(+)} & =\bbf e_{a+b+1}\oplus\cdots\oplus\bbf e_{a+b+c_+}, \quad
U_{(-)} =\bbf e_{n+2}\oplus\cdots\oplus\bbf e_{n+c_-+1}, \\
U_{(0)} & =\bbf e_{a+b+c_++1}\oplus\cdots\oplus\bbf e_{a+b+c_++c_0} \oplus\bbf e_{n+c_-+2}\oplus\cdots\oplus\bbf e_{n+c_-+c_0+1} \oplus\bbf e_{n+1} 
\end{align*}
of $\bbf^{2n+1}$. Write $W_{(0)}=U_{(\alpha)}\oplus U_{(\beta)}\oplus U_{(+)}\oplus U_{(-)},\ k_+=a+b+c_+$ and $k_-=n+c_-+1$. If $c_0=2c_1-1$ is odd, then we define an element of $M$ by
\begin{align*}
V(a,b,c_+,c_-)_{\rm odd} & =W_{(0)}\oplus \left(\bigoplus_{i=1}^{c_1-1}\bbf (e_{k_++i}+e_{k_-+i})\right) \oplus \left(\bigoplus_{i=c_1+1}^{c_0}\bbf (e_{k_++i}-e_{k_-+i})\right) \\
& \qquad \oplus \bbf (e_{k_++c_1}-{1\over 2}e_{k_-+c_1}+e_{n+1}).
\end{align*}
If $c_0=2c_1$ is even, then we define an element of $M$ by
\begin{align*}
V(a,b,c_+,c_-)_{\rm even}^0 & =W_{(0)}\oplus \left(\bigoplus_{i=1}^{c_1}\bbf (e_{k_++i}+e_{k_-+i})\right) \oplus \left(\bigoplus_{i=c_1+1}^{c_0}\bbf (e_{k_++i}-e_{k_-+i})\right).
\end{align*}
If $c_0=2c_1$ is even and positive, then we also define
\begin{align*}
V(a,b,c_+,c_-)_{\rm even}^1 & =W_{(0)}\oplus \left(\bigoplus_{i=1}^{c_1}\bbf (e_{k_++i}+e_{k_-+i})\right) \oplus \left(\bigoplus_{i=c_1+1}^{c_0-1}\bbf (e_{k_++i}-e_{k_-+i})\right) \\
& \qquad \oplus \bbf (e_{k_++c_0}-e_{k_-+c_0}-{1\over 2}e_{k_-+1}+e_{n+1}) \in M.
\end{align*}

\begin{theorem} \ Let $t=(V_{(1)},V_{(2)},V_{(3)})$ be an element of $\mct=M\times M\times M$. Define
\begin{align*}
a & =a(t)=\dim (V_{(1)}\cap V_{(2)}\cap V_{(3)}),\ b=b(t)=\dim (V_{(1)}\cap V_{(2)}) -a, \\
c_+ & =c_+(t)=\dim (V_{(1)}\cap V_{(3)}) -a,\ c_-=c_-(t)=\dim (V_{(2)}\cap V_{(3)}) -a, \\
c_0 & =c_0(t)=n-a-b-c_+-c_- \\
\mand \varepsilon & =\varepsilon(t)=\dim(V_{(1)}+V_{(2)}+V_{(3)})+\dim(V_{(1)}\cap V_{(2)}\cap V_{(3)})-2n\in \{0,1\}.\end{align*}

{\rm (i)} If $c_0$ is odd, then $\varepsilon=1$ and $t\in G(U_0,U_{n-a-b},V(a,b,c_+,c_-)_{\rm odd})$.

{\rm (ii)} If $c_0=0$, then $\varepsilon=0$ and $t\in G(U_0,U_{n-a-b},V(a,b,c_+,c_-)_{\rm even}^0)$.

{\rm (iii)} If $c_0$ is even and positive, then $t\in G(U_0,U_{n-a-b},V(a,b,c_+,c_-)_{\rm even}^\varepsilon)$ with $\varepsilon=0\mbox{ or }1$.
\label{th1.2}
\end{theorem}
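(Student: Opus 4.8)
The plan is to reduce the classification to a normal form problem for a triple of maximal isotropic subspaces in $\bbf^{2n+1}$, proceeding by a stepwise ``peeling off'' of canonical summands governed by the intersection invariants $a,b,c_+,c_-$. First I would record that the five numbers $a,b,c_+,c_-,c_0$ together with $\varepsilon$ are manifestly $G$-invariant (they are defined purely in terms of dimensions of sums and intersections), so it suffices to show that any $t=(V_{(1)},V_{(2)},V_{(3)})$ with prescribed invariants lies in the $G$-orbit of the listed model. I would set $W=V_{(1)}\cap V_{(2)}\cap V_{(3)}$, $X_{12}=V_{(1)}\cap V_{(2)}$, $X_{13}=V_{(1)}\cap V_{(3)}$, $X_{23}=V_{(2)}\cap V_{(3)}$, and pick complements so that, abstractly, $V_{(1)}\supset W\oplus (X_{12}\ominus W)\oplus (X_{13}\ominus W)\oplus R_1$ and similarly for the others; the key numerical point is that the ``residual'' pieces $R_i$ all have the same dimension $c_0$, because each $V_{(i)}$ has dimension $n$.

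The heart of the argument is the analysis of the residual triple, i.e.\ what happens after quotienting by $X_{12}+X_{13}+X_{23}$ and restricting to its orthogonal complement (which, modulo the part killed by the radical, carries an induced nondegenerate symmetric form on a space of dimension $2c_0$ or $2c_0+1$). There the three subspaces become maximal isotropics in ``general position'' in the sense that their pairwise intersections are zero. I would use Witt's extension theorem repeatedly: first move $V_{(1)}$ to the standard maximal isotropic $U_0$, then use the stabilizer of $U_0$ (a maximal parabolic whose Levi is essentially $\mathrm{GL}_n$) to move $V_{(2)}$ into a standard position compatible with the flag data, which amounts to the $\mathrm{GL}_n$-orbit description on $G/P$ alluded to in the abstract, and finally use the remaining stabilizer to bring $V_{(3)}$ to normal form. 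The pairing $(V_{(1)}+V_{(2)})$ against $V_{(3)}$ — equivalently the nondegenerate symmetric form induced on the $2c_0$- or $(2c_0+1)$-dimensional residual space — is where the discrete invariant $\varepsilon$ enters: over a general field a nondegenerate symmetric form of even dimension $2c_0$ has its isometry type controlled, in the relevant ``split plus one-dimensional anisotropic'' situation, by a single $\bz/2$ invariant (here detected by $\varepsilon$), while in odd residual dimension (the $c_0$ odd case) the form is forced to be the split one, which is why $\varepsilon=1$ automatically. I would check that the three explicit vectors $V(a,b,c_+,c_-)_{\rm odd}$, $V^0_{\rm even}$, $V^1_{\rm even}$ realize exactly these isometry types, e.g.\ by computing the Gram matrix of $V_{(3)}$-representatives against a basis of $V_{(1)}+V_{(2)}$ and reading off whether it is split or split-plus-anisotropic.

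Concretely the steps are: (1) verify $G$-invariance of $(a,b,c_+,c_-,c_0,\varepsilon)$ and the forced relations (e.g.\ $c_0$ odd $\Rightarrow \varepsilon=1$, which I would get from a parity argument on the radical of the induced form on the $(2c_0+1)$-dimensional residual space); (2) normalize $V_{(1)}=U_0$ using transitivity of $G$ on $M$; (3) within $\mathrm{Stab}_G(U_0)=P$ normalize $V_{(2)}$ to the standard $U_{n-a-b}$, tracking how $\dim(V_{(1)}\cap V_{(2)})=a+b$ pins down the relative position — this is the $P$-orbit (equivalently $\mathrm{GL}_n$-orbit on $G/P$) step; (4) within $\mathrm{Stab}_G(U_0,U_{n-a-b})$ decompose according to $X_{13},X_{23},W$, split off the ``already-determined'' summands $U_{(\alpha)},U_{(\beta)},U_{(+)},U_{(-)}$, and reduce to the residual block; (5) in the residual block solve the three-isotropic-subspaces-in-general-position problem, producing the $\pm$ patterns and the $e_{n+1}$-correction terms, and identify the resulting isometry invariant with $\varepsilon$. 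The main obstacle I anticipate is step (5) together with the bookkeeping in step (4): one must choose the complements and the residual coordinates so that the action of the (rather large) stabilizer $\mathrm{Stab}_G(U_0,U_{n-a-b})$ is transparent, and one must carry out the field-independent classification of the induced symmetric (or in the abstract-pairing picture, alternating — cf.\ Remark~(i)) form carefully enough to see that $\varepsilon\in\{0,1\}$ is a complete invariant in even residual dimension and that it degenerates appropriately when $c_0\in\{0,1\}$ or when $c_0$ is odd. Transitivity/extension inputs (Witt) are standard, so the real work is organizing the inductive peeling and pinning down the last discrete invariant by an explicit Gram-matrix computation.
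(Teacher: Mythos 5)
Your steps (1)--(4) track the paper's actual reduction quite closely: the paper normalizes $(V_{(1)},V_{(2)})$ to $(U_0,U_{n-a-b})$ via the $P$-orbit decomposition $M=\bigsqcup_i PU_i$ (Proposition \ref{prop2.3} with $d=0$), then uses the unipotent-plus-$\mathrm{GL}$ part $Q$ of the stabilizer to split off $U_{(\alpha)},U_{(\beta)}$ (Proposition \ref{prop2.3} for general $d$), reducing to the action of $L_{W_1}\cap R_d\cong \mathrm{GL}_d(\bbf)$, and finally peels off $U_{(+)},U_{(-)}$. So far your outline is sound.

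The gap is in your step (5), which is where the theorem actually lives. You propose to classify the residual block by the \emph{isometry type of an induced nondegenerate symmetric form}, invoking Witt's theorem and asserting that this type is ``controlled by a single $\bz/2$ invariant.'' That is not the right invariant and the claim is false over a general field: nondegenerate symmetric forms of a fixed even dimension are not classified by one bit unless $\bbf$ is finite or algebraically closed, whereas the theorem holds for arbitrary $\bbf$ with $\mathrm{char}\,\bbf\ne 2$. Moreover the symmetric form restricted to the residual space $U_{(0)}=U_{(0+)}\oplus U_{(0-)}\oplus\bbf e_{n+1}$ is the fixed split form of rank $2c_0+1$ and carries no information about $V_{(3)}$. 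What the paper classifies instead is the third isotropic $V\cap U_{(0)}$ written as the graph of a pair $(f_V,\varphi_V)$ over $U_{(0+)}$, giving a nondegenerate bilinear form $\langle u,v\rangle_V=(f_V(u),v)$ on the $c_0$-dimensional space $U_{(0+)}$ whose \emph{symmetric part is forced by isotropy to be} $-\tfrac12\varphi_V(u)\varphi_V(v)$, hence of rank at most one. The complete invariant under the residual $\mathrm{GL}_{c_0}$ is therefore the (essentially alternating) form together with the vanishing or nonvanishing of the linear functional $\varphi_V$; since nondegenerate alternating forms are classified by rank alone, this is field-independent, it is why a single $\varepsilon\in\{0,1\}$ suffices, and it is why $c_0$ odd forces $\varepsilon=1$ (a nondegenerate alternating form cannot live on an odd-dimensional space). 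You gesture at this parenthetically (``or in the abstract-pairing picture, alternating''), but as written your Witt-theoretic classification of a symmetric form would not close the argument; you need the rank-one computation (\ref{eq2.1'}) identifying the symmetric part with $\varphi_V\otimes\varphi_V$, the nondegeneracy statements of Proposition \ref{prop2.7'}, and the identification of $\varepsilon$ with the condition $\varphi_V\ne 0$, as in Proposition \ref{prop2.14}.
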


\begin{corollary} \ $\displaystyle{|G\backslash \mct|=\sum_{k=0}^n \eta_k {n-k+3\choose 3}}$ where
$\displaystyle{\eta_k=\begin{cases} 1 & \text{if $k=0,1,3,5,\ldots$}, \\ 2 & \text{if $k=2,4,6,\ldots$}. \end{cases}}$
\end{corollary}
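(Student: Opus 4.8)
The plan is to derive the count directly from Theorem \ref{th1.2} by stratifying the orbit set according to the invariant $k = c_0(t) = n - a - b - c_+ - c_-$. Theorem \ref{th1.2} tells us that a $G$-orbit is determined by the tuple $(a,b,c_+,c_-)$ together with the extra bit $\varepsilon$, which is forced (to $1$) when $c_0$ is odd, forced (to $0$) when $c_0=0$, and genuinely free (two choices, $0$ or $1$) when $c_0$ is even and positive. So for each fixed value of $k$ the number of orbits with $c_0 = k$ is $\eta_k$ times the number of quadruples $(a,b,c_+,c_-)$ of nonnegative integers with $a+b+c_++c_- = n-k$, where $\eta_k = 1$ if $k$ is $0$ or odd and $\eta_k = 2$ if $k$ is even and positive; note $\eta_0 = 1$ so the stated formula for $\eta_k$ (value $1$ for $k=0,1,3,5,\dots$, value $2$ for $k=2,4,6,\dots$) is consistent.

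The next step is the elementary combinatorial identity: the number of solutions in nonnegative integers of $a+b+c_++c_- = m$ is $\binom{m+3}{3}$, by the standard stars-and-bars argument. Setting $m = n-k$ this gives $\binom{n-k+3}{3}$ orbits per choice of $\varepsilon$, hence $\eta_k\binom{n-k+3}{3}$ orbits with $c_0 = k$. Summing over $k = 0,\dots,n$ yields
$$|G\backslash\mct| = \sum_{k=0}^{n}\eta_k\binom{n-k+3}{3},$$
which is exactly the claimed formula.

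The only point that needs a little care — and the place I would be most worried about an off-by-one or an over/undercount — is checking that the parametrization in Theorem \ref{th1.2} is a genuine bijection onto $G\backslash\mct$, i.e. that distinct admissible data $(a,b,c_+,c_-,\varepsilon)$ really give distinct orbits and that every orbit arises. Surjectivity is immediate from Theorem \ref{th1.2}(i)--(iii). For injectivity one observes that $a,b,c_+,c_-$ are recovered from $t$ as dimensions of intersections of the three subspaces (they are manifestly $G$-invariant), that $c_0$ is then determined, and that $\varepsilon$ is the $G$-invariant defined in the theorem; when $c_0$ is even and positive the two representatives $V(a,b,c_+,c_-)_{\rm even}^0$ and $V(a,b,c_+,c_-)_{\rm even}^1$ are distinguished precisely by the value of $\varepsilon$, so no two of the listed orbits coincide. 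Once this bijection is in hand the corollary is pure bookkeeping. I would also record, as a sanity check, the small cases $n=0$ (one orbit, matching $\binom{3}{3}=1$) and $n=1$ (the sum is $\binom{4}{3}+\binom{3}{3} = 4+1 = 5$), which agree with a direct inspection of triples of isotropic lines in $\bbf^3$.
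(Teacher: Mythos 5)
Your proposal is correct and is precisely the bookkeeping the paper intends: the corollary is stated without proof as an immediate consequence of Theorem \ref{th1.2}, and your stratification by $c_0=k$, the stars-and-bars count $\binom{n-k+3}{3}$ of quadruples $(a,b,c_+,c_-)$ with sum $n-k$, and the factor $\eta_k$ accounting for the free choice of $\varepsilon$ only when $c_0$ is even and positive, together with the observation that $(a,b,c_+,c_-,\varepsilon)$ are $G$-invariants separating the listed representatives, is exactly the intended argument. Your numerical checks ($n=1$ gives $5$, consistent with the paper's table) confirm this.
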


For $n=1,2,3,4$, the number of orbits $|G\backslash \mct|$ is as follows.

\bigskip
\centerline{
\vbox{\offinterlineskip
\hrule
\halign{&\vrule#&\strut\quad$\hfil#\hfil$\quad\cr
height2pt&\omit&&\omit&&\omit&&\omit&&\omit&\cr
& n && 1 && 2 && 3 && 4 &\cr
height2pt&\omit&&\omit&&\omit&&\omit&&\omit&\cr
\noalign{\hrule}
height4pt&\omit&&\omit&&\omit&&\omit&&\omit&\cr
& |G\backslash \mct| && 5 && 16 && 39 && 81 &\cr
height4pt&\omit&&\omit&&\omit&&\omit&&\omit&\cr}
\hrule}
}

\begin{theorem} \ When $\bbf$ is the finite field $\bbf_r$ with $r$ elements, the number of elements in the $G$-orbit $Gt$ is
$$|Gt|=|M|{r^{(n-a)(n-a+1)/2}[r]_n\over [r]_a[r]_b[r]_{c_+}[r]_{c_-}[r]_{c_0}}\psi_{c_0}^\varepsilon(r).$$
Here $[r]_m$ is the $r$-factorial number $(r+1)(r^2+r+1)\cdots(r^{m-1}+r^{m-2}+\cdots+1)$ and
\begin{align*}
\psi_{2k}^0(r) & =\psi_{2k-1}^1(r)={\psi_{2k}^1(r)\over r^{2k}-1}
=r^{k(k-1)}(r-1)(r^3-1)\cdots(r^{2k-1}-1).
\end{align*}
\label{th1.4}
\end{theorem}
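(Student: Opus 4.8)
The plan is to compute $|Gt|$ by combining the orbit classification of Theorem \ref{th1.2} with a stabilizer count, using the standard identity $|Gt| = |G|/|G_t|$ over the finite field $\bbf_r$, or equivalently by peeling off the flag coordinates one at a time. Concretely, I would first write $|Gt| = |M|\cdot|P\backslash(P t')|$ where $t' = (V_{(2)}, V_{(3)})$ is the image of $t$ in the double flag variety $M\times M$ after using $G$-homogeneity of the first factor to fix $V_{(1)} = U_0$; the factor $|M|$ accounts for the $G/P = M$ coordinate. This reduces the problem to counting the $P$-orbit of a pair of maximal isotropic subspaces with prescribed intersection dimensions $a, b, c_+, c_-, c_0$ and invariant $\varepsilon$.

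Next I would set up the computation of $|P t'|$ by a dimension-type bookkeeping adapted to finite fields: choose $V_{(2)}$ first (a maximal isotropic subspace with $\dim(U_0\cap V_{(2)}) = a+b$), giving a factor that is a Gaussian-binomial-type expression, and then count the choices of $V_{(3)}$ compatible with all the intersection constraints with both $U_0$ and $V_{(2)}$, as well as the isotropy condition. The combinatorial heart is to recognize that the first factor contributes $r^{(n-a)(n-a+1)/2}[r]_n / ([r]_a [r]_{b+c_+}\cdots)$-type terms and that the remaining freedom in $V_{(3)}$, after accounting for the "diagonal" subspace $U_{(0)}$ carrying the bilinear/alternating form data, produces exactly $[r]_n/([r]_a[r]_b[r]_{c_+}[r]_{c_-}[r]_{c_0})$ times the correction $\psi_{c_0}^\varepsilon(r)$. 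I expect the $\psi_{c_0}^\varepsilon(r)$ factor to emerge as the number of $\bbf_r$-points of the variety of nondegenerate symmetric (or alternating, depending on parity) bilinear forms on a $c_0$-dimensional space of the appropriate type — this is where the split between $\varepsilon = 0$ and $\varepsilon = 1$, and between $c_0$ even and odd, enters, matching the recursive formula $\psi_{2k}^1(r) = (r^{2k}-1)\psi_{2k}^0(r)$ and $\psi_{2k-1}^1 = \psi_{2k}^0$.

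The main obstacle will be the precise computation of $\psi_{c_0}^\varepsilon(r)$: one must carefully identify which orbit of forms corresponds to which value of $\varepsilon$ and count points in the corresponding $\mathrm{O}$- or $\mathrm{Sp}$-homogeneous space of forms. For $c_0$ even, a symmetric form of a given discriminant class contributes the orthogonal group order $|\mathrm{O}_{c_0}^{\pm}(\bbf_r)|$ up to normalization, and extracting the clean product $r^{k(k-1)}(r-1)(r^3-1)\cdots(r^{2k-1}-1)$ requires the standard formula for $|\mathrm{SO}_{2k}^{\pm}|$ together with the index computation coming from the quotient by the stabilizer inside $P$; for $c_0$ odd one uses instead the count attached to the odd orthogonal group, which explains why $\psi_{2k-1}^1 = \psi_{2k}^0$ (the odd case has a unique form class up to scaling, matching $\varepsilon = 1$). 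I would verify the final formula against the small cases $n = 1, 2$ (where the orbit counts $5$ and $16$ are already tabulated) to confirm that $\sum_t |Gt| = |M|^3$, which serves as a strong consistency check on the exponents and the $\psi$-factors.
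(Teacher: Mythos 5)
Your opening factorization is the same as the paper's: after fixing $V_{(1)}=U_0$ you get $|Gt|=|M|\cdot|Pt'|$, and the paper then splits $|Pt'|=|PU_d|\cdot|R_dV|$ with $d=n-a-b$, computing $|PU_d|$ and the unipotent contribution to $|R_dV|$ exactly by the kind of bookkeeping you describe (the exponent $r^{(n-a)(n-a+1)/2}$ is the product $r^{d(d+1)/2}\cdot r^{((n-a)(n-a+1)-d(d+1))/2}$ of two unipotent-radical counts, Proposition \ref{prop2.3}). Up to that point your plan is sound, if schematic.

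The genuine gap is in your identification of $\psi_{c_0}^\varepsilon(r)$. It is \emph{not} the number of nondegenerate symmetric bilinear forms of a given discriminant class, and the stabilizers are not orthogonal groups. The invariant attached to $V$ on the $c_0$-dimensional space $U_{(0+)}$ is the pair consisting of the \emph{alternating part} of $\langle u,v\rangle_V=(f_V(u),v)$ and a linear form $\varphi_V$; the symmetric part is forced to be $-\tfrac12\varphi_V(u)\varphi_V(v)$, of rank at most one, so it carries no independent information (this is (\ref{eq2.1'}) and Proposition \ref{prop2.8}). Consequently $\psi_{c_0}^\varepsilon(r)=|{\rm GL}_{c_0}(\bbf_r)/H_{c_0}^\varepsilon|$ with $H_{c_0}^\varepsilon$ equal to ${\rm Sp}_{c_0}$, $Q_{c_0}=\{g\in{\rm Sp}_{c_0}\mid gv=v\}$, or $1\times{\rm Sp}_{c_0-1}$ (Propositions \ref{prop1.5} and \ref{prop2.14}); here $\varepsilon$ records whether $\varphi_V$ vanishes, i.e.\ whether $V\subset U_0\oplus U_n$, not a discriminant. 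Your route through $|{\rm O}_{2k}^{\pm}(\bbf_r)|$ cannot produce the stated product: those orders contain factors $(r^k\mp1)$ and an overall $2$ that do not cancel into $(r-1)(r^3-1)\cdots(r^{2k-1}-1)$, and for $c_0$ even with $\varepsilon=1$ there is no orthogonal interpretation at all — the relation $\psi_{2k}^1=(r^{2k}-1)\psi_{2k}^0$ is precisely $|{\rm Sp}_{2k}(\bbf_r)/Q_{2k}|=r^{2k}-1$, the transitive action of ${\rm Sp}_{2k}$ on nonzero vectors. (The coincidence $|{\rm SO}_{2k-1}(\bbf_r)|=|{\rm Sp}_{2k-2}(\bbf_r)|$ makes the odd case look superficially plausible, but it is the symplectic stabilizer that actually occurs.) To close the gap you would need to replace the symmetric-form count by the classification of the pair $(\langle\ ,\ \rangle_V^{\rm alt},\varphi_V)$ under ${\rm GL}_{c_0}$ and compute the corresponding stabilizers, which is what the paper's Section 2.4 does.
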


\begin{remark} (c.f. Proposition \ref{prop1.5})\ $\psi_{c_0}^\varepsilon(r)=|{\rm GL}_{c_0}(\bbf_r)/H_{c_0}^\varepsilon|$ where
$$H_{c_0}^\varepsilon=\begin{cases} 1\times {\rm Sp}_{c_0-1}(\bbf_r) & \text{if $c_0$ is odd}, \\
{\rm Sp}_{c_0}(\bbf_r) & \text{if $c_0$ is even and $\varepsilon=0$}, \\
Q_{c_0}=\{g\in {\rm Sp}_{c_0}(\bbf_r)\mid gv=v\} & \text{if $c_0$ is even and $\varepsilon=1$.}
\end{cases}$$
($v$ is a nonzero element in $\bbf_r^{c_0}$.)
\end{remark}

\subsection{$G$-orbits on $\mct_0=M\times M\times M_0$}

By Theorem \ref{th1.2}, we may fix a $t=(U_0,U_d,V)$ with $V\in M$ of the form
$$V=V(a,b,c_+,c_-)_{\rm odd},\ V(a,b,c_+,c_-)_{\rm even}^0\mbox{ or }V(a,b,c_+,c_-)_{\rm even}^1$$
where $d=n-a-b$. Let $M_0(V)$ denote the subvariety of $M_0$ consisting of full flags $\mcf: V_1\subset\cdots\subset V_n$ satisfying $V_n=V$. Let $\pi: \mct_0\to \mct$ be the projection. Then the fiber $\pi^{-1}(t)$ at $t$ is naturally identified with $M_0(V)$. Since the isotropy subgroup at $t$ is $R(t)=P\cap P_{U_d}\cap P_V$. We have only to describe $R(t)$-orbits on $M_0(V)$.

\begin{definition} \ A full flag $\mcf: V_1\subset\cdots\subset V_n$ in $M_0(V)$ is called standard if
$$V_i =(V_i\cap U_{(\alpha)})\oplus (V_i\cap U_{(\beta)})\oplus (V_i\cap (U_{(+)}\oplus U_{(-)}))\oplus (V_i\cap U_{(0)}),$$
$V_i\cap U_{(\alpha)} =\bbf e_1\oplus\cdots\oplus \bbf e_{a_i(\mcf)}$
and $V_i\cap U_{(\beta)} =\bbf e_{2n-a-b+2}\oplus\cdots\oplus \bbf e_{2n-a-b+1+b_i(\mcf)}$ for all $i=1,\ldots,n$ where $a_i(\mcf)=\dim (V_i\cap U_{(\alpha)})$ and $b_i(\mcf)=\dim (V_i\cap U_{(\beta)})$.
\label{def1.6}
\end{definition}

For a standard full flag $\mcf: V_1\subset\cdots\subset V_n$, write $c_i(\mcf)=\dim(V_i\cap (U_{(+)}\oplus U_{(-)}))$ and $d_i(\mcf)=\dim(V_i\cap U_{(0)})$. Define subsets
\begin{align*}
I_{(\alpha)} & =\{\alpha_1,\ldots,\alpha_a\} =\{i\in I\mid a_i(\mcf)=a_{i-1}(\mcf)+1\}, \\
I_{(\beta)} & =\{\beta_1,\ldots,\beta_b\} =\{i\in I\mid b_i(\mcf)=b_{i-1}(\mcf)+1\}, \\
I_{(\gamma)} & =\{\gamma_1,\ldots,\gamma_c\} =\{i\in I\mid c_i(\mcf)=c_{i-1}(\mcf)+1\}, \\
I_{(\delta)} & =\{\delta_1,\ldots,\delta_{c_0}\} =\{i\in I\mid d_i(\mcf)=d_{i-1}(\mcf)+1\}
\end{align*}
of $I=\{1,\ldots,n\}$ where $c=c_++c_-$ and $\alpha_1<\cdots<\alpha_a,\ \beta_1<\cdots<\beta_b,\ \gamma_1<\cdots<\gamma_c,\ \delta_1<\cdots<\delta_{c_0}$. Let $\tau(\mcf)$ denote the permutation
$$\tau(\mcf): (1\,2\cdots n)\mapsto (\alpha_1\cdots\alpha_a\gamma_1\cdots\gamma_c\delta_1\cdots\delta_{c_0}\beta_1\cdots\beta_b)$$
of $I$ and $\ell(\tau(\mcf))$ the inversion number of $\tau(\mcf)$.

For $X\in {\rm GL}_n(\bbf)$, write
$$h[X]=\bp X & 0 & 0 \\ 0 & 1 & 0 \\ 0 & 0 & J\,{}^tX^{-1}J \ep\mbox{ with }J=J_n=\bp 0 && 1 \\ & \edots & \\ 1 && 0 \ep.$$
For $A\in {\rm GL}_{c_+}(\bbf),\ B\in {\rm GL}_{c_0}(\bbf)$ and $C\in {\rm GL}_{c_-}(\bbf)$, define an element
$$\ell(A,B,C)=h\left[\bp I_{a+b} & 0 & 0 & 0 \\ 0 & A & 0 & 0 \\ 0 & 0 & B & 0 \\ 0 & 0 & 0 & C \ep\right]$$
of $G$. Let $L_+,\ L_0,\ L_-,\ L$ and $L_V$ denote the subgroups of $G$ defined by
\begin{align*}
L_+ & =\{\ell(A,I_{c_0},I_{c_-})\mid A\in {\rm GL}_{c_+}(\bbf)\}, \\
L_0 & =\{\ell(I_{c_+},B,I_{c_-})\mid B\in {\rm GL}_{c_0}(\bbf)\}, \\
L_- & =\{\ell(I_{c_+},I_{c_0},C)\mid C\in {\rm GL}_{c_-}(\bbf)\},
\end{align*}
$L=L_+\times L_0\times L_-$ and $L_V=\{\ell\in L\mid \ell V=V\}$, respectively.

\begin{proposition} \ {\rm (i)} $L_V=L_+\times (L_V\cap L_0)\times L_-$.

{\rm (ii)} $V=V(a,b,c_+,c_-)_{\rm odd}\Longrightarrow L_V\cap L_0\cong 1\times {\rm Sp}_{c_0-1}(\bbf)$,

\quad $V=V(a,b,c_+,c_-)_{\rm even}^0\Longrightarrow L_V\cap L_0\cong {\rm Sp}_{c_0}(\bbf)$,

\quad $V=V(a,b,c_+,c_-)_{\rm even}^1\Longrightarrow L_V\cap L_0\cong Q_{c_0}$.

\noindent Here $Q_{c_0}=\{g\in {\rm Sp}_{c_0}(\bbf)\mid gv=v\}$ with some $v\in\bbf^{c_0}-\{0\}$.
\label{prop1.5}
\end{proposition}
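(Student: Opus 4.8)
The plan is to compute $L_V=\{\ell(A,B,C)\in L\mid \ell(A,B,C)V=V\}$ directly from the explicit bases for the three forms of $V$, exploiting the block structure of $V$ relative to the decomposition $\bbf^{2n+1}=U_{(\alpha)}\oplus U_{(+)}\oplus U_{(0)}'\oplus U_{(-)}\oplus U_{(\beta)}\oplus(\text{dual blocks})$, where $U_{(0)}'$ denotes the ``upper'' part $\bbf e_{a+b+c_++1}\oplus\cdots\oplus\bbf e_{a+b+c_++c_0}$ of $U_{(0)}$ and the dual blocks are those on which $h[X]$ acts by $J\,{}^tX^{-1}J$. First I would observe that in each of the three descriptions of $V$, the subspace $V$ splits as $W_{(0)}\oplus V^{(0)}$ where $W_{(0)}=U_{(\alpha)}\oplus U_{(\beta)}\oplus U_{(+)}\oplus U_{(-)}$ is fixed pointwise-as-a-space by every $\ell(A,B,C)$ (since $\ell(A,B,C)$ preserves each of $U_{(\alpha)}$, $U_{(\beta)}$, $U_{(+)}$, $U_{(-)}$ and acts on $U_{(+)}$ by $A$, on $U_{(-)}$ by $C$), and $V^{(0)}$ is the ``diagonal'' part living in $U_{(0)}\oplus(\text{the dual of }U_{(0)}')$—more precisely in the span of $e_{k_++i},e_{k_-+i}$ ($1\le i\le c_0$) together with $e_{n+1}$, on which $\ell(A,B,C)=h[\cdots]$ acts only through $B$ on the $e_{k_++i}$'s and through $J_{c_0}{}^tB^{-1}J_{c_0}$ on the $e_{k_-+i}$'s. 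Since $A$ and $C$ act only on $W_{(0)}$ and trivially on $V^{(0)}$, the condition $\ell(A,B,C)V=V$ is independent of $A$ and $C$; this gives part (i), $L_V=L_+\times(L_V\cap L_0)\times L_-$.

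For (ii) the task reduces to: for which $B\in{\rm GL}_{c_0}(\bbf)$ does $h[\,\mathrm{diag}(I_{a+b},I_{c_+},B,I_{c_-})\,]$ preserve $V^{(0)}$? I would set up coordinates so that $V^{(0)}$ is the graph of a linear map: writing $u_i=e_{k_++i}$ ($1\le i\le c_0$) for a basis of $U_{(0)}'$ and $w_i=e_{k_-+i}$ ($1\le i\le c_0$) for the reversed-dual basis, and $z=e_{n+1}$ (a one-dimensional block fixed by all of $L$), the listed generators of $V^{(0)}$ express each generator as $u_i\mapsto$ (a fixed vector in $\mathrm{span}(w_1,\dots,w_{c_0})\oplus\bbf z$). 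Concretely $V^{(0)}$ is spanned by vectors of the form $u_i+\sum_j S_{ij}w_j+t_i z$ for a $c_0\times c_0$ matrix $S$ and a vector $t$ read off from the definitions. Then $h[\mathrm{diag}(\ldots,B,\ldots)]$ sends $u_i\mapsto(B)$ acting on the $u$-block, $w_i\mapsto (J_{c_0}{}^tB^{-1}J_{c_0})$ acting on the $w$-block, $z\mapsto z$; preserving the graph is equivalent to a matrix identity relating $B$, $S$ and $t$. Carrying out this computation for $V(a,b,c_+,c_-)^0_{\rm even}$ one finds $S$ is (after the change of basis $w_j\mapsto w_{c_0+1-j}$ that converts $J_{c_0}{}^tB^{-1}J_{c_0}$ into ${}^tB^{-1}$) the matrix of a nondegenerate alternating form—the form with $+1$ above a certain anti-diagonal block and $-1$ below—and $t=0$, so the stabilizer condition becomes ${}^tB S B=S$, i.e. $B\in{\rm Sp}(S)\cong{\rm Sp}_{c_0}(\bbf)$. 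For $V(a,b,c_+,c_-)_{\rm odd}$ the extra generator involving $e_{n+1}$ forces $B$ to also fix the corresponding vector $t\ne0$ in the $u$-coordinates, so that $B$ is block-diagonal $1\times{\rm Sp}_{c_0-1}$ (the $1$ acting on the line $\bbf t$, which is a radical-type direction once $z$ is incorporated); this is exactly $1\times{\rm Sp}_{c_0-1}(\bbf)$. For $V(a,b,c_+,c_-)^1_{\rm even}$ the generator $e_{k_++c_0}-e_{k_-+c_0}-\frac12 e_{k_-+1}+e_{n+1}$ modifies the previous picture so that the alternating form $S$ stays nondegenerate of rank $c_0$ but $B$ must additionally fix one nonzero vector $v$, giving $Q_{c_0}=\{g\in{\rm Sp}_{c_0}(\bbf)\mid gv=v\}$.

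The main obstacle I expect is bookkeeping rather than conceptual: getting the change of basis between the $e_{k_-+i}$ coordinates and the ``$J$-dual'' coordinates right, so that the action of $B$ on the ``lower'' indices really is ${}^tB^{-1}$ and the matrix $S$ really is the anti-symmetric matrix claimed; and checking in the odd and $\varepsilon=1$ even cases that the lone extra basis vector contributes exactly a one-dimensional fixed subspace (rather than, say, enlarging the radical or forcing $B$ into a smaller group). To control this I would verify the three cases by a uniform argument: first compute $L_V\cap L_0$ for $V^0_{\rm even}$, and then treat $V_{\rm odd}$ and $V^1_{\rm even}$ as the stabilizer inside that ${\rm Sp}_{c_0}$ (resp. ${\rm Sp}_{c_0+1}$) of the extra vector, using the standard facts that the stabilizer in ${\rm Sp}_{2m}$ of a nonzero vector in a nondegenerate space is a ``$Q$''-group and that the stabilizer in ${\rm Sp}_{2m+1}$ (viewed as ${\rm Sp}$ of a degenerate form with one-dimensional radical) of a vector spanning the radical is $1\times{\rm Sp}_{2m}$. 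A final sanity check is the orbit-count consistency: $|L_V\cap L_0|$ computed this way must match the factor $\psi^{\varepsilon}_{c_0}(r)=|{\rm GL}_{c_0}(\bbf_r)/H^{\varepsilon}_{c_0}|$ appearing in Theorem \ref{th1.4}, which it does by the remark following that theorem.
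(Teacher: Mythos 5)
Your proposal is correct and follows essentially the same route as the paper: Section 2.4 encodes $V\cap U_{(0)}$ as the graph over $U_{(0+)}$ of the pair $(f_V,\varphi_V)$ (your $(S,t)$), shows that $V$ is determined by the alternating form $\langle\ ,\ \rangle_V^{\rm alt}$ together with $\varphi_V$ (Proposition \ref{prop2.8}), and reads off the three stabilizers exactly as you describe in Proposition \ref{prop2.14} (ii). One small caution for the odd case: the condition coming from the $e_{n+1}$-component is that $B$ fixes the \emph{covector} $\varphi_V$ (equivalently, preserves the hyperplane $\ker\varphi_V$ and acts trivially on the quotient), and it is this combined with preservation of the degenerate alternating form --- not stabilization of the radical vector alone, whose full stabilizer inside the automorphism group of the degenerate form is ${\rm Sp}_{c_0-1}(\bbf)\ltimes\bbf^{c_0-1}$ --- that forces the block-diagonal shape $1\times{\rm Sp}_{c_0-1}(\bbf)$.
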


\begin{theorem} \ {\rm (i)} For every full flag $\mcf$ in $M_0(V)$, there exists a $g\in R(t)=P\cap P_{U_d}\cap P_V$ such that $g\mcf$ is standard.

{\rm (ii)} Let $\mcf$ and $\mcf'$ be two standard full flags such that $g\mcf=\mcf'$ for some $g\in R(t)$. Then there exists a $g_L\in L_V$ such that $g_L\mcf=\mcf'$.

{\rm (iii)} If $\bbf=\bbf_r$, then $|R(t)\mcf|=[r]_a[r]_br^{\ell(\tau(\mcf))}|L_V\mcf|$ for each standard full flag $\mcf$ in $M_0(V)$. 
\label{th1.6}
\end{theorem}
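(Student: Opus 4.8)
The plan is to reduce the description of $R(t)$-orbits on $M_0(V)$ to a combinatorial normal-form argument, following the pattern already established for $G$-orbits on $\mct$ in Theorem \ref{th1.2}. Recall that $R(t)=P\cap P_{U_d}\cap P_V$ is the simultaneous stabilizer of the triple $(U_0,U_d,V)$, and that $V$ is in one of the three normal forms built from the decomposition $\bbf^{2n+1}=U_{(\alpha)}\oplus U_{(\beta)}\oplus(U_{(+)}\oplus U_{(-)})\oplus U_{(0)}$. The key structural fact I would establish first is a Levi-type decomposition $R(t)=L_V\cdot N$, where $L$ is the block-diagonal subgroup introduced before Proposition \ref{prop1.5} and $N$ is a unipotent ``radical'' part of $R(t)$; more precisely, $R(t)$ preserves each of the subspaces $U_{(\alpha)}$, $U_{(\beta)}$, $U_{(+)}\oplus U_{(-)}$, $U_{(0)}$ modulo lower-order terms in a suitable filtration, so that the associated-graded action factors through $L_V$ acting on $\bigoplus$(the four pieces), with $L_V\cap L_0$ acting as $1\times{\rm Sp}_{c_0-1}$, ${\rm Sp}_{c_0}$ or $Q_{c_0}$ by Proposition \ref{prop1.5}.

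For part (i), given an arbitrary full flag $\mcf:V_1\subset\cdots\subset V_n$ in $M_0(V)$, I would straighten it to a standard flag by a descending induction on $n$ (or an ascending induction on $i$), at each stage using elements of the unipotent part $N\subset R(t)$ to clear entries. Concretely, for each $i$ one writes $V_i$ in coordinates adapted to the four-block decomposition; the defining conditions of $M_0(V)$ (namely $V_n=V$ and $(V_n,V_n)=0$) together with the already-achieved standardness of $V_{i+1},\ldots,V_n$ constrain the possible ``leading terms''. One then shows that the subgroup of $R(t)$ fixing the higher part of the flag still acts transitively enough on the relevant quotient to move $V_i$ into the standard position $(V_i\cap U_{(\alpha)})\oplus(V_i\cap U_{(\beta)})\oplus(V_i\cap(U_{(+)}\oplus U_{(-)}))\oplus(V_i\cap U_{(0)})$ with the prescribed intersections with $U_{(\alpha)}$ and $U_{(\beta)}$. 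This is where the bulk of the work lies: one must check that no obstruction arises from the isotropy constraint, i.e.\ that the unipotent part of $R(t)$ is ``large enough'' — I expect this to be the main obstacle, and I would handle it by an explicit parametrization of $N$ in block-matrix form (using the $h[X]$ notation) analogous to the matrix manipulations behind Theorem \ref{th1.2}.

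For part (ii), once both $\mcf$ and $\mcf'=g\mcf$ are standard, the point is that $g$ can be replaced by an element of $L_V$: decompose $g=\ell n$ with $\ell\in L_V$ and $n\in N$ using part of the Levi decomposition; since $N$ preserves each $V_i$ modulo strictly-lower pieces of the filtration while both flags are already in standard (filtration-compatible) position, $n$ must fix $\mcf$, so $g\mcf=\ell\mcf=\mcf'$. This again reduces to checking that the only element of $N$ sending one standard flag to another standard flag is one fixing it, which is a direct computation on the block form. Finally, part (iii) is a counting statement over $\bbf_r$: the orbit $R(t)\mcf$ fibers over $L_V\mcf$ with fiber the set of ways the unipotent straightening can act, and by (i)–(ii) this fiber is a torsor under a subgroup of $N$ whose cardinality is $[r]_a[r]_b\,r^{\ell(\tau(\mcf))}$ — the factors $[r]_a,[r]_b$ coming from the freedom in choosing the standardizing element on the $U_{(\alpha)}$- and $U_{(\beta)}$-blocks (each a full flag variety count for ${\rm GL}_a$, ${\rm GL}_b$), and $r^{\ell(\tau(\mcf))}$ being the number of points of the affine cell indexed by the permutation $\tau(\mcf)$, exactly as in the Bruhat decomposition. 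So the remaining task for (iii) is to identify the relevant unipotent subgroup of $N$ with this affine space of dimension $\binom{a+1}{2}+\binom{b+1}{2}+\ell(\tau(\mcf))$ and apply the orbit–stabilizer formula $|R(t)\mcf|=|R(t)|/|R(t)_\mcf|$ together with (ii).
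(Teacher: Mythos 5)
Your overall architecture (split $R(t)$ into $L_V$ times a complementary part, straighten an arbitrary flag with the complement, prove uniqueness modulo $L_V$, then count) is the same as the paper's, but your key structural claim is false as stated, and this breaks part (iii). You assert $R(t)=L_V\cdot N$ with $N$ unipotent. In fact $R(t)$ contains the stabilizer of $U_{(\alpha)}$ inside $L_{W_0}\cong{\rm GL}(W_0)={\rm GL}_{a+b}(\bbf)$, a parabolic whose Levi ${\rm GL}_a\times{\rm GL}_b$ lies neither in $L_V$ (which is the identity on the first $a+b$ coordinates) nor in any unipotent subgroup. The correct decomposition is Lemma \ref{lem2.13}: $R(t)=Q_VN_HL_V$, where $Q_V=Q\cap P_V$ carries that reductive ${\rm GL}_a\times{\rm GL}_b$ piece and only $N_H$ (and the unipotent part of $Q_V$) is unipotent. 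The error is arithmetically visible in your (iii): a subgroup of a unipotent group over $\bbf_r$ has order a power of the characteristic, whereas $[r]_a[r]_br^{\ell(\tau(\mcf))}$ is not (e.g.\ $[r]_2=r+1$), so the fiber cannot be a torsor under a subgroup of $N$, nor an affine cell of dimension ${a+1\choose 2}+{b+1\choose 2}+\ell(\tau(\mcf))$. The factors $[r]_a[r]_b$ are the orbit of $\mcf$ under the reductive ${\rm GL}_a\times{\rm GL}_b$, namely the product of the full flag varieties of $V\cap U_{(\alpha)}$ and $V\cap U_{(\beta)}$; the correct count multiplies $|Q_V\mcf|=[r]_a[r]_br^{\ell(\sigma)}$ (Proposition \ref{prop2.5}) by $|N_H\mcf|=r^{\ell(\tau')}$ (Proposition \ref{prop2.14'}) and uses $\ell(\tau)=\ell(\sigma)+\ell(\tau')$.

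Separately, the straightening step in (i), which you yourself identify as ``the main obstacle,'' is not carried out: it requires the two-stage normal form of Proposition \ref{prop2.5}(i) (using $N_{W_0}$ and $L_{W_0}\cap P_V$ to split each $V_i$ along $W_0\oplus W_1\oplus W_2$) followed by Proposition \ref{prop2.14'}(i) (using the maps $f_g$ attached to $N_H$ to split the induced flag in $W_1$ along $U_{(+)}\oplus U_{(-)}$ versus $U_{(0)}$), and your (ii) must likewise be run twice, once for each stage, stripping first $g_Q$ and then $g_N$. As written, the proposal is a plausible outline with an incorrect structural lemma at its core rather than a proof.
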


\subsection{Orbits on ${\rm GL}_n(\bbf)/B$}

By Proposition \ref{prop1.5} and Theorem \ref{th1.6}, our problem is reduced to the orbit decompositions on the full flag variety of ${\rm GL}_n(\bbf)$ with respect to the following four kinds of subgroups $H$ of ${\rm GL}_n(\bbf)$:

\bigskip
(A) $H={\rm GL}_{m_+}(\bbf)\times {\rm GL}_{m_-}(\bbf)$ where $m_++m_-=n$,

(B) $H={\rm Sp}_n(\bbf)$ for even $n$,

(C) $H=Q_n$ for even $n$,

(D) $H=1\times {\rm Sp}_{n-1}(\bbf)$ for odd $n$.
\bigskip

When $\bbf=\bc$, the subgroups $H$ in (A) and (B) are symmetric subgroups of ${\rm GL}_n(\bc)$ and the orbit structures were described in \cite{M1} and \cite{R}. We also have symbolic description of orbits in \cite{MO}. 


We will solve Problems (B), (C) and (D) in Section 3. We will also give a proof for the Problem (A) in the appendix. We don't need the assumption ${\rm char}\,\bbf \ne 2$ for these problems.

We can express ${\rm GL}_{m_+}(\bbf)\times {\rm GL}_{m_-}(\bbf)$-orbits on $M={\rm GL}_n(\bbf)/B$ by ``+$-$ab-symbols''. For example, when $m_+=m_-=2$, the orbit structure is as follows (Fig.7 in \cite{MO}).

\setlength{\unitlength}{1mm}

\begin{picture}(140,105)(0,-10)
\put(20,85){\makebox(0,0){++$-$$-$}}
\put(40,85){\makebox(0,0){+$-$+$-$}}
\put(60,85){\makebox(0,0){+$-$$-$+}}
\put(80,85){\makebox(0,0){$-$++$-$}}
\put(100,85){\makebox(0,0){$-$+$-$+}}
\put(120,85){\makebox(0,0){$-$$-$++}}
\put(20,65){\makebox(0,0){+aa$-$}}
\put(40,65){\makebox(0,0){+$-$aa}}
\put(60,65){\makebox(0,0){aa+$-$}}
\put(80,65){\makebox(0,0){aa$-$+}}
\put(100,65){\makebox(0,0){$-$+aa}}
\put(120,65){\makebox(0,0){$-$aa+}}
\put(30,45){\makebox(0,0){+a$-$a}}
\put(50,45){\makebox(0,0){a+a$-$}}
\put(70,45){\makebox(0,0){aabb}}
\put(90,45){\makebox(0,0){a$-$a+}}
\put(110,45){\makebox(0,0){$-$a+a}}
\put(40,25){\makebox(0,0){a+$-$a}}
\put(70,25){\makebox(0,0){abab}}
\put(100,25){\makebox(0,0){a$-$+a}}
\put(70,5){\makebox(0,0){abba}}
\put(20,82){\vector(0,-1){14}} \put(17,75){\makebox(0,0){$2$}}
\put(37,82){\vector(-1,-1){14}} \put(27,76){\makebox(0,0){$2$}}
\put(40,82){\vector(0,-1){14}} \put(37,75){\makebox(0,0){$3$}}
\put(43,82){\vector(1,-1){14}} \put(44,78){\makebox(0,0){$1$}}
\put(57,82){\vector(-1,-1){14}} \put(56,78){\makebox(0,0){$3$}}
\put(63,82){\vector(1,-1){14}} \put(64,78){\makebox(0,0){$1$}}
\put(77,82){\vector(-1,-1){14}} \put(76,78){\makebox(0,0){$1$}}
\put(83,82){\vector(1,-1){14}} \put(84,78){\makebox(0,0){$3$}}
\put(97,82){\vector(-1,-1){14}} \put(96,78){\makebox(0,0){$1$}}
\put(100,82){\vector(0,-1){14}} \put(103,75){\makebox(0,0){$3$}}
\put(103,82){\vector(1,-1){14}} \put(113,76){\makebox(0,0){$2$}}
\put(120,82){\vector(0,-1){14}} \put(123,75){\makebox(0,0){$2$}}
\put(21.5,62){\vector(1,-2){7}} \put(22,55){\makebox(0,0){$3$}}
\put(61.5,62){\vector(1,-2){7}} \put(67,57){\makebox(0,0){$3$}}
\put(78.5,62){\vector(-1,-2){7}} \put(73,57){\makebox(0,0){$3$}}
\put(118.5,62){\vector(-1,-2){7}} \put(118,55){\makebox(0,0){$3$}}
\put(24.5,62){\vector(3,-2){21}} \put(30,62){\makebox(0,0){$1$}}
\put(44.5,62){\vector(3,-2){21}} \put(50,62){\makebox(0,0){$1$}}
\put(95.5,62){\vector(-3,-2){21}} \put(90,62){\makebox(0,0){$1$}}
\put(115.5,62){\vector(-3,-2){21}} \put(110,62){\makebox(0,0){$1$}}
\put(38.5,62){\vector(-1,-2){7}} \put(39,59){\makebox(0,0){$2$}}
\put(58.5,62){\vector(-1,-2){7}} \put(59,59){\makebox(0,0){$2$}}
\put(81.5,62){\vector(1,-2){7}} \put(81,59){\makebox(0,0){$2$}}
\put(101.5,62){\vector(1,-2){7}} \put(101,59){\makebox(0,0){$2$}}
\put(31.5,42){\vector(1,-2){7}} \put(32,35){\makebox(0,0){$1$}}
\put(48.5,42){\vector(-1,-2){7}} \put(48,35){\makebox(0,0){$3$}}
\put(70,42){\vector(0,-1){14}} \put(73,35){\makebox(0,0){$2$}}
\put(91.5,42){\vector(1,-2){7}} \put(92,35){\makebox(0,0){$3$}}
\put(108.5,42){\vector(-1,-2){7}} \put(108,35){\makebox(0,0){$1$}}
\put(44.5,22){\vector(3,-2){21}} \put(52,14){\makebox(0,0){$2$}}
\put(95.5,22){\vector(-3,-2){21}} \put(88,14){\makebox(0,0){$2$}}
\put(68.5,22){\vector(0,-1){14}} \put(66,16){\makebox(0,0){$1$}}
\put(71.5,22){\vector(0,-1){14}} \put(74,16){\makebox(0,0){$3$}}
\put(70,-5){\makebox(0,0){Fig.1. ${\rm GL}_2(\bbf)\times {\rm GL}_2(\bbf)\backslash {\rm GL}_4(\bbf)/B$}}
\end{picture}


\bigskip
\noindent Notation: For $i=1,\ldots,n-1$, we can consider the partial flag variety
$$M_i=\{V_1\subset\cdots\subset V_{i-1}\subset V_{i+1}\subset\cdots\subset V_{n-1} \mid \dim V_j=j\}$$
and the canonical projection $p_i:M\to M_i$. For two $H$-orbits $S_1$ and $S_2$ in $M$, we write $S_1\xrightarrow{i} S_2$ when $p_i(S_1)=p_i(S_2)$ and $\dim S_1+1=\dim S_2$. (Remark: In our setting, every orbit in $M$ is defined by linear equations. So we can define ``dimension'' of each orbit over an arbitrary field $\bbf$. When $\bbf=\bbf_r$, the number $|S|$ of points in a orbit $S$ is a polynomial of $r$ and $\dim S$ is the degree of the polynomial.) This notation will be also used in Problems (B), (C) and (D).

\bigskip
Problem (B) is solved as follows. 
Let $G={\rm GL}_{2n}(\bbf)$ and define a nondegenerate alternating form $\langle\ ,\ \rangle$ on $\bbf^{2n}$ by
$$\langle e_i,e_j\rangle=\begin{cases} \delta_{i,2n+1-j} & \text{for $i=1,\ldots,n$}, \\
-\delta_{i,2n+1-j} & \text{for $i=n+1,\ldots,2n$}.
\end{cases}$$
For a subspace $V$ in $\bbf^{2n}$, let $V^\perp=\{v\in V\mid \langle v,V\rangle =\{0\}\}$ denote the orthogonal space for $V$. Define $H=\{g\in G\mid \langle gu,gv\rangle=\langle u,v\rangle\mbox{ for all }u,v\in\bbf^{2n}\}$. Then $H$ is isomorphic to ${\rm Sp}_{2n}(\bbf)$. Let
$\mcf: V_1\subset V_2\subset\cdots\subset V_{2n-1}$
be a full flag in $\bbf^{2n}$. Define $d_{i,j}=d_{i,j}(\mcf)=\dim(V_i\cap V_j^\perp)$ for $i,j=0,\ldots,2n$ and $c_{i,j}=c_{i,j}(\mcf)=d_{i,j-1}-d_{i,j}-d_{i-1,j-1}+d_{i-1,j}$ for $i,j=1,\ldots,2n$. We prove the following propositions in Section 3.2.

\begin{proposition} \ The matrix $\{c_{i,j}\}_{i=1,\ldots,2n}^{j=1,\ldots,2n}$ is a symmetric permutation matrix such that $c_{i,i}=0$ for $i=1,\ldots,2n$.
\label{prop1.9}
\end{proposition}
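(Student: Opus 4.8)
The plan is to analyze the quantities $d_{i,j}(\mcf)=\dim(V_i\cap V_j^{\perp})$ as functions of $i$ and $j$ on the index range $0,\dots,2n$, and to exploit the symplectic duality $V\mapsto V^{\perp}$ together with the inclusion-exclusion definition of $c_{i,j}$. First I would record the basic boundary behavior: $d_{0,j}=0$ and $d_{i,0}=\dim V_i=i$ (since $V_0^{\perp}=\bbf^{2n}$), and $d_{2n,j}=\dim V_j^{\perp}=2n-j$, $d_{i,2n}=\dim(V_i\cap\{0\})=0$. From these, summing the defining relation $c_{i,j}=d_{i,j-1}-d_{i,j}-d_{i-1,j-1}+d_{i-1,j}$ telescopes in each variable: $\sum_{j=1}^{2n}c_{i,j}=(d_{i,0}-d_{i,2n})-(d_{i-1,0}-d_{i-1,2n})=i-(i-1)=1$, and symmetrically $\sum_{i=1}^{2n}c_{i,j}=(d_{0,j}-d_{2n,j})-(d_{0,j-1}-d_{2n,j-1})=(j-1)-j=$ wait — one must be careful with signs here; the correct bookkeeping gives $\sum_{i}c_{i,j}=1$ as well after using $d_{0,j}=0$, $d_{2n,j}=2n-j$. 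So each row and each column of $\{c_{i,j}\}$ sums to $1$.

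The second ingredient is nonnegativity and integrality of $c_{i,j}$, which would make $\{c_{i,j}\}$ a genuine permutation matrix. For this I would use the standard ``rank function'' argument for pairs of flags: the flag $\mcf$ and the ``dual flag'' $V_0^{\perp}\supset V_1^{\perp}\supset\cdots\supset V_{2n}^{\perp}$ are two complete flags in $\bbf^{2n}$, and $d_{i,j}=\dim(V_i\cap V_j^{\perp})$ is exactly the rank matrix of the corresponding relative position (Schubert cell) in ${\rm GL}_{2n}\backslash(\text{flags}\times\text{flags})$. It is classical that such a rank matrix satisfies $c_{i,j}=d_{i,j-1}-d_{i,j}-d_{i-1,j-1}+d_{i-1,j}\in\{0,1\}$ and that $\{c_{i,j}\}$ is the permutation matrix of the Weyl group element encoding the relative position; I would either cite this or give the short submodularity/parallelogram-inequality proof (the function $j\mapsto d_{i-1,j}-d_{i,j}$ is monotone because $V_{i-1}^{\perp}\cap$(something) ... more precisely one checks $d_{i,j-1}-d_{i,j}\ge d_{i-1,j-1}-d_{i-1,j}$ directly from the inclusion $V_{i-1}\subset V_i$, giving $c_{i,j}\ge 0$, and $d_{i,j-1}-d_{i,j}\in\{0,1\}$ gives the upper bound). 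Combined with the row/column sum $1$, this forces $\{c_{i,j}\}$ to be a permutation matrix.

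The genuinely symplectic part is the symmetry $c_{i,j}=c_{j,i}$, and this is where the argument differs from the $\mathrm{GL}$ case and where I expect the main subtlety. The key point is that $\perp$ is an involution that reverses inclusions and preserves dimensions up to the complement: $\dim V^{\perp}=2n-\dim V$, and $(V_i\cap V_j^{\perp})^{\perp}=V_i^{\perp}+V_j$. I would compute $d_{j,i}=\dim(V_j\cap V_i^{\perp})$ and relate it to $d_{i,j}$ via the identity $\dim(V_i\cap V_j^{\perp})+\dim(V_i^{\perp}+V_j)=\dim V_i=i$ together with $\dim(V_i^{\perp}+V_j)=\dim V_i^{\perp}+\dim V_j-\dim(V_i^{\perp}\cap V_j)=(2n-i)+j-d_{j,i}$. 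This yields the functional equation $d_{i,j}=d_{j,i}+j-i$, i.e. $d_{i,j}-j = d_{j,i}-i$. Substituting into the definition of $c_{i,j}$: each of the four terms $d_{i,j-1},d_{i,j},d_{i-1,j-1},d_{i-1,j}$ transforms with a linear shift in $(i,j)$, and the shifts cancel in the alternating sum (they contribute $((j-1)-j)-((j-1)-j)=0$ after the appropriate regrouping), leaving $c_{i,j}=c_{j,i}$. Finally $c_{i,i}=0$: for a symmetric permutation matrix this is equivalent to the associated involution having no fixed points, which follows because a fixed index $i$ would force $V_i\cap V_i^{\perp}$ to jump in dimension precisely at step $i$ in a way incompatible with $\langle\ ,\ \rangle$ being nondegenerate and alternating — concretely, one shows $\dim(V_i\cap V_i^{\perp})$ and $\dim(V_{i-1}\cap V_{i-1}^{\perp})$ have the same parity (since $V_i\cap V_i^{\perp}$ carries the restricted alternating form whose radical is controlled by one-dimensional steps), so the diagonal second difference $c_{i,i}$ is forced to be even, hence $0$. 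I would spell out this parity argument carefully, as it is the one place where $\mathrm{char}\,\bbf\ne 2$ and the alternating (not merely bilinear) nature of the form are really used; the rest is formal linear algebra.
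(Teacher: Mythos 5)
Your treatment of the permutation-matrix property (telescoping row/column sums plus the submodularity bound $c_{i,j}\in\{0,1\}$) and of the symmetry $c_{i,j}=c_{j,i}$ is essentially the paper's argument. One small slip: the correct functional equation is $d_{j,i}=d_{i,j}+j-i$, not $d_{i,j}=d_{j,i}+j-i$ (test it at $i=0$, where $d_{j,0}=j$ and $d_{0,j}=0$); the sign does not affect the cancellation giving $c_{i,j}=c_{j,i}$, but it should be fixed.

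The step $c_{i,i}=0$ is where your sketch has a genuine error. You claim that $\dim(V_i\cap V_i^\perp)$ and $\dim(V_{i-1}\cap V_{i-1}^\perp)$ have the same parity. They do not: $V_i\cap V_i^\perp$ is the radical of the alternating form restricted to $V_i$, and $V_i/(V_i\cap V_i^\perp)$ carries a nondegenerate alternating form, hence is even-dimensional, so $d_{i,i}\equiv i\pmod 2$ and the two diagonal entries have \emph{opposite} parities. Moreover, even a correct parity statement about $d_{i,i}$ and $d_{i-1,i-1}$ would not by itself control $c_{i,i}=d_{i,i-1}-d_{i,i}-d_{i-1,i-1}+d_{i-1,i}$, which also involves the off-diagonal entries $d_{i,i-1}$ and $d_{i-1,i}$. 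Your strategy can be repaired: the symmetry relation gives $d_{i,i-1}=d_{i-1,i}+1$, hence $c_{i,i}=2d_{i-1,i}+1-d_{i,i}-d_{i-1,i-1}\equiv 1+i+(i-1)\equiv 0\pmod 2$, and since $c_{i,i}\in\{0,1\}$ this forces $c_{i,i}=0$. The paper argues more directly: it first shows (Lemma \ref{lem3.6'}) that $c_{i,j}=1$ implies $\langle u,v\rangle\ne 0$ for every $u\in(V_i\cap V_{j-1}^\perp)\setminus(V_{i-1}\cap V_{j-1}^\perp)$ and $v\in(V_j\cap V_{i-1}^\perp)\setminus(V_{j-1}\cap V_{i-1}^\perp)$ --- a lemma needed later anyway to build the basis of Proposition \ref{prop1.10} --- and then takes $i=j$ to produce a vector with $\langle v,v\rangle\ne 0$, contradicting that the form is alternating. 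Note also that what is really used is that the form is alternating, not ${\rm char}\,\bbf\ne 2$; the paper explicitly remarks that Problem (B) does not require that hypothesis.
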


Let $\tau=\tau(\mcf)$ be the permutation of $I=\{1,\ldots,2n\}$ corresponding to $\{c_{i,j}\}=\{c_{i,j}(\mcf)\}$. Then $\tau$ is expressed as
$\tau=(i_1\ j_1)\cdots(i_n\ j_n)$
with transpositions $(i_1\ j_1),\ldots,$ $(i_n\ j_n)$. We may assume
$$i_t<j_t\mbox{ for }t=1,\ldots,n\mand i_1<i_2<\cdots<i_n.$$
(Hence $i_1=1$.) Define another permutation $\sigma$ by
$$\sigma=\sigma(\mcf):(1\,2\cdots 2n)\mapsto (i_1j_1i_2j_2\cdots i_nj_n).$$ 
Let $\ell(\sigma)$ denote the inversion number
$\ell(\sigma)=|\{(i,j)\mid i<j\mbox{ and }\sigma(i)>\sigma(j)\}|$. (Remark: We can prove $\ell(\tau)=n+2\ell(\sigma)$.)

\begin{proposition} \ {\rm (i)} There exists a basis $v_1,\ldots,v_{2n}$ of $\bbf^{2n}$ satisfying {\rm (a)} and {\rm (b)}$:$

\indent\indent {\rm (a)} $V_i=\bbf v_1\oplus\cdots\oplus \bbf v_i$ for $i=1,\ldots,2n$.

\indent\indent {\rm (b)} $\langle v_i,v_j\rangle=c_{i,j}$ for $i<j$.

{\rm (ii)} If $\bbf=\bbf_r$, then the number of bases satisfying the properties {\rm (a)} and {\rm (b)} in {\rm (i)} is $(r-1)^nr^{n+\ell(\sigma)}$.
\label{prop1.10}
\end{proposition}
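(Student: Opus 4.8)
\textbf{Proof proposal for Proposition \ref{prop1.10}.}

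The plan is to construct the basis $v_1,\dots,v_{2n}$ inductively, adding one vector at a time so that at stage $i$ the vector $v_i$ completes a basis of $V_i$ and has the prescribed pairings $\langle v_i,v_j\rangle=c_{i,j}$ with all previously chosen $v_1,\dots,v_{i-1}$. The key structural input is Proposition \ref{prop1.9}: the matrix $\{c_{i,j}\}$ is a symmetric permutation matrix with zero diagonal, so $\tau=\tau(\mcf)$ is a fixed-point-free involution, and each index $i$ is matched to a unique partner $\tau(i)$. Writing $\tau=(i_1\,j_1)\cdots(i_n\,j_n)$ with $i_t<j_t$, the condition (b) says $\langle v_i,v_j\rangle$ is $\pm1$ exactly when $\{i,j\}$ is one of these pairs and $0$ otherwise; in particular $v_i$ is required to be orthogonal to all $v_j$ except its single partner.

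For the existence part (i), I would fix at each step $i$ the subspace $V_i$, which is $(i)$-dimensional, and the hyperplane $V_{i-1}\subset V_i$. I need to choose $v_i\in V_i\setminus V_{i-1}$. The constraints are linear functionals on $v_i$: namely $\langle v_i,v_j\rangle=c_{i,j}$ for $j=1,\dots,i-1$. The crucial point is that these functionals, restricted to $V_i$, are controlled by the numbers $d_{i,j}=\dim(V_i\cap V_j^\perp)$, and one must check that the affine system they define is consistent and that its solution set is not contained in $V_{i-1}$. Here is where the definition $c_{i,j}=d_{i,j-1}-d_{i,j}-d_{i-1,j-1}+d_{i-1,j}$ does the work: a telescoping/counting argument with the $d$'s shows that the rank of the relevant system of functionals on $V_i$ equals exactly the number of partners $j<i$ of $i$ under $\tau$ (which is $0$ or $1$), plus a correction coming from whether the partner of $i$ is already used, so the solution set has the expected codimension and meets $V_i\setminus V_{i-1}$. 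I would phrase this by distinguishing two cases for $i$: either $i=i_t$ is the smaller element of its pair (then $v_i$ must be orthogonal to all of $V_{i-1}$, i.e.\ $v_i\in V_i\cap V_{i-1}^\perp$, and the dimension count via $d_{i,i-1}$ guarantees such a $v_i\notin V_{i-1}$), or $i=j_t$ is the larger element (then $v_i$ must satisfy $\langle v_i,v_{i_t}\rangle=c_{i_t,i}=\pm1$ and be orthogonal to the rest of $V_{i-1}$; again a dimension count shows the affine solution set has the right size and is not swallowed by $V_{i-1}$).

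For the counting part (ii) over $\bbf=\bbf_r$, I would run the same induction and multiply the number of choices at each step. When $i=i_t$ is the smaller element of its pair, $v_i$ ranges over $(V_i\cap V_{i-1}^\perp)\setminus V_{i-1}$; since the containing space has one more dimension than $V_{i-1}\cap V_{i-1}^\perp$, this is a coset-difference of size $r^{s}(r-1)$ for an appropriate exponent $s$, reflecting the choice of a nonzero scalar direction together with a free affine part inside $V_{i-1}$. When $i=j_t$ is the larger element, the equation $\langle v_i,v_{i_t}\rangle=\pm1$ pins down the scaling (no factor of $r-1$), and the remaining freedom is a torsor over the subspace of $V_i$ orthogonal to all of $V_{i-1}$, contributing a pure power of $r$. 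Collecting exponents, the $n$ ``smaller-element'' steps each contribute one factor $(r-1)$, giving $(r-1)^n$; the accumulated powers of $r$ sum to $n+\ell(\sigma)$, where the $n$ accounts for one unavoidable $r$ per pair and $\ell(\sigma)$ is exactly the count of ``crossings'' of the pairing $\sigma$, i.e.\ inversions measuring how interleaved the pairs $(i_t,j_t)$ are. Matching the bookkeeping to $\ell(\sigma)$ is where I would use the remark $\ell(\tau)=n+2\ell(\sigma)$ together with a direct interpretation of each inversion of $\sigma$ as contributing one extra dimension of freedom to some $v_i$.

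The main obstacle I anticipate is the dimension/rank bookkeeping in step (i): showing that at each stage the linear system imposed on $v_i$ by the pairing conditions has precisely the rank predicted by the $c_{i,j}$'s, and in particular that the solution set genuinely sticks out of $V_{i-1}$. This requires a careful analysis of how $\dim(V_i\cap V_j^\perp)$ changes as $i$ and $j$ vary — essentially verifying that the ``second difference'' defining $c_{i,j}$ behaves like the entries of a permutation matrix, which is the content of Proposition \ref{prop1.9} but must be used quantitatively (not just qualitatively) to track the exact codimensions. Once that linear-algebra lemma is in place, both the existence and the point count follow by a uniform induction, and the exponent $n+\ell(\sigma)$ emerges from summing the per-step contributions.
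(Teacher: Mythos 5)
Your plan is a genuinely different route from the paper's, but as written it has a real gap precisely at the point you flag as "the main obstacle," and that obstacle is not resolved. At a step $i=j_t$ (the larger element of a pair) with $j_t<2n$, you must solve the affine system $\langle w,v_k\rangle=c_{k,i}$ for $k=1,\dots,i-1$ inside $V_i$. The homogeneous solution space is $V_i\cap V_{i-1}^\perp$, which can be large, and the functionals $\langle\,\cdot\,,v_k\rangle|_{V_i}$ are in general linearly dependent; consistency of the inhomogeneous system (the target vector lying in the image of $V_i\to\bbf^{i-1}$) is exactly what needs proof, and "a telescoping/counting argument with the $d$'s shows that the rank ... equals ..." is an assertion, not an argument. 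Proposition \ref{prop1.9} alone does not give it to you: from $c_{j_t,i_t}=1$ you can produce (via the analogue of Lemma \ref{lem3.6'}) a $w\in V_{j_t}\cap V_{i_t-1}^\perp$ with $\langle w,v_{i_t}\rangle\ne0$, but this $w$ need not be orthogonal to $v_k$ for $i_t<k<j_t$, and correcting it by elements of $V_{j_t-1}\cap V_{i_t}^\perp$ requires a surjectivity statement that is itself the content of the missing lemma. The same gap propagates into (ii): your per-step counts are cosets of $V_i\cap V_{i-1}^\perp$ (of size $r^{d_{i,i-1}}$, possibly minus a hyperplane), so the final exponent is $\sum_t d_{i_t-1,i_t-1}+\sum_t d_{j_t,j_t-1}$, and the identity equating this with $n+\ell(\sigma)$ is again only promised, not proved.

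The paper sidesteps both difficulties with a different induction: pick one pair $(i,j)$ with $c_{i,j}=1$, use Lemma \ref{lem3.6'} to find $v_i\in V_i\cap V_{j-1}^\perp$ and $v_j\in V_j\cap V_{i-1}^\perp$ with $\langle v_i,v_j\rangle=1$, set $U=\bbf v_i\oplus\bbf v_j$, check that every $V_k$ and $V_k\cap V_\ell^\perp$ splits along $\bbf^{2n}=U\oplus U^\perp$ with the $c$-matrix restricting correctly, and induct on the flag in $U^\perp$. Consistency is then automatic (each step only ever solves for one hyperbolic pair inside a symplectic complement where all other constraints have been removed), and the count falls out pair by pair as $(r-1)$ choices for each $v_{i_k}$ and $r^{\ell_{2k}+1}$ for each $v_{j_k}$, with $\ell(\sigma)=\ell_2+\ell_4+\cdots+\ell_{2n}$. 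If you want to salvage the greedy one-vector-at-a-time scheme, you must prove the rank/consistency lemma for the restricted functionals at every step and then establish the combinatorial identity for the exponent; neither is routine, and both are exactly what the splitting argument is designed to avoid.
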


Let $C_{2n}$ denote the set of symmetric permutation matrices $\{c_{i,j}\}$ of degree $2n$ such that $c_{i,i}=0$ for $i=1,\ldots,2n$. By Proposition \ref{prop1.9} and Proposition \ref{prop1.10}, we have:

\begin{corollary} \ {\rm (i)} There exists a one-to-one correspondence between $C_{2n}$ and ${\rm Sp}_{2n}(\bbf)\backslash {\rm GL}_{2n}(\bbf)/B$.

{\rm (ii)} $\displaystyle{|{\rm Sp}_{2n}(\bbf)\backslash {\rm GL}_{2n}(\bbf)/B| =(2n-1)(2n-3)\cdots 1={(2n)!\over 2^nn!}}$.

{\rm (iii)} If $\bbf=\bbf_r$, then
$$|H\mcf|={|{\rm Sp}_{2n}(\bbf_r)|\over (r-1)^nr^{n+\ell(\sigma(\mcf))}}
={(r^2-1)(r^4-1)\cdots(r^{2n}-1)\over (r-1)^n} r^{n^2-n-\ell(\sigma(\mcf))}.$$
\label{cor1.11}
\end{corollary}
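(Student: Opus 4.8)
\textbf{Proof proposal for Corollary \ref{cor1.11}.}
The plan is to treat the assignment $\mcf\mapsto\{c_{i,j}(\mcf)\}$ as a map from full flags in $\bbf^{2n}$ to $C_{2n}$ and to show it induces the bijection in (i); parts (ii) and (iii) will then be bookkeeping on top of Propositions \ref{prop1.9} and \ref{prop1.10}. First I would note that this map is constant on $H$-orbits: if $g\in H$ then $(gV)^\perp=g(V^\perp)$ for every subspace $V$, so $d_{i,j}(g\mcf)=\dim g(V_i\cap V_j^\perp)=d_{i,j}(\mcf)$ and hence $c_{i,j}(g\mcf)=c_{i,j}(\mcf)$. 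By Proposition \ref{prop1.9} the image lies in $C_{2n}$, so we obtain a well-defined map $\overline c:{\rm Sp}_{2n}(\bbf)\backslash{\rm GL}_{2n}(\bbf)/B\to C_{2n}$.

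For surjectivity of $\overline c$, fix $\{c_{i,j}\}\in C_{2n}$ with corresponding fixed-point-free involution $\tau$ of $I=\{1,\ldots,2n\}$. Take a basis $v_1,\ldots,v_{2n}$ as in Proposition \ref{prop1.10}(i), so its Gram matrix is the alternating matrix $A$ with $A_{i,j}=c_{i,j}$ for $i<j$, which is nondegenerate precisely because $\tau$ has no fixed points. Setting $V_i=\bbf v_1\oplus\cdots\oplus\bbf v_i$, a short computation shows $V_j^\perp=\bigoplus_{\tau(k)>j}\bbf v_k$: the right-hand side is clearly contained in the left, and both have dimension $2n-j$. Since $V_i$ and $V_j^\perp$ are coordinate subspaces for the basis $(v_k)$, this gives $d_{i,j}(\mcf)=\#\{k\le i\mid\tau(k)>j\}$, and the inclusion–exclusion defining $c_{i,j}$ collapses to $c_{i,j}(\mcf)=1$ if $\tau(i)=j$ and $0$ otherwise; i.e. $\overline c$ sends the class of this flag to $\{c_{i,j}\}$. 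For injectivity, if $\mcf,\mcf'$ both have matrix $\{c_{i,j}\}$, choose adapted bases $v_1,\ldots,v_{2n}$ and $v_1',\ldots,v_{2n}'$ as above; both have Gram matrix $A$, so the linear map $g$ with $gv_i=v_i'$ preserves $\langle\ ,\ \rangle$ and carries $\mcf$ to $\mcf'$, so the classes agree. This proves (i), and (ii) follows because $|C_{2n}|$ equals the number of perfect matchings on $2n$ points, namely $(2n-1)(2n-3)\cdots1=(2n)!/(2^nn!)$.

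For (iii), with $\bbf=\bbf_r$ and $\{c_{i,j}\}=\{c_{i,j}(\mcf)\}$, I would count in two ways the set $\mathcal B$ of all bases $(v_1,\ldots,v_{2n})$ of $\bbf_r^{2n}$ with $\langle v_i,v_j\rangle=c_{i,j}$ for $i<j$. First, fix a reference $v^0\in\mathcal B$ (it exists by Proposition \ref{prop1.10}(i)); any $v\in\mathcal B$ equals $gv^0$ for a unique $g\in{\rm GL}_{2n}(\bbf_r)$, and $g$ automatically preserves $\langle\ ,\ \rangle$ since the Gram matrices of $v^0$ and $v$ coincide (by antisymmetry and the relations for $i<j$). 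Hence ${\rm Sp}_{2n}(\bbf_r)$ acts simply transitively on $\mathcal B$ and $|\mathcal B|=|{\rm Sp}_{2n}(\bbf_r)|=r^{n^2}(r^2-1)(r^4-1)\cdots(r^{2n}-1)$. Second, the map $v\mapsto(\bbf v_1\subset\cdots\subset\bbf v_1\oplus\cdots\oplus\bbf v_{2n-1})$ carries $\mathcal B$ onto the orbit $H\mcf$ (each such flag has invariant $\{c_{i,j}\}$ by the computation above), and the fiber over a flag $\mcf'$ in $H\mcf$ is the set of bases adapted to $\mcf'$ satisfying condition (b), which by Proposition \ref{prop1.10}(ii) has $(r-1)^nr^{n+\ell(\sigma(\mcf'))}$ elements; as $\sigma(\mcf')$ depends only on the orbit invariant $\{c_{i,j}\}$, this count is uniformly $(r-1)^nr^{n+\ell(\sigma(\mcf))}$. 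Therefore $|H\mcf|=|\mathcal B|/\bigl((r-1)^nr^{n+\ell(\sigma(\mcf))}\bigr)$, which simplifies to the asserted formula.

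The main point requiring care is the dictionary between flags and bases: that a basis obeying (b) yields a flag in the correct orbit — equivalently the identity that the model flag attached to $\tau$ realizes the permutation matrix $\{c_{i,j}\}$ — together with the observation that $\ell(\sigma)$ is a genuine orbit invariant, so that Proposition \ref{prop1.10}(ii) can be applied uniformly across all fibers of the base-to-flag map. Once those are in hand, the $H$-invariance of $\{c_{i,j}\}$, the simply transitive action on $\mathcal B$, and the count $|C_{2n}|=(2n-1)(2n-3)\cdots1$ are routine.
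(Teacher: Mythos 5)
Your argument is correct and is essentially the paper's intended one: the paper states this corollary as an immediate consequence of Propositions \ref{prop1.9} and \ref{prop1.10}, and you have simply written out the routine details (the $H$-invariance of $\{c_{i,j}\}$, injectivity/surjectivity via adapted bases, the count of fixed-point-free involutions, and orbit counting via the simply transitive ${\rm Sp}_{2n}$-action on the set of adapted bases). The only tiny imprecision is in the surjectivity step, where you cite Proposition \ref{prop1.10}(i) (which starts from a flag) when what you actually need is the standard fact that any nondegenerate alternating Gram matrix is realized by some basis of $(\bbf^{2n},\langle\ ,\ \rangle)$, i.e.\ the equivalence of nondegenerate alternating forms over an arbitrary field.
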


By this result, we can describe ${\rm Sp}_{2n}(\bbf)\backslash {\rm GL}_{2n}(\bbf)/B$ by the ``AB-symbols''. For $n=2,3$, the orbit structure is as in Fig.2 and Fig.3 (Fig.3 and Fig.4 in \cite{MO}). For example, the symbol ABBA implies the ${\rm Sp}_4$-orbit of the flag $\mcf$ such that $c_{1,4}=c_{2,3}=1$.

Problem (C) is solved as follows. Retain the notations for Problem (B). Define a subgroup $Q_{2n}=\{g\in H\mid ge_{2n}=e_{2n}\}$ of $H\cong {\rm Sp}_{2n}(\bbf)$. Write $W=(\bbf e_{2n})^\perp=\bbf e_2\oplus\cdots\oplus \bbf e_{2n}$. Let $S$ denote the subset of $I\times I$ defined by
$S=S(\mcf)=\{(i,j)\mid V_i\cap V_{j-1}^\perp\not\subset W\}$
and define a subset
$$S_0=S_0(\mcf)=\{(i,j)\in S\mid V_i\cap V_j^\perp\subset W\mbox{ and }V_{i-1}\cap V_{j-1}^\perp\subset W\}$$
of $S$. Then the following results will be proved in Section 3.3.

\begin{picture}(140,150)
\put(30,100){\makebox(0,0){ABBA}}
\put(29,97){\vector(0,-1){14}} \put(26,90){\makebox(0,0){$1$}}
\put(31,97){\vector(0,-1){14}} \put(34,90){\makebox(0,0){$3$}}
\put(30,80){\makebox(0,0){ABAB}}
\put(30,77){\vector(0,-1){14}} \put(27,70){\makebox(0,0){$2$}}
\put(30,60){\makebox(0,0){AABB}}
\put(30,50){\makebox(0,0){Fig.2. ${\rm Sp}_4(\bbf)\backslash {\rm GL}_4(\bbf)/B$}}
\put(90,140){\makebox(0,0){ABCCBA}}
\put(86,137){\vector(-1,-2){7}} \put(80,130){\makebox(0,0){$2$}}
\put(88,137){\vector(-1,-2){7}} \put(87,130){\makebox(0,0){$4$}}
\put(92,137){\vector(1,-2){7}} \put(93,130){\makebox(0,0){$1$}}
\put(94,137){\vector(1,-2){7}} \put(100,130){\makebox(0,0){$5$}}
\put(80,120){\makebox(0,0){ABCBCA}}
\put(100,120){\makebox(0,0){ABCCAB}}
\put(76,117){\vector(-1,-2){7}} \put(70,110){\makebox(0,0){$5$}}
\put(82,117){\vector(1,-2){7}} \put(81,113){\makebox(0,0){$3$}}
\put(84,117){\vector(3,-2){21}} \put(98,110){\makebox(0,0){$1$}}
\put(104,117){\vector(1,-2){7}} \put(110,110){\makebox(0,0){$4$}}
\put(96,117){\vector(-3,-2){21}} \put(79,108){\makebox(0,0){$2$}}
\put(70,100){\makebox(0,0){ABCBAC}}
\put(90,100){\makebox(0,0){ABBCCA}}
\put(110,100){\makebox(0,0){ABCACB}}
\put(70,97){\vector(0,-1){14}} \put(67,90){\makebox(0,0){$3$}}
\put(110,97){\vector(0,-1){14}} \put(113,90){\makebox(0,0){$3$}}
\put(71,97){\vector(1,-1){14}} \put(74,92){\makebox(0,0){$1$}}
\put(73,97){\vector(1,-1){14}} \put(78,94){\makebox(0,0){$4$}}
\put(87,97){\vector(-1,-1){14}} \put(86,92){\makebox(0,0){$5$}}
\put(93,97){\vector(1,-1){14}} \put(94,92){\makebox(0,0){$1$}}
\put(107,97){\vector(-1,-1){14}} \put(102,94){\makebox(0,0){$2$}}
\put(109,97){\vector(-1,-1){14}} \put(106,92){\makebox(0,0){$5$}}
\put(70,80){\makebox(0,0){ABBCAC}}
\put(90,80){\makebox(0,0){ABCABC}}
\put(110,80){\makebox(0,0){ABACCB}}
\put(70,77){\vector(0,-1){14}} \put(67,70){\makebox(0,0){$4$}}
\put(90,77){\vector(0,-1){14}} \put(88,72){\makebox(0,0){$3$}}
\put(110,77){\vector(0,-1){14}} \put(113,70){\makebox(0,0){$2$}}
\put(73,77){\vector(1,-1){14}} \put(83,70){\makebox(0,0){$1$}}
\put(107,77){\vector(-1,-1){14}} \put(97,70){\makebox(0,0){$5$}}
\put(70,60){\makebox(0,0){ABBACC}}
\put(90,60){\makebox(0,0){ABACBC}}
\put(110,60){\makebox(0,0){AABCCB}}
\put(71,57){\vector(1,-2){7}} \put(72,50){\makebox(0,0){$1$}}
\put(73,57){\vector(1,-2){7}} \put(79,50){\makebox(0,0){$3$}}
\put(88,57){\vector(-1,-2){7}} \put(86,50){\makebox(0,0){$4$}}
\put(92,57){\vector(1,-2){7}} \put(94,50){\makebox(0,0){$2$}}
\put(107,57){\vector(-1,-2){7}} \put(101,50){\makebox(0,0){$3$}}
\put(109,57){\vector(-1,-2){7}} \put(109,50){\makebox(0,0){$5$}}
\put(80,40){\makebox(0,0){ABABCC}}
\put(100,40){\makebox(0,0){AABCBC}}
\put(82,37){\vector(1,-2){7}} \put(83,30){\makebox(0,0){$2$}}
\put(98,37){\vector(-1,-2){7}} \put(97,30){\makebox(0,0){$4$}}
\put(90,20){\makebox(0,0){AABBCC}}
\put(90,10){\makebox(0,0){Fig.3. ${\rm Sp}_6(\bbf)\backslash {\rm GL}_6(\bbf)/B$}}
\end{picture}

\begin{proposition} \ {\rm (i)} If $(i,j)\in S_0$ then $c_{i,j}=1$.

{\rm (ii)} We can write $S_0=\{(x_1,y_1),(x_2,y_2),\ldots,(x_s,y_s)\}$ with some $x_1<x_2<\cdots<x_s$ and $y_1<y_2<\cdots<y_s$ satisfying $\{x_1,\ldots,x_s\}\cap \{y_1,\ldots,y_s\}=\phi$.
\label{prop1.12}
\end{proposition}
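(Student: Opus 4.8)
The plan is to analyze the combinatorics of the functions $d_{i,j}(\mcf)=\dim(V_i\cap V_j^\perp)$ restricted to the hyperplane $W=(\bbf e_{2n})^\perp$, using the fact that $\dim(V_i\cap V_j^\perp) - \dim(V_i\cap V_j^\perp\cap W)\in\{0,1\}$ since $W$ has codimension $1$. Introduce the auxiliary function $e_{i,j}=e_{i,j}(\mcf)=\dim(V_i\cap V_j^\perp\cap W)$, so that $d_{i,j}-e_{i,j}\in\{0,1\}$ and $(i,j)\in S$ exactly means $d_{i,j}>e_{i,j}$. Then $(i,j)\in S_0$ says that $d_{i,j-1}>e_{i,j-1}$ while $d_{i,j}=e_{i,j}$, $d_{i-1,j-1}=e_{i-1,j-1}$ — i.e. the "extra dimension" coming from $e_{2n}$ sits in the slot $(i,j-1)$ but disappears when we shrink either index.

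For part (i), I would compute $c_{i,j}=d_{i,j-1}-d_{i,j}-d_{i-1,j-1}+d_{i-1,j}$ and the analogous $c'_{i,j}=e_{i,j-1}-e_{i,j}-e_{i-1,j-1}+e_{i-1,j}$ (which, by the same argument applied to the restriction of the symplectic form — or rather to the degenerate form on $\bbf^{2n}/\bbf e_{2n}$ paired against $W$ — is a nonnegative integer; one checks $c'_{i,j}\ge 0$ directly from the inclusion-exclusion / submodularity of dimension). Under the hypothesis $(i,j)\in S_0$, the four quantities $d_{i,j-1}, d_{i,j}, d_{i-1,j-1}, d_{i-1,j}$ differ from their $e$-counterparts by $1,0,0,\delta$ respectively where $\delta=d_{i-1,j}-e_{i-1,j}\in\{0,1\}$; hence $c_{i,j}=c'_{i,j}+1-\delta$. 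Since $c_{i,j}\in\{0,1\}$ by Proposition \ref{prop1.9} and $c'_{i,j}\ge 0$, the only possibility compatible with $\delta\le 1$ is $c'_{i,j}=0$, $\delta=0$, $c_{i,j}=1$. (One must rule out $c_{i,j}=0$: that would force $c'_{i,j}=\delta-1\le 0$, so $c'_{i,j}=0$ and $\delta=1$; but $\delta=1$ means $V_{i-1}\cap V_{j-1}^\perp\not\subset W$ followed by $V_{i-1}\cap V_j^\perp\not\subset W$ — wait, $\delta$ concerns $(i-1,j)$, and I need to combine this with the $S_0$-condition $V_{i-1}\cap V_{j-1}^\perp\subset W$ together with the jump structure to get a contradiction; this bookkeeping is the crux.)

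For part (ii), the claim is that $S_0$ has the staircase / partial-matching shape. First note $S_0$ is contained in the support of the permutation matrix $\{c_{i,j}\}$ by part (i), and that matrix is symmetric with zero diagonal, so its support is a disjoint union of transposition pairs $(x,y)$ with $x\ne y$; this immediately gives $\{x_1,\dots,x_s\}\cap\{y_1,\dots,y_s\}=\phi$ once we agree to list each pair as $(x_t,y_t)$ with $x_t<y_t$. The content is then that no two elements of $S_0$ can share a row or a column and, more, that ordering the $x$'s increasingly forces the $y$'s increasingly. For the "no shared row/column" part I would argue: if $(i,j)$ and $(i,j')\in S_0$ with $j<j'$, then $V_i\cap V_{j-1}^\perp\not\subset W$ but $V_i\cap V_j^\perp\subset W$ (from $(i,j)\in S_0$), whereas $V_i\cap V_{j'-1}^\perp\not\subset W$ forces — since $V_{j'-1}^\perp\subseteq V_j^\perp$ when $j\le j'-1$ — that $V_i\cap V_{j'-1}^\perp\subseteq V_i\cap V_j^\perp\subset W$, a contradiction; the symmetric argument (using $\perp$-symmetry of the $d_{i,j}$, i.e. $d_{i,j}=d_{j,i}$ after an index shift, or directly that $V_i\cap V_j^\perp$ and $V_j\cap V_i^\perp$ have related dimensions) handles shared columns. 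For the monotonicity $x_1<\cdots<x_s\Rightarrow y_1<\cdots<y_s$: suppose $x_t<x_{t+1}$ but $y_t>y_{t+1}$; then the pairs $(x_t,y_t)$ and $(x_{t+1},y_{t+1})$ are "nested" or "crossing", and I would derive a contradiction by comparing $V_{x_{t+1}}\cap V_{y_{t+1}-1}^\perp$ with the $W$-containment conditions forced at $(x_t,y_t)$, using monotonicity of $i\mapsto V_i$ and $j\mapsto V_j^\perp$ again.

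\emph{Main obstacle.} The genuinely delicate step is part (i): squeezing $c_{i,j}$ between the bound $c_{i,j}\le 1$ (Proposition \ref{prop1.9}) and the nonnegativity of the restricted-form invariant $c'_{i,j}$, while correctly tracking the four $\{0,1\}$-valued discrepancies $d_{\bullet,\bullet}-e_{\bullet,\bullet}$ at the corners $(i,j-1),(i,j),(i-1,j-1),(i-1,j)$ — two of which are pinned to $0$ and one to $1$ by the definition of $S_0$, but the corner $(i-1,j)$ is free, and ruling out the bad configuration there requires combining the hyperplane-codimension-one estimate with the jump structure of the flag. Everything in part (ii) is then a clean, if somewhat tedious, consequence of monotonicity of $V_\bullet$ and $V_\bullet^\perp$ plus the symmetry $d_{i,j}=d_{j,i}$ of the invariant.
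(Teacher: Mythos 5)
Your overall strategy (exploit the codimension-one hyperplane $W$ and the monotonicity of $V_i\cap V_j^\perp$ in $i$ and $j$) is the right one and is close in spirit to the paper's, but both halves of your write-up have genuine gaps.

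In part (i), you explicitly leave the decisive case open: you reduce to ruling out $c_{i,j}=0$, $c'_{i,j}=0$, $\delta=1$, and then stop at ``this bookkeeping is the crux.'' In fact $\delta=0$ is automatic and needs no bookkeeping: $V_{i-1}\cap V_j^\perp\subseteq V_i\cap V_j^\perp\subset W$ by the very definition of $S_0$, so $d_{i-1,j}=e_{i-1,j}$. With that observation your computation gives $c_{i,j}=c'_{i,j}+1\ge 1$, hence $c_{i,j}=1$, and your approach closes. The paper avoids the $e_{i,j}$ apparatus entirely: from $(i,j)\in S_0$ one has $(V_i\cap V_j^\perp)+(V_{i-1}\cap V_{j-1}^\perp)\subset W$ while $V_i\cap V_{j-1}^\perp\not\subset W$, so the inclusion $(V_i\cap V_j^\perp)+(V_{i-1}\cap V_{j-1}^\perp)\subsetneqq V_i\cap V_{j-1}^\perp$ is strict, and Proposition \ref{prop3.4} (condition (iv)) gives $c_{i,j}=1$ in one line.

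In part (ii), your disjointness argument rests on a false premise. You write that the support of the symmetric permutation matrix splits into transposition pairs and that disjointness follows ``once we agree to list each pair as $(x_t,y_t)$ with $x_t<y_t$.'' But $S_0$ is a specific subset of $I\times I$ determined by the flag, not a set of unordered pairs you may orient at will: the paper's own example $S_0=\{(2,1),(4,3)\}$ has $x_t>y_t$. What actually needs ruling out is that both $(i,j)$ and $(j,i)$ lie in $S_0$ for a single transposition pair $\{i,j\}$ (if $S_0$ meets each transposition pair at most once, disjointness is automatic since distinct transpositions of a permutation are disjoint). This is done with the monotonicity property you already use elsewhere: if $i<j$ and $(i,j)\in S$, then $(j,i+1)\in S$, i.e.\ $V_j\cap V_i^\perp\not\subset W$, contradicting $(j,i)\in S_0$. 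Your argument that no two elements of $S_0$ share a first (or second) coordinate is fine but redundant given part (i) and Proposition \ref{prop1.9}; and your monotonicity step ($x$'s increasing forces $y$'s increasing) is only a sketch — the needed implication is exactly the paper's: $(x_t,y_t)\in S$, $x_t<x_{t+1}$, $y_t-1\ge y_{t+1}$ force $(x_{t+1},y_{t+1}+1)\in S$, contradicting $(x_{t+1},y_{t+1})\in S_0$.
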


Define $m=m(\mcf)=|\{(i,j)\mid c_{i,j}=1\mbox{ and }(i,j)\in S-S_0\}|$.

\begin{proposition} \ {\rm (i)} There exists a basis $v_1,\ldots,v_{2n}$ of $\bbf^{2n}$ satisfying {\rm (a)}, {\rm (b)} and {\rm (c)}$:$

\indent\indent {\rm (a)} $V_i=\bbf v_1\oplus\cdots\oplus \bbf v_i$ for $i=1,\ldots,2n$.

\indent\indent {\rm (b)} $\langle v_i,v_j\rangle=c_{i,j}$ for $i<j$.

\indent\indent {\rm (c)} $v_i\in W$ for $i\ne x_1,\ldots,x_s$ and $\langle v_{x_1},e_{2n}\rangle=\cdots =\langle v_{x_s},e_{2n}\rangle=1$.

{\rm (ii)} If $\bbf=\bbf_r$, then the number of bases satisfying the properties {\rm (a)}, {\rm (b)} and {\rm (c)} in {\rm (i)} is $(r-1)^{n-s}r^{n+\ell(\sigma)-m}$.
\label{prop1.13}
\end{proposition}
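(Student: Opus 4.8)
The plan is to build an adapted basis $v_1,\dots,v_{2n}$ of $\bbf^{2n}$ step by step ($i=1,2,\dots,2n$), in the same spirit as the proof of Proposition~\ref{prop1.10} for Problem~(B), and then, when $\bbf=\bbf_r$, to count the admissible choices at each step. The new feature over Problem~(B) is the linear functional $f(v)=\langle v,e_{2n}\rangle$, whose kernel is $W$: condition~(c) demands $f(v_i)=0$ when $i\notin\{x_1,\dots,x_s\}$ and $f(v_i)=1$ when $i\in\{x_1,\dots,x_s\}$. Thus, after reproducing the Problem~(B) count, we must control exactly how much the extra requirement $f(v_i)=\varepsilon_i$ (with $\varepsilon_i\in\{0,1\}$) cuts down the number of choices at each step, the claim being that the overall cut is by the factor $(r-1)^s r^m$.

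For the inductive step, suppose $v_1,\dots,v_{i-1}$ have been chosen so that (a) holds up to $i-1$, the instances of~(b) relating any two of $v_1,\dots,v_{i-1}$ hold, and~(c) holds for the indices $<i$. Fix once and for all a vector $v_i^0\in V_i\setminus V_{i-1}$; every candidate for $v_i$ is $v_i=\lambda v_i^0+u$ with $\lambda\in\bbf^\times$ and $u\in V_{i-1}$. The instances of~(b) that couple $v_i$ to a $v_k$ with $k<i$, namely $\langle v_k,v_i\rangle=c_{k,i}$, are $i-1$ affine-linear equations in $(\lambda,u)$; the remaining instances of~(b) (those coupling $v_i$ to a future $v_j$) will be taken care of at later steps, and the fact that the present choice does not obstruct them is part of what is inherited from the Problem~(B) argument. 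Using $c_{i,i}=0$ and Proposition~\ref{prop1.9} (for fixed $i$ at most one $c_{k,i}$ with $k<i$ is nonzero, namely when $k=\tau(i)<i$), one checks, just as for Problem~(B), that the set of admissible $(\lambda,u)$ is nonempty and that for each admissible $\lambda$ the admissible $u$ form a coset of the radical $V_{i-1}\cap V_{i-1}^\perp$, of dimension $d_{i-1,i-1}$; whether $\lambda$ is forced or free is read off from whether $\tau(i)<i$ and from the jumps of the $d_{i-1,j}$. Carried out for all $i$, this gives the Problem~(B) total $(r-1)^n r^{n+\ell(\sigma)}$ of Proposition~\ref{prop1.10}.

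Now impose, in addition, $f(v_i)=\varepsilon_i$. This is one more affine-linear equation on the set of $(\lambda,u)$ already cut out, so at step $i$ its effect is one of: nothing (the equation is implied, because $f$ already vanishes on the relevant slice of $V_{i-1}$), a drop of $1$ in the dimension of the coset of $u$, or the replacement of a free scaling parameter $\lambda$ by a forced one. The heart of the matter is that which of the three occurs at step $i$ is controlled exactly by the subspace relations defining $S$ and $S_0$: the inclusions of the form $V_i\cap V_j^\perp\subset W$ record precisely when $f$ is redundant on the relevant slices; Proposition~\ref{prop1.12}(i) (that $c_{i,j}=1$ for every $(i,j)\in S_0$) shows that the indices at which~(c) pins down $f(v_{x_\ell})=1$ are exactly indices carrying a $\tau$-pair, so that at those $s$ steps the factor $r-1$ (a free scaling) becomes $1$ (a forced scaling); and the further losses, one power of $r$ for each $(i,j)$ with $c_{i,j}=1$ and $(i,j)\in S-S_0$, are, by definition, counted by $m$. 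Checking in addition that the combined system (pairing conditions plus $f(v_i)=\varepsilon_i$) stays consistent at each step, again a consequence of the set-theoretic identities behind $S_0$ and of Proposition~\ref{prop1.12}, establishes the existence statement~(i). Multiplying the per-step counts then gives $(r-1)^{n-s}r^{n+\ell(\sigma)-m}$, which is~(ii).

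The main obstacle is the local-to-global bookkeeping just sketched: one must run through the possible types of index $i$, an $x_\ell$, a $y_\ell$, an index $i$ for which $(i,\tau(i))$ contributes to $m$, or an index not interacting with $e_{2n}$ at all, prove in each case that the number of admissible $v_i$ differs from the Problem~(B) value at that step exactly as asserted, and verify that these local discrepancies add up globally to the loss of $s$ factors $r-1$ and $m$ factors $r$. Concretely this means translating the subspace inclusions ``$V_i\cap V_j^\perp\subset W$'' and the like into linear-independence statements among the functionals $\langle\,\cdot\,,v_k\rangle$ ($k<i$) and $f$ on $V_i$, and using Proposition~\ref{prop1.12}(ii) (the $x_\ell$ being disjoint from the $y_\ell$) to be sure the $s$ lost $(r-1)$-factors are independent of the rest of the count. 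Everything else is formal.
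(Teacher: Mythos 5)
Your proposal correctly identifies the shape of the answer (condition (c) costs exactly a factor $(r-1)^s r^m$ relative to Proposition \ref{prop1.10}), but as written it is a plan rather than a proof: essentially all of the mathematical content is deferred to the ``local-to-global bookkeeping'' that you yourself flag as ``the main obstacle'' and do not carry out. Two gaps are concrete. First, the existence statement (i) is not established by your greedy index-by-index scheme. You assert that the choice of $v_i$ at step $i$ does not obstruct the later conditions $\langle v_i,v_j\rangle=c_{i,j}$ ($j>i$) because this ``is inherited from the Problem (B) argument'', but the paper's proof of Proposition \ref{prop1.10} is not greedy in the index order: it splits off a hyperbolic plane $U=\bbf v_i\oplus\bbf v_j$ for a pair with $c_{i,j}=1$ and recurses on $U^\perp$, so there is nothing to inherit, and a genuinely index-by-index construction would need its own consistency argument. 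The paper's proof of (i) proceeds by induction on $n$: for $s<n$ it chooses a pair $(i,j)$ in $I_{(A)}$, uses Lemma \ref{lem3.12} to correct $v_i$ and $v_j$ into $W$ (the key point being that when $(i,j)\in S$ some $(x_t,y_t)\in S_0$ with $x_t<i$, $y_t>j$ supplies a vector of $V_{x_t}\cap V_{y_t-1}^\perp$ outside $W$ to correct with), so that $U\subset W$, $e_{2n}\in U^\perp$, and the induction applies; the case $s=n$ is handled separately by normalizing a basis from Proposition \ref{prop1.10}. None of this, or a substitute for it, appears in your argument.

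Second, the case analysis behind the factor $(r-1)^s r^m$ is asserted rather than proved, and the one concrete claim you do make is not quite right: you say that at each of the $s$ indices $x_\ell$ ``the factor $r-1$ (a free scaling) becomes $1$ (a forced scaling)''. That is the correct picture only when $x_\ell<y_\ell$. The paper's own example $S_0=\{(2,1),(4,3)\}$ has $x_1=2>y_1=1$, and there the scaling of $v_{x_1}$ is already forced in the Problem (B) count by $\langle v_{y_1},v_{x_1}\rangle=1$; the normalization $\langle v_{x_\ell},e_{2n}\rangle=1$ must then be traded against the scaling of $v_{y_\ell}$ instead (this is exactly why the paper, in the $s=n$ case, first normalizes $v_{x_t}$ against $e_{2n}$ and only afterwards normalizes $v_{y_t}$ against $v_{x_t}$, pushing $v_{y_t}$ into $W$ when $y_t>x_t$). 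Similarly, the claim that $f(v_i)=0$ costs a factor $r$ precisely when $(i,\tau(i))\in S-S_0$ and is otherwise automatic is the crux of (ii); it requires translating the inclusions $V_i\cap V_{j-1}^\perp\subset W$ into statements about the functional $\langle\cdot\,,e_{2n}\rangle$ on the relevant cosets, which you name but do not perform. Until these verifications are supplied the proposal does not establish the proposition.
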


For a subset $I_{(A)}$ of $I$ with even number of elements, write
$$C(I_{(A)})=\{\mbox{symmetric permutation matrices }\{c_{i,j}\}_{i,j\in I_{(A)}}\mbox{ such that }c_{i,i}=0\mbox{ for }i\in I_{(A)}\}.$$

\begin{theorem} \ {\rm (i)} There exists a one-to-one correspondence between
$$\bigsqcup_{s=1}^n \bigsqcup_* C(I_{(A)})\mand Q_{2n}\backslash {\rm GL}_{2n}(\bbf)/B.$$
Here the disjoint union $*$ is taken for all the partitions $I=I_{(A)}\sqcup I_{(X)}\sqcup I_{(Y)}$ such that $|I_{(X)}|=|I_{(Y)}|=s$.

{\rm (ii)} $\displaystyle{|Q_{2n}\backslash {\rm GL}_{2n}(\bbf)/B|=\sum_{s=1}^n{(2n)!\over 2^{n-s}(s!)^2(n-s)!}}$.

{\rm (iii)} If $\bbf=\bbf_r$, then
\begin{align*}
|Q_{2n}\mcf| & ={|Q_{2n}|\over (r-1)^{n-s(\mcf)}r^{n+\ell(\sigma(\mcf))-m(\mcf)}} \\
& ={(r^2-1)(r^4-1)\cdots(r^{2n-2}-1)\over (r-1)^{n-s(\mcf)}} r^{n^2-n-\ell(\sigma(\mcf))+m(\mcf)}.
\end{align*}
\label{th1.14}
\end{theorem}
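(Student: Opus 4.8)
# Proof Proposal for Theorem \ref{th1.14}

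The plan is to build Theorem \ref{th1.14} on top of Propositions \ref{prop1.12} and \ref{prop1.13} in exactly the way Corollary \ref{cor1.11} is built on Propositions \ref{prop1.9} and \ref{prop1.10}, but carefully tracking the extra data recording how the flag interacts with the distinguished vector $e_{2n}$. First I would establish the well-definedness of the map from $Q_{2n}\backslash {\rm GL}_{2n}(\bbf)/B$ to $\bigsqcup_s\bigsqcup_* C(I_{(A)})$: given a full flag $\mcf$, the integers $d_{i,j}(\mcf)=\dim(V_i\cap V_j^\perp)$ and the finer invariants defining $S(\mcf)$ and $S_0(\mcf)$ are manifestly constant on $Q_{2n}$-orbits (they depend only on the flag, the symplectic form, and the line $\bbf e_{2n}$, all of which $Q_{2n}$ preserves). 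By Proposition \ref{prop1.12}(ii), $S_0(\mcf)$ determines a partition $I=I_{(A)}\sqcup I_{(X)}\sqcup I_{(Y)}$ with $I_{(X)}=\{x_1,\ldots,x_s\}$, $I_{(Y)}=\{y_1,\ldots,y_s\}$, and $I_{(A)}$ the complement; restricting the symmetric permutation matrix $\{c_{i,j}(\mcf)\}$ (which is a permutation matrix with zero diagonal by Proposition \ref{prop1.9}) to $I_{(A)}\times I_{(A)}$ gives an element of $C(I_{(A)})$, once I check that the off-$I_{(A)}$ entries of $\{c_{i,j}\}$ are entirely accounted for by the pairs $(x_t,y_t)$. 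This last check is the combinatorial heart of the well-definedness direction: I need that $c_{x_t,y_t}=1$ (which is Proposition \ref{prop1.12}(i)) and that $\{c_{i,j}\}$ has no other nonzero entries involving $I_{(X)}\cup I_{(Y)}$, i.e. the restriction of $\tau(\mcf)$ to $I_{(X)}\cup I_{(Y)}$ is precisely the product of transpositions $(x_t\,y_t)$.

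Next I would prove surjectivity and injectivity together by the basis-construction method. Proposition \ref{prop1.13}(i) produces, for any flag in a given orbit, a basis $v_1,\ldots,v_{2n}$ adapted to the flag (condition (a)), realizing the prescribed form values (condition (b)), and normalized against $e_{2n}$ (condition (c)). Conversely, given an abstract datum $(s,I_{(A)},I_{(X)},I_{(Y)},\{c_{i,j}\}_{i,j\in I_{(A)}})$, I would extend $\{c_{i,j}\}$ to a full symmetric permutation matrix on $I$ with zero diagonal by setting $c_{x_t,y_t}=c_{y_t,x_t}=1$ and all other new entries $0$, then construct a basis $v_1,\ldots,v_{2n}$ of $\bbf^{2n}$ with $\langle v_i,v_j\rangle=c_{i,j}$, with $v_i\in W=(\bbf e_{2n})^\perp$ for $i\notin I_{(X)}$, and with $\langle v_{x_t},e_{2n}\rangle=1$; this is possible because the Gram matrix is a valid nondegenerate alternating form and the linear conditions on the $v_i$ relative to $e_{2n}$ are consistent (exactly one basis vector ``pairs into'' $e_{2n}$ for each $x_t$, matching the symplectic structure). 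Setting $V_i=\bbf v_1\oplus\cdots\oplus\bbf v_i$ recovers a flag whose invariants are the prescribed datum, giving surjectivity. For injectivity, if two flags $\mcf,\mcf'$ map to the same datum, I pick adapted bases $v_\bullet,v'_\bullet$ as above; the linear map sending $v_i\mapsto v'_i$ lies in ${\rm Sp}_{2n}(\bbf)$ by condition (b), and conditions (c) force it to fix $e_{2n}$ (up to adjusting by an element of the stabilizer of the flag inside $Q_{2n}$, using that the $v_{x_t}$ and $v'_{x_t}$ have the same $e_{2n}$-pairing and the remaining vectors lie in $W$), hence lies in $Q_{2n}$ and carries $\mcf$ to $\mcf'$. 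I expect this injectivity step — showing the transition map can be chosen inside $Q_{2n}$, not merely inside ${\rm Sp}_{2n}$ — to be the main obstacle, since it requires showing the freedom in choosing the adapted basis is itself acted on transitively by the flag-stabilizer in $Q_{2n}$; the normalization in condition (c) is precisely designed to pin this down, and I would exploit it by a triangular (induction on $i$) adjustment of the basis.

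Part (ii) then follows by counting: for fixed $s$, choose $I_{(X)}$ and $I_{(Y)}$ as disjoint $s$-subsets of $I$ and $I_{(A)}$ as the complement — there are $\binom{2n}{s}\binom{2n-s}{s}$ such ordered choices, but the matching $x_t\leftrightarrow y_t$ is forced to be order-preserving by Proposition \ref{prop1.12}(ii), so no further factor — and then $|C(I_{(A)})|$ is the number of fixed-point-free symmetric permutation matrices of size $2n-2s$, which by Corollary \ref{cor1.11}(ii) (applied with $n$ replaced by $n-s$) equals $(2n-2s-1)(2n-2s-3)\cdots 1=\frac{(2n-2s)!}{2^{n-s}(n-s)!}$. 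Multiplying and simplifying $\binom{2n}{s}\binom{2n-s}{s}\cdot\frac{(2n-2s)!}{2^{n-s}(n-s)!}=\frac{(2n)!}{2^{n-s}(s!)^2(n-s)!}$ and summing over $s=1,\ldots,n$ gives the stated count. For part (iii), the orbit-stabilizer formula gives $|Q_{2n}\mcf|=|Q_{2n}|/|{\rm Stab}_{Q_{2n}}(\mcf)|$, and $|{\rm Stab}_{Q_{2n}}(\mcf)|$ equals the number of adapted bases of the type in Proposition \ref{prop1.13}(i) divided by... — more precisely, the stabilizer acts simply transitively on such bases, so its order is exactly the count $(r-1)^{n-s}r^{n+\ell(\sigma)-m}$ from Proposition \ref{prop1.13}(ii); combined with the formula $|Q_{2n}|=|{\rm Sp}_{2n}(\bbf_r)|/|\bbf_r^{2n}\setminus\{0\}|\cdot(\text{orbit size of }e_{2n})$, and noting $e_{2n}$ is a single ${\rm Sp}_{2n}$-orbit of size $r^{2n}-1$, we get $|Q_{2n}|=(r^2-1)\cdots(r^{2n-2}-1)\,r^{(n-1)^2}\cdot\frac{(r^{2n}-1)r^{2n-1}}{r^{2n}-1}$ — I would tidy the exponent bookkeeping to land on $\frac{(r^2-1)(r^4-1)\cdots(r^{2n-2}-1)}{(r-1)^{n-s}}\,r^{n^2-n-\ell(\sigma)+m}$ as claimed.
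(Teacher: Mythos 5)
Your proposal is correct and follows essentially the same route as the paper: reduce everything to the adapted bases of Proposition \ref{prop1.13}, show the transition map between two such bases preserves $W$ and fixes $e_{2n}$ (hence lies in $Q_{2n}$), exhibit an explicit standard basis realizing each datum $(I_{(A)},I_{(X)},I_{(Y)},\{c_{i,j}\})$, and deduce (ii) by the count $\binom{2n}{s}\binom{2n-s}{s}(2n-2s-1)!!$ and (iii) by orbit--stabilizer together with Proposition \ref{prop1.13}(ii). The only cosmetic difference is that the paper packages surjectivity and injectivity into the single statement that any flag is carried by some $g\in Q_{2n}$ to the standard flag of its datum, whereas you separate the steps; no adjustment of the transition map is actually needed for injectivity, since the map $v_i\mapsto u_i$ already lies in $Q_{2n}$.
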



\begin{picture}(140,130)
\put(70,120){\makebox(0,0){YAAX}}
\put(67,117){\vector(-1,-1){14}} \put(56,110){\makebox(0,0){$1$}}
\put(73,117){\vector(1,-1){14}} \put(84,110){\makebox(0,0){$3$}}
\put(50,100){\makebox(0,0){AYAX}}
\put(70,100){\makebox(0,0){AYXA}}
\put(90,100){\makebox(0,0){YAXA}}
\put(48,97){\vector(-1,-2){7}} \put(41,90){\makebox(0,0){$2$}}
\put(52,97){\vector(1,-2){7}} \put(52,90){\makebox(0,0){$3$}}
\put(67,97){\vector(-1,-2){7}} \put(62,92){\makebox(0,0){$1$}}
\put(69,97){\vector(-1,-2){7}} \put(68,90){\makebox(0,0){$3$}}
\put(72,97){\vector(1,-2){7}} \put(80,87){\makebox(0,0){$2$}}
\put(87,97){\vector(-3,-2){21}} \put(80,95){\makebox(0,0){$1$}}
\put(93,97){\vector(1,-1){14}} \put(104,90){\makebox(0,0){$2$}}
\put(40,80){\makebox(0,0){AAYX}}
\put(60,80){\makebox(0,0){YYXX}}
\put(80,80){\makebox(0,0){AXYA}}
\put(110,80){\makebox(0,0){YXAA}}
\put(38,77){\vector(-1,-2){7}} \put(31,70){\makebox(0,0){$3$}}
\put(58,77){\vector(-1,-2){7}} \put(51,70){\makebox(0,0){$2$}}
\put(78,77){\vector(-1,-2){7}} \put(71,70){\makebox(0,0){$3$}}
\put(82,77){\vector(1,-2){7}} \put(89,70){\makebox(0,0){$1$}}
\put(113,77){\vector(1,-1){14}} \put(124,70){\makebox(0,0){$1$}}
\put(30,60){\makebox(0,0){AAXY}}
\put(50,60){\makebox(0,0){YXYX}}
\put(70,60){\makebox(0,0){AXAY}}
\put(90,60){\makebox(0,0){XAYA}}
\put(110,60){\makebox(0,0){XAAY}}
\put(130,60){\makebox(0,0){XYAA}}
\put(33,57){\vector(1,-1){14}} \put(36,50){\makebox(0,0){$2$}}
\put(50,57){\vector(0,-1){14}} \put(47,50){\makebox(0,0){$3$}}
\put(67,57){\vector(-1,-1){14}} \put(64,50){\makebox(0,0){$2$}}
\put(56,58){\vector(3,-1){48}} \put(73,49){\makebox(0,0){$1$}}
\put(73,57){\vector(1,-1){14}} \put(78,55){\makebox(0,0){$1$}}
\put(90,57){\vector(0,-1){14}} \put(88,52){\makebox(0,0){$3$}}
\put(106,57){\vector(-1,-1){14}} \put(102,56){\makebox(0,0){$1$}}
\put(108,57){\vector(-1,-1){14}} \put(106,52){\makebox(0,0){$3$}}
\put(93,57){\vector(1,-1){14}} \put(95,52){\makebox(0,0){$2$}}
\put(127,57){\vector(-1,-1){14}} \put(124,50){\makebox(0,0){$2$}}
\put(50,40){\makebox(0,0){YXXY}}
\put(90,40){\makebox(0,0){XXYY}}
\put(110,40){\makebox(0,0){XYYX}}
\put(56,37){\vector(2,-1){28}} \put(67,29){\makebox(0,0){$1$}}
\put(90,37){\vector(0,-1){14}} \put(87,30){\makebox(0,0){$2$}}
\put(107,37){\vector(-1,-1){14}} \put(104,30){\makebox(0,0){$3$}}
\put(90,20){\makebox(0,0){XYXY}}
\put(80,10){\makebox(0,0){Fig.4. $Q_4\backslash {\rm GL}_4(\bbf)/B$}}
\end{picture}

For $n=1,2,3,4$, the number of orbits $|Q_{2n}\backslash {\rm GL}_{2n}(\bbf)/B|$ is as follows.

\bigskip
\centerline{
\vbox{\offinterlineskip
\hrule
\halign{&\vrule#&\strut\quad$\hfil#\hfil$\quad\cr
height2pt&\omit&&\omit&&\omit&&\omit&&\omit&\cr
& n && 1 && 2 && 3 && 4 &\cr
height2pt&\omit&&\omit&&\omit&&\omit&&\omit&\cr
\noalign{\hrule}
height4pt&\omit&&\omit&&\omit&&\omit&&\omit&\cr
& |Q_{2n}\backslash {\rm GL}_{2n}(\bbf)/B| && 2 && 18 && 200 && 2730 &\cr
height4pt&\omit&&\omit&&\omit&&\omit&&\omit&\cr}
\hrule}
}

We can express orbits by ``ABXY-symbols''. When $n=2$, the orbit structure is as in Fig.4. In the diagram, the symbol XYXY implies the $Q_4$-orbit containing the flag $\mcf$ such that $S_0(\mcf)=\{(1,2),(3,4)\}$, for example.

Finally we will solve Problem (D) only restricting Problem (C) to a subgroup of ${\rm GL}_{2n}(\bbf)$. Retain the notations in Problem (C). Define a subspace $W'=\bbf e_1\oplus\cdots\oplus \bbf e_{2n-1}=(\bbf e_1)^\perp$ of $\bbf^{2n}$. Then $Q'_{2n}=\{g\in Q_{2n}\mid gW'=W'\}$ is written as
$$Q'_{2n}=\{g\in H\mid ge_1=e_1\mbox{ and }ge_{2n}=e_{2n}\}\cong 1\times{\rm Sp}_{2n-2}(\bbf)\subset {\rm GL}(W').$$
Consider the variety $M'$ consisting of full flags $V_1\subset\cdots\subset V_{2n-1}$ satisfying $V_{2n-1}=W'$. Then $M'$ is the full flag variety of ${\rm GL}(W')$. Two flags in $M'$ are in the same $Q'_{2n}$-orbit if and only if they are in the same $Q_{2n}$-orbit.

For a full flag $V_1\subset\cdots\subset V_{2n-1}$ in $M'$, let $i$ be the least integer such that $V_i\supset \bbf e_1$. Then the pair $(i,2n)$ is contained in $S_0$ since $V_i\cap V_{2n-1}^\perp=\bbf e_1\not\subset W$. This implies  $x_s=i$ and $y_s=2n$. Thus we have:

\begin{theorem} \ {\rm (i)} There exists a one-to-one correspondence between
$$\bigsqcup_{s=1}^n \bigsqcup_* C(I_{(A)})\mand 1\times {\rm Sp}_{2n-2}(\bbf)\backslash {\rm GL}_{2n-1}(\bbf)/B.$$
Here the disjoint union $*$ is taken for all the partitions $I=\{1,\ldots,2n-1\}=I_{(A)}\sqcup I_{(X)}\sqcup I_{(Y)}$ such that $|I_{(X)}|=s$ and that $|I_{(Y)}|=s-1$.

{\rm (ii)} $\displaystyle{|1\times {\rm Sp}_{2n-2}(\bbf)\backslash {\rm GL}_{2n-1}(\bbf)/B|=\sum_{s=1}^n{(2n-1)!\over 2^{n-s}s!(s-1)!(n-s)!}}.$

{\rm (iii)} If $\bbf=\bbf_r$, then
\begin{align*}
|(1\times {\rm Sp}_{2n-2}(\bbf))\mcf| & ={|{\rm Sp}_{2n-2}(\bbf)|\over (r-1)^{n-s(\mcf)}r^{n+\ell(\sigma(\mcf))-m(\mcf)}} \\
& ={(r^2-1)(r^4-1)\cdots(r^{2n-2}-1)\over (r-1)^{n-s(\mcf)}} r^{(n-1)^2-n-\ell(\sigma(\mcf))+m(\mcf)}
\end{align*}
Here the invariants $\sigma(\mcf)$ and $m(\mcf)$ are defined for the natural extension of the full flag $\mcf$ to $\bbf^{2n}$ as is explained above.
\label{th1.15}
\end{theorem}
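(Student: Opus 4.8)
The plan is to deduce Theorem~\ref{th1.15} from Theorem~\ref{th1.14} (Problem (C)) by restriction, exactly as the paragraph preceding the statement already indicates. The key observation to nail down is the bijection between flags in $M'$ (full flags of ${\rm GL}(W')$ with $W'=\bbf e_1\oplus\cdots\oplus\bbf e_{2n-1}=(\bbf e_1)^\perp$) and those flags $\mcf$ in the full flag variety of ${\rm GL}_{2n}(\bbf)$ whose last term $V_{2n-1}$ equals $W'$; since $\langle\ ,\ \rangle$ restricts to a form on $W'$ with radical $\bbf e_1$, and since $Q'_{2n}=\{g\in H\mid ge_1=e_1,\ ge_{2n}=e_{2n}\}$ stabilizes $W'$, two such flags lie in the same $Q'_{2n}$-orbit iff they lie in the same $Q_{2n}$-orbit. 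First I would make this reduction precise: restrict every invariant from Problem (C) ($d_{i,j}$, $c_{i,j}$, $S$, $S_0$, $m$, $\sigma$) to flags with $V_{2n-1}=W'$, and record which combinatorial data can occur. The point is that $V_{2n-1}^\perp=(W')^\perp=\bbf e_1$, so for the least $i$ with $\bbf e_1\subset V_i$ we get $V_i\cap V_{2n-1}^\perp=\bbf e_1\not\subset W=(\bbf e_{2n})^\perp$, hence $(i,2n)\in S$, and in fact $(i,2n)\in S_0$ because $V_i\cap V_{2n}^\perp=\{0\}\subset W$ and $V_{i-1}\cap V_{2n-1}^\perp=\{0\}\subset W$. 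Thus among the pairs $(x_1,y_1),\dots,(x_s,y_s)$ of Proposition~\ref{prop1.12} the last one is forced: $x_s=i$, $y_s=2n$.

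Next I would translate this constraint into the index-set combinatorics. In Theorem~\ref{th1.14} a $Q_{2n}$-orbit corresponds to a partition $I=\{1,\dots,2n\}=I_{(A)}\sqcup I_{(X)}\sqcup I_{(Y)}$ with $|I_{(X)}|=|I_{(Y)}|=s$ together with an element of $C(I_{(A)})$. The forced pair $x_s=i<y_s=2n$ means $2n\in I_{(Y)}$, so $2n$ is never in $I_{(A)}$ and the matrix in $C(I_{(A)})$ lives on the index set $I_{(A)}\subseteq\{1,\dots,2n-1\}$. Removing the redundant entry $2n$ from $I_{(Y)}$ gives exactly the data in Theorem~\ref{th1.15}(i): a partition $\{1,\dots,2n-1\}=I_{(A)}\sqcup I_{(X)}\sqcup I_{(Y)}$ with $|I_{(X)}|=s$, $|I_{(Y)}|=s-1$, plus an element of $C(I_{(A)})$. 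This is a bijection because, conversely, any such datum on $\{1,\dots,2n-1\}$ extends uniquely to a datum on $\{1,\dots,2n\}$ of the type in Theorem~\ref{th1.14} by appending $2n$ to $I_{(Y)}$, and by Theorem~\ref{th1.14}(i) this corresponds to a $Q_{2n}$-orbit all of whose representative flags can be taken (after the usual normalization) to satisfy $V_{2n-1}=W'$, hence to lie in $M'$. Summing the cardinalities from Theorem~\ref{th1.14}(ii) restricted to this sub-collection — counting partitions of a $(2n-1)$-set into blocks of sizes $|I_{(A)}|=2n-1-(2s-1)=2(n-s)$, $|I_{(X)}|=s$, $|I_{(Y)}|=s-1$, times $|C(I_{(A)})|=(2(n-s)-1)!!$ — gives the multinomial $\dfrac{(2n-1)!}{2^{n-s}s!(s-1)!(n-s)!}$ of part (ii) after the standard simplification $(2m)!/(2^m m!)=(2m-1)!!$.

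For part (iii) I would run the point-count of Theorem~\ref{th1.14}(iii) for the ambient flag $\mcf$ (the natural extension to $\bbf^{2n}$, as the statement says), but replace the acting group: a flag in $M'$ has stabilizer in $Q'_{2n}$ rather than $Q_{2n}$, and since $Q'_{2n}\cong 1\times{\rm Sp}_{2n-2}(\bbf)$ we have $|Q'_{2n}|=|{\rm Sp}_{2n-2}(\bbf)|$. The stabilizer computation is identical to the one behind Proposition~\ref{prop1.13} and Theorem~\ref{th1.14}(iii): the number of bases $v_1,\dots,v_{2n}$ satisfying (a),(b),(c) with the additional matching $V_{2n-1}=W'$ is still $(r-1)^{n-s}r^{n+\ell(\sigma)-m}$ — the constraint $V_{2n-1}=W'$ is already built into the combinatorial type and does not change the count of normalized bases — so $|Q'_{2n}\mcf|=|{\rm Sp}_{2n-2}(\bbf)|/\big((r-1)^{n-s}r^{n+\ell(\sigma)-m}\big)$. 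Plugging $|{\rm Sp}_{2n-2}(\bbf_r)|=r^{(n-1)^2}(r^2-1)(r^4-1)\cdots(r^{2n-2}-1)$ and collecting powers of $r$ yields the displayed formula. The main obstacle, and the only place needing real care, is verifying the bijection in part (i) in both directions — in particular checking that every $Q_{2n}$-orbit with $2n\in I_{(Y)}$ genuinely meets $M'$ (so that no orbit is lost or double-counted), and that the invariants $s,\ell(\sigma),m$ computed for the $\bbf^{2n}$-extension agree with what one would compute intrinsically on $W'$; once that dictionary is set up, parts (ii) and (iii) are bookkeeping built on Theorem~\ref{th1.14}.
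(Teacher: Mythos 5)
Your proposal is correct and follows essentially the same route as the paper: Theorem \ref{th1.15} is obtained by restricting Problem (C) to flags with $V_{2n-1}=W'=(\bbf e_1)^\perp$, observing that this forces $(x_s,y_s)=(i,2n)\in S_0$ so that $2n$ always lies in $I_{(Y)}$, and then transporting the combinatorics and the point count of Theorem \ref{th1.14} (with $Q_{2n}$ replaced by $Q'_{2n}\cong 1\times{\rm Sp}_{2n-2}(\bbf)$, whose flag stabilizers coincide with those in $Q_{2n}$). The details you supply — surjectivity onto the data with $2n\in I_{(Y)}$ via the standard basis, the multinomial count, and the unchanged stabilizer order — are exactly the bookkeeping the paper leaves implicit.
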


For $n\le 5$, the number of orbits $|1\times {\rm Sp}_{2n-2}(\bbf)\backslash {\rm GL}_{2n-1}(\bbf)/B|$ is as follows.

\bigskip
\centerline{
\vbox{\offinterlineskip
\hrule
\halign{&\vrule#&\strut\quad$\hfil#\hfil$\quad\cr
height2pt&\omit&&\omit&&\omit&&\omit&&\omit&&\omit&\cr
& n && 1 && 2 && 3 && 4 && 5 &\cr
height2pt&\omit&&\omit&&\omit&&\omit&&\omit&&\omit&\cr
\noalign{\hrule}
height4pt&\omit&&\omit&&\omit&&\omit&&\omit&&\omit&\cr
& |1\times {\rm Sp}_{2n-2}(\mathbb{F})\backslash {\rm GL}_{2n-1}(\bbf)/B| && 1 && 6 && 55 && 665 && 9891 &\cr
height4pt&\omit&&\omit&&\omit&&\omit&&\omit&&\omit&\cr}
\hrule}
}

We can express $1\times {\rm Sp}_2(\bbf)$-orbits on ${\rm GL}_3(\bbf)/B$ as follows:

\begin{picture}(100,65)(0,-10)
\put(50,45){\makebox(0,0){AAX}}
\put(70,45){\makebox(0,0){AXA}}
\put(90,45){\makebox(0,0){XAA}}
\put(60,25){\makebox(0,0){YXX}}
\put(80,25){\makebox(0,0){XXY}}
\put(70,5){\makebox(0,0){XYX}}
\put(51.5,42){\vector(1,-2){7}}
\put(68.5,42){\vector(-1,-2){7}}
\put(71.5,42){\vector(1,-2){7}}
\put(88.5,42){\vector(-1,-2){7}}
\put(61.5,22){\vector(1,-2){7}}
\put(78.5,22){\vector(-1,-2){7}}
\put(52,35){\makebox(0,0){2}}
\put(68,35){\makebox(0,0){2}}
\put(72,35){\makebox(0,0){1}}
\put(88,35){\makebox(0,0){1}}
\put(62,15){\makebox(0,0){1}}
\put(78,15){\makebox(0,0){2}}
\put(70,-5){\makebox(0,0){Fig.5. $1\times {\rm Sp}_2(\bbf)\backslash {\rm GL}_3(\bbf)/B$}}
\end{picture}

\noindent We have only to extract from the diagram of $Q_4\backslash {\rm GL}_4(\bbf)/B$ six symbols containing the letter ``Y'' as the fourth letter and then delete these ``Y''. Clearly the orbit structure is the same as ${\rm GL}_2(\bbf)\times {\rm GL}_1(\bbf)\backslash {\rm GL}_3(\bbf)/B$ (c.f. \cite{MO}, Fig.5).

\subsection{Expression by symbols and number of orbits}

By Theorem \ref{th1.6}, Corollary \ref{cor1.11}, Theorem \ref{th1.14}, Theorem \ref{th1.15} and Proposition \ref{prop4.3}, we can attach each $G$-orbit $Gt$ ($t=(U_0,U_d,\mcf),\ d=n-a-b$) on $\mct_0=M\times M\times M_0$ a ``word'' $w=\ell_1\cdots\ell_n$ consisting of letters $\ell_i$ as follows. Write $I=\{1,\ldots,n\}=I_{(\alpha)}\sqcup I_{(\beta)}\sqcup I_{(\gamma)}\sqcup I_{(\delta)}$ as in Section 1.2. Then
\begin{align*}
i\in I_{(\alpha)} & \Longrightarrow \ell_i=\alpha, \quad
i\in I_{(\beta)} \Longrightarrow \ell_i=\beta, \quad
i\in I_{(\gamma)} \Longrightarrow \ell_i=+,-\mbox{ or a, b},\ldots, \\
i\in I_{(\delta)} & \Longrightarrow \ell_i=\mbox{X,Y or A,B},\ldots.
\end{align*}
Here the subword $w_{(\gamma)}=\ell_{\gamma_1}\cdots\ell_{\gamma_c}$ expresses an $L_+\times L_-$-orbit of the full flag
$V_{\gamma_1}\cap W^0\subset\cdots\subset V_{\gamma_c}\cap W^0$
in $W^0=U_{(+)}\oplus U_{(-)}$ as in Section 1.3 and Section 4. On the other hand, the subword $w_{(\delta)}=\ell_{\delta_1}\cdots\ell_{\delta_{c_0}}$ expresses an $L_V\cap L_0$-orbit of the full flag
$V_{\delta_1}\cap U_{(0)}\subset\cdots\subset V_{\delta_{c_0}}\cap U_{(0)}$
in $U_{(0)}$ as in Section 1.3.

\begin{example} (The case of $n=2$) We can describe $\displaystyle{G\backslash \mct_0 \cong \bigsqcup_{d=0}^2 R_d\backslash M_0}$ as in Fig.6 when $n=2$.

\bigskip
\noindent Notation: Let $p_i: M_0\to M_i\ (i=1,2)$ be the canonical projections where $M_1=\{V_2\mid \dim V_2=2\}\ (=M)$ and $M_2=\{V_1\mid \dim V_1=1\}$ are partial flag varieties. Then for two $R_d$-orbits $S_1$ and $S_2$ on $M_0$, we write $S_1\xrightarrow{i} S_2$ when $p_i(S_1)=p_i(S_2)$ and $\dim S_1+1=\dim S_2$.
\end{example}

\begin{remark} \ Suppose $\bbf=\bc$. Then $G={\rm SO}_5(\bc)\cong {\rm Sp}_4(\bc)/\mathbb{Z}_2$ in this case. So the orbit structure is the same as the symplectic triple flag variety of the shape $(121)(121)(1^4)$ given in \cite{MWZ2}.
\end{remark}

Using these symbols, we can easily count the number of orbits as follows. Let $\xi(k)$ denote the number of words consisting of $k$ letters which do not contain the four letters $\alpha,\beta,+$ and $-$.

\begin{picture}(140,115)
\put(10,110){\makebox(0,0){$\alpha\alpha$}}
\put(10,107){\vector(0,-1){14}} \put(13,100){\makebox(0,0){$2$}}
\put(10,90){\makebox(0,0){$\alpha\beta$}}
\put(10,87){\vector(0,-1){14}} \put(13,80){\makebox(0,0){$1$}}
\put(10,70){\makebox(0,0){$\beta\alpha$}}
\put(10,67){\vector(0,-1){14}} \put(13,60){\makebox(0,0){$2$}}
\put(10,50){\makebox(0,0){$\beta\beta$}}
\put(10,20){\makebox(0,0){$d=0$}}
\put(40,110){\makebox(0,0){$\alpha+$}}
\put(60,110){\makebox(0,0){$\alpha-$}}
\put(38,107){\vector(-1,-2){7}} \put(32,100){\makebox(0,0){$1$}}
\put(42,107){\vector(1,-2){7}} \put(43,100){\makebox(0,0){$2$}}
\put(58,107){\vector(-1,-2){7}} \put(57,100){\makebox(0,0){$2$}}
\put(62,107){\vector(1,-2){7}} \put(68,100){\makebox(0,0){$1$}}
\put(30,90){\makebox(0,0){$+\alpha$}}
\put(50,90){\makebox(0,0){$\alpha{\rm X}$}}
\put(70,90){\makebox(0,0){$-\alpha$}}
\put(30,87){\vector(0,-1){14}} \put(27,80){\makebox(0,0){$2$}}
\put(50,87){\vector(0,-1){14}} \put(53,80){\makebox(0,0){$1$}}
\put(70,87){\vector(0,-1){14}} \put(73,80){\makebox(0,0){$2$}}
\put(30,70){\makebox(0,0){$+\beta$}}
\put(50,70){\makebox(0,0){${\rm X}\alpha$}}
\put(70,70){\makebox(0,0){$-\beta$}}
\put(30,67){\vector(0,-1){14}} \put(27,60){\makebox(0,0){$1$}}
\put(50,67){\vector(0,-1){14}} \put(53,60){\makebox(0,0){$2$}}
\put(70,67){\vector(0,-1){14}} \put(73,60){\makebox(0,0){$1$}}
\put(30,50){\makebox(0,0){$\beta+$}}
\put(50,50){\makebox(0,0){${\rm X}\beta$}}
\put(70,50){\makebox(0,0){$\beta-$}}
\put(33,47){\vector(1,-1){14}} \put(37,40){\makebox(0,0){$2$}}
\put(50,47){\vector(0,-1){14}} \put(53,40){\makebox(0,0){$1$}}
\put(67,47){\vector(-1,-1){14}} \put(63,40){\makebox(0,0){$2$}}
\put(50,30){\makebox(0,0){$\beta{\rm X}$}}
\put(50,20){\makebox(0,0){$d=1$}}
\put(80,110){\makebox(0,0){$++$}}
\put(100,110){\makebox(0,0){$+-$}}
\put(120,110){\makebox(0,0){$-+$}}
\put(140,110){\makebox(0,0){$--$}}
\put(82,107){\vector(1,-2){7}} \put(83,100){\makebox(0,0){$2$}}
\put(98,107){\vector(-1,-2){7}} \put(97,100){\makebox(0,0){$2$}}
\put(102,107){\vector(1,-2){7}} \put(103,100){\makebox(0,0){$1$}}
\put(118,107){\vector(-1,-2){7}} \put(117,100){\makebox(0,0){$1$}}
\put(122,107){\vector(1,-2){7}} \put(123,100){\makebox(0,0){$2$}}
\put(138,107){\vector(-1,-2){7}} \put(137,100){\makebox(0,0){$2$}}
\put(90,90){\makebox(0,0){$+{\rm X}$}}
\put(110,90){\makebox(0,0){${\rm aa}$}}
\put(120,90){\makebox(0,0){${\rm AA}$}}
\put(130,90){\makebox(0,0){$-{\rm X}$}}
\put(90,87){\vector(0,-1){14}} \put(87,80){\makebox(0,0){$1$}}
\put(110,87){\vector(0,-1){14}} \put(107,80){\makebox(0,0){$2$}}
\put(119,87){\vector(-1,-2){7}} \put(118,80){\makebox(0,0){$2$}}
\put(130,87){\vector(0,-1){14}} \put(133,80){\makebox(0,0){$1$}}
\put(90,70){\makebox(0,0){${\rm X}+$}}
\put(110,70){\makebox(0,0){YX}}
\put(130,70){\makebox(0,0){${\rm X}-$}}
\put(93,67){\vector(1,-1){14}} \put(97,60){\makebox(0,0){$2$}}
\put(110,67){\vector(0,-1){14}} \put(113,60){\makebox(0,0){$1$}}
\put(127,67){\vector(-1,-1){14}} \put(123,60){\makebox(0,0){$2$}}
\put(110,50){\makebox(0,0){XY}}
\put(110,20){\makebox(0,0){$d=2$}}
\put(70,10){\makebox(0,0){Fig.6. $\displaystyle{\bigsqcup_{d=0}^2 R_d\backslash {\rm SO}_5(\bbf)/B}$}}
\end{picture}

\begin{lemma} \ $\displaystyle{\xi(2k)=\sum_{s=0}^k {(2k)!\over (s!)^2(k-s)!}}$ and $\displaystyle{\xi(2k-1)=\sum_{s=1}^k {(2k-1)!\over s!(s-1)!(k-s)!}}$.
\label{lem1.18}
\end{lemma}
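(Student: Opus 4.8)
The plan is to first pin down exactly which configurations $\xi(k)$ counts and then to evaluate the resulting sum by an elementary factorial manipulation. By the symbol dictionary of Section 1.4, a word $w=\ell_1\cdots\ell_k$ of length $k$ that contains none of $\alpha,\beta,+,-$ is the same datum as the following. Since no $\alpha$ or $\beta$ occurs we have $a=b=0$, so $I=\{1,\dots,k\}$ splits as $I_{(\gamma)}\sqcup I_{(\delta)}$ only. Since $w_{(\gamma)}$ is a $(c_+,c_-)$-clan containing no $+$ and no $-$, the description of $L_+\times L_-$-orbits (Problem (A), Proposition \ref{prop4.3}) forces $c_+=c_-$, say $c_+=c_-=p$, and then $w_{(\gamma)}$ is merely an unordered perfect matching of the $2p=|I_{(\gamma)}|$ positions, of which there are $(2p-1)!!=(2p)!/(2^pp!)$. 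Finally $w_{(\delta)}$ is an arbitrary admissible $\delta$-symbol of length $c_0=k-2p$ for the group $L_V\cap L_0$ on the full flag variety of $U_{(0)}$. Distinct triples $(I_{(\gamma)},\ \text{matching},\ w_{(\delta)})$ give distinct words, and each such word arises this way, so writing $\nu(m)$ for the number of admissible $\delta$-symbols of length $m$ we obtain
\[\xi(k)=\sum_{p\ge 0}\binom{k}{2p}\frac{(2p)!}{2^p\,p!}\;\nu(k-2p).\]

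The next step is to read off $\nu(m)$ from the theorems already proved. If $m$ is odd, only $\varepsilon=1$ occurs (Theorem \ref{th1.2}(i)) with $L_V\cap L_0\cong 1\times{\rm Sp}_{m-1}(\bbf)$, so setting $m=2k'-1$, Theorem \ref{th1.15}(ii) gives $\nu(2k'-1)=\sum_{s=1}^{k'}\frac{(2k'-1)!}{2^{k'-s}\,s!\,(s-1)!\,(k'-s)!}$. If $m$ is even and positive, both $\varepsilon=0$ (then $L_V\cap L_0\cong{\rm Sp}_m(\bbf)$, and $w_{(\delta)}$ is an AB-symbol) and $\varepsilon=1$ (then $L_V\cap L_0\cong Q_m$, and $w_{(\delta)}$ is an ABXY-symbol) occur, by Theorem \ref{th1.2}(iii) and Proposition \ref{prop1.5}, and the two families of symbols are disjoint; so adding Corollary \ref{cor1.11}(ii) and Theorem \ref{th1.14}(ii) and writing $m=2k'$ gives $\nu(2k')=\sum_{s=0}^{k'}\frac{(2k')!}{2^{k'-s}\,(s!)^2\,(k'-s)!}$, a formula which also holds, with value $1$, when $m=0$.

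Now comes the computation. In both parities the numerator of the closed form for $\nu(k-2p)$ is $(k-2p)!$, so the identity $\binom{k}{2p}\frac{(2p)!}{2^pp!}(k-2p)!=\frac{k!}{2^pp!}$ turns the displayed sum, after introducing $q=K-p-s$, into a sum over nonnegative integers $p,s,q$ with $p+s+q=K$ (where $k=2K$ or $k=2K-1$):
\[\xi(2K)=(2K)!\sum_{p+s+q=K}\frac{1}{2^{K-s}\,p!\,(s!)^2\,q!},\qquad \xi(2K-1)=(2K-1)!\sum_{\substack{p+s+q=K\\ s\ge 1}}\frac{1}{2^{K-s}\,p!\,s!\,(s-1)!\,q!}.\]
For each fixed $s$, the sum over $p+q=K-s$ equals $\frac{1}{(K-s)!}\sum_{p}\binom{K-s}{p}=\frac{2^{K-s}}{(K-s)!}$, which cancels the power of $2$ and collapses the two displays to $\sum_{s=0}^{K}\frac{(2K)!}{(s!)^2(K-s)!}$ and $\sum_{s=1}^{K}\frac{(2K-1)!}{s!\,(s-1)!\,(K-s)!}$; renaming $K$ as $k$ is exactly Lemma \ref{lem1.18}.

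The only genuinely delicate step is the first: making precise the correspondence between a word avoiding $\alpha,\beta,+,-$ and the triple (matched $\gamma$-positions, perfect matching, $\delta$-symbol), and checking in particular that for each even $c_0>0$ \emph{both} admissible values of $\varepsilon$ contribute and that their symbol families are disjoint. Everything afterward is routine bookkeeping with binomial coefficients together with $\sum_{p}\binom{N}{p}=2^{N}$.
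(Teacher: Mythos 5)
Your proof is correct, but it is organized differently from the paper's. The paper counts the words directly: it fixes the number $s$ of X's (and Y's), chooses their positions via the multinomial coefficient $(2k)!/\bigl((s!)^2(2k-2s)!\bigr)$, pairs up the remaining $2k-2s$ positions in $(2k-2s-1)(2k-2s-3)\cdots 1$ ways, and then assigns to each of the $k-s$ pairs a ``signature'' recording whether it is a lowercase ($\gamma$) or uppercase ($\delta$) pair, giving the factor $2^{k-s}$; the identity $2^{k-s}(k-s)!\,(2k-2s-1)!!=(2k-2s)!$ then yields the closed form at once. You instead first sum over the size $2p$ of the $\gamma$-block, import the closed-form orbit counts $\nu(m)$ from Corollary \ref{cor1.11}(ii), Theorem \ref{th1.14}(ii) and Theorem \ref{th1.15}(ii), and recover the paper's $2^{k-s}$ only at the end through $\sum_p\binom{K-s}{p}=2^{K-s}$. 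The two arguments rest on the same classification of admissible symbols (your ``delicate first step'' is exactly what the paper also takes for granted from Theorem \ref{th1.14}(i), Theorem \ref{th1.15}(i) and Proposition \ref{prop4.3}), and your identification of which $\varepsilon$ occur and the disjointness of the AB- and ABXY-families is right. What your route buys is an explicit consistency check of Lemma \ref{lem1.18} against the previously proved enumeration theorems; what it costs is the extra resummation, which the paper's ``signature per pair'' device renders unnecessary.
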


\begin{theorem} \ {\rm (i)} $\displaystyle{|G\backslash \mct_0|= \sum_{k=0}^n4^{n-k}{n\choose k}\xi(k)}$.

{\rm (ii)} $\displaystyle{|{\rm GL}_n(\bbf)\backslash M_0|= \sum_{k=0}^n2^{n-k}{n\choose k}\xi(k)}$.
\label{th1.19}
\end{theorem}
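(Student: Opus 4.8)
The plan is to count $G$-orbits on $\mct_0$ through the word $w(t)=\ell_1\cdots\ell_n$ associated to each $G$-orbit $Gt$ at the beginning of this subsection, by splitting each such word into a ``coincidence part'' and a ``free part''. First I would verify that $Gt\mapsto w(t)$ is a bijection from $G\backslash\mct_0$ onto the set of \emph{admissible} words --- those for which, writing $I=I_{(\alpha)}\sqcup I_{(\beta)}\sqcup I_{(\gamma)}\sqcup I_{(\delta)}$ as in Section 1.2, the subword $w_{(\gamma)}$ is one of the $+-$ab-symbols of Problem (A) for some $c_+,c_-$ with $c_++c_-=|I_{(\gamma)}|$ and $w_{(\delta)}$ is one of the symbols of Problem (B), (C) or (D) of length $|I_{(\delta)}|$. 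This is the assembly of Theorem~\ref{th1.2} (which reduces a triple to $R(t)$-orbits on $M_0(V)$), Theorem~\ref{th1.6}, Proposition~\ref{prop1.5}, and the completeness of the classifications in Corollary~\ref{cor1.11}, Theorem~\ref{th1.14}, Theorem~\ref{th1.15} and Proposition~\ref{prop4.3}. Here I would record the structural fact on which the count rests: in a $+-$ab-symbol the positions carrying $+$ and those carrying $-$ form arbitrary disjoint subsets, the remaining positions being filled by an arbitrary perfect matching (the letters ${\rm a},{\rm b},\ldots$), while in every Problem (B)/(C)/(D) symbol each position lies in a matched pair or an X--Y pair. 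Thus, calling a position of $w$ \emph{free} when $\ell_i\in\{\alpha,\beta,+,-\}$ (so that the non-free positions are precisely those carrying a letter ${\rm a},{\rm b},\ldots$ or ${\rm X},{\rm Y},{\rm A},{\rm B},\ldots$), an admissible word avoids $\alpha,\beta,+,-$ exactly when all of its positions are non-free.

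For part (i), I would group the admissible words by the set $T\subseteq\{1,\ldots,n\}$ of their non-free positions. For a fixed $T$ with $|T|=k$, an admissible word with non-free set exactly $T$ is obtained by two independent choices: an admissible word on $T$ avoiding $\alpha,\beta,+,-$, of which there are $\xi(k)$ by the definition of $\xi$; and, for each of the $n-k$ positions outside $T$, an unrestricted choice among $\alpha,\beta,+,-$ --- by the structural fact above, adjoining arbitrary $\alpha$'s and $\beta$'s and arbitrary pure signs keeps $w_{(\gamma)}$ a legal $+-$ab-symbol and leaves $w_{(\delta)}$ unchanged. Hence there are $4^{\,n-k}\xi(k)$ such words, and summing over $T$ gives $|G\backslash\mct_0|=\sum_{k=0}^n\binom nk 4^{\,n-k}\xi(k)$.

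For part (ii), the maximal isotropic subspaces $U_0$ and $U_n$ are in opposite position for the form $(\ ,\ )$, so $P\cap P_{U_n}$ is the Levi factor ${\rm GL}_n$ of $P$; hence the ${\rm GL}_n$-orbits on $M_0$ are precisely the $G$-orbits on $\mct_0$ lying over the $G$-orbit of $(U_0,U_n)$, i.e.\ those $Gt$ with $\dim(V_{(1)}\cap V_{(2)})=0$, equivalently $a(t)=b(t)=0$, equivalently the word $w(t)$ contains neither $\alpha$ nor $\beta$. Running the count of part (i) over these words, the $n-k$ free positions now carry only the two choices $+,-$, which yields $|{\rm GL}_n\backslash M_0|=\sum_{k=0}^n\binom nk 2^{\,n-k}\xi(k)$.

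The substance is all in the first step: making the orbit--symbol correspondence a genuine bijection onto the admissible words, and in particular checking that a $+-$ab-symbol constrains nothing about the pure signs beyond ``arbitrary matching on the complement of the sign-set'' and that the $\alpha,\beta$ labels may sit on an arbitrary subset of positions of a standard flag. Granting that, the two counts above are routine double counting, and combining with Lemma~\ref{lem1.18} turns the answers into the explicit binomial sums.
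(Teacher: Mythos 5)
Your proof is correct and follows essentially the same route as the paper: both arguments split each orbit-word into the subword on the letters $\alpha,\beta,+,-$ (with $4^{n-k}$, resp.\ $2^{n-k}$, free choices) and the complementary subword counted by $\xi(k)$, with $\binom{n}{k}$ placements. Your added justification of the orbit--word bijection and of why part (ii) restricts to words without $\alpha,\beta$ (via $R_n\cong{\rm GL}_n$) only makes explicit what the paper leaves implicit.
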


For $n=1,2,3,4$, the number of orbits is as follows.

\bigskip
\centerline{
\vbox{\offinterlineskip
\hrule
\halign{&\vrule#&\strut\quad$\hfil#\hfil$\quad\cr
height2pt&\omit&&\omit&&\omit&&\omit&&\omit&\cr
& n && 1 && 2 && 3 && 4 &\cr
height2pt&\omit&&\omit&&\omit&&\omit&&\omit&\cr
\noalign{\hrule}
height4pt&\omit&&\omit&&\omit&&\omit&&\omit&\cr
& |G\backslash \mct_0| && 5 && 28 && 169 && 1082 &\cr
height4pt&\omit&&\omit&&\omit&&\omit&&\omit&\cr
& |{\rm GL}_n(\bbf)\backslash M_0| && 3 && 12 && 53 && 258 &\cr
height4pt&\omit&&\omit&&\omit&&\omit&&\omit&\cr}
\hrule}
}

\subsection{The case of ${\rm SO}_{2n}(\bbf)$}

Let $G'$ and $\widetilde{G}'$ be the subgroups of $G$ defined by
$$G'=\{g\in G\mid g e_{n+1}=e_{n+1}\}\mand \widetilde{G}'=\{g\in G\mid g e_{n+1}=\pm e_{n+1}\},$$
respectively. Then they are isomorphic to the split special orthogonal group and the split orthogonal group of degree $2n$, respectively.
Let $M'$ be the subvariety of $M$ defined by $M'=\{V\in M\mid (V,e_{n+1})=\{0\}\}$.

Then $M'$ is a homogeneous space of $\widetilde{G}'$ consisting of two $G'$-orbits $M^0=G'U_0$ and $M^1=G'U_1\ (U_1=\bbf e_1\oplus\cdots\oplus \bbf e_{n-1}\oplus \bbf e_{n+2})$. Note that $V$ and $V'$ in $M'$ are contained in the same $G'$-orbit if and only if $n-\dim(V\cap V')$ is even. Define a subvariety $M'_0=\{\mcf:V_1\subset\cdots\subset V_n \mid V_n\in M'\}$ of $M_0$. Then $M'_0$ is also a homogeneous space of $\widetilde{G}'$ consisting of two $G'$-orbits $M^0_0$ and $M^1_0$.

\begin{theorem} \ Let $t=(V_{(1)},V_{(2)},V_{(3)})$ be an element of $\mct'=M'\times M'\times M'$. Define
\begin{align*}
a & =a(t)=\dim (V_{(1)}\cap V_{(2)}\cap V_{(3)}),\ b=b(t)=\dim (V_{(1)}\cap V_{(2)}) -a, \\
c_+ & =c_+(t)=\dim (V_{(1)}\cap V_{(3)}) -a,\ c_-=c_-(t)=\dim (V_{(2)}\cap V_{(3)}) -a, \\
c_0 & =c_0(t)=n-a-b-c_+-c_- \\
\mand \varepsilon & =\varepsilon(t)=\dim(V_{(1)}+V_{(2)}+V_{(3)})+\dim(V_{(1)}\cap V_{(2)}\cap V_{(3)})-2n\in \{0,1\}.
\end{align*}
Then we have$:$

{\rm (i)} $\varepsilon=0,\ c_0$ is even and $t\in \widetilde{G}'(U_0,U_{n-a-b},V(a,b,c_+,c_-)_{\rm even}^0)$.

{\rm (ii)} If $\bbf=\bbf_r$, then $\displaystyle{|\widetilde{G}'t|=2|G't|=|M'|{r^{(n-a)(n-a-1)}[r]_n\psi_{c_0}^0(r)\over [r]_a[r]_b[r]_{c_+}[r]_{c_0}[r]_{c_-}}}$.
\label{th1.20}
\end{theorem}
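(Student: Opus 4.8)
\textbf{Plan of proof for Theorem \ref{th1.20}.}
The strategy is to deduce the ${\rm SO}_{2n}$ statement from the ${\rm SO}_{2n+1}$ result (Theorem \ref{th1.2}, Theorem \ref{th1.4}) by restriction to the subspace $e_{n+1}^\perp$ and its fixed-point subgroups $G'$ and $\widetilde G'$, exactly as Problem (D) was reduced to Problem (C) in Section 1.4. First I would observe that for $V\in M'$ one has $e_{n+1}\notin V$ and $(V,e_{n+1})=\{0\}$, so $V\oplus\bbf e_{n+1}$ is an $(n+1)$-dimensional isotropic-plus-radical subspace; more usefully, $M'$ sits inside $M$ and the three subspaces $V_{(1)},V_{(2)},V_{(3)}$ of an element $t\in\mct'$ are in particular an element of $\mct=M\times M\times M$. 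Hence all the combinatorial invariants $a,b,c_+,c_-,c_0,\varepsilon$ attached to $t$ are the same whether computed in $\mct$ or $\mct'$, and Theorem \ref{th1.2} already tells us that $t\in G(U_0,U_{n-a-b},V(a,b,c_+,c_-)_{\bullet})$ for the appropriate normal form.

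The first real step is to prove part (i): that $\varepsilon=0$ and $c_0$ is even for any $t\in\mct'$. The key point is that each $V_{(i)}\perp e_{n+1}$, so $e_{n+1}\in (V_{(1)}+V_{(2)}+V_{(3)})^\perp$, which forces $\dim(V_{(1)}+V_{(2)}+V_{(3)})\le 2n$ and hence $\varepsilon=\dim(V_{(1)}+V_{(2)}+V_{(3)})+a-2n\le a-n\le 0$; combined with $\varepsilon\in\{0,1\}$ from Theorem \ref{th1.2} this gives $\varepsilon=0$. By the trichotomy in Theorem \ref{th1.2}, $\varepsilon=0$ already excludes case (i) ($c_0$ odd, which forces $\varepsilon=1$) and case (iii) with $\varepsilon=1$, leaving only $c_0=0$ or $c_0$ even and positive with $\varepsilon=0$; in either case $c_0$ is even and the normal form is $V(a,b,c_+,c_-)_{\rm even}^0$. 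It then remains to upgrade ``$t\in G(U_0,U_{n-a-b},V_{\rm even}^0)$'' to ``$t\in\widetilde G'(U_0,U_{n-a-b},V_{\rm even}^0)$''; for this I would check that the explicit representative $(U_0,U_{n-a-b},V(a,b,c_+,c_-)_{\rm even}^0)$ in fact lies in $\mct'$ (all three subspaces are visibly spanned by basis vectors or differences $e_{k_++i}\pm e_{k_-+i}$ avoiding $e_{n+1}$, hence orthogonal to $e_{n+1}$), and that the $G$-orbit of a point of $\mct'$ meets $\mct'$ exactly in a $\widetilde G'$-orbit. This last fact follows because the stabilizer in $G$ of the line $\bbf e_{n+1}$ is $\widetilde G'$ (since $(e_{n+1},e_{n+1})=1\ne 0$ it is nonisotropic, so $g$ preserving the form and the line scales $e_{n+1}$ by $\pm1$), and $\widetilde G'$ acts transitively on the set of $g\in G$ with $gt\in\mct'$ by a standard double-coset argument; transitivity of $G$ on $\mct'$-orbits passes to $\widetilde G'$ exactly as in the reduction from (C) to (D).

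For part (ii), the counting formula, I would use $|\widetilde G' t| = |\widetilde G'|\,/\,|{\rm Stab}_{\widetilde G'}(t)|$ together with the analogous expression for $|Gt|$ in Theorem \ref{th1.4}. The cleanest route is: $|\widetilde G' t| = 2|G' t|$ because $-I_{2n}\notin G'$ acts as $\widetilde G'/G'$ and moves no point of $\mct'$ (indeed $M^0\ne M^1$ are genuinely two $G'$-orbits but $\widetilde G'$ swaps the two normal isotropic spaces only through the $e_{n+1}\mapsto -e_{n+1}$ element, whose effect on the flags must be tracked); and then $|G' t|$ is computed by intersecting the $G$-orbit with $\mct'$. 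Concretely, $Gt\cap\mct'$ is a single $\widetilde G'$-orbit whose size is $|Gt|\cdot|\mct'|/|\mct|$ if the projection $Gt\to M$ (first coordinate) restricted suitably is ``uniform'' over the fibers meeting $M'$ — but it is safer to compute directly: $|\widetilde G' t| = |M'|\cdot|{\rm Stab}_G(U_0)\cap\widetilde G'\ {\rm acting}|^{-1}\cdots$, i.e. factor through $|\widetilde G' t| = |\widetilde G' U_0|\cdot|R'\text{-orbit of }(U_{n-a-b},V)|$ with $R'=P\cap\widetilde G'$ the stabilizer of $U_0$ in $\widetilde G'$, and similarly peel off the second factor. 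Plugging in the known cardinalities — $|M'|$, the $r$-factorials $[r]_m$, and $\psi_{c_0}^0(r)=r^{k(k-1)}(r-1)(r^3-1)\cdots(r^{2k-1}-1)$ for $c_0=2k$ from Proposition \ref{prop1.5} and Theorem \ref{th1.4} — and comparing the power of $r$ in the ``volume of the unipotent radical'' factor (which drops from $(n-a)(n-a+1)/2$ in the odd case to $(n-a)(n-a-1)$ here, reflecting the one-dimension-smaller Borel unipotent radical and the loss of the $e_{n+1}$ row/column) should yield exactly the stated formula.

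\textbf{Main obstacle.} The genuinely delicate point is the orbit-matching in part (i): showing that the $G$-orbit of a point of $\mct'$ intersects $\mct'$ in a \emph{single} $\widetilde G'$-orbit, rather than several. One must rule out that two configurations in $\mct'$ with the same invariants $(a,b,c_+,c_-,c_0,\varepsilon=0)$ — hence $G$-conjugate — could fail to be $\widetilde G'$-conjugate because the conjugating $g\in G$ unavoidably moves the line $\bbf e_{n+1}$. This is really a transitivity statement for $\widetilde G'$ on a certain variety of ``flags transverse to $e_{n+1}$'' and is the $\mct'$-analogue of the clean observation in the (C)$\to$(D) reduction that ``two flags in $M'$ are in the same $Q'_{2n}$-orbit iff they are in the same $Q_{2n}$-orbit.'' I expect it to follow from a Witt-type extension argument — given $g\in G$ with $gt,t\in\mct'$, the element $g$ maps $(V_{(1)}+V_{(2)}+V_{(3)})^\perp$ to itself, and since $e_{n+1}$ spans a nondegenerate line in that orthogonal complement one can correct $g$ by an element of the orthogonal group of $(V_{(1)}+V_{(2)}+V_{(3)})^\perp$ fixing everything relevant to land inside $\widetilde G'$ — but writing this carefully, and simultaneously keeping the normal-form representative inside $\mct'$, is where the real work lies. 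The cardinality in (ii) is then bookkeeping, modulo getting the exponent of $r$ right by carefully comparing unipotent-radical dimensions between the $2n$ and $2n+1$ cases.
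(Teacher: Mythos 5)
Your overall strategy (restrict the $\mathrm{SO}_{2n+1}$ results to $e_{n+1}^{\perp}$ and its stabilizer) is the right one, but as written the argument has a genuine gap already at the central step of part (i). From $V_{(i)}\subset e_{n+1}^{\perp}$ you conclude $\dim(V_{(1)}+V_{(2)}+V_{(3)})\le 2n$ and then write $\varepsilon\le a-n\le 0$; the correct consequence of your bound is only $\varepsilon=\dim(V_{(1)}+V_{(2)}+V_{(3)})+a-2n\le a$, which does not give $\varepsilon=0$ unless $a=0$. The statement is still true, but you need the sharper bound $\dim(V_{(1)}+V_{(2)}+V_{(3)})\le 2n-a$: the orthogonal complement $(V_{(1)}+V_{(2)}+V_{(3)})^{\perp}=V_{(1)}^{\perp}\cap V_{(2)}^{\perp}\cap V_{(3)}^{\perp}$ contains both the $a$-dimensional isotropic subspace $V_{(1)}\cap V_{(2)}\cap V_{(3)}$ and the anisotropic vector $e_{n+1}$, which cannot lie in that isotropic subspace, so this complement has dimension at least $a+1$. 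With that repair, $\varepsilon=0$, and Lemma \ref{lem2.11} then forces $c_0$ even with normal form $V(a,b,c_+,c_-)^0_{\rm even}$, as you say.

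The two remaining pieces are correctly identified but not actually proved. For the upgrade from a $G$-orbit to a single $\widetilde G'$-orbit you only sketch a Witt-extension argument; note that the paper does not argue this way at all — it reruns the reductions of Section 2 with every group intersected with $G'$ (this is what Section 3.5 and the $\mathrm{SO}_{2n}$ analogue of Theorem \ref{th1.6} do), so that the normalizing elements $g(X,Z)$, $\ell\in L_{W_0}$, etc.\ are produced inside $G'$ from the start and no separate orbit-matching lemma is needed. Your alternative route can be made to work (one must show that $R(t_0)$ acts transitively on the norm-one vectors of $(U_0+U_d+V)^{\perp}$ modulo its radical), but that is exactly the "real work" you defer. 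Likewise part (ii) is left at the level of "should yield exactly the stated formula": the actual content is the computation $|(N_{W_0}\cap G')V|=r^{bd+b(b-1)/2}$ (the middle block $W_1$ drops from dimension $2d+1$ to $2d$, removing $b$ from the exponent of the odd case) and $|(P\cap G')U_d|=r^{d(d-1)/2}[r]_n/([r]_d[r]_{n-d})$, combined with (\ref{eq2.16}); and the factor $2$ in $|\widetilde G't|=2|G't|$ comes not from $-I_{2n}$ (which is not the relevant coset representative) but from the fact that $\widetilde G'\setminus G'$ interchanges the two $G'$-orbits $M^0$ and $M^1$, so no element of it can stabilize $t$.
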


\begin{corollary} \ {\rm (i)} $\displaystyle{|\widetilde{G}'\backslash \mct'| ={|G'\backslash \mct'|\over 2}=\sum_{k=0}^{[n/2]} {n-2k+3\choose 3}}$.

{\rm (ii)} $\displaystyle{|G'\backslash M^{\nu_1}\times M^{\nu_2}\times M^{\nu_3}| =\sum_{k=0}^{[n/2]} {k+3\choose 3} +\sum_{k=0}^{[(n-3)/2]} {k+3\choose 3}}$ if $\nu_1=\nu_2=\nu_3$.

{\rm (iii)} $\displaystyle{|G'\backslash M^{\nu_1}\times M^{\nu_2}\times M^{\nu_3}| =
\sum_{k=0}^{[(n-1)/2]} {k+3\choose 3} +\sum_{k=0}^{[(n-2)/2]} {k+3\choose 3}}$ if $\nu_i\ne \nu_j$ for some $i,j=1,2,3$.
\end{corollary}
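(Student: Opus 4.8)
The plan is to derive all three counting formulas directly from Theorem \ref{th1.20} by exploiting the dictionary between $\widetilde{G}'$-orbits on $\mct'=M'\times M'\times M'$ and the five-part partitions $n=a+b+c_++c_0+c_-$ that survive the constraint ``$c_0$ even, $\varepsilon=0$''. First I would observe that, by part (i) of Theorem \ref{th1.20}, a $\widetilde{G}'$-orbit on $\mct'$ is uniquely determined by the quadruple $(a,b,c_+,c_-)$ subject only to $a+b+c_++c_-\le n$ and $c_0:=n-a-b-c_+-c_-\equiv 0\pmod 2$; there is no $\varepsilon$-ambiguity (unlike the $\mathcal{T}$ case in Theorem \ref{th1.2}), so no extra factor $\eta_k$ appears. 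Hence $|\widetilde{G}'\backslash\mct'|$ equals the number of such quadruples. Writing $n-c_0=2m$ for the remaining mass $a+b+c_++c_-$, and letting $k$ run so that $c_0$ ranges over $n,n-2,\dots$, the count of $(a,b,c_+,c_-)$ with fixed sum $n-2k$ is the number of ordered $4$-tuples of nonnegative integers summing to $n-2k$, namely $\binom{n-2k+3}{3}$. Summing over $k=0,\dots,[n/2]$ gives part (i); the equality $|\widetilde{G}'\backslash\mct'| = |G'\backslash\mct'|/2$ is the orbit-doubling statement already contained in Theorem \ref{th1.20}(ii) (each $\widetilde{G}'$-orbit on $\mct'$, with $M'=M^0\sqcup M^1$, splits into exactly two $G'$-orbits, equivalently $|\widetilde{G}'t|=2|G't|$).

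For parts (ii) and (iii) I would refine the bookkeeping by tracking which $G'$-component $M^{\nu}$ each of the three flags lies in. The key combinatorial input is the criterion stated just before Theorem \ref{th1.20}: two maximal isotropic subspaces $V,V'\in M'$ lie in the same $G'$-orbit iff $n-\dim(V\cap V')$ is even. Applied to a representative $t=(U_0,U_{n-a-b},V(a,b,c_+,c_-)_{\rm even}^0)$, the three pairwise intersection dimensions are $\dim(V_{(1)}\cap V_{(2)})=a+b$, $\dim(V_{(1)}\cap V_{(3)})=a+c_+$, $\dim(V_{(2)}\cap V_{(3)})=a+c_-$. So the relative $M^\nu$-positions of the three flags are governed by the parities of $n-a-b$, $n-a-c_+$, $n-a-c_-$, equivalently — since $c_0=n-a-b-c_+-c_-$ is even, so $n-a\equiv b+c_++c_-\pmod 2$ — by the parities of $c_++c_-$, $b+c_-$, $b+c_+$, i.e. ultimately by the parities of $b,c_+,c_-$. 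This is exactly what one needs: fixing the target triple $(\nu_1,\nu_2,\nu_3)$ with each $\nu_i\in\{0,1\}$ imposes a parity condition on $(b,c_+,c_-)$ modulo $2$, and because $a$ can absorb parity freely (as long as the total sum has the right parity) the count splits cleanly into two cases according to whether $\nu_1=\nu_2=\nu_3$ or not.

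More concretely, in the homogeneous case $\nu_1=\nu_2=\nu_3$ the constraint forces $b\equiv c_+\equiv c_-\pmod 2$ (all the pairwise differences lie in the same $G'$-class), and one then counts $4$-tuples $(a,b,c_+,c_-)$ summing to $n-2k$ with $b,c_+,c_-$ of a prescribed common parity; organizing by that parity gives the two sums $\sum\binom{k+3}{3}+\sum\binom{k+3}{3}$ over the ranges $[n/2]$ and $[(n-3)/2]$ displayed in (ii). In the inhomogeneous case, the parity pattern of $(b,c_+,c_-)$ is not all-equal; a short parity analysis shows the relevant tuples are counted by the two sums in (iii) with ranges $[(n-1)/2]$ and $[(n-2)/2]$. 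In each case the passage from the parity-restricted tuple count to a binomial coefficient is the standard stars-and-bars substitution (replace a variable constrained to be even by $2\times$ a new nonnegative variable, and peel off the parity of $a$). I would verify the ranges of the outer summation index by checking the extreme values of $c_0$ and the small cases $n=1,2,3$ against direct enumeration.

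The main obstacle I anticipate is purely organizational rather than conceptual: getting the parity case-split for $(b,c_+,c_-)$ exactly right and matching the resulting four-variable restricted compositions to the specific pairs of binomial sums with their somewhat delicate summation ranges ($[n/2]$ vs.\ $[(n-3)/2]$ in (ii), $[(n-1)/2]$ vs.\ $[(n-2)/2]$ in (iii)). In particular one must be careful that the ``$G'$-class of a pair'' depends on $n-\dim(V\cap V')$, so the parities interact with the parity of $n-a$ and hence with the evenness of $c_0$; a clean way to bypass errors is to fix $a$'s parity first, reduce everything to three free nonnegative integers with a prescribed parity vector, and read off the binomial count. Everything else is elementary once Theorem \ref{th1.20} and the $G'$-orbit parity criterion are in hand.
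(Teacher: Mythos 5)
Your proposal is correct and follows exactly the route the paper intends: the corollary is stated without proof as an immediate consequence of Theorem \ref{th1.20}, namely counting quadruples $(a,b,c_+,c_-)$ with $c_0=n-a-b-c_+-c_-$ even for (i), and refining by the parity criterion ``same $G'$-component iff $n-\dim(V\cap V')$ is even'' applied to the pairwise intersections $a+b$, $a+c_+$, $a+c_-$ for (ii) and (iii). The parity case-split and summation ranges you describe do work out (and agree with the paper's table, e.g.\ $11$, $5$, $2$ for $n=2$, with $2\cdot 5+6\cdot 2=22=2\cdot 11$), so nothing further is needed.
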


For $n=2,3,4,5$, the number of orbits is as follows.

\bigskip
\centerline{
\vbox{\offinterlineskip
\hrule
\halign{&\vrule#&\strut\quad$\hfil#\hfil$\quad\cr
height2pt&\omit&&\omit&&\omit&&\omit&&\omit&\cr
& n && 2 && 3 && 4 && 5 &\cr
height2pt&\omit&&\omit&&\omit&&\omit&&\omit&\cr
\noalign{\hrule}
height4pt&\omit&&\omit&&\omit&&\omit&&\omit&\cr
& |\widetilde{G}'\backslash \mct'|=|G'\backslash \mct'|/2 && 11 && 24 && 46 && 80 &\cr
height4pt&\omit&&\omit&&\omit&&\omit&&\omit&\cr
& |G'\backslash M^{\nu_1}\times M^{\nu_2}\times M^{\nu_3}|\ (\nu_1=\nu_2=\nu_3) && 5 && 6 && 16 && 20 &\cr
height4pt&\omit&&\omit&&\omit&&\omit&&\omit&\cr
& |G'\backslash M^{\nu_1}\times M^{\nu_2}\times M^{\nu_3}|\ (\nu_i\ne \nu_j\mbox{ for some }i,j) && 2 && 6 && 10 && 20 &\cr
height4pt&\omit&&\omit&&\omit&&\omit&&\omit&\cr}
\hrule}
}

Let $t=(U_0,U_d,V)$ with $d=n-a-b$ and $V=V(a,b,c_+,c_-)_{\rm even}^0$. The following proposition is proved in the same way as Theorem \ref{th1.6}.

\begin{proposition} \ {\rm (i)} For every full flag $\mcf$ in $M_0(V)\cap M'_0$, there exists a $g\in R(t)\cap G'$ such that $g\mcf$ is standard.

{\rm (ii)} Let $\mcf$ and $\mcf'$ be two standard full flags in $M_0(V)\cap M'_0$. Then $\mcf'=g\mcf$ for some $g\in R(t)\cap G'\Longrightarrow \mcf'=g_L\mcf$ for some $g_L\in L_V\cap G'$.

{\rm (iii)} Let $\mcf$ be a standard full flag in $M_0(V)\cap M'_0$. If $\bbf=\bbf_r$, then $|(R(t)\cap G')\mcf|=[r]_a[r]_br^{\ell(\tau(\mcf))}|(L_V\cap G')\mcf|$.
\end{proposition}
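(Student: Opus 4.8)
The plan is to mimic, step by step, the proof of Theorem~\ref{th1.6}, restricting every group to $G'$ and every flag variety to the intersection with $M'_0$. Fix $t=(U_0,U_d,V)$ with $V=V(a,b,c_+,c_-)_{\rm even}^0$ and $d=n-a-b$; note that $e_{n+1}\in U_{(0)}\subset V$ and that $e_{n+1}$ is fixed by every element of $R(t)\cap G'$, so all the constructions of Section~1.2--1.3 specialize. First I would prove (i): given $\mcf\in M_0(V)\cap M'_0$, run the same straightening algorithm used to prove Theorem~\ref{th1.6}(i). The point to check is that each elementary step of that algorithm can be performed by an element of $R(t)$ that in addition fixes $e_{n+1}$. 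Since the straightening only adjusts the positions $V_i\cap U_{(\alpha)}$, $V_i\cap U_{(\beta)}$, $V_i\cap(U_{(+)}\oplus U_{(-)})$ and $V_i\cap U_{(0)}$, and the subspaces $U_{(\alpha)}, U_{(\beta)}, U_{(+)}, U_{(-)}$ are all orthogonal to $e_{n+1}$, the unipotent and block-diagonal transformations produced by the algorithm can be taken in the subgroup fixing $e_{n+1}$; I would verify this case by case against the list of moves in the proof of Theorem~\ref{th1.6}(i).

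Next, for (ii): suppose $\mcf,\mcf'$ are standard full flags in $M_0(V)\cap M'_0$ with $\mcf'=g\mcf$, $g\in R(t)\cap G'$. The proof of Theorem~\ref{th1.6}(ii) produces some $g_L\in L_V$ with $g_L\mcf=\mcf'$; I need to refine this to $g_L\in L_V\cap G'$. The natural argument is: by Proposition~\ref{prop1.5}(i), $L_V=L_+\times(L_V\cap L_0)\times L_-$, and the factors $L_+$ and $L_-$ act only on $U_{(+)},U_{(-)}$ and their duals, hence fix $e_{n+1}$; so the obstruction lives entirely in the $L_V\cap L_0\cong{\rm Sp}_{c_0}(\bbf)$ factor acting on $U_{(0)}$, which does contain $e_{n+1}$. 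Here I would use the description of $U_{(0)}$ with its distinguished vector $e_{n+1}$ as a symplectic space and observe that the requirement ``$g_L e_{n+1}=e_{n+1}$'' cuts out a subgroup of ${\rm Sp}_{c_0}(\bbf)$ isomorphic to a $Q_{c_0}$-type stabilizer; the compatibility then follows because both $\mcf$ and $\mcf'$ lie in $M'_0$, i.e.\ $V_n\supset e_{n+1}$ is already built in. I expect this is the step where genuine symplectic-geometry input (Witt's theorem / transitivity of ${\rm Sp}$ on nonzero vectors, exactly as in Proposition~\ref{prop1.5}) is needed to adjust $g_L$ within its coset.

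Finally, part (iii) over $\bbf_r$ is a counting bookkeeping: $|(R(t)\cap G')\mcf|=|R(t)\cap G'|/|{\rm Stab}_{R(t)\cap G'}(\mcf)|$, and by (i) and (ii) this orbit is fibered over the set of standard flags in the $R(t)\cap G'$-orbit with fiber an $(L_V\cap G')$-orbit. The factor $[r]_a[r]_b r^{\ell(\tau(\mcf))}$ counts, exactly as in Theorem~\ref{th1.6}(iii), the number of standard flags Borel-equivalent to $\mcf$ inside the given $R(t)$-orbit together with the unipotent directions recorded by the inversion number $\ell(\tau(\mcf))$; none of that combinatorics sees $e_{n+1}$, so it is unchanged upon passing to $G'$. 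Thus $|(R(t)\cap G')\mcf|=[r]_a[r]_b r^{\ell(\tau(\mcf))}\,|(L_V\cap G')\mcf|$, which is the claim. The main obstacle is (ii): ensuring that the coset representative can be chosen inside $G'$ without disturbing the already-achieved equality $g_L\mcf=\mcf'$, which amounts to checking that the centralizer of $e_{n+1}$ in $L_V\cap L_0$ still acts transitively on the relevant set of flags in $U_{(0)}$.
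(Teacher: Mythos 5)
Your overall strategy --- rerun the proof of Theorem \ref{th1.6} with every group intersected with $G'$ --- is exactly what the paper intends (it offers nothing beyond ``proved in the same way as Theorem \ref{th1.6}''). But your analysis of \emph{where} the condition $ge_{n+1}=e_{n+1}$ actually bites is wrong, starting from a misreading of the definitions: $M'$ consists of $V\in M$ with $(V,e_{n+1})=\{0\}$, not with $e_{n+1}\in V$; indeed $e_{n+1}$ can never lie in an isotropic subspace because $(e_{n+1},e_{n+1})=1$, so your assertions ``$e_{n+1}\in U_{(0)}\subset V$'' and ``$V_n\supset e_{n+1}$ is already built in'' are false. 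This sends you hunting for the obstruction in the wrong place. Every element of the form $h[X]$ fixes $e_{n+1}$ (its middle $1\times1$ block is $1$), so $L$, hence $L_V$, and also $N_H$ are \emph{already contained in} $G'$. Consequently part (ii) is immediate: since $R(t)\cap G'\subset R(t)$, Theorem \ref{th1.6}(ii) already produces $g_L\in L_V=L_V\cap G'$, and no Witt-theorem adjustment inside ${\rm Sp}_{c_0}(\bbf)$ is needed or even meaningful --- the centralizer of $e_{n+1}$ in $L_V\cap L_0$ is all of $L_V\cap L_0$.

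The genuine work, which your proposal does not locate, sits in the factor $Q_V=(L_{W_0}\cap P_V)(N_{W_0}\cap P_V)$ of the decomposition $R(t)=Q_VN_HL_V$ (Lemma \ref{lem2.13}): $L_{W_0}\subset G'$, but $N_{W_0}\not\subset G'$, since $g(X,Z)e_{n+1}=e_{n+1}-\sum_{i=1}^m x_{d+1,m+1-i}e_i$. For (i) one must check that the straightening element $g(X,Z)$ constructed in the proof of Proposition \ref{prop2.5}(i) lies in $G'$; it does, because that construction makes the whole row $x_{d+1,\ast}$ vanish (for $j\le b$ by its second case, for $j\ge b+1$ by its third case), but this is a verification against the explicit formulas, not a consequence of $U_{(\alpha)},U_{(\beta)},U_{(+)},U_{(-)}$ being orthogonal to $e_{n+1}$. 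For (iii), your claim that ``none of that combinatorics sees $e_{n+1}$'' is too quick: $N_{W_0}\cap P_V\cap G'$ is strictly smaller than $N_{W_0}\cap P_V$, by the $a$ extra conditions $x_{d+1,j}=0$ for $j=b+1,\ldots,m$ --- compare the paper's proof of Theorem \ref{th1.20}(ii), where $|(N_{W_0}\cap G')V|$ genuinely differs from $|N_{W_0}V|$. The formula of part (iii) survives only because the stabilizer of $\mcf$ in $N_{W_0}\cap P_V$ leaves exactly those same $a$ entries free, so the group and the stabilizer both shrink by $r^a$ and $|(N_{W_0}\cap P_V\cap G')\mcf|=r^{\ell(\sigma(\mcf))}$ is unchanged. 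Without these two checks the proposal does not establish the proposition.
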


As in Section 1.4, we can express $G'$-orbits on $\mct'_0=M'\times M'\times M'_0$ by words with letters $\alpha,\ \beta,\ +,\ -,\ {\rm a,b},\ldots$ and ${\rm A,B},\ldots$. The following corollary is proved in the same way as Theorem \ref{th1.19}.

\begin{corollary} \ {\rm (i)} $\displaystyle{|\widetilde{G}'\backslash \mct'_0|={|G'\backslash \mct'_0|\over 2}=\sum_{k=0}^{[n/2]} 4^{n-2k}{n\choose 2k}{(2k)!\over k!}=\sum_{k=0}^{[n/2]} {4^{n-2k}n!\over k!(n-2k)!}}$.

{\rm (ii)} $\displaystyle{|G'\backslash M^{\nu_1}\times M^{\nu_2}\times M^{\nu_3}_0|={|\widetilde{G}'\backslash \mct'_0|\over 4}+\mu{n!\over 4(n/2)!}}$ where
$$\mu=\begin{cases} 0 & \text{if $n$ is odd}, \\ 1 & \text{if $n$ is even and $\nu_1=\nu_2$}, \\ -1 & \text{if $n$ is even and $\nu_1\ne\nu_2$}. \end{cases}$$

{\rm (iii)} $\displaystyle{|{\rm GL}_n(\bbf)\backslash M'_0|=2|{\rm GL}_n(\bbf)\backslash M^0_0|=\sum_{k=0}^{[n/2]} {2^{n-2k}n!\over k!(n-2k)!}}$.
\end{corollary}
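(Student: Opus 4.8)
The plan is to imitate the proof of Theorem~\ref{th1.19} line by line, the only new input being the constraint supplied by Theorem~\ref{th1.20}. Combining the Proposition above (the $G'$-version of Theorem~\ref{th1.6}) with Proposition~\ref{prop1.5}, Corollary~\ref{cor1.11} and Proposition~\ref{prop4.3}, every $\widetilde G'$-orbit on $\mct'_0=M'\times M'\times M'_0$ is encoded by a word $w=\ell_1\cdots\ell_n$ of the shape described in Section~1.4. The key observation is that if $t=(V_{(1)},V_{(2)},\mcf)\in\mct'_0$ then $(V_{(1)},V_{(2)},V_n)\in\mct'=M'\times M'\times M'$, so by Theorem~\ref{th1.20} we always have $\varepsilon=0$ and $c_0$ even; hence the $\delta$-part is of $\mathrm{Sp}_{c_0}$-type (Problem~(B)) and uses no letters $\mathrm X,\mathrm Y$, only matched pairs $\mathrm A,\mathrm B,\ldots$. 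Since in addition each $\gamma$-letter $\mathrm a,\mathrm b,\ldots$ occurs in a matched pair and the only isolated letters are $\alpha,\beta,+,-$, the word $w$ amounts to nothing more than the choice of a set of $2k$ ``paired'' positions ($0\le k\le[n/2]$), a perfect matching of these $2k$ positions into $k$ pairs, a colour (lowercase $=\gamma$, uppercase $=\delta$) for each of the $k$ pairs, and one of the four letters $\alpha,\beta,+,-$ for each of the remaining $n-2k$ positions. Here Corollary~\ref{cor1.11}(i) identifies an $\mathrm{Sp}$-part on $2q$ positions with a perfect matching, and Proposition~\ref{prop4.3} identifies an all-matched $\gamma$-part on $2p$ positions with a perfect matching; the two parts are independent, so every such datum occurs and occurs once.

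Granting this, part~(i) is a count: the number of words is
$$|\widetilde G'\backslash\mct'_0|=\sum_{k=0}^{[n/2]}4^{\,n-2k}\binom n{2k}\,\frac{(2k)!}{2^kk!}\cdot 2^k=\sum_{k=0}^{[n/2]}\frac{4^{\,n-2k}\,n!}{k!\,(n-2k)!},$$
the middle three factors being the $n-2k$ generic letters, the perfect matching, and the colouring. To pass from $\widetilde G'$ to $G'$, write $\widetilde G'=G'\sqcup G'\sigma$ with $\sigma$ orientation-reversing; $\sigma$ sends $M^\nu\to M^{1-\nu}$ and $M^\nu_0\to M^{1-\nu}_0$, so a $G'$-orbit on $\mct'_0$ in addition records the components $(\nu_1,\nu_2,\nu_3)\in\{0,1\}^3$ of $V_{(1)},V_{(2)},V_n$. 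An orientation-reversing element cannot fix $V_{(1)}$, so $\widetilde G'_x\subseteq G'$ for every $x\in\mct'_0$, whence $G'\backslash\widetilde G'/\widetilde G'_x=G'\backslash\widetilde G'$ has two elements and each $\widetilde G'$-orbit is the disjoint union of exactly two $G'$-orbits. This gives $|G'\backslash\mct'_0|=2|\widetilde G'\backslash\mct'_0|$ and completes~(i).

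For part~(ii), note that $\sigma$ acts freely on the eight pieces $M^{\nu_1}\times M^{\nu_2}\times M^{\nu_3}_0$ by flipping all three indices, grouping them into four pairs distinguished by the $\sigma$-invariant data $\bigl((\nu_1-\nu_2),(\nu_1-\nu_3)\bigr)\bmod 2$, and the two $G'$-pieces of a given $\widetilde G'$-orbit fall into one such pair. From $\dim(V_{(1)}\cap V_{(2)})=a+b$, $\dim(V_{(1)}\cap V_{(3)})=a+c_+$, $\dim(V_{(2)}\cap V_{(3)})=a+c_-$, the criterion ``$V,V'\in M'$ lie in one $G'$-orbit iff $n-\dim(V\cap V')$ is even'', and $c_0$ even, one gets $\nu_1=\nu_2\Leftrightarrow c_++c_-$ even and $\nu_1=\nu_3\Leftrightarrow b+c_-$ even. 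Thus $|G'\backslash M^{\nu_1}\times M^{\nu_2}\times M^{\nu_3}_0|$ equals the number $N_{\varepsilon_1,\varepsilon_2}$ of words with $c_++c_-\equiv\varepsilon_1$ and $b+c_-\equiv\varepsilon_2\pmod 2$, where $\varepsilon_1\equiv\nu_1-\nu_2$, $\varepsilon_2\equiv\nu_1-\nu_3$. Inverting the finite Fourier transform over $(\bz/2)^2$, $N_{\varepsilon_1,\varepsilon_2}=\frac14\sum_{\eta_1,\eta_2\in\{0,1\}}(-1)^{\eta_1\varepsilon_1+\eta_2\varepsilon_2}S_{\eta_1,\eta_2}$ with $S_{\eta_1,\eta_2}=\sum_w(-1)^{\eta_1(c_++c_-)+\eta_2(b+c_-)}$. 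Re-running the word-count with these signs: for $(\eta_1,\eta_2)\neq(0,0)$ the four generic letters contribute two $+1$'s and two $-1$'s, summing to $0$, so only the fully-paired term ($n=2k$, forcing $n$ even) survives; a lowercase pair then carries sign $(-1)^{\eta_2}$ and an uppercase pair sign $1$, so a colour-summed pair contributes $(-1)^{\eta_2}+1$, which is $2$ if $\eta_2=0$ and $0$ if $\eta_2=1$. Hence $S_{(1,0)}=n!/(n/2)!$ for $n$ even and $0$ for $n$ odd, while $S_{(0,1)}=S_{(1,1)}=0$, and therefore
$$|G'\backslash M^{\nu_1}\times M^{\nu_2}\times M^{\nu_3}_0|=\frac14\bigl(|\widetilde G'\backslash\mct'_0|+(-1)^{\varepsilon_1}S_{(1,0)}\bigr)=\frac{|\widetilde G'\backslash\mct'_0|}{4}+\mu\,\frac{n!}{4\,(n/2)!},$$
with $\mu=0$ for $n$ odd and $\mu=(-1)^{\varepsilon_1}=\pm1$ for $n$ even according as $\nu_1=\nu_2$ or $\nu_1\neq\nu_2$; this is~(ii).

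Part~(iii) is the analogue of Theorem~\ref{th1.19}(ii) with the same restriction: $V_n\in M'$ forces $\varepsilon=0$ and $c_0$ even, hence again $2k$ paired positions with the factor $(2k)!/k!$, and running the count with only two generic letters instead of four gives $|\mathrm{GL}_n(\bbf)\backslash M'_0|=\sum_{k=0}^{[n/2]}2^{\,n-2k}\binom n{2k}(2k)!/k!=\sum_{k=0}^{[n/2]}2^{\,n-2k}n!/(k!(n-2k)!)$; moreover the map $M^0_0\to M^1_0$ replacing the top $V_n$ of a flag by the other maximal isotropic containing $V_{n-1}$ is a $\mathrm{GL}_n(\bbf)$-equivariant bijection, so $|\mathrm{GL}_n(\bbf)\backslash M'_0|=2|\mathrm{GL}_n(\bbf)\backslash M^0_0|$. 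The routine parts are the manipulation $\frac{(2k)!}{2^kk!}\cdot2^k=\frac{(2k)!}{k!}$ and the evaluations of the $S_\eta$; the part that needs care is the bookkeeping in~(ii), namely pinning down the correspondence between a $G'$-orbit, its word, and its component triple $(\nu_1,\nu_2,\nu_3)$ — in particular that $\nu_i-\nu_j\bmod 2$ is read off the word — and verifying that the sign-twisted word-sums collapse to the single clean term $n!/(n/2)!$ while the other two vanish.
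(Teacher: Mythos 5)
Your proposal is correct and follows the same route the paper intends: the paper proves this corollary "in the same way as Theorem \ref{th1.19}," i.e.\ by the word-count with the $\mathrm{SO}_{2n}$ constraint ($\varepsilon=0$, $c_0$ even) eliminating the letters X, Y, which is exactly your count $\sum_k 4^{n-2k}\binom{n}{2k}(2k)!/k!$, and your checks against the paper's tables ($18,\,5,\,4$ for $n=2$; $118,\,112$ for $n=4$) confirm the bookkeeping. The parity/Fourier-inversion argument you supply for part (ii) is a legitimate filling-in of details the paper leaves implicit, and your identification of $\nu_i-\nu_j \bmod 2$ with the parities of $c_++c_-$ and $b+c_-$ via the criterion ``same $G'$-family iff $n-\dim(V\cap V')$ is even'' is exactly what is needed.
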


For $n=2,3,4,5$, the number of orbits is as follows.

\bigskip
\centerline{
\vbox{\offinterlineskip
\hrule
\halign{&\vrule#&\strut\quad$\hfil#\hfil$\quad\cr
height2pt&\omit&&\omit&&\omit&&\omit&&\omit&\cr
& n && 2 && 3 && 4 && 5 &\cr
height2pt&\omit&&\omit&&\omit&&\omit&&\omit&\cr
\noalign{\hrule}
height4pt&\omit&&\omit&&\omit&&\omit&&\omit&\cr
& |\widetilde{G}'\backslash \mct'_0|=|G'\backslash \mct'_0|/2 && 18 && 88 && 460 && 2544 &\cr
height4pt&\omit&&\omit&&\omit&&\omit&&\omit&\cr
& |G'\backslash M^{\nu_1}\times M^{\nu_2}\times M^{\nu_3}_0|\ (\nu_1=\nu_2) && 5 && 22 && 118 && 636 &\cr
height4pt&\omit&&\omit&&\omit&&\omit&&\omit&\cr
& |G'\backslash M^{\nu_1}\times M^{\nu_2}\times M^{\nu_3}_0|\ (\nu_1\ne \nu_2) && 4 && 22 && 112 && 636 &\cr
height4pt&\omit&&\omit&&\omit&&\omit&&\omit&\cr
& |{\rm GL}_n(\bbf)\backslash M'_0|=2|{\rm GL}_n(\bbf)\backslash M^0_0| && 6 && 20 && 76 && 312 &\cr
height4pt&\omit&&\omit&&\omit&&\omit&&\omit&\cr}
\hrule}
}

\begin{remark} \ When $\bbf=\bc$, ${\rm GL}_n(\bc)$ is a symmetric subgroup of ${\rm SO}_{2n}(\bc)$. The structure of ${\rm GL}_n(\bc)\backslash M^0_0\cong {\rm GL}_n(\bc)\backslash {\rm SO}_{2n}(\bc)/B$ is described in \cite{MO} (Fig.19 and Fig.20) for $n=3$ and $4$. (In \cite{MO}, read $GL(n,\bc)$ for $\bc^\times \times PSL(n,\bc)$. For $A++$ in Fig.20 read $AA+$.)
\end{remark}

\section{Orbits on $M$ and $M_0$}

\subsection{Preliminaries}

First we prepare some results on orbits on $M$ and $M_0$ which follow essentially from the Bruhat decompositions for the Chevalley-type groups. Since we need more explicit results, we will prove them by elementary arguments.

Write $W_0=\bbf e_1\oplus\cdots\oplus \bbf e_{n-d},\ W_1=\bbf e_{n-d+1}\oplus\cdots\oplus \bbf e_{n+d+1}$ and $W_2=\bbf e_{n+d+2}\oplus\cdots\oplus \bbf e_{2n+1}$.
Then $W_0^\perp=W_0\oplus W_1$. Define a maximal parabolic subgroup
$P_{W_0}=\{g\in G\mid gW_0=W_0\}$
of $G$. Let $N_{W_0},\ L_{W_0}$ and $L_{W_1}$ be subgroups of $P_{W_0}$ defined by
\begin{align*}
N_{W_0} & =\left\{\bp I_m & * & * \\ 0 & I_{2d+1} & * \\ 0 & 0 & I_m \ep\right\}, \ 
L_{W_0} =\left\{\bp A & 0 & 0 \\ 0 & I_{2d+1} & 0 \\ 0 & 0 & J_m\,{}^tA^{-1}J_m  \ep \Bigm| A\in{\rm GL}_m(\bbf)\right\} \\
& \mand L_{W_1} =\{g\in G\mid gv=v\mbox{ for all }v\in W_0\oplus W_2\},
\end{align*}
respectively, where $m=n-d$. Then $N_{W_0}$ is the unipotent radical of $P_{W_0}$ and $L_{W_0}L_{W_1}\cong L_{W_0}\times L_{W_1}$ is a Levi subgroup of $P_{W_0}$. The subgroup $L_{W_1}$ is identified with the special orthogonal group for $W_1$. Define
$$Q=L_{W_0}N_{W_0}=N_{W_0}L_{W_0}.$$
Write
$$g(X,Z)=\bp I_m & -J_m\,{}^tXJ_{2d+1} & Z \\ 0 & I_{2d+1} & X \\ 0 & 0 & I_m \ep$$
where $X=\{x_{i,j}\}$ is a $(2d+1)\times m$ matrix and $Z=\{z_{i,j}\}$ is an $m\times m$ matrix. Then we can write
\begin{equation}
N_{W_0} =\{g(X,Z)\mid Z+J_m\,{}^tZJ_m=-J_m\,{}^tXJ_{2d+1}X\}. \label{eq2.0}
\end{equation}
Noting that
\begin{equation}
z_{m+1-j,j}=-{1\over 2}\sum_{k=1}^{2d+1} x_{k,j}x_{2d+2-k,j} \label{eq2.2'}
\end{equation}
for $j=1,\ldots,m$ and
\begin{equation}
z_{i,j}=-z_{m+1-j,m+1-i}-\sum_{k=1}^{2d+1} x_{k,j}x_{2d+2-k,m+1-i} \label{eq2.3}
\end{equation}
for $i+j\ge m+2$, we have:

\begin{lemma} \ There exists a bijection between $\bbf^{m(2d+1)+m(m-1)/2}$ and $N_{W_0}$ given by
$$(X,\{z_{i,j}\}_{i+j\le m})\mapsto g(X,Z).$$
\label{lem2.1}
\end{lemma}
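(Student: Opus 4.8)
The plan is to describe the unipotent radical $N_{W_0}$ concretely by exploiting the defining relation in \eqref{eq2.0}, namely $Z+J_m\,{}^tZJ_m=-J_m\,{}^tXJ_{2d+1}X$, which says precisely that the matrix $Z$ is determined up to the ``anti-symmetric part'' of $J_m Z$. First I would observe that the condition $g(X,Z)\in G$ is equivalent to that single matrix identity; this is a routine check using the form $(e_i,e_j)=\delta_{i,2n-i+2}$ and the block shape of $g(X,Z)$, so I will not belabor it. The key point is that this identity is a system of linear equations in the entries $z_{i,j}$ of $Z$ once $X$ is fixed, and I want to see exactly which entries are free.

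The heart of the argument is to read off the equations \eqref{eq2.2'} and \eqref{eq2.3} from \eqref{eq2.0}. Comparing the $(i,j)$ entry of $Z+J_m\,{}^tZJ_m$ (which is $z_{i,j}+z_{m+1-j,m+1-i}$) with the $(i,j)$ entry of $-J_m\,{}^tXJ_{2d+1}X$ (which is $-\sum_{k=1}^{2d+1}x_{k,j}x_{2d+2-k,m+1-i}$), one gets, for $i+j=m+1$, the self-paired relation giving \eqref{eq2.2'} with the factor $1/2$ (here ${\rm char}\,\bbf\ne 2$ is used), and for $i+j\ge m+2$ the relation \eqref{eq2.3} expressing $z_{i,j}$ in terms of $z_{m+1-j,m+1-i}$, whose index sum is $2(m+1)-(i+j)\le m$. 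Thus every entry $z_{i,j}$ with $i+j\ge m+1$ is uniquely determined by $X$ and by the entries $\{z_{i',j'}\mid i'+j'\le m\}$. Conversely, for any choice of $X$ and of the entries $\{z_{i,j}\}_{i+j\le m}$, defining the remaining entries by \eqref{eq2.2'} and \eqref{eq2.3} produces a matrix $Z$ satisfying \eqref{eq2.0}, hence $g(X,Z)\in N_{W_0}$; this gives a well-defined inverse map. The map $(X,\{z_{i,j}\}_{i+j\le m})\mapsto g(X,Z)$ and the reconstruction map are mutually inverse, establishing the bijection.

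Finally I would count the parameters: $X$ contributes $m(2d+1)$ entries, and the free entries of $Z$ are those $z_{i,j}$ with $1\le i,j\le m$ and $i+j\le m$, of which there are $\sum_{i=1}^{m-1}(m-i)=m(m-1)/2$. Hence the total is $m(2d+1)+m(m-1)/2$, matching the statement with $m=n-d$. The only mild subtlety — and the one place any care is needed — is making sure the indexing in \eqref{eq2.3} is consistent, i.e. that when $i+j\ge m+2$ the ``partner'' index pair $(m+1-j,m+1-i)$ indeed has index sum $\le m$ so that the recursion bottoms out and no circularity occurs; this is immediate from $2(m+1)-(i+j)\le m \iff i+j\ge m+2$. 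There is no serious obstacle here: the lemma is essentially a bookkeeping statement unwinding the orthogonality relation, and the proof is a direct verification once \eqref{eq2.2'} and \eqref{eq2.3} are in hand.
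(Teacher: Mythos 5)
Your proposal is correct and follows essentially the same route as the paper, which simply records the relations \eqref{eq2.2'} and \eqref{eq2.3} extracted from \eqref{eq2.0} and states the lemma as an immediate consequence. Your entry-by-entry verification, the observation that the $(i,j)$ and $(m+1-j,m+1-i)$ equations pair off so the recursion bottoms out at $i+j\le m$, and the parameter count $m(2d+1)+m(m-1)/2$ all match the intended argument.
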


\begin{definition} \ A maximally isotropic subspace $V$ in $\bbf^{2n+1}$ is called standard if
\begin{align*}
V & =(V\cap W_0)\oplus (V\cap W_1)\oplus (V\cap W_2), \\
V\cap W_0 & =U_{(\alpha)}\ (=\bbf e_1\oplus\cdots\oplus \bbf e_a) \\
\mbox{and}\quad V\cap W_2 & =U_{(\beta)}\ (=\bbf e_{n+d+2}\oplus\cdots\oplus \bbf e_{2n-a+1})
\end{align*}
with some $a=0,\ldots,m=n-d$.
\end{definition}

\begin{proposition} \ {\rm (i)} For every $V\in M$, there exists a $g\in Q$ such that $gV$ is standard.

{\rm (ii)} Let $V$ and $V'$ be two standard elements in $M$. If $V'=gV$ for some $g=g_Qg_L\in P_{W_0}=QL_{W_1}$, then $V'=g_L V$.

{\rm (iii)} When $\bbf=\bbf_r$, $\displaystyle{|QV|=r^{((n-a)(n-a+1)-d(d+1))/2}{[r]_{a+b}\over [r]_a[r]_b}}$ for $V\in M$. Here $a=\dim(V\cap W_0),\ b=n-d-a$ and
$[r]_k=(r+1)(r^2+r+1)\cdots(r^{k-1}+r^{k-2}+\cdots+1)$.
\label{prop2.3}
\end{proposition}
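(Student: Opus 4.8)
The plan is to treat this as an explicit variant of the Bruhat decomposition for $P_{W_0}$ acting on the maximal isotropic Grassmannian $M$, carrying out all reductions by hand in the coordinates $g(X,Z)$ supplied by Lemma \ref{lem2.1}. For part (i), I would start from an arbitrary $V \in M$ and use the action of $N_{W_0}$ (which fixes $W_0$ pointwise and acts on $W_0^\perp/W_0 \cong W_1$ by the orthogonal group $L_{W_1}$, up to the $L_{W_0}$-twist) to straighten $V$ step by step: first apply an element of $N_{W_0}$ to move the ``new'' directions of $V$ transverse to $W_0$ into $W_1 \oplus W_2$, exploiting that $N_{W_0}$ surjects onto a suitable unipotent piece; then apply $L_{W_0} \cong {\rm GL}_m(\bbf)$ to put $V \cap W_0$ into the standard coordinate form $U_{(\alpha)}$ and, dually (using that $g \in L_{W_0}$ acts on $W_2$ via $J_m\,{}^tA^{-1}J_m$), to put $V \cap W_2$ into the form $U_{(\beta)}$. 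The key structural point to check is that after these moves one really does get the direct-sum decomposition $V = (V\cap W_0) \oplus (V \cap W_1) \oplus (V \cap W_2)$; this is where the isotropy of $V$ and the relation $W_0^\perp = W_0 \oplus W_1$ are used, since a vector of $V$ with nonzero $W_2$-component pairs nontrivially against $W_0$ unless its $W_0$-part is also present in a controlled way.

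For part (ii) I would argue that the ``extra'' freedom $L_{W_1}$ (the orthogonal group of $W_1$) does not move a standard $V$ off its $L_{W_1}$-orbit needlessly: write $g = g_Q g_L$ with $g_Q \in Q$, $g_L \in L_{W_1}$, and observe that $g_L$ fixes $W_0 \oplus W_2$ pointwise, hence fixes $V \cap W_0 = U_{(\alpha)}$ and $V \cap W_2 = U_{(\beta)}$; then the condition $V' = gV$ with both $V, V'$ standard forces $g_Q$ to stabilize the standard form, and a direct inspection of how $Q = L_{W_0} N_{W_0}$ acts on the three-block decomposition shows $g_Q V = V$, so $V' = g_L V$. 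Concretely, I expect this to reduce to: the stabilizer in $Q$ of the ambient flag $W_0 \subset W_0^\perp$ together with the standard subspaces $U_{(\alpha)}, U_{(\beta)}$ acts trivially on $V$ itself.

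For part (iii), over $\bbf = \bbf_r$, the count $|QV| = |Q/(Q \cap P_V)|$ should be obtained by a dimension/fiber bookkeeping: the orbit map $Q \to QV$ has fibers of size $|Q \cap P_V|$, and using the parametrization of $N_{W_0}$ by $\bbf^{m(2d+1)+m(m-1)/2}$ from Lemma \ref{lem2.1} together with $L_{W_0} \cong {\rm GL}_m(\bbf)$ one reads off $|Q| = |{\rm GL}_m(\bbf_r)| \cdot r^{m(2d+1)+m(m-1)/2}$. Then I would identify $Q \cap P_V$ as a parabolic-type subgroup whose ${\rm GL}_m$-part is the block stabilizer of the flag $U_{(\alpha)} \subset U_0$ inside ${\rm GL}_m$ (contributing the Grassmannian factor $[r]_{a+b}/([r]_a[r]_b) \cdot (\text{power of }r)$) and whose unipotent part is cut out by exactly the linear conditions forcing $g(X,Z)V \subset V$; carefully counting how many of the $m(2d+1)+m(m-1)/2$ coordinates survive gives the exponent $((n-a)(n-a+1)-d(d+1))/2$. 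The main obstacle I anticipate is precisely this last exponent computation: one must track which entries of $X$ and which of the free $z_{i,j}$ (with $i+j \le m$) are constrained by the requirement $g(X,Z)U_{(\alpha)} \subset U_{(\alpha)}$-adapted-$V$, and verify that the alternating-form-type quadratic relations \eqref{eq2.2'}--\eqref{eq2.3} do not further reduce the count; getting the bookkeeping to collapse to the stated closed form $\tfrac12\big((n-a)(n-a+1)-d(d+1)\big)$ is the delicate step, and I would double-check it against the small cases $a=0$ and $d=0$.
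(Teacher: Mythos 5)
Your overall route is the same as the paper's (decompose $Q=L_{W_0}N_{W_0}$, straighten $V$ block by block, count stabilizers via Lemma \ref{lem2.1}), but part (ii) as you describe it has a genuine gap. Your argument rests on the assertion that ``$V'=gV$ with both standard forces $g_Q$ to stabilize the standard form'', followed by ``$g_QV=V$''. First, the space that $g_Q$ acts on is $g_LV$, not $V$, so what you need is $g_Q(g_LV)=g_LV$. More seriously, nothing you wrote shows that $g_Q$ preserves $U_{(\beta)}$: the $N_{W_0}$-component of $g_Q$ moves $W_2$ off itself, so $g_Q$ carries $g_LV\cap W_2=U_{(\beta)}$ to a subspace of $V'$ that need not lie in $W_2$, and your reduction to ``the stabilizer in $Q$ of the ambient flag together with $U_{(\alpha)},U_{(\beta)}$'' never gets started, because $g_Q$ is not shown to lie in that stabilizer. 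The paper's proof sidesteps this: dimension invariance gives $V\cap W_0=V'\cap W_0$ and $V\cap W_2=V'\cap W_2$, and then for $v\in V\cap W_1$ one combines $gv\in V'\cap W_0^\perp=(V'\cap W_0)\oplus(V'\cap W_1)$ with $gv=g_Qg_Lv\in g_Lv+W_0$ to read off that the $W_1$-component $g_Lv$ lies in $V'\cap W_1$; a dimension count then gives $g_L(V\cap W_1)=V'\cap W_1$. You should replace your step by this projection argument.

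For (i) and (iii) your plan is the paper's, but the decisive verifications are left undone. In (i) the crux is that the straightening element exists \emph{inside} $N_{W_0}$: after normalizing $V\cap W_0=U_{(\alpha)}$ by $L_{W_0}$ (equivalently $\pi_2(V)=U_{(\beta)}$), one lifts $e_{n+d+1+j}$ to $u_j\in V$, and the isotropy relations $(u_j,u_k)=0$ force the coefficients of the $u_j$ to satisfy precisely the quadratic constraints (\ref{eq2.2'})--(\ref{eq2.3}) defining $N_{W_0}$, so that they extend to a genuine $g(X,Z)$; the residual splitting of $g^{-1}V\cap(W_0\oplus W_1)$ into $W_0$- and $W_1$-parts then follows from orthogonality to $U_{(\beta)}$. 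This is the point you only allude to. In (iii) the bookkeeping does collapse as you hope, but only after first conjugating by $L_{W_1}$ (legitimate since $\ell Q\ell^{-1}=Q$) to put $V\cap W_1$ in coordinate form; the stabilizer of $V$ in $N_{W_0}$ is then cut out by \emph{linear} conditions on $X$ and on the free entries $z_{i,j}$ with $i+j\le m$ (the quadratic relations only determine the dependent entries, so they do not affect the count), giving $|N_{W_0}V|=r^{b(d+1)+b(b-1)/2}=r^{((n-a)(n-a+1)-d(d+1))/2}$, while the factor $[r]_{a+b}/[r]_a[r]_b$ is the size of the $L_{W_0}$-orbit of the pair $(V\cap W_0,\pi_2(V))$.
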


\begin{proof} (i) Let $\pi_2:\bbf^{2n+1}\to W_2$ denote the projection with respect to the direct sum decomposition $\bbf^{2n+1}=W_0\oplus W_1\oplus W_2$. By the action of $L_{W_0}\cong {\rm GL}(W_0)$, we may assume $V\cap W_0=U_{(\alpha)}$. It is equivalent to $\pi_2(V)=U_{(\beta)}$. Take vectors $u_1,\ldots,u_b\in V$ such that $\pi_2(u_j)=e_{n+d+1+j}$ for $j=1,\ldots,b$. Then we can write
$$u_j=e_{n+d+1+j}+\sum_{i=1}^m \widetilde{z}_{i,j}e_i +\sum_{i=1}^{2d+1} \widetilde{x}_{i,j}e_{m+i}$$
with some $\widetilde{z}_{i,j},\widetilde{x}_{i,j}\in\bbf$. It follows from the condition $(u_j,u_k)=0$ for $j,k=1,\ldots,b$ that
$$\widetilde{z}_{m+1-j,j}=-{1\over 2}\sum_{k=1}^{2d+1} \widetilde{x}_{k,j}\widetilde{x}_{2d+2-k,j}$$
for $j=1,\ldots,b$ and that
$$\widetilde{z}_{i,j}=-\widetilde{z}_{m+1-j,m+1-i}-\sum_{k=1}^{2d+1} \widetilde{x}_{k,j}\widetilde{x}_{2d+2-k,m+1-i}$$
for $(i,j)\in\{(i,j)\mid i+j\ge m+2,\ j\le b\}$. Hence we can take $X=\{x_{i,j}\}$ and $Z=\{z_{i,j}\}$ satisfying (\ref{eq2.2'}) and (\ref{eq2.3}) so that
$x_{i,j}=\widetilde{x}_{i,j}$ and that $z_{i,j}=\widetilde{z}_{i,j}$
for $j=1,\ldots,b$. Take $g=g(X,Z)\in N_{W_0}$. Then we have
$$g^{-1}V=(g^{-1}V\cap (W_0\oplus W_1))\oplus(g^{-1}V\cap W_2)\mand g^{-1}V\cap W_2=U_{(\beta)}.$$
Since $g^{-1}V\cap (W_0\oplus W_1)\perp g^{-1}V\cap W_2$, we have
$$g^{-1}V\cap (W_0\oplus W_1)\subset U_{(\alpha)}\oplus W_1.$$
Hence
$$g^{-1}V\cap (W_0\oplus W_1)=(g^{-1}V\cap W_0)\oplus (g^{-1}V\cap W_1)$$
with $g^{-1}V\cap W_0=V\cap W_0=U_{(\alpha)}$.

(ii) The condition $V'=gV$ implies $\dim(V\cap W_0)=\dim(V'\cap W_0)$. So we have
$$V_i\cap W_0=V'_i\cap W_0=U_{(\alpha)}\mand V_i\cap W_2=V'_i\cap W_2=U_{(\beta)}.$$
We have only to show that $g_L(V\cap W_1)=V'\cap W_1$. Let $v$ be an element of $V\cap W_1$. Then we have
$$gv\in V'\cap W_0^\perp=(V'\cap W_0)\oplus (V'\cap W_1).$$
So we can write $gv=v_0+v_1$ with some $v_0\in V'\cap W_0$ and $v_1\in V'\cap W_1$. Since $g_L v\in W_1$ and since $gv=g_Qg_L v\in g_L v+W_0$, it follows that
$g_L v=v_1$.
Thus we have proved $g_L(V\cap W_1)=V'\cap W_1$ since $\dim(V\cap W_1)=\dim(V'\cap W_1)$.

(iii) By (i), we may assume that $V$ is standard. Note that we may consider $\ell V$ instead of $V$ with some $\ell\in L_{W_1}$ because $\ell Q\ell^{-1}=Q$. So we may assume
$$V\cap W_1=\bbf e_{m+1}\oplus\cdots\oplus \bbf e_n.$$

Let $g=g(X,Z)$ be an element of $N_{W_0}$ such that $gV=V$. Since $g(V\cap W_2)\subset V$, we have
\begin{equation}
i\ge d+1,\ j\le b\Longrightarrow x_{i,j}=0 \label{eq2.4'}
\end{equation}
and
\begin{equation}
i\ge a+1,\ j\le b\Longrightarrow z_{i,j}=0. \label{eq2.5}
\end{equation}
It follows from (\ref{eq2.2'}), (\ref{eq2.3}) and (\ref{eq2.4'}) that
$z_{m+1-j,j}=0$
for $j=1,\ldots,b$ and that
$z_{i,j}=-z_{m+1-j,m+1-i}$
for $(i,j)\in\{(i,j)\mid j\le b,\ i+j\ge m+2\}$. Hence the condition (\ref{eq2.5}) follows from the condition (\ref{eq2.4'}) and the condition
\begin{equation}
i\ge a+1,\ j\le b,\ i+j\le a+b\Longrightarrow z_{i,j}=0. \label{eq2.6}
\end{equation}

Conversely suppose that $g=g(X,Z)$ satisfies the conditions (\ref{eq2.4'}) and (\ref{eq2.6}). Then $g(V\cap W_2)\subset V$. Write
$$ge_{m+j}=e_{m+j}+\sum_{i=1}^m y_{i,j}e_i$$
for $j=1,\ldots,2d+1$. Then $y_{i,j}=-x_{2d+2-j,m+1-i}$ by (\ref{eq2.0}). Since
$$y_{i,j}=-x_{2d+2-j,m+1-i}=0\mbox{ for }(i,j)\in\{(i,j)\mid i\ge a+1,\ j\le d+1\}$$
by (\ref{eq2.4'}), we also have $g(V\cap W_1)\subset V$. Hence $gV=V$.

Thus it follows from Lemma \ref{lem2.1} that
\begin{align*}
|N_{W_0}V| & =|N_{W_0}|/|\{g\in N_{W_0}\mid gV=V\}|=r^{b(d+1)+b(b-1)/2} \\
& =r^{((n-a)(n-a+1)-d(d+1))/2}.
\end{align*}
On the other hand, we have
$|L_{W_0}V|=[r]_{a+b}/[r]_a[r]_b$
since $L_{W_0}\cong{\rm GL}(W_0)$ and $h(V\cap W_2)$ is the orthogonal subspace of $h(V\cap W_0)$ in $W_2$ for $h\in L_0$. Thus we have the desired formula for $|QV|$.
\end{proof}

Fix a standard maximally isotropic subspace $V$ in $\bbf^{2n+1}$ and let $M_0(V)$ denote the subvariety of $M_0$ consisting of full flags $\mcf: V_1\subset\cdots\subset V_n$ such that $V_n=V$.

For a full flag $\mcf:V_1\subset\cdots\subset V_n$ contained in $M_0(V)$, define
$$a_i=a_i(\mcf)=\dim(V_i\cap W_0),\ c_i=c_i(\mcf)=\dim(V_i\cap W_0^\perp)-\dim(V_i\cap W_0)$$
$$\mand b_i=b_i(\mcf)=\dim V_i-\dim(V_i\cap W_0^\perp)$$
for $i=0,1,\ldots,n$. Define subsets
\begin{align*}
I_{(\alpha)} & =\{\alpha_1,\ldots,\alpha_a\}=\{i\in I\mid a_i=a_{i-1}+1\}, \\
I_{(\lambda)} & =\{\lambda_1,\ldots,\lambda_d\}=\{i\in I\mid c_i=c_{i-1}+1\}, \\
I_{(\beta)} & =\{\beta_1,\ldots,\beta_b\}=\{i\in I\mid b_i=b_{i-1}+1\}
\end{align*}
of $I=\{1,\ldots,n\}$ with $\alpha_1<\cdots<\alpha_a,\ \lambda_1<\cdots<\lambda_d$ and $\beta_1<\cdots<\beta_b$. Then $I=I_{(\alpha)}\sqcup I_{(\lambda)}\sqcup I_{(\beta)}$. Consider the permutation
$$\sigma=\sigma(\mcf):(1\,2\cdots n)\mapsto (\alpha_1\cdots\alpha_a\lambda_1\cdots\lambda_d\beta_1\cdots\beta_b)$$
of $I$. Then the inversion number $\ell(\sigma)$ of $\sigma$ is
$$\ell(\sigma)=|\{(i,j)\mid \alpha_i>\lambda_j\}|+|\{(i,j)\mid \alpha_i>\beta_j\}|+|\{(i,j)\mid \lambda_i>\beta_j\}|.$$

\begin{definition} \ A full flag $\mcf: V_1\subset\cdots\subset V_n=V$ in $M_0(V)$ is called $L_{W_1}$-standard if
\begin{align*}
V_i & =(V_i\cap W_0)\oplus (V_i\cap W_1)\oplus (V_i\cap W_2), \\
V_i\cap W_0 & =\bbf e_1\oplus\cdots\oplus \bbf e_{a_i(\mcf)} \\
\mbox{and}\quad V_i\cap W_2 & =\bbf e_{n+d+2}\oplus\cdots\oplus \bbf e_{n+d+1+b_i(\mcf)}
\end{align*}
for $i=1,\ldots,n$.
\end{definition}

Write $Q_V=Q\cap P_V$ and $(L_{W_1})_V=L_{W_1}\cap P_V$. By Proposition \ref{prop2.3} (ii), we have
$$P_{W_0}\cap P_V=Q_V(L_{W_1})_V.$$

\begin{proposition} \ {\rm (i)} For every full flag $\mcf: V_1\subset\cdots\subset V_n=V$ in $M_0(V)$, there exists a $g\in Q_V$ such that $g\mcf: gV_1\subset\cdots\subset gV_n=V$ is $L_{W_1}$-standard.

{\rm (ii)} Let $\mcf$ and $\mcf'$ be two $L_{W_1}$-standard full flags in $M_0(V)$. If $\mcf'=g\mcf$ for some $g=g_Qg_L\in P_{W_0}\cap P_V=Q_V(L_{W_1})_V$, then $\mcf'=g_L\mcf$.

{\rm (iii)} When $\bbf=\bbf_r$, we have $|Q_V\mcf|=r^{\ell(\sigma(\mcf))}[r]_a[r]_b$.
\label{prop2.5}
\end{proposition}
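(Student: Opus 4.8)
The strategy mirrors the proof of Proposition \ref{prop2.3}, working relatively over the Levi factor $L_{W_1}$ and using the unipotent group $N_{W_0}$ to bring a full flag into $L_{W_1}$-standard form step by step. Concretely, I would proceed by induction on $n$ (or equivalently by sweeping through the flag $V_1\subset\cdots\subset V_n=V$ from the bottom), at each stage choosing a suitable element of $N_{W_0}\cap P_V$ that fixes $V$ and the already-normalized bottom part of the flag, while straightening out the next subspace $V_i$. For part (i): assuming $V_1\subset\cdots\subset V_{i-1}$ is already $L_{W_1}$-standard, pick a vector $v\in V_i\setminus V_{i-1}$; using the action of $(L_{W_1})_V$ (which is a product of ${\rm GL}$'s acting on $W_1$) I may assume the $W_1$-component of $v$ is a standard basis vector or zero, and then apply an element $g(X,Z)\in N_{W_0}$ — chosen exactly as in the proof of Proposition \ref{prop2.3}(i), killing the $W_0$- and $W_1$-parts that shouldn't be there — to force $v$ into the standard decomposition; one must check that $g$ can be taken to fix $V=V_n$ and the lower subspaces $V_1,\dots,V_{i-1}$, which is where the conditions (\ref{eq2.4'}) and (\ref{eq2.6}) computed in Proposition \ref{prop2.3}(iii) get used.

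For part (ii): given two $L_{W_1}$-standard flags $\mcf$ and $\mcf'$ with $\mcf'=g_Qg_L\mcf$, comparing dimensions of intersections with $W_0$ and $W_2$ shows $V_i\cap W_0=V_i'\cap W_0$ and $V_i\cap W_2=V_i'\cap W_2$ for all $i$, so only the $W_1$-parts differ; then the same argument as in Proposition \ref{prop2.3}(ii) — writing $g_L v\in W_1$ and $g_Qg_L v\in g_L v+W_0$, so that the $W_1$-component of $g v$ equals $g_L v$ — shows $g_L(V_i\cap W_1)=V_i'\cap W_1$, hence $g_L\mcf=\mcf'$.

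For part (iii): by (i) we may assume $\mcf$ is $L_{W_1}$-standard, and since $\ell Q\ell^{-1}=Q$ for $\ell\in L_{W_1}$ I may further normalize $V\cap W_1=\bbf e_{m+1}\oplus\cdots\oplus\bbf e_n$ as in Proposition \ref{prop2.3}(iii). The orbit count factors as $|Q_V\mcf|=|N_{W_0}\cdot\mcf|\,\cdot\,|L_{W_0}\cdot\mcf|$ once one checks the relevant stabilizers split accordingly. The factor $|L_{W_0}\mcf|=[r]_a[r]_b$ comes from $L_{W_0}\cong{\rm GL}(W_0)$ acting on the induced flag on $W_0$ together with its mirror on $W_2$, which is a ${\rm GL}_m$-flag count refined by the positions $I_{(\alpha)}$, $I_{(\beta)}$: the relevant stabilizer is a parabolic whose index is $[r]_{a+b}/([r]_a[r]_b)$ times a power of $r$, and the power of $r$ is $|\{(i,j): \alpha_i>\beta_j\}|$ — but that term, together with the two cross-terms involving $I_{(\lambda)}$, must be combined with the $N_{W_0}$-contribution. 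The factor from $N_{W_0}$ is computed by identifying, via Lemma \ref{lem2.1}, the affine coordinates $(X,\{z_{i,j}\}_{i+j\le m})$ of a $g(X,Z)\in N_{W_0}$ that stabilizes the whole flag: these are linear conditions (generalizing (\ref{eq2.4'}) and (\ref{eq2.6}) to each $V_i$), so the stabilizer is an affine subspace and its codimension, carefully tallied against the dimension $m(2d+1)+m(m-1)/2$ of $N_{W_0}$, produces exactly the remaining exponent so that the total power of $r$ is $\ell(\sigma(\mcf))$ as given by the displayed formula for $\ell(\sigma)$. The main obstacle is precisely this bookkeeping in (iii): one has to match the three cross-terms in $\ell(\sigma)$ — the $\alpha$-vs-$\lambda$, $\alpha$-vs-$\beta$, and $\lambda$-vs-$\beta$ inversion counts — against the codimensions coming separately from $N_{W_0}$ and from the $L_{W_0}$-parabolic, and verify that the $x$-coordinates and $z$-coordinates contribute independently (no hidden relations beyond those recorded in (\ref{eq2.0})). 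Parts (i) and (ii) are routine adaptations of the $M$-case; the inductive straightening in (i) and the independence of the coordinate conditions in (iii) are the points requiring genuine care.
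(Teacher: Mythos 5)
Parts (i) and (ii) of your plan are essentially the paper's argument: (ii) is word-for-word the $W_0$-shift argument of Proposition \ref{prop2.3}(ii) applied to each $V_i$, and (i) is the same normalization (first $L_{W_0}\cap P_V$ on the $W_0$- and $W_2$-components, then an element $g(X,Z)\in N_{W_0}\cap P_V$ killing the spurious components), except that the paper writes down one explicit $g(X,Z)$ for the whole flag rather than sweeping up the flag inductively. One caution for (i): the conclusion demands $g\in Q_V$, so your intermediate use of $(L_{W_1})_V$ must be packaged as a conjugation --- replace $\mcf$ by $\ell\mcf$ and use $\ell Q_V\ell^{-1}=Q_{\ell V}$ together with the fact that $L_{W_1}$ preserves $L_{W_1}$-standardness --- as you do explicitly in (iii) but not in (i).

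The genuine gap is in (iii). First, the factorization must read $|Q_V\mcf|=|(N_{W_0}\cap P_V)\mcf|\cdot|(L_{W_0}\cap P_V)\mcf|$, not with the full groups $N_{W_0}$ and $L_{W_0}$ (which do not preserve $V_n=V$); the stabilizer does split because $N_{W_0}$ fixes $W_0$ pointwise and acts trivially on $W_2$ modulo $W_0\oplus W_1$. Second, and more seriously, your proposed distribution of the exponent is wrong. The group $L_{W_0}\cap P_V$ is the parabolic of ${\rm GL}(W_0)$ stabilizing $U_{(\alpha)}$; since a standard flag is determined by its $W_0$-, $W_1$- and $W_2$-components and $L_{W_0}$ fixes $W_1$ pointwise, its orbit through $\mcf$ is exactly the set of pairs (full flag in $V\cap W_0$, full flag in $V\cap W_2$), of cardinality exactly $[r]_a[r]_b$ --- there is no residual power of $r$, and the term $|\{(i,j)\mid\alpha_i>\beta_j\}|$ does not come from this factor. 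All three cross-terms of $\ell(\sigma)$ come from $N_{W_0}\cap P_V$: in the coordinates of Lemma \ref{lem2.1}, the stabilizer of $\mcf$ inside $N_{W_0}\cap P_V$ is cut out by the additional vanishing conditions $x_{2d+2-j,m+1-i}=0$ for $\alpha_i>\lambda_j$ (forced by $ge_{m+j}\in V_{\lambda_j}$), and $x_{i,j}=0$ for $\lambda_i>\beta_j$ together with $z_{i,j}=0$ for $\alpha_i>\beta_j$ (forced by $ge_{n+d+1+j}\in V_{\beta_j}$); one checks these $\ell(\sigma)$ conditions involve free coordinates disjoint from those constrained by (\ref{eq2.4'}) and (\ref{eq2.6}), so the index is exactly $r^{\ell(\sigma)}$. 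As written, your plan sends you looking for a cancellation between an $L_{W_0}$-contribution and an $N_{W_0}$-contribution that does not exist, so the ``bookkeeping'' you defer would not close as described.
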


\begin{proof} (i) Since $V$ is standard, it is written as
$$V=(V\cap W_0)\oplus(V\cap W_1)\oplus(V\cap W_2),$$
with $V\cap W_0=U_{(\alpha)}$ and $V\cap W_2=U_{(\beta)}$. We may moreover assume $V\cap W_1=\bbf e_{m+1}\oplus\cdots\oplus \bbf e_n$ replacing $V$ by $\ell V$ with some $\ell\in L_{W_1}$. By the same reason, we may assume
$$\pi_1(V_{\lambda_i}\cap(W_0\oplus W_1))=\bbf e_{m+1}\oplus\cdots\oplus \bbf e_{m+i}$$
for $i=1,\ldots,d$ where $\pi_1: W_0\oplus W_1\to W_1$ is the projection.

By the action of $L_{W_0}\cap P_V$, we may assume
$$V_i\cap W_0=\bbf e_1\oplus\cdots\oplus \bbf e_{a_i(\mcf)}\mand \pi_2(V_i)=\bbf e_{n+d+2}\oplus\cdots\oplus \bbf e_{n+d+1+b_i(\mcf)}$$
for $i=1,\ldots,n$. We can take vectors
$$w_j=e_{m+j}+\sum_{i=1}^a \widetilde{y}_{i,j}e_i\in (V\cap W_0)\oplus(V\cap W_1)$$
for $j=1,\ldots,d$ such that
$\bbf w_1\oplus\cdots\oplus \bbf w_j\subset V_{\lambda_j}$.
We can also take vectors
$$w'_j=e_{n+d+1+j}+\sum_{i=1}^a \widetilde{z}_{i,j}e_i +\sum_{i=1}^d \widetilde{x}_{i,j}e_{m+i}\in V$$
for $j=1,\ldots,b$ such that
$\bbf w'_1\oplus\cdots\oplus \bbf w'_j\subset V_{\beta_j}$.
Take $g=g(X,Z)\in N_{W_0}\cap P_V$ with $X=\{x_{i,j}\}$ and $Z=\{z_{i,j}\}$ so that
\begin{align*}
x_{i,j} & =\begin{cases} \widetilde{x}_{i,j} & \text{for $i\le d,\ j\le b$}, \\
0 & \text{for $i\ge d+1,\ j\le b$}, \\
0 & \text{for $i\le d+1,\ j\ge b+1$}, \\
\widetilde{y}_{m+1-j,2d+2-i} & \text{for $i\ge d+2,\ j\ge b+1$}, \end{cases} \\
z_{i,j} & =\begin{cases} \widetilde{z}_{i,j} & \text{for $i\le a,\ j\le b$}, \\
0 & \text{if $i\ge a+1$ or $j\ge b+1$}. \end{cases}
\end{align*}
Then we have
$$g^{-1}V_i=(g^{-1}V_i\cap W_0)\oplus (g^{-1}V_i\cap W_1)\oplus (g^{-1}V_i\cap W_2)$$
with
$g^{-1}V_i\cap W_0 =\bbf e_1\oplus\cdots\oplus \bbf e_{a_i(\mcf)}, \ 
g^{-1}V_i\cap W_1 =\bbf e_{m+1}\oplus\cdots\oplus \bbf e_{m+c_i(\mcf)}$ and 
$g^{-1}V_i\cap W_2 =\bbf e_{n+d+2}\oplus\cdots\oplus \bbf e_{n+d+1+b_i(\mcf)}$
for $i=1,\ldots,n$.

(ii) The condition $\mcf'=g\mcf$ implies $a_i(\mcf)=a_i(\mcf')$ and $b_i(\mcf)=b_i(\mcf')$ for $i=1,\ldots,n$. So we have
$$V_i\cap W_0=V'_i\cap W_0\mand V_i\cap W_2=V'_i\cap W_2.$$
We have only to show that $g_L(V_i\cap W_1)=V'_i\cap W_1$ for $i=1,\ldots,n$. Let $v$ be an element of $V_i\cap W_1$. Then we have
$$gv\in V'_i\cap W_0^\perp=(V'_i\cap W_0)\oplus (V'_i\cap W_1).$$
So we can write $gv=v_0+v_1$ with some $v_0\in V'_i\cap W_0$ and $v_1\in V'_i\cap W_1$. Since $g_L v\in W_1$ and since $gv=g_Qg_L v\in g_L v+W_0$, it follows that
$g_L v=v_1$.
Thus we have proved $g_L(V_i\cap W_1)=V'_i\cap W_1$ since $\dim(V_i\cap W_1)=\dim(V'_i\cap W_1)$.

(iii) By (i), we may assume that $\mcf$ is $L_{W_1}$-standard. As in the proof of (i), we may assume
$V_{\lambda_i}\cap W_1=\bbf e_{m+1}\oplus\cdots\oplus \bbf e_{m+i}$
for $i=1,\ldots,d$. Note that
$$gV=V\Longleftrightarrow \mbox{(\ref{eq2.4'}) and (\ref{eq2.5})}$$
for $g=g(X,Z)\in N_{W_0}$.

Suppose $g\mcf=\mcf$. Then since $ge_{m+j}\in V_{\lambda_j}$, we have
$$i\le a,\ \alpha_i>\lambda_j\Longrightarrow y_{i,j}=-x_{2d+2-j,m+1-i}=0$$
for $j=1,\ldots,d$. Since $ge_{n+d+1+j}\in V_{\beta_j}$, we also have
\begin{align*}
i\le d,\ \lambda_i>\beta_j & \Longrightarrow x_{i,j}=0 \\
\mand i\le a,\ \alpha_i>\beta_j & \Longrightarrow z_{i,j}=0
\end{align*}
for $j=1,\ldots,b$. Conversely if $g\in N_{W_0}\cap P_V$ satisfies the above three conditions, then $g\mcf=\mcf$.

Thus we have
$$|(N_{W_0}\cap P_V)\mcf|=|N_{W_0}\cap P_V|/|\{g\in N_{W_0}\cap P_V\mid g\mcf=\mcf\}|=r^{\ell(\sigma(\mcf))}.$$
Clearly $|(L_{W_0}\cap P_V)\mcf|=[r]_a[r]_b$ (the product of the numbers of full flags in $V\cap W_0$ and $V\cap W_2$). So we have
$|Q_V\mcf|=r^{\ell(\sigma(\mcf))}[r]_a[r]_b$.
\end{proof}

\subsection{First reduction}

First apply Proposition \ref{prop2.3} to the case of $d=0$. Noting that $Q=P_{W_0}=P$ in this case, we get the $P$-orbit decomposition
\begin{equation}
M=\bigsqcup_{i=0}^n PU_i \label{eq2.7}
\end{equation}
of $M$. (Of course, we can also deduce this from the Bruhat decomposition of $G$.) Furthermore if $\bbf=\bbf_r$, then
\begin{equation}
|PU_i|=r^{i(i+1)/2}{[r]_n\over [r]_i[r]_{n-i}} \label{eq2.8}
\end{equation}
by Proposition \ref{prop2.3} (iii). By (\ref{eq2.7}), we have a decomposition
$M\times M=\bigsqcup_{i=0}^n\{(gU_0,gU_i)\mid g\in G\}$.
Since the isotropy subgroup of $G$ at $(U_0,U_i)\in M\times M$ is
$R_i=P\cap P_{U_i}$,
we can write
\begin{equation}
M\times M\cong \bigsqcup_{i=0}^n G/R_i. \label{eq2.1}
\end{equation}

By (\ref{eq2.1}), we have only to describe $G$-orbits on $(G/R_i)\times M$ and $(G/R_i)\times M_0$ with respect to the diagonal action of $G$ for $i=0,\ldots,n$. These orbit decompositions are identified with the $R_i$-orbit decompositions of $M$ and $M_0$, respectively.

\subsection{Second reduction}

Consider $R_d$-orbit decompositions of $M$ and $M_0$ for $d=0,\ldots,n$. Since
$U_0\cap U_d=\bbf e_1\oplus\cdots\oplus \bbf e_{n-d}=W_0$,
we have
$Q\subset R_d\subset P_{W_0}$
in the setting of Section 2.1. So we can apply the results in Section 2.1 once more. By Proposition \ref{prop2.3} and Proposition \ref{prop2.5}, the problems are reduced to the $L_{W_1}\cap R_d$-orbit decompositions of $M$ and $M_0$. Note that
$$L_{W_1}\cap R_d=\{g\in L_{W_1}\mid g(W_1\cap U_0)=W_1\cap U_0\mbox{ and }g(W_1\cap U_d)=W_1\cap U_d\}\cong {\rm GL}_d(\bbf)$$
since $(W_1\cap U_0)\cap (W_1\cap U_d)=\{0\}$. So we will consider the case of $d=n$ in the next subsection.

\subsection{The case of $d=n$}

Assume $d=n$. For $A\in {\rm GL}_n(\bbf)$, define
$$h[A]=\bp A & 0 & 0 \\ 0 & 1 & 0 \\ 0 & 0 & J\,{}^tA^{-1}J \ep \mbox{ with } J=J_n=\bp 0 && 1 \\ & \edots & \\ 1 && 0 \ep$$
as in Section 1.1. Then we can write
$R_n=\{h[A]\mid A\in {\rm GL}_n(\bbf)\}\cong {\rm GL}_n(\bbf)$.
Write $H=R_n$ in this subsection. For a maximal isotropic subspace $V$ in $\bbf^{2n+1}$, define
$$c_+=c_+(V)=\dim (V\cap U_0),\ c_-=c_-(V)=\dim (V\cap U_n),\ c_0=c_0(V)=n-c_+-c_-$$
$$\mand \varepsilon(V)=\dim(V+(U_0\oplus U_n))-2n\in \{0,1\}.$$

Let $M(U_{(+)},U_{(-)})$ denote the subvariety of $M$ consisting of $V\in M$ such that
$$V\cap U_0=U_{(+)}=\bbf e_1\oplus\cdots\oplus \bbf e_{c_+}\mbox{ and that } V\cap U_n=U_{(-)}=\bbf e_{n+2}\oplus\cdots\oplus \bbf e_{n+c_-+1}.$$
By the action of $H\cong {\rm GL}_n(\bbf)$, we may assume $V\in M(U_{(+)},U_{(-)})$.
Let $\pi_+:\bbf^{2n+1}\to U_0,\ \pi_-:\bbf^{2n+1}\to U_n$ and $\pi_Z:\bbf^{2n+1}\to Z=\bbf e_{n+1}$ denote the projections with respect to the direct sum decomposition $\bbf^{2n+1}=U_0\oplus U_n\oplus Z$. Write $U_{(0+)}=\bbf e_{c_++1}\oplus\cdots\oplus \bbf e_{c_++c_0}$ and $U_{(0-)}=\bbf e_{n+c_-+2}\oplus\cdots\oplus \bbf e_{n+c_-+c_0+1}$. Then
$$U_{(0)}=U_{(0+)}\oplus U_{(0-)}\oplus Z.$$

\begin{lemma} \ {\rm (i)} ${\rm ker}\,\pi_+|_V=U_{(-)}$ and ${\rm ker}\,\pi_-|_V=U_{(+)}$.

{\rm (ii)} $\pi_+(V)=U_{(+)}\oplus U_{(0+)}$ and $\pi_-(V)=U_{(-)}\oplus U_{(0-)}$.

{\rm (iii)} The inner product $(\ ,\ )$ is nondegenerate on the pair $(U_{(0+)},U_{(0-)})$.
\label{lem2.6}
\end{lemma}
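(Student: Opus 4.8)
The plan is to work throughout in the fixed orthogonal decomposition $\bbf^{2n+1}=U_0\oplus U_n\oplus Z$ and to exploit two structural facts: first, $Z=\bbf e_{n+1}$ is an anisotropic line with $Z\perp U_0$ and $Z\perp U_n$, while $U_0$ and $U_n$ are maximal isotropic and placed in perfect duality by $(\ ,\ )$; second, $V$ is maximal isotropic, so $V\subseteq V^\perp$, and by hypothesis $V\cap U_0=U_{(+)}$ and $V\cap U_n=U_{(-)}$. All three parts then reduce to elementary linear algebra together with index bookkeeping, the only genuinely useful extra input being orthogonality against $U_{(-)}$ (resp.\ $U_{(+)}$).

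For (i): by definition $\ker\pi_+|_V=V\cap(U_n\oplus Z)$. Take $v\in V$ with $\pi_+(v)=0$ and write $v=y+z$ with $y\in U_n$, $z\in Z$. Since $y$ is isotropic and $U_n\perp Z$, expanding $(v,v)=0$ gives $(z,z)=0$; as $Z$ is anisotropic this forces $z=0$, so $v\in V\cap U_n=U_{(-)}$. The reverse inclusion is immediate because $U_{(-)}\subseteq U_n\subseteq\ker\pi_+$. Exchanging the roles of $U_0$ and $U_n$ (and using $\ker\pi_-=U_0\oplus Z$) gives $\ker\pi_-|_V=U_{(+)}$ by the same argument.

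For (ii): from the dimension formula $\dim\pi_+(V)=\dim V-\dim\ker\pi_+|_V$ and (i) we get $\dim\pi_+(V)=n-c_-=c_++c_0$, so it suffices to prove $\pi_+(V)\subseteq U_{(+)}\oplus U_{(0+)}$ and then compare dimensions. Here the key point is the chain $U_{(-)}\subseteq V\subseteq V^\perp\subseteq U_{(-)}^\perp$, together with a direct index count showing $U_{(-)}^\perp=(U_{(+)}\oplus U_{(0+)})\oplus Z\oplus U_n$: indeed $U_{(-)}=\bbf e_{n+2}\oplus\cdots\oplus\bbf e_{n+c_-+1}$ is in perfect duality with $\bbf e_{n-c_-+1}\oplus\cdots\oplus\bbf e_n$, and $n-c_-=c_++c_0$. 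Applying $\pi_+$, whose kernel is $Z\oplus U_n$, yields $\pi_+(V)\subseteq\pi_+(U_{(-)}^\perp)=U_{(+)}\oplus U_{(0+)}$, and equality follows by dimension. The statement for $\pi_-(V)$ is obtained symmetrically.

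For (iii): this is a direct computation from the definitions of $U_{(0+)}$ and $U_{(0-)}$. For $1\le k,l\le c_0$ one has $(e_{c_++k},e_{n+c_-+1+l})=\delta_{(c_++k)+(n+c_-+1+l),\,2n+2}=\delta_{k+l,\,c_0+1}$, using $n+1-c_+-c_-=c_0+1$; hence the restriction of $(\ ,\ )$ to $U_{(0+)}\times U_{(0-)}$ has matrix $J_{c_0}$, which is nondegenerate. I expect the only delicate point to be organizing (ii): one should resist a coordinate-by-coordinate argument and instead use the single inclusion $V\subseteq U_{(-)}^\perp$ cleanly, being careful that the identity $c_++c_0=n-c_-$ is exactly what makes the index ranges match up; everything else is routine.
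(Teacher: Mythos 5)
Your proof is correct and follows essentially the same route as the paper: part (i) by expanding $(v,v)=0$ on $V\cap(U_n\oplus Z)$ to force the $Z$-component to vanish, part (ii) by combining rank--nullity with the observation that $V\perp U_{(-)}$ (your perp chain is just a repackaging of the paper's direct computation $(\pi_+(u),v)=0$), and part (iii) by the explicit pairing computation the paper leaves as "clear from the definition." No gaps.
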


\begin{proof} (i) By the symmetry, we have only to prove the first equality. The kernel of the map $\pi_+|_V:V\to U_0$ is $V\cap (U_n\oplus Z)$. If an element $v\in V$ is written as $v=v_-+z$ with $v_-\in U_n$ and $z\in Z$, then we have
$$0=(v,v)=(v_-+z,v_-+z)=(z,z)$$
and hence $z=0$. Thus we have $V\cap (U_n\oplus Z)=V\cap U_n=U_{(-)}$.

(ii) For $u\in V$ and $v\in U_{(-)}$, we have
$$0=(u,v)=(\pi_+(u)+\pi_-(u)+\pi_Z(u),v)=(\pi_+(u),v).$$
Hence $\pi_+(V)\subset U_{(+)}\oplus U_{(0+)}$. On the other hand, the dimension of $\pi_+(V)$ is $n-\dim U_{(-)}=n-c_-$ by (i). Hence the equality holds. By the symmetry, we have the second formula.

(iii) is clear from the definition.
\end{proof}

\begin{corollary} \ $V=U_{(+)}\oplus U_{(-)}\oplus (V\cap U_{(0)})$.
\end{corollary}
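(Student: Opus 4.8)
The plan is to prove the two halves of the claimed direct-sum decomposition separately: that $U_{(+)}+U_{(-)}+(V\cap U_{(0)})$ is a direct sum contained in $V$, and that it exhausts $V$.

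For the first half, $V\in M(U_{(+)},U_{(-)})$ gives $U_{(+)}=V\cap U_0\subset V$ and $U_{(-)}=V\cap U_n\subset V$, while $V\cap U_{(0)}\subset V$ trivially, so the sum lies in $V$. Directness I would obtain from a bookkeeping observation: $U_{(+)}$, $U_{(-)}$ and $U_{(0)}$ are spanned by pairwise disjoint subsets of the canonical basis $e_1,\ldots,e_{2n+1}$, namely the index sets $\{1,\ldots,c_+\}$, $\{n+2,\ldots,n+c_-+1\}$ and $\{c_++1,\ldots,c_++c_0\}\cup\{n+1\}\cup\{n+c_-+2,\ldots,n+c_-+c_0+1\}$; these are mutually disjoint because $c_++c_0=n-c_-\le n$ and $n+c_-+2>n+c_-+1$. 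Hence $U_{(+)}\oplus U_{(-)}\oplus U_{(0)}$ is direct, and so a fortiori is the subsum $U_{(+)}\oplus U_{(-)}\oplus(V\cap U_{(0)})$.

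The real content is that this sum is all of $V$, and it is exactly here that Lemma \ref{lem2.6} enters. Given $v\in V$, I would decompose $v=\pi_+(v)+\pi_-(v)+\pi_Z(v)$ along $\bbf^{2n+1}=U_0\oplus U_n\oplus Z$ and apply Lemma \ref{lem2.6}(ii) to write $\pi_+(v)=u_++u_{0+}$ with $u_+\in U_{(+)}$, $u_{0+}\in U_{(0+)}$, and $\pi_-(v)=u_-+u_{0-}$ with $u_-\in U_{(-)}$, $u_{0-}\in U_{(0-)}$. Then $v=u_++u_-+w$ where $w=u_{0+}+u_{0-}+\pi_Z(v)\in U_{(0+)}\oplus U_{(0-)}\oplus Z=U_{(0)}$; but also $w=v-u_+-u_-\in V$, so $w\in V\cap U_{(0)}$, and $v$ lies in $U_{(+)}+U_{(-)}+(V\cap U_{(0)})$. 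Together with the previous paragraph this yields $V=U_{(+)}\oplus U_{(-)}\oplus(V\cap U_{(0)})$. I do not anticipate a genuine obstacle; the one point needing care is that $\pi_\pm(v)$ must be split precisely along $U_{(0+)}$ (resp.\ $U_{(0-)}$) rather than merely along the full complement of $U_{(+)}$ in $U_0$ (resp.\ of $U_{(-)}$ in $U_n$), and this is exactly what Lemma \ref{lem2.6}(ii) supplies.
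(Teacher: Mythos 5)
Your proof is correct and follows exactly the route the paper intends: the corollary is stated without proof as an immediate consequence of Lemma \ref{lem2.6}, and your argument—directness from the disjoint basis-index sets, plus exhaustion by splitting $\pi_\pm(v)$ via Lemma \ref{lem2.6}(ii) and observing the remainder lies in $V\cap U_{(0)}$—is the natural fleshing-out of that. You also correctly identify the one genuinely needed input, namely that $\pi_+(V)=U_{(+)}\oplus U_{(0+)}$ rather than merely $\pi_+(V)\subset U_0$.
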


By Lemma \ref{lem2.6} (i) and (ii), we have linear isomorphisms
\begin{align*}
\pi_+|_{V\cap U_{(0)}}: & V\cap U_{(0)}\to U_{(0+)} 
\mand\pi_-|_{V\cap U_{(0)}}: V\cap U_{(0)}\to U_{(0-)}.
\end{align*}
So we can define a linear isomorphism
$$f_V=\pi_-|_{V\cap U_{(0)}}\circ(\pi_+|_{V\cap U_{(0)}})^{-1}: U_{(0+)}\to U_{(0-)}.$$
We also define a linear form $\varphi_V: U_{(0+)}\to \bbf$ by
$$\varphi_V(v)=(e_{n+1},\pi_+|_{V\cap U_{(0)}}^{-1}(v)).$$
By Lemma \ref{lem2.6} (iii) and the above argument, we can define a nondegenerate bilinear form on $U_{(0+)}$ by
$$\langle u,v\rangle_V=(f_V(u),v)$$
for $u,v\in U_{(0+)}$. Define the alternating part and the symmetric part of $\langle \ ,\ \rangle_V$ by
$$\langle u,v \rangle_V^{\rm alt}={1\over 2}(\langle u,v \rangle_V -\langle v,u \rangle_V) 
\mand \langle u,v \rangle_V^{\rm sym}={1\over 2}(\langle u,v \rangle_V +\langle v,u \rangle_V),$$
respectively. Let $u=u_++u_Z+u_-$ and $v=v_++v_Z+v_-$ be elements of $V\cap U_{(0)}$ with $u_+,v_+\in U_{(0+)},\ u_Z,v_Z\in Z$ and $u_-,v_-\in U_{(0-)}$. Then we have
\begin{align*}
0=(u,v) & =(u_++u_Z+u_-,v_++v_Z+v_-) \\
& =(u_+,v_-)+(u_Z,v_Z)+(u_-,v_+) \\
& =(u_+,f_V(v_+))+(u_Z,v_Z)+(f_V(u_+),v_+).
\end{align*}
Hence we have
\begin{equation}
\langle u,v \rangle_V^{\rm sym}={1\over 2}(\langle u,v \rangle_V +\langle v,u \rangle_V)=-{1\over 2}\varphi_V(u)\varphi_V(v) \label{eq2.1'}
\end{equation}
for $u,v\in U_{(0+)}$. In particular, if $u$ or $v$ is in the kernel of $\varphi_V$, then
\begin{equation}
\langle u,v \rangle_V =\langle u,v \rangle_V^{\rm alt}. \label{eq2.2}
\end{equation}

\begin{lemma} \ {\rm (i)} If $c_0(V)=0$, then $\varepsilon(V)=0$.

{\rm (ii)} If $c_0(V)$ is odd, then $\varepsilon(V)=1$.
\label{lem2.11}
\end{lemma}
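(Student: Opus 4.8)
Lemma \ref{lem2.11} asserts two parity/nondegeneracy facts about $\varepsilon(V)$, and the natural strategy is to compute $\varepsilon(V)$ directly from the structure recorded in the preceding lemmas, especially the bilinear form $\langle\ ,\ \rangle_V$ on $U_{(0+)}$ and its symmetric part formula \eqref{eq2.1'}. The plan is first to reinterpret $\varepsilon(V)$ intrinsically. Since $V\cap(U_0\oplus U_n)=U_{(+)}\oplus U_{(-)}$ has dimension $c_++c_-=n-c_0$, and $\dim(U_0\oplus U_n)=2n$, the inclusion-exclusion identity gives $\dim(V+(U_0\oplus U_n))=n+2n-(n-c_0)=2n+c_0$ when $V\cap U_{(0)}=V\cap(U_{(0+)}\oplus U_{(0-)}\oplus Z)$ meets $U_0\oplus U_n$ trivially outside those pieces — but of course $\dim V = n$, so in fact $\varepsilon(V)=\dim(V+(U_0\oplus U_n))-2n$ measures exactly whether the extra $c_0$-dimensional complement $V\cap U_{(0)}$ projects into $U_0\oplus U_n$ or spills into $Z=\bbf e_{n+1}$. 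Concretely, $\varepsilon(V)=1$ iff $V\cap U_{(0)}\not\subset U_{(0+)}\oplus U_{(0-)}$, i.e. iff $\pi_Z|_{V\cap U_{(0)}}\ne 0$, i.e. iff the linear form $\varphi_V$ on $U_{(0+)}$ is nonzero; and $\varepsilon(V)=0$ iff $\varphi_V=0$. This reduction is the conceptual heart and should be stated carefully.

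Granting that reformulation, part (i) is immediate: if $c_0(V)=0$ then $U_{(0+)}=0$, so $\varphi_V$ is the zero map on the zero space, hence $\varepsilon(V)=0$. For part (ii), I would argue by contradiction: suppose $c_0$ is odd and $\varepsilon(V)=0$, so $\varphi_V=0$. Then by \eqref{eq2.2} (or directly from \eqref{eq2.1'} with $\varphi_V\equiv 0$), the form $\langle\ ,\ \rangle_V$ on $U_{(0+)}$ equals its alternating part $\langle\ ,\ \rangle_V^{\mathrm{alt}}$, i.e. $\langle\ ,\ \rangle_V$ is an alternating bilinear form. But $\langle\ ,\ \rangle_V$ is nondegenerate (established in the paragraph before the lemma, using Lemma \ref{lem2.6}(iii)), and a nondegenerate alternating form exists only on an even-dimensional space. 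Since $\dim U_{(0+)}=c_0$ is odd, this is a contradiction, so $\varepsilon(V)=1$.

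The main obstacle — really the only nontrivial point — is nailing down the first step, namely the clean equivalence $\varepsilon(V)=1 \iff \varphi_V\ne 0$. One must be slightly careful that $\dim(V\cap U_{(0)})=c_0$ (which follows from the Corollary after Lemma \ref{lem2.6}, $V=U_{(+)}\oplus U_{(-)}\oplus(V\cap U_{(0)})$ together with $\dim V=n$) and that the image $\pi_+(V\cap U_{(0)})\oplus\pi_-(V\cap U_{(0)})$ inside $U_0\oplus U_n$ has dimension exactly $c_0$ as well (by the isomorphisms $\pi_\pm|_{V\cap U_{(0)}}$ of Lemma \ref{lem2.6}), so that $V+(U_0\oplus U_n)$ picks up one extra dimension precisely when some element of $V\cap U_{(0)}$ has nonzero $e_{n+1}$-component, which is exactly the condition $\varphi_V\ne 0$. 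Once this bookkeeping is in place, parts (i) and (ii) are each a one-line consequence, with (ii) using only the already-proven nondegeneracy of $\langle\ ,\ \rangle_V$ and the parity obstruction to nondegenerate alternating forms in odd dimension.
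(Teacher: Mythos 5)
Your proof is correct and follows essentially the same route as the paper: part (i) comes down to $V\subset U_0\oplus U_n$ (equivalently $\varphi_V=0$ on the zero space), and part (ii) is the contrapositive argument that $\varepsilon(V)=0$ forces $\langle\ ,\ \rangle_V$ to be alternating via \eqref{eq2.2}, which together with its nondegeneracy forces $c_0$ even. The only difference is that you spell out the equivalence $\varepsilon(V)=1\Longleftrightarrow\varphi_V\ne 0$, which the paper uses implicitly; that is a worthwhile clarification, not a new method.
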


\begin{proof} (i) If $c_0(V)=0$, then $V=U_{(+)}\oplus U_{(-)}$. Hence $V\subset U_0\oplus U_n$.

(ii) If $\varepsilon(V)=0$, then the bilinear form $\langle\ ,\ \rangle_V$ is alternating by (\ref{eq2.2}). Since it is also nondegenerate, the dimension $c_0(V)=\dim U_{(0+)}$ is even.
\end{proof}

\begin{proposition} \ {\rm (i)} If $c_0$ is even, then $\langle \ ,\ \rangle_V^{\rm alt}$ is nondegenerate on $U_{(0+)}$.

{\rm (ii)} If $c_0$ is odd, then $\langle \ ,\ \rangle_V^{\rm alt}$ is nondegenerate on $\varphi_V^{-1}(0)$.
\label{prop2.7'}
\end{proposition}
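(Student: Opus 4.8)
The plan is to reduce the statement to elementary linear algebra on the $c_0$-dimensional space $U_{(0+)}$, using only that $\langle\ ,\ \rangle_V$ is nondegenerate and that, by (\ref{eq2.1'}), its symmetric part $\langle u,v\rangle_V^{\rm sym}=-{1\over 2}\varphi_V(u)\varphi_V(v)$ has rank at most $1$. The one fact doing all the work is the bound $\dim R\le 1$, where $R$ denotes the radical of the alternating form $\langle\ ,\ \rangle_V^{\rm alt}$ on $U_{(0+)}$. To prove it I would take $u\in R$ and observe that for every $v\in U_{(0+)}$ one has $\langle u,v\rangle_V=\langle u,v\rangle_V^{\rm alt}+\langle u,v\rangle_V^{\rm sym}=-{1\over 2}\varphi_V(u)\varphi_V(v)$ by (\ref{eq2.1'}); if $\varphi_V(u)=0$ this forces $\langle u,\cdot\rangle_V\equiv 0$, hence $u=0$ by nondegeneracy, so $R\cap\varphi_V^{-1}(0)=\{0\}$ and $\dim R\le 1$.

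Next I would invoke ${\rm char}\,\bbf\ne 2$, so that $\langle\ ,\ \rangle_V^{\rm alt}$ is genuinely alternating and its rank is even; equivalently $c_0-\dim R$ is even. For part (i): if $c_0$ is even then $\dim R$ is even and $\le 1$, hence $R=\{0\}$, which is precisely the claim that $\langle\ ,\ \rangle_V^{\rm alt}$ is nondegenerate on $U_{(0+)}$. For part (ii): if $c_0$ is odd then $\dim R$ is odd and $\le 1$, so $R=\bbf u_0$ for a single vector $u_0\ne 0$; the displayed identity above shows $\varphi_V(u_0)\ne 0$ (otherwise $u_0$ would lie in the radical of the nondegenerate form $\langle\ ,\ \rangle_V$), whence $U_{(0+)}=\varphi_V^{-1}(0)\oplus\bbf u_0$. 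In particular $\dim\varphi_V^{-1}(0)=c_0-1$ is even, as it must be for a nondegenerate alternating form to exist there, which is also consistent with Lemma \ref{lem2.11}(ii) via (\ref{eq2.2}).

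Finally, for part (ii) it remains to show that $\langle\ ,\ \rangle_V^{\rm alt}$ has trivial radical on $\varphi_V^{-1}(0)$. I would take $w\in\varphi_V^{-1}(0)$ orthogonal to all of $\varphi_V^{-1}(0)$ under $\langle\ ,\ \rangle_V^{\rm alt}$; since $\langle w,u_0\rangle_V^{\rm alt}=-\langle u_0,w\rangle_V^{\rm alt}=0$ because $u_0\in R$, the vector $w$ is then orthogonal to the whole of $U_{(0+)}=\varphi_V^{-1}(0)\oplus\bbf u_0$, so $w\in R=\bbf u_0$; as $w\in\varphi_V^{-1}(0)$ while $u_0\notin\varphi_V^{-1}(0)$, we get $w=0$. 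The only step carrying any subtlety is the radical bound $\dim R\le 1$, exactly where the rank-one symmetric part must be played off against the nondegeneracy of $\langle\ ,\ \rangle_V$; everything after that is a parity count together with the splitting $U_{(0+)}=\varphi_V^{-1}(0)\oplus R$, so I anticipate no genuine obstacle.
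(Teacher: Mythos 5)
Your proof is correct and rests on the same two ingredients as the paper's: the identity (\ref{eq2.1'})/(\ref{eq2.2}) playing the rank-one symmetric part off against the nondegeneracy of $\langle\ ,\ \rangle_V$ (your bound $R\cap\varphi_V^{-1}(0)=\{0\}$, hence $\dim R\le 1$, is the contrapositive of the paper's step of picking a nonzero element of $Y\cap\varphi_V^{-1}(0)$), together with the evenness of the rank of an alternating form in ${\rm char}\,\bbf\ne 2$. Your packaging — one radical bound plus a parity count, with (ii) deduced from the splitting $U_{(0+)}=\varphi_V^{-1}(0)\oplus\bbf u_0$ — is a mild streamlining of the paper's two separate contradiction arguments, not a genuinely different route.
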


\begin{proof} (i) By (\ref{eq2.2}), we may assume that $\varphi_V$ is nontrivial. Suppose that $\langle \ ,\ \rangle_V^{\rm alt}$ is degenerate. Then the subspace
$$Y=\{y\in U_{(0+)}\mid \langle y,v\rangle_V^{\rm alt}=0\mbox{ for all }v\in U_{(0+)}\}$$
of $U_{(0+)}$ is nontrivial and even-dimensional. Take a nonzero element $y$ of $Y\cap \varphi_V^{-1}(0)$. Then for all $v\in U_{(0+)}$, it follows from (\ref{eq2.2}) that
$\langle y,v\rangle_V=\langle y,v\rangle_V^{\rm alt}=0$,
contradicting that $\langle \ ,\ \rangle_V$ is nondegenerate on $U_{(0+)}$.

(ii) By (\ref{eq2.2}), $\langle \ ,\ \rangle_V=\langle \ ,\ \rangle_V^{\rm alt}$ on $\varphi_V^{-1}(0)$. Suppose that it is degenerate on $\varphi_V^{-1}(0)$. Then the subspace
$$Y=\{y\in \varphi_V^{-1}(0)\mid \langle y,v\rangle_V=0\mbox{ for all }v\in \varphi_V^{-1}(0)\}$$
of $\varphi_V^{-1}(0)$ is nontrivial and even-dimensional. Take a $v_0\in U_{(0+)}-\varphi_V^{-1}(0)$ and write
$$Y'=\{y\in Y\mid \langle y,v_0\rangle_V=0\}.$$
Then $Y'$ is nontrivial and
$\langle y,v\rangle_V=0$
for all $v\in U_{(0+)}$, contradicting that $\langle \ ,\ \rangle_V$ is nondegenerate on $U_{(0+)}$.
\end{proof}

\begin{proposition} \ Let $V$ and $V'$ be two elements of $M(U_{(+)},U_{(-)})$. Then the following three conditions are equivalent$:$

{\rm (i)} $V=V'$.

{\rm (ii)} $f_V=f_{V'}$ and $\varphi_V=\varphi_{V'}$.

{\rm (iii)} $\langle\ ,\ \rangle_V^{\rm alt}=\langle\ ,\ \rangle_{V'}^{\rm alt}$ and $\varphi_V=\varphi_{V'}$.
\label{prop2.8}
\end{proposition}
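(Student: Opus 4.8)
The plan is to establish the chain of implications (i)$\Rightarrow$(ii)$\Rightarrow$(iii)$\Rightarrow$(ii)$\Rightarrow$(i); since (i)$\Rightarrow$(ii) and (ii)$\Rightarrow$(iii) are immediate from the definitions, the real content lies in (iii)$\Rightarrow$(ii) and in the reconstruction (ii)$\Rightarrow$(i). Throughout I use that $V,V'\in M(U_{(+)},U_{(-)})$ have the same invariants $c_+,c_-,c_0$, so that the spaces $U_{(0+)},U_{(0-)}$ are the same for both and $f_V,f_{V'}$ share the same domain and codomain.

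First I would record the two trivial steps. The maps $f_V$ and $\varphi_V$ are built directly out of $V$, so (i)$\Rightarrow$(ii) is nothing. For (ii)$\Rightarrow$(iii), note that $\langle u,v\rangle_V=(f_V(u),v)$ depends only on $f_V$; hence $f_V=f_{V'}$ gives $\langle\ ,\ \rangle_V=\langle\ ,\ \rangle_{V'}$, and a fortiori $\langle\ ,\ \rangle_V^{\rm alt}=\langle\ ,\ \rangle_{V'}^{\rm alt}$, while equality of the linear forms $\varphi$ is part of the hypothesis.

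Next I would prove (iii)$\Rightarrow$(ii). The key input is the symmetric-part formula (\ref{eq2.1'}): $\langle u,v\rangle_V^{\rm sym}=-{1\over 2}\varphi_V(u)\varphi_V(v)$ depends only on $\varphi_V$. Therefore $\varphi_V=\varphi_{V'}$ already forces $\langle\ ,\ \rangle_V^{\rm sym}=\langle\ ,\ \rangle_{V'}^{\rm sym}$, and adding the hypothesis $\langle\ ,\ \rangle_V^{\rm alt}=\langle\ ,\ \rangle_{V'}^{\rm alt}$ yields $\langle\ ,\ \rangle_V=\langle\ ,\ \rangle_{V'}$, i.e. $(f_V(u),v)=(f_{V'}(u),v)$ for all $u,v\in U_{(0+)}$. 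Since $f_V(u)-f_{V'}(u)\in U_{(0-)}$ and $(\ ,\ )$ restricts to a nondegenerate pairing between $U_{(0+)}$ and $U_{(0-)}$ by Lemma~\ref{lem2.6}(iii), this gives $f_V=f_{V'}$.

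Finally, for (ii)$\Rightarrow$(i), I would reconstruct $V$ from $f_V$ and $\varphi_V$. By the corollary following Lemma~\ref{lem2.6} it suffices to recover $V\cap U_{(0)}$. For $w\in V\cap U_{(0)}$ write $w=w_++w_Z+w_-$ with $w_\pm\in U_{(0\pm)}$ and $w_Z\in Z=\bbf e_{n+1}$. Since $\pi_+|_{V\cap U_{(0)}}$ is a linear isomorphism onto $U_{(0+)}$, the vector $w$ is determined by $w_+$; by definition $w_-=f_V(w_+)$, and because $(e_{n+1},e_{n+1})=1$ while $(e_{n+1},e_j)=0$ for all other basis vectors occurring, we get $w_Z=(e_{n+1},w)\,e_{n+1}=\varphi_V(w_+)\,e_{n+1}$. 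Hence
$$V\cap U_{(0)}=\{\,v+\varphi_V(v)e_{n+1}+f_V(v)\mid v\in U_{(0+)}\,\},$$
which depends only on $f_V$ and $\varphi_V$, so (ii) implies $V=V'$. The only step requiring an actual (short) computation is this last identification of the $\bbf e_{n+1}$-component via the bilinear form; everything else is formal, and I do not anticipate a serious obstacle.
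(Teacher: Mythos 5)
Your proposal is correct and follows essentially the same route as the paper: the reconstruction $V\cap U_{(0)}=\{v+\varphi_V(v)e_{n+1}+f_V(v)\mid v\in U_{(0+)}\}$ for (ii)$\Rightarrow$(i), and the use of (\ref{eq2.1'}) plus the nondegenerate pairing between $U_{(0+)}$ and $U_{(0-)}$ for (iii)$\Rightarrow$(ii), are exactly the paper's arguments. The only difference is the (harmless) choice of which trivial implications close the cycle of equivalences.
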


\begin{proof} (ii)$\Longrightarrow$(i). By the direct sum decomposition
$U_{(0)}=U_{(0+)}\oplus Z\oplus U_{(0-)}$,
$V\cap U_{(0)}$ is written as
\begin{align*}
V\cap U_{(0)} & =\{(\pi_+(v),\pi_Z(v),\pi_-(v))\mid v\in V\cap U_{(0)}\} 
=\{(u,\varphi_V(u)e_n,f_V(u))\mid u\in U_{(0+)}\}.
\end{align*}
Hence $V\cap U_{(0)}$ is determined by the two maps $f_V$ and $\varphi_V$.

(iii)$\Longrightarrow$(ii). By (\ref{eq2.1'}), $\langle\ ,\ \rangle_V^{\rm sym}$ is determined by $\varphi_V$. Hence the bilinear form $\langle\ ,\ \rangle_V$ is determined by $\langle\ ,\ \rangle_V^{\rm alt}$ and $\varphi_V$. Since the inner product $(\ ,\ )$ defines a nondegenerate pairing between $U_{(0+)}$ and $U_{(0-)}$, the map $f_V$ is determined by $\langle\ ,\ \rangle_V$.

The implication (i)$\Longrightarrow$(iii) is trivial.
\end{proof}

\begin{proposition} \ {\rm (i)} Let $V$ be an element of $M$. Write $c_\pm=c_\pm(V),\ c_0=c_0(V)$ and $\varepsilon=\varepsilon(V)$. Then
$$V\in \begin{cases} HV(0,0,c_+,c_-)_{\rm odd} & \text{if $c_0$ is odd}, \\
HV(0,0,c_+,c_-)_{\rm even}^0 & \text{if $c_0$ is even and $\varepsilon=0$}, \\
HV(0,0,c_+,c_-)_{\rm even}^1 & \text{if $c_0$ is even and $\varepsilon=1$}.
\end{cases}$$

{\rm (ii)} Let $L_0\cong{\rm GL}_{c_0}(\bbf)$ be as in Section 1.2. Then
$$\{\ell\in L_0\mid \ell V=V\}\cong\begin{cases} 1\times {\rm Sp}_{c_0-1}(\bbf) & \text{if $V=V(0,0,c_+,c_-)_{\rm odd}$}, \\
{\rm Sp}_{c_0}(\bbf) & \text{if $V=V(0,0,c_+,c_-)_{\rm even}^0$}, \\
Q_{c_0} & \text{if $V=V(0,0,c_+,c_-)_{\rm even}^1$}
\end{cases}$$
where $Q_{c_0}=\{g\in {\rm Sp}_{c_0}(\bbf)\mid gv=v\}$ with some nonzero $v\in\bbf^{c_0}$.

{\rm (iii)} If $\bbf=\bbf_r$, then
$$|HV|={[r]_n\over [r]_{c_+}[r]_{c_-}[r]_{c_0}}\psi_{c_0}^\varepsilon(r).$$
\label{prop2.14}
\end{proposition}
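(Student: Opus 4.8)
The plan is to pass at once to the slice $M(U_{(+)},U_{(-)})$ and use Proposition~\ref{prop2.8}, which identifies an element $V$ there with the pair $(\langle\ ,\ \rangle_V^{\rm alt},\varphi_V)$ on $U_{(0+)}$, together with the structural restrictions on that pair coming from Proposition~\ref{prop2.7'} and Lemma~\ref{lem2.11}. First I would record that $H\cong{\rm GL}_n(\bbf)$ acts transitively on the set of pairs $(X,Y)$ with $X\subseteq U_0$, $Y\subseteq U_n$, $\dim X=c_+$, $\dim Y=c_-$ and $X\perp Y$: acting through ${\rm GL}(U_0)$ we may take $X=U_{(+)}$, and then $Y\perp X$ forces $Y\subseteq U_{(+)}^\perp\cap U_n$, a space of dimension $n-c_+$ on which the stabilizer of $U_{(+)}$ in $H$ acts as the full general linear group (via the contragredient action on $U_0/U_{(+)}$), so we may further take $Y=U_{(-)}$. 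In particular $HV$ meets $M(U_{(+)},U_{(-)})$ for every $V\in M$, and the stabilizer in $H$ of the pair $(U_{(+)},U_{(-)})$ is exactly the parabolic $P'$ of $H$ corresponding to the flag $U_{(+)}\subset U_{(+)}\oplus U_{(0+)}$, so the $H$-orbit of this pair has size $[r]_n/([r]_{c_+}[r]_{c_0}[r]_{c_-})$ over $\bbf_r$.

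Next I would analyze the residual $L_0\cong{\rm GL}_{c_0}(\bbf)$-action inside $P'$. Since $L_0$ acts on $U_{(0+)}$ by the standard representation and fixes $e_{n+1}$, a one-line computation (using that $L_0\subseteq G$ preserves $(\ ,\ )$) gives $\langle u,v\rangle_{\ell V}=\langle\ell^{-1}u,\ell^{-1}v\rangle_V$ and $\varphi_{\ell V}(u)=\varphi_V(\ell^{-1}u)$, i.e.\ $L_0$ acts on $(\langle\ ,\ \rangle_V^{\rm alt},\varphi_V)$ by the usual congruence/substitution action (the symmetric part carries no information by (\ref{eq2.1'})). By Proposition~\ref{prop2.7'}, Lemma~\ref{lem2.11}, and the observation that $\varphi_V=0\Leftrightarrow V\subseteq U_0\oplus U_n\Leftrightarrow\varepsilon(V)=0$, the pairs occurring for $V\in M(U_{(+)},U_{(-)})$ split into the three cases of the statement: ($c_0$ odd) $\varphi\neq0$ with $\langle\ ,\ \rangle^{\rm alt}$ nondegenerate on $\ker\varphi$; ($c_0$ even, $\varepsilon=0$) $\varphi=0$ with $\langle\ ,\ \rangle^{\rm alt}$ nondegenerate; ($c_0$ even, $\varepsilon=1$) $\varphi\neq0$ with $\langle\ ,\ \rangle^{\rm alt}$ nondegenerate. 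An elementary normal-form argument shows that each case is a single ${\rm GL}_{c_0}$-orbit: symplectically reduce $\ker\varphi$ (resp.\ all of $U_{(0+)}$); in the odd case absorb $\langle v_0,\ \rangle^{\rm alt}|_{\ker\varphi}$ into $\ker\varphi$ using nondegeneracy there, whence $v_0$ spans the $1$-dimensional radical, and rescale so that $\varphi(v_0)=1$; in the even $\varepsilon=1$ case write $\varphi=\langle v,\ \rangle^{\rm alt}$ with $v\neq0$ and normalize the pair $(v,\langle\ ,\ \rangle^{\rm alt})$. Finally I would compute $f_V,\varphi_V,\langle\ ,\ \rangle_V^{\rm alt}$ for the three explicit subspaces $V(0,0,c_+,c_-)_{\rm odd}$, $V(0,0,c_+,c_-)_{\rm even}^0$, $V(0,0,c_+,c_-)_{\rm even}^1$ and check that they realize these normal forms; this yields~(i).

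For~(ii), by Proposition~\ref{prop2.8} an element $\ell\in L_0$ fixes $V$ iff it preserves both $\langle\ ,\ \rangle_V^{\rm alt}$ and $\varphi_V$, so the stabilizer is the isotropy group of the corresponding normal-form pair: in the odd case $\ell$ must fix the radical pointwise (forced by $\varphi(v_0)=1$ and nondegeneracy of $\langle\ ,\ \rangle^{\rm alt}$ on $\ker\varphi$) and act symplectically on $\ker\varphi$, giving $1\times{\rm Sp}_{c_0-1}(\bbf)$; in the even $\varepsilon=0$ case $\varphi_V=0$ and one gets ${\rm Sp}_{c_0}(\bbf)$; in the even $\varepsilon=1$ case, writing $\varphi_V=\langle v,\ \rangle^{\rm alt}$, preserving $\langle\ ,\ \rangle^{\rm alt}$ and $\varphi_V$ is equivalent to $\ell\in{\rm Sp}_{c_0}(\bbf)$ with $\ell v=v$, i.e.\ $Q_{c_0}$. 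For~(iii), the $H$-equivariant map $HV\to\{(X,Y)\}$, $V'\mapsto(V'\cap U_0,V'\cap U_n)$, has image the single $H$-orbit of Step~1 and fibers of constant size; the fiber over $(U_{(+)},U_{(-)})$ is $\{V'\in M(U_{(+)},U_{(-)})\mid\varepsilon(V')=\varepsilon\}$, which by the previous paragraph is exactly one $L_0$-orbit. Hence over $\bbf_r$
$$|HV|=|H/P'|\cdot|L_0V|=\frac{[r]_n}{[r]_{c_+}[r]_{c_0}[r]_{c_-}}\cdot\frac{|{\rm GL}_{c_0}(\bbf_r)|}{|H_{c_0}^\varepsilon|}=\frac{[r]_n}{[r]_{c_+}[r]_{c_0}[r]_{c_-}}\,\psi_{c_0}^\varepsilon(r),$$
the last equality being a direct count of $|{\rm GL}_{c_0}(\bbf_r)|$, $|{\rm Sp}_{c_0}(\bbf_r)|$, $|Q_{c_0}|$ (the identity $\psi_{c_0}^\varepsilon(r)=|{\rm GL}_{c_0}(\bbf_r)/H_{c_0}^\varepsilon|$ recorded after Theorem~\ref{th1.4}).

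The main obstacle is the middle step: showing that each of the three constraint patterns forms a single ${\rm GL}_{c_0}$-orbit, and then matching those orbits with the explicit generators of $V(0,0,c_+,c_-)_{\rm odd/even}$ — in particular keeping track of the index shifts $k_\pm$, the order reversal built into $J_n$, and the coefficient $-\tfrac12$ in the definitions. Everything else is either already available in the excerpt (Propositions~\ref{prop2.8}, \ref{prop2.7'}, Lemma~\ref{lem2.11}) or a routine orbit--fibration count.
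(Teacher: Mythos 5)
Your proposal is correct and follows essentially the same route as the paper: reduce by $H$-transitivity to the slice $M(U_{(+)},U_{(-)})$, encode $V$ by the pair $(\langle\ ,\ \rangle_V^{\rm alt},\varphi_V)$ via Proposition~\ref{prop2.8}, bring that pair to one of three normal forms under $L_0\cong{\rm GL}_{c_0}$ using Proposition~\ref{prop2.7'} and Lemma~\ref{lem2.11}, read off the stabilizers, and multiply the pair-count by $|L_0V|$. The only differences are cosmetic (you locate $v_0$ in the odd case as the radical of the alternating part, while the paper defines it by $\varphi_V=\langle v_0,\cdot\rangle_V$; and you make the orbit--fibration bookkeeping slightly more explicit), and the explicit verification you defer for the three model subspaces is exactly the computation the paper carries out.
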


\begin{proof} (i) By the action of $H=R_n$, we may assume $V\cap U_0=U_{(+)}$ and $V\cap U_n=U_{(-)}$. By the above arguments, the space $V$ defines an alternating form $\langle\ ,\ \rangle_V^{\rm alt}$ on $U_{(0+)}$ and a linear form $\varphi_V:U_{(0+)}\to \bbf$.

For $V=V(0,0,c_+,c_-)_{\rm even}^\varepsilon$, the form $\langle\ ,\ \rangle_V^{\rm alt}$ is standard: 
$$\langle e_{c_++i},e_{c_++j}\rangle_V^{\rm alt}=\langle e_{c_++i},e_{c_++j}\rangle_0=\begin{cases} \delta_{i,c_0+1-j} & \text{if $i\le c_0/2$}, \\ 
-\delta_{i,c_0+1-j} & \text{if $i>c_0/2$}. \end{cases}$$
The linear form $\varphi_V$ vanishes for $V=V(0,0,c_+,c_-)_{\rm even}^0$ and
$\varphi_V(e_{c_++i})=\delta_{i,c_0}$
for $V=V(0,0,c_+,c_-)_{\rm even}^1$. On the other hand, for $V=V(0,0,c_+,c_-)_{\rm odd}\ (c_0=2c_1-1)$, we have
$$\langle e_{c_++i},e_{c_++j}\rangle_V^{\rm alt}=\langle e_{c_++i},e_{c_++j}\rangle'_0=\begin{cases} \delta_{i,c_0-j} & \text{if $i<c_1$}, \\
0 & \text{if $i=c_1$}, \\ 
-\delta_{i,c_0-j} & \text{if $i>c_1$}, \end{cases}$$
and $\varphi_V(e_{c_++i})=\delta_{i,c_1}$. Note that $\langle\ ,\ \rangle'_0$ is nondegenerate on $W=\varphi_V^{-1}(0)=\bbf e_{c_++1}\oplus\cdots\oplus \bbf e_{c_++c_1-1}\oplus \bbf e_{c_++c_1+1}\oplus\cdots\oplus \bbf e_{c_++c_0}$.

Let $L_0$ be the subgroup of $H$ defined by
$$L_0=\left\{h\left[\bp I_{c_+} & 0 & 0 \\ 0 & A & 0 \\ 0 & 0 & I_{c_-} \ep \right] \Bigm| A\in {\rm GL}_{c_0}(\bbf)\right\}\cong {\rm GL}(U_{(0+)})$$
as in Section 1.2. By Proposition \ref{prop2.8}, we have only to take an $\ell\in L_0$ such that $\langle\ ,\ \rangle_{\ell V}^{\rm alt}$ and $\varphi_{\ell V}$ are standard. Here we note that
\begin{align*}
\langle u,v\rangle_{\ell V}^{\rm alt} & ={1\over 2}((f_{\ell V}(u),v)-(u,f_{\ell V}(v))) \\
& ={1\over 2}(((\ell\circ f_V\circ\ell^{-1})(u),v)-(u,(\ell\circ f_V\circ\ell^{-1})(v))) \\
& ={1\over 2}(((f_V\circ\ell^{-1})(u),\ell^{-1}(v))-(\ell^{-1}(u),(f_V\circ\ell^{-1})(v))) \\
& =\langle \ell^{-1}u,\ell^{-1}v\rangle_V^{\rm alt}.
\end{align*}

First suppose that $c_0$ is even. Then $\langle\ ,\ \rangle_V^{\rm alt}$ is nondegenerate on $U_{(0+)}$ by Proposition \ref{prop2.7'}. So there exists an $\ell\in L_0$ such that $\langle\ ,\ \rangle_{\ell V}^{\rm alt}$ is equal to the standard alternating form $\langle\ ,\ \rangle_0$ on $U_{(0+)}$. If $\varphi_{\ell V}=0$, then we have proved that $\ell V=V(0,0,c_+,c_-)_{\rm even}^0$. Suppose $\varphi_{\ell V}\ne 0$. Then we can take a nonzero element $v_0$ of $U_{(0+)}$ such that
$\varphi_{\ell V}(v)=\langle v_0,v\rangle_0$
for all $v\in U_{(0+)}$. Let $S$ denote the subgroup of $L_0$ defined by
\begin{equation}
S=\{g\in L_0\mid \langle gu,gv\rangle_0=\langle u,v\rangle_0\mbox{ for all }u,v\in U_{(0+)}\}\cong {\rm Sp}_{c_0}(\bbf). \label{eq2.11}
\end{equation}
Then we can take an element $\ell_0$ of $S$ such that $\ell_0 v_0=e_{c_++1}$. We have
$$\varphi_{\ell_0\ell V}(v)=\varphi_{\ell V}(\ell_0^{-1}v)=\langle v_0,\ell_0^{-1}v\rangle_0  =\langle \ell_0 v_0,v\rangle_0 =\langle e_{c_++1},v\rangle_0.$$
Since $\langle e_{c_++1},e_{c_++i}\rangle_0=\delta_{i,c_0}$, we have proved $\ell_0\ell V=V(0,0,c_+,c_-)_{\rm even}^1$.

Next suppose that $c_0$ is odd. Since the bilinear form $\langle\ ,\ \rangle_V$ is nondegenerate on $U_{(0+)}$, we can take a nonzero element $v_0\in U_{(0+)}$ such that
$\varphi_V(v)=\langle v_0,v\rangle_V$
for all $v\in U_{(0+)}$. Since $\langle v_0,v\rangle_V=0$ for all $v\in \varphi_V^{-1}(0)$, it follows that $v_0\notin \varphi_V^{-1}(0)$. Take an element $\ell$ of $L_0$ such that
$$\ell v_0=e_{c_++c_1}\mbox{ and that }\ell\varphi_V^{-1}(0)=W.$$
Then we have
$\langle e_{c_++c_1},v\rangle_{\ell V}^{\rm alt}=0$
for all $v\in U_{(0+)}$ and $\varphi_{\ell V}(e_{c_++i})=\delta_{i,c_1}$. Since $\langle\ ,\ \rangle_{\ell V}^{\rm alt}$ is nondegenerate on $W=\varphi_{\ell V}^{-1}(0)$, we can take an element $\ell_0$ of
$$L'_0=\{\ell\in L_0\mid \ell e_{c_++c_1}=e_{c_++c_1}\mbox{ and }\ell W=W\}\cong {\rm GL}_{c_0-1}(\bbf)$$
such that $\langle\ ,\ \rangle_{\ell_0\ell V}^{\rm alt}=\langle\ ,\rangle'_0$ on $W$. Thus we have $\ell_0\ell V=V(0,0,c_+,c_-)_{\rm odd}$.

(ii) By Proposition \ref{prop2.8}, we have
$$\ell V=V\Longleftrightarrow \langle\ ,\ \rangle^{\rm alt}_{\ell V}=\langle\ ,\ \rangle^{\rm alt}_V\mbox{ and }\varphi_{\ell V}=\varphi_V$$
for $\ell\in L_0$. If $V=V(0,0,c_+,c_-)_{\rm even}^0$, then $\langle\ ,\ \rangle^{\rm alt}_V=\langle\ ,\ \rangle_0$ and $\varphi_V=0$. Hence
$$\ell V=V\Longleftrightarrow \ell\in S\cong {\rm Sp}_{c_0}(\bbf)$$
with the subgroup $S$ of $L_0$ defined in (\ref{eq2.11}). Next suppose that $V=V(0,0,c_+,c_-)_{\rm even}^1$. Then $\langle\ ,\ \rangle^{\rm alt}_V=\langle\ ,\ \rangle_0$ and $\varphi_V(v)=\langle e_{c_++1},v\rangle_0$ for $v\in U_{(0+)}$. Hence
$$\ell V=V\Longleftrightarrow \ell\in S\mbox{ and }\ell e_{c_++1}=e_{c_++1}.$$
Finally suppose that $V=V(0,0,c_+,c_-)_{\rm odd}$. Then $\langle\ ,\ \rangle^{\rm alt}_V=\langle\ ,\ \rangle'_0$ and $\varphi_V(e_{c_++i})=\delta_{i,c_1}$ for $i=1,\ldots,c_0$. Hence
$$\ell V=V\Longleftrightarrow \langle \ell u,\ell v\rangle'_0=\langle u,v\rangle'_0\mbox{ for }u,v\in U_{(0+)}\mbox{ and }\varphi_{\ell V}=\varphi_V \Longleftrightarrow \ell\in S'$$
where $S'=\{\ell\in L_0\mid \ell(e_{c_++c_0})=e_{c_++c_0},\ \ell(W)=W\mbox{ and }\langle \ell u,\ell v\rangle'_0=\langle u,v\rangle'_0\mbox{ for }u,v\in W\}\cong 1\times {\rm Sp}_{c_0-1}(\bbf)$.

(iii) If we fix $c_+$ and $c_-$, then we have $[r]_n/[r]_{c_+}[r]_{c_-}[r]_{c_0}$ choices of the pair $(V\cap U_0,V\cap U_n)$. On the other hand, every element of $M(U_{(+)},U_{(-)})$ is contained in the $L_0$-orbit of $V=V(0,0,c_+,c_-)_{\rm odd},\ V(0,0,c_+,c_-)_{\rm even}^0$ or $V(0,0,c_+,c_-)_{\rm even}^1$ as is proved in (i). Since $|L_0V|=\psi_{c_0}^\varepsilon(r)$ by (ii), we have the desired formula for $HV$.
\end{proof}

Let $P_H$ be the parabolic subgroup of $H=R_n$ defined by
\begin{align*}
P_H & =\{g\in H\mid gU_{(+)}=U_{(+)}\mbox{ and }g(U_{(+)}\oplus U_{(0+)})=U_{(+)}\oplus U_{(0+)}\} \\
& =\{g\in H\mid gU_{(+)}=U_{(+)}\mbox{ and }gU_{(-)}=U_{(-)}\}.
\end{align*}
Then the unipotent radical $N_H$ of $P_H$ is written as
$$N_H=\left\{h\left[\bp I_{c_+} & * & * \\ 0 & I_{c_0} & * \\ 0 & 0 & I_{c_-} \ep \right] \right\}$$
and 
$$L=\left\{h\left[\bp A & 0 & 0 \\ 0 & B & 0 \\ 0 & 0 & C \ep \right] \Bigm| A\in {\rm GL}_{c_+}(\bbf),\ B\in {\rm GL}_{c_0}(\bbf),\ C\in {\rm GL}_{c_-}(\bbf)\right\}$$
is a Levi subgroup of $P_H$. Let $\mathcal{M}(p,q;\bbf)$ denote the space of $p\times q$ matrices with entries in $\bbf$. Then every element of $N_H$ is written as
$$n(A,B,C)=h\left[\bp I_{c_+} & A & C \\ 0 & I_{c_0} & B \\ 0 & 0 & I_{c_-} \ep \right]$$
with $A\in\mathcal{M}(c_+,c_0;\bbf),\ B\in\mathcal{M}(c_0,c_-;\bbf)$ and $C\in\mathcal{M}(c_+,c_-;\bbf)$.

Fix a $V\in M(U_{(+)},U_{(-)})$. By Definition \ref{def1.6}, a full flag $\mcf: V_1\subset\cdots\subset V_n$ is called standard if
$$V_n=V\mand V_i=(V_i\cap U_{(+-)})\oplus (V_i\cap U_{(0)})$$
for all $i=1,\ldots,n$. Here we write $U_{(+-)}=U_{(+)}\oplus U_{(-)}$.

\begin{lemma} \ {\rm (i)} For every $g\in N_H$, there exists a linear map
$$f_g: V\cap U_{(0)}\to U_{(+-)}$$
such that
$gv=v+f_g(v)$
for all $v\in V\cap U_{(0)}$.

{\rm (ii)} The map $g\mapsto f_g$ is a surjection of $N_H$ onto the space of $\bbf$-linear maps ${\rm Hom}(V\cap U_{(0)},U_{(+-)})$.

\label{lem2.9}
\end{lemma}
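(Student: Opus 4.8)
The plan is to read off the action of a general element $g=n(A,B,C)\in N_H$ on $\bbf^{2n+1}=U_0\oplus Z\oplus U_n$ (where $Z=\bbf e_{n+1}$) directly from the formula $h[M]=\bp M & 0 & 0 \\ 0 & 1 & 0 \\ 0 & 0 & J_n\,{}^tM^{-1}J_n \ep$ with $M=\bp I_{c_+} & A & C \\ 0 & I_{c_0} & B \\ 0 & 0 & I_{c_-}\ep$. The block $M$ is upper unitriangular for the partial flag $U_{(+)}\subset U_{(+)}\oplus U_{(0+)}\subset U_0$, and, computing $M^{-1}$ and conjugating ${}^tM^{-1}$ by $J_n$ (which reverses the block order $c_+,c_0,c_-$ and turns a lower triangular matrix into an upper triangular one), one sees that $J_n\,{}^tM^{-1}J_n$ is upper unitriangular for the partial flag $U_{(-)}\subset U_{(-)}\oplus U_{(0-)}\subset U_n$. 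Hence $g$ fixes $U_{(+)}$, $Z$ and $U_{(-)}$ pointwise, and there are $\bbf$-linear maps $\alpha=\alpha(A)\colon U_{(0+)}\to U_{(+)}$ and $\beta=\beta(B)\colon U_{(0-)}\to U_{(-)}$ with $gu=u+\alpha(u)$ for $u\in U_{(0+)}$ and $gw=w+\beta(w)$ for $w\in U_{(0-)}$; the block $C$ affects none of this, $A\mapsto\alpha$ is the identity identification $\mathcal{M}(c_+,c_0;\bbf)\cong\mathrm{Hom}(U_{(0+)},U_{(+)})$ in standard bases, and $B\mapsto\beta$ is a linear bijection onto $\mathrm{Hom}(U_{(0-)},U_{(-)})$.

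For (i): since $V\cap U_{(0)}\subset U_{(0)}=U_{(0+)}\oplus Z\oplus U_{(0-)}$, any $v\in V\cap U_{(0)}$ decomposes as $v=\pi_+(v)+\pi_Z(v)+\pi_-(v)$, and applying $g$ term by term gives $gv=v+\alpha(\pi_+(v))+\beta(\pi_-(v))$ with $\alpha(\pi_+(v))\in U_{(+)}$, $\beta(\pi_-(v))\in U_{(-)}$. So $f_g:=(\alpha\circ\pi_++\beta\circ\pi_-)|_{V\cap U_{(0)}}$ is $\bbf$-linear with image in $U_{(+-)}=U_{(+)}\oplus U_{(-)}$ and satisfies $gv=v+f_g(v)$.

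For (ii): by the above, $f_g=\alpha\circ(\pi_+|_{V\cap U_{(0)}})+\beta\circ(\pi_-|_{V\cap U_{(0)}})$, where the restrictions $\pi_+|_{V\cap U_{(0)}}\colon V\cap U_{(0)}\to U_{(0+)}$ and $\pi_-|_{V\cap U_{(0)}}\colon V\cap U_{(0)}\to U_{(0-)}$ are the isomorphisms recorded after Lemma \ref{lem2.6}. Precomposition with them yields isomorphisms $\mathrm{Hom}(U_{(0+)},U_{(+)})\xrightarrow{\sim}\mathrm{Hom}(V\cap U_{(0)},U_{(+)})$ and $\mathrm{Hom}(U_{(0-)},U_{(-)})\xrightarrow{\sim}\mathrm{Hom}(V\cap U_{(0)},U_{(-)})$. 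As $(A,B)$ runs over $\mathcal{M}(c_+,c_0;\bbf)\times\mathcal{M}(c_0,c_-;\bbf)$ the pair $(\alpha,\beta)$ runs independently over $\mathrm{Hom}(U_{(0+)},U_{(+)})\times\mathrm{Hom}(U_{(0-)},U_{(-)})$, so $g\mapsto f_g$ surjects onto $\mathrm{Hom}(V\cap U_{(0)},U_{(+)})\oplus\mathrm{Hom}(V\cap U_{(0)},U_{(-)})=\mathrm{Hom}(V\cap U_{(0)},U_{(+-)})$, which is (ii). The one point deserving care is the second sentence of the first paragraph: checking that the $J_n$-conjugate of ${}^tM^{-1}$ is genuinely block upper unitriangular for the flag in $U_n$ built from $U_{(-)},U_{(0-)}$ — so that $U_{(-)}$ is fixed and $U_{(0-)}$ acquires only a $U_{(-)}$-component — and that $B\mapsto\beta$ is a bijection rather than merely linear; both are routine matrix bookkeeping once the block sizes $(c_+,c_0,c_-)$ and $(c_-,c_0,c_+)$ are matched up.
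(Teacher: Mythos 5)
Your proof is correct and follows essentially the same route as the paper: compute the block structure of $h[M]$ for $g=n(A,B,C)$, observe that $g$ acts on $U_{(0+)}$ by $u\mapsto u+Au$ and on $U_{(0-)}$ by $w\mapsto w-J_{c_-}\,{}^tBJ_{c_0}w$ while fixing $U_{(+)}$, $Z$ and $U_{(-)}$, and then use the isomorphisms $\pi_\pm|_{V\cap U_{(0)}}$ from Lemma \ref{lem2.6} to get surjectivity onto ${\rm Hom}(V\cap U_{(0)},U_{(+-)})$. You are in fact slightly more explicit than the paper on why part (ii) holds (the paper simply asserts that every $f$ has the form $A\pi_+(v)-J_{c_-}\,{}^tBJ_{c_0}\pi_-(v)$), but the argument is the same.
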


\begin{proof} (i) For $g=n(A,B,C)$, we have
$g\pi_+(v)=\pi_+(v)+A\pi_+(v)$
for $v\in V\cap U_{(0)}$. Here the matrix $A$ is naturally identified with the linear map $A:U_{(0+)}\to U_{(+)}$. Since
$$J\bp I_{c_+} & 0 & 0 \\ {}^tA & I_{c_0} & 0 \\ {}^tC & {}^tB & I_{c_-} \ep^{-1}J=\bp I_{c_-} & -J_{c_-}\,{}^tBJ_{c_0} & * \\ 0 & I_{c_0} & -J_{c_0}\,{}^tAJ_{c_+} \\ 0 & 0 & I_{c_+} \ep,$$
we have
$g\pi_-(v)=\pi_-(v)-J_{c_-}\,{}^tBJ_{c_0}\pi_-(v)$.
Hence
$$gv=g\pi_+(v)+g\pi_-(v)+g\pi_Z(v)=v+A\pi_+(v)-J_{c_-}\,{}^tBJ_{c_0}\pi_-(v).$$
So we can write $f_g(v)=A\pi_+(v)-J_{c_-}\,{}^tBJ_{c_0}\pi_-(v)$.

(ii) For every element $f\in {\rm Hom}(V\cap U_{(0)},U_{(+-)})$, $f(v)$ is written as
$$f(v)=A\pi_+(v)-J_{c_-}\,{}^tBJ_{c_0}\pi_-(v)$$
with some $A\in\mathcal{M}(c_+,c_0;\bbf)$ and $B\in\mathcal{M}(c_0,c_-;\bbf)$.
\end{proof}

Let $\mcf: V_1\subset\cdots\subset V_n$ be a full flag in $\bbf^{2n+1}$ such that $V_n=V$. Define subsets $I_1$ and $I_2$ of $I=\{1,\ldots,n\}$ by
\begin{align*}
I_1 & =I_1(\mcf)=\{\gamma_1,\ldots,\gamma_c\}=\{i\in I\mid V_i\cap U_{(+-)} \supsetneqq V_{i-1}\cap U_{(+-)}\}, \\
I_2 & =I_2(\mcf)=\{\delta_1,\ldots,\delta_{c_0}\}=\{i\in I\mid V_i\cap U_{(+-)} = V_{i-1}\cap U_{(+-)}\}
\end{align*}
where $c=c_++c_-$ and $\gamma_1<\cdots<\gamma_c,\ \delta_1<\cdots<\delta_{c_0}$. Let $\tau(\mcf)$ denote the permutation
$$\tau(\mcf): (1\,2\cdots n)\mapsto (\gamma_1\cdots\gamma_c\delta_1\cdots\delta_{c_0})$$
of $I$ and $\ell(\tau(\mcf))$ the inversion number. Then
$$\ell(\tau(\mcf))=(c_++c_-)c_0-\sum_{i\in I_2(\mcf)} \dim(V_i\cap U_{(+-)}).$$

\begin{proposition} \ {\rm (i)} Let $\mcf: V_1\subset\cdots\subset V_n$ be a full flag in $\bbf^{2n+1}$ such that $V_n=V$. Then there exists a $g\in N_H$ such that $g\mcf: gV_1\subset\cdots\subset gV_n$ is standard.

{\rm (ii)} Let $\mcf: V_1\subset\cdots\subset V_n$ and $\mcf': V'_1\subset\cdots\subset V'_n$ be two standard full flags in $\bbf^{2n+1}$. Suppose $\mcf'=g\mcf$ with some $g=g_Ng_L\in N_HL_H=P_H$. Then $\mcf'=g_L\mcf$.

{\rm (iii)} If $\bbf=\bbf_r$, then $|N_H\mcf|=r^{\ell(\tau(\mcf))}$.
\label{prop2.14'}
\end{proposition}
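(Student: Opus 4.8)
The plan is to lean on the two structural facts about $N_H$ recorded in Lemma \ref{lem2.9}: every $g\in N_H$ fixes $U_{(+-)}=U_{(+)}\oplus U_{(-)}$ pointwise, and acts on $V\cap U_{(0)}$ by $v\mapsto v+f_g(v)$ with $g\mapsto f_g$ surjecting onto $\mathrm{Hom}(V\cap U_{(0)},U_{(+-)})$. In particular $N_H\subseteq P_V$, since $f_g(v)\in U_{(+-)}\subseteq V$. I will also use that $U_{(+-)}\cap U_{(0)}=\{0\}$ and that $L_H$, being block diagonal, preserves both $U_{(+-)}$ and $U_{(0)}$ and hence carries standard flags to standard flags. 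Throughout, $p\colon V\to V\cap U_{(0)}$ and $p'\colon V\to U_{(+-)}$ denote the projections for the decomposition $V=U_{(+-)}\oplus(V\cap U_{(0)})$ (the corollary to Lemma \ref{lem2.6}).

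For (i), I would fix $\mcf\colon V_1\subset\cdots\subset V_n=V$ and choose a basis $u_1,\dots,u_n$ of $V$ with $V_i=\bbf u_1\oplus\cdots\oplus\bbf u_i$, taking $u_{\gamma_j}\in(V_{\gamma_j}\cap U_{(+-)})\setminus(V_{\gamma_j-1}\cap U_{(+-)})$ for $\gamma_j\in I_1$ and $u_{\delta_k}$ an arbitrary vector in $V_{\delta_k}\setminus V_{\delta_k-1}$ for $\delta_k\in I_2$. Using $\delta_k\in I_2$ one checks $p(u_{\delta_k})\notin p(V_{\delta_k-1})$, so $p(u_{\delta_1}),\dots,p(u_{\delta_{c_0}})$ are linearly independent and form a basis of $V\cap U_{(0)}$. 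By surjectivity in Lemma \ref{lem2.9}(ii), pick $g\in N_H$ with $f_g(p(u_{\delta_k}))=-p'(u_{\delta_k})$ for all $k$; then $gu_{\delta_k}=p(u_{\delta_k})\in U_{(0)}$ and $gu_{\gamma_j}=u_{\gamma_j}\in U_{(+-)}$, so each $gV_i$ is spanned by vectors each of which lies in $U_{(+-)}$ or in $U_{(0)}$. Since $U_{(+-)}\cap U_{(0)}=\{0\}$ this forces $gV_i=(gV_i\cap U_{(+-)})\oplus(gV_i\cap U_{(0)})$, i.e.\ $g\mcf$ is standard.

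For (ii), standardness of $\mcf,\mcf'$ gives $V_n=V_n'=V$, hence $g\in P_V$, and since $N_H\subseteq P_V$ also $g_L\in P_V$; thus $g_L\mcf$ is a standard flag in $M_0(V)$ and we may reduce to $g=g_N\in N_H$. For each $i$, an element of $g_NV_i$ lies in $U_{(+-)}$ precisely when it already lay in $V_i\cap U_{(+-)}$, so $g_NV_i\cap U_{(+-)}=V_i\cap U_{(+-)}$; and writing a general element of $g_NV_i$ as $a+b+f_{g_N}(b)$ with $a\in V_i\cap U_{(+-)}$ and $b\in V_i\cap U_{(0)}$ (standardness of $\mcf$), one sees it lies in $U_{(0)}$ only when $a+f_{g_N}(b)=0$, so $g_NV_i\cap U_{(0)}\subseteq V_i\cap U_{(0)}$. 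A dimension count using standardness of $\mcf'$ upgrades both inclusions to equalities, whence $V_i'=g_NV_i=(V_i\cap U_{(+-)})\oplus(V_i\cap U_{(0)})=V_i$.

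For (iii), since $\dim(V_i\cap U_{(+-)})$ is constant on $N_H$-orbits, so is $\tau(\mcf)$, and by (i) we may take $\mcf$ standard. Then $g_N\mcf=\mcf$ iff $f_{g_N}(V_i\cap U_{(0)})\subseteq V_i\cap U_{(+-)}$ for all $i$; as $V_i\cap U_{(0)}$ is constant between the jump indices $\delta_1<\cdots<\delta_{c_0}$, this is equivalent to $f_{g_N}\bigl(V_{\delta_k}\cap U_{(0)}\bigr)\subseteq V_{\delta_k}\cap U_{(+-)}$ for $k=1,\dots,c_0$, which (evaluating $f_{g_N}$ on a basis adapted to the flag $\{V_{\delta_k}\cap U_{(0)}\}$) has $r^{\sum_{k}\dim(V_{\delta_k}\cap U_{(+-)})}$ solutions. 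Multiplying by the order $r^{c_+c_-}$ of the kernel $\{n(0,0,C)\}$ of $g\mapsto f_g$ gives $|\mathrm{Stab}_{N_H}(\mcf)|$, and dividing into $|N_H|=r^{(c_++c_-)c_0+c_+c_-}$ yields $|N_H\mcf|=r^{(c_++c_-)c_0-\sum_{i\in I_2}\dim(V_i\cap U_{(+-)})}=r^{\ell(\tau(\mcf))}$ by the displayed formula for $\ell(\tau(\mcf))$. I expect the only delicate point to be this last count: recognizing that among the conditions $f_{g_N}(V_i\cap U_{(0)})\subseteq V_i\cap U_{(+-)}$ only those at jump indices are independent, and tracking the $c_+c_-$-dimensional kernel of $g\mapsto f_g$, so that the exponents assemble exactly into the combinatorial expression for $\ell(\tau)$; parts (i) and (ii) are routine once one exploits that $N_H$ fixes $U_{(+-)}$ pointwise and is unipotent in the $U_{(0)}$-direction.
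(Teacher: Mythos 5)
Your proposal is correct and follows essentially the same route as the paper: part (i) realizes an adapted complement of $U_{(+-)}$ in $V$ as the graph of some $f\in{\rm Hom}(V\cap U_{(0)},U_{(+-)})$ and kills it by the surjectivity in Lemma \ref{lem2.9}, part (ii) exploits that $N_H$ fixes $U_{(+-)}$ pointwise and moves $U_{(0)}$ only within $U_{(+-)}$-directions, and part (iii) reduces the stabilizer condition to $f_g(V_i\cap U_{(0)})\subset V_i\cap U_{(+-)}$ at the jump indices and counts. Your bookkeeping of the fiber $\{n(0,0,C)\}$ of $g\mapsto f_g$ is a point the paper leaves implicit, but the computation agrees.
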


\begin{proof} (i) Take elements $v_i\in V_i-V_{i-1}$ for $i\in I_2$ and define a subspace
$W=\bigoplus_{i\in I_2}\bbf v_i$
of $V$. Then we have $V=U_{(+-)}\oplus W$ and
$$V_i=(V_i\cap U_{(+-)})\oplus(V_i\cap W)$$
for $i=1,\ldots,n$.

The subspace $W$ is written as $W=\{v+f(v)\mid v\in V\cap U_{(0)}\}$ with some $f\in {\rm Hom}(V\cap U_{(0)},U_{(+-)})$. By Lemma \ref{lem2.9}, there exists a $g\in N_H$ such that $gW=V\cap U_{(0)}$. Hence the flag $g\mcf$ is standard.

(ii) Suppose $V_i=(V_i\cap U_{(+-)})\oplus(V_i\cap U_{(0)}),\ V'_i=(V'_i\cap U_{(+-)})\oplus(V'_i\cap U_{(0)})$ and $gV_i=V'_i$ with some $g=g_Ng_L\in N_HL_H=P_H$. For an element $v\in V_i\cap U_{(0)}$, write
$gv=v'_0+v'_1$
with $v'_0\in V'_i\cap U_{(+-)}$ and $v'_1\in V'_i\cap U_{(0)}$. Since $g_L v=g_N^{-1}gv\in U_{(+-)}+v'_1$, it follows that $g_L v=v'_1$. Hence
$$g_L (V_i\cap U_{(0)})=V'_i\cap U_{(0)}.$$
Since $g(V_i\cap U_{(+-)})=V'_i\cap U_{(+-)}$ and since $g_N$ acts trivially on $U_{(+-)}$, it also follows that $g_L (V_i\cap U_{(+-)})=V'_i\cap U_{(+-)}$.

(iii) We may assume that $\mcf$ is standard. Let $g$ be an element of $N_H$. Then
$$g\mcf=\mcf\Longleftrightarrow f_g(V_i\cap U_{(0)})\subset V_i\cap U_{(+-)}\mbox{ for all }i\in I_2.$$
Take $v_i\in (V_i\cap U_{(0)})-(V_{i-1}\cap U_{(0)})$ for $i\in I_2$. Then $\{v_i\mid i\in I_2\}$ is a basis of $V\cap U_{(0)}$. The above condition is equivalent to
$$f_g(v_i)\in V_i\cap U_{(+-)}\mbox{ for all }i\in I_2.$$
The number of such $f_g$ is
$r^{\sum_{i\in I_2} \dim(V_i\cap U_{(+-)})}$.
On the other hand, the number of elements in ${\rm Hom}(V\cap U_{(0)},U_{(+-)})$ is $r^{(c_++c_-)c_0}$. Hence $|N_H\mcf|=r^{\ell(\tau(\mcf))}$.
\end{proof}

\subsection{Proof of Theorem \ref{th1.2}, Theorem \ref{th1.4}, Proposition \ref{prop1.5} and Theorem \ref{th1.6}}
\indent

Theorem \ref{th1.2} follows from Proposition \ref{prop2.3} (i), Lemma \ref{lem2.11} and Proposition \ref{prop2.14} (i). Proposition \ref{prop1.5} is proved in Proposition \ref{prop2.14}.

\bigskip
\noindent {\it Proof of Theorem \ref{th1.4}.} By Theorem \ref{th1.2}, we may assume that $t=(U_0,U_d,V)\ (d=n-a-b)$ with $V=V(a,b,c_+,c_-)_{\rm odd},\ V=V(a,b,c_+,c_-)_{\rm even}^0$ or $V=V(a,b,c_+,c_-)_{\rm even}^1$. Since
\begin{equation}
|Gt|=|\{(gU_0,gU_d)\mid g\in G\}||R_dV|=|M||PU_d||R_dV|, \label{eq2.12}
\end{equation}
we have only to compute $|PU_d|$ and $|R_dV|$. By (\ref{eq2.8}), we have
\begin{equation}
|PU_d|=r^{d(d+1)/2}{[r]_n\over [r]_d[r]_{n-d}}. \label{eq2.14}
\end{equation}
On the other hand, it follows from Proposition \ref{prop2.3} (ii) and (iii) that
\begin{equation}
|R_dV|=|Q(R_d\cap L_{W_1})V|=r^{((n-a)(n-a+1)-d(d+1))/2}{[r]_{n-d}\over [r]_a[r]_b}|(R_d\cap L_{W_1})V|. \label{eq2.15}
\end{equation}
By Proposition \ref{prop2.14} (iii), we have
\begin{equation}
|(R_d\cap L_{W_1})V|={[r]_d\over [r]_{c_+}[r]_{c_-}[r]_{c_0}}\psi_{c_0}^\varepsilon(r). \label{eq2.16}
\end{equation}
So the assertion follows from (\ref{eq2.12}), (\ref{eq2.14}), (\ref{eq2.15}) and (\ref{eq2.16}).
\hfill$\square$

\bigskip
Let $U_d$ and $V$ be as above. Write $H=L_{W_1}\cap R_d\cong {\rm GL}_d(\bbf)$. Then $R_d=P\cap P_{U_d}$ is decomposed as $R_d=QH$. By Proposition \ref{prop2.5} (ii),
$$R(t)=P\cap P_{U_d}\cap P_V=R_d\cap P_V=Q_V(H\cap P_V)$$
where $Q_V=Q\cap P_V$. Define a parabolic subgroup
\begin{align*}
P_H & =\{g\in H\mid gU_{(+)}=U_{(+)},\ gU_{(-)}=U_{(-)}\} \\
& =\{g\in H\mid gU_{(+)}=U_{(+)},\ g(U_{(+)}\oplus U_{(0+)})=U_{(+)}\oplus U_{(0+)}\}
\end{align*}
of $H$. Then we have a Levi decomposition $P_H=N_HL$ where $L$ is defined in Section 1.3. Since $N_H\subset H\cap P_V\subset P_H$, we can write
$H\cap P_V=N_HL_V$
with $L_V=L\cap P_V$. Thus we have:

\begin{lemma} \ We have a bijection
$$Q_V\times N_H\times L_V\ni (g,h,\ell)\mapsto gh\ell\in R(t)=P\cap P_{U_d}\cap P_V.$$
\label{lem2.13}
\end{lemma}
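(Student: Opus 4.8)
The plan is to verify that the multiplication map $Q_V \times N_H \times L_V \to R(t)$, $(g,h,\ell)\mapsto gh\ell$, is both surjective and injective. Surjectivity is essentially already recorded: by Proposition \ref{prop2.5}(ii) we have $R(t) = Q_V(H\cap P_V)$, and the discussion preceding the lemma gives $H\cap P_V = N_H L_V$; composing these, $R(t) = Q_V N_H L_V$, which is exactly the image of the map.

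For injectivity I would peel off one factor at a time, using two ambient unique factorizations. First I would note that the product map $Q\times H\to R_d$ is itself a bijection. Indeed $R_d = QH$, and $R_d\subseteq P_{W_0}$, where $P_{W_0} = N_{W_0}(L_{W_0}L_{W_1})$ is a Levi decomposition with $L_{W_0}L_{W_1}\cong L_{W_0}\times L_{W_1}$ (Section 2.1); hence every element of $P_{W_0}$ has a unique expression $n\ell_0\ell_1$ with $n\in N_{W_0}$, $\ell_0\in L_{W_0}$, $\ell_1\in L_{W_1}$. Since $Q = N_{W_0}L_{W_0}$ and $H = L_{W_1}\cap R_d\subseteq L_{W_1}$, an equality $qh = q'h'$ with $q = n\ell_0$, $q' = n'\ell_0'$ in $Q$ and $h,h'\in H$ forces $n=n'$, $\ell_0=\ell_0'$, $h=h'$, hence $q=q'$ and $h=h'$. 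Second, since $P_H = N_H L$ is a Levi decomposition of the parabolic $P_H$ of $H$, the product map $N_H\times L\to P_H$ is a bijection, and restricting it gives an injection $N_H\times L_V\to H\cap P_V$.

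With these two facts, injectivity of the lemma's map becomes a short two-step unwinding. Given $gh\ell = g'h'\ell'$ with $g,g'\in Q_V\subseteq Q$, $h,h'\in N_H\subseteq H$ and $\ell,\ell'\in L_V\subseteq L\subseteq H$, I would regroup it as $g\cdot(h\ell) = g'\cdot(h'\ell')$ with $g,g'\in Q$ and $h\ell, h'\ell'\in H$; the $Q\times H$ bijection then yields $g = g'$ and $h\ell = h'\ell'$, and the $N_H\times L$ bijection yields $h=h'$ and $\ell=\ell'$.

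The only step requiring real care — the main obstacle, such as it is — is the compatibility of the subgroups $Q$ and $H$ with the Levi decomposition of $P_{W_0}$: one must check that $Q$ is precisely the locus where the $L_{W_1}$-component is trivial and that $H$ lies in the $L_{W_1}$-factor, so that the unique factorization of $P_{W_0}$ restricts cleanly to give bijectivity of $Q\times H\to R_d$. This is immediate from $L_{W_0}L_{W_1}\cong L_{W_0}\times L_{W_1}$ and the definitions $Q = N_{W_0}L_{W_0}$, $H = L_{W_1}\cap R_d$. Everything else in the argument is formal group-theoretic bookkeeping.
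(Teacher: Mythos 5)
Your proof is correct and follows essentially the same route as the paper: the paper obtains surjectivity from $R(t)=Q_V(H\cap P_V)$ and $H\cap P_V=N_HL_V$ in the discussion immediately preceding the lemma, and leaves the uniqueness of the factorization implicit in the semidirect-product structures $P_{W_0}=N_{W_0}(L_{W_0}\times L_{W_1})$ and $P_H=N_HL$. Your explicit two-step unwinding of injectivity is exactly the intended argument.
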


\noindent {\it Proof of Theorem \ref{th1.6}.} (i) Let $\mcf: V_1\subset\cdots\subset V_n$ be a full flag in $M_0(V)$. By Proposition \ref{prop2.5} (i), there exists a $g\in Q_V$ such that
$$gV_i=(gV_i\cap W_0)\oplus (gV_i\cap W_1)\oplus (gV_i\cap W_2),$$
$gV_i\cap W_0=\bbf e_1\oplus\cdots\oplus \bbf e_{a_i(\mcf)}$ and that $gV_i\cap W_2=\bbf e_{n+d+2}\oplus\cdots\oplus \bbf e_{n+d+1+b_i(\mcf)}$ for $i=1,\ldots,n$.

Consider the full flag
$gV_{\lambda_1}\cap W_1\subset\cdots\subset gV_{\lambda_d}\cap W_1$
in $W_1$. By Proposition \ref{prop2.14'} (i), there exists an $h\in N_H$ such that
$$hgV_i\cap W_1=(hgV_i\cap (U_{(+)}\oplus U_{(-)}))\oplus (hgV_i\cap U_{(0)}).$$
for $i=1,\ldots,n$. So the flag $hg\mcf$ is standard.

(ii) Let $\mcf$ and $\mcf'$ be two standard full flags in $M_0(V)$ such that $g\mcf=\mcf'$ for some $g\in R(t)$. By Lemma \ref{lem2.13}, we can write $g=g_Qg_Ng_L$ with $g_Q\in Q_V,\ g_N\in N_H$ and $g_L\in L_V$. Since $g_Ng_L\in L_{W_1}$, it follows from Proposition \ref{prop2.5} (ii) that $g_Ng_L\mcf=\mcf'$. By Proposition \ref{prop2.14'} (ii), we also have $g_L\mcf=\mcf'$ as desired.

(iii) Let $\mcf: V_1\subset\cdots\subset V_n$ be a standard full flag in $M_0(V)$. Considering the full flag
$V_{\lambda_1}\cap W_1\subset\cdots\subset V_{\lambda_d}\cap W_1$
in $W_1$, we define a permutation
$$\tau'=\tau'(\mcf): (\lambda_1\cdots\lambda_d)\mapsto (\gamma_1\cdots\gamma_c\delta_1\cdots\delta_{c_0})$$
of the subset $\{\lambda_1,\ldots,\lambda_d\}$ in $\{1,\ldots,n\}$. Let $\ell(\tau')$ be the inversion number
$\ell(\tau')=|\{(\gamma_i,\delta_j)\mid \gamma_i>\delta_j\}|.$
Then we have $\ell(\tau)=\ell(\sigma)+\ell(\tau')$ where
\begin{align*}
\sigma & : (1\,2\cdots n)\mapsto (\alpha_1\cdots\alpha_a\lambda_a\cdots\lambda_d\beta_1\cdots\beta_b) \\
\mand \tau & : (1\,2\cdots n)\mapsto (\alpha_1\cdots \alpha_a\gamma_1\cdots\gamma_c\delta_1\cdots\delta_{c_0}\beta_1\cdots\beta_b)
\end{align*}
are as in Section 2.4 and Section 1.3.

By Proposition \ref{prop2.14'} (ii) and (iii), we have
$|N_HL_V\mcf|=r^{\ell(\tau')}|L_V\mcf|$.
On the other hand, by Proposition \ref{prop2.5} (ii) and (iii), we have
$|Q_VN_HL_V\mcf|=[r]_a[r]_br^{\ell(\sigma)}|N_HL_V\mcf|$.
Hence
$|R(t)\mcf|=[r]_a[r]_br^{\ell(\tau)}|L_V\mcf|$
by Lemma \ref{lem2.13}.
\hfill$\square$

\section{Orbits on ${\rm GL}_n(\bbf)/B$}

\subsection{Preliminaries}

Let $\bbf$ be an arbitrary field. Let
$$V_1\subset V_2\subset\cdots\subset V_{n-1}\subset \bbf^n\mand W_1\subset W_2\subset\cdots\subset W_{n-1}\subset \bbf^n$$
be two full flags in $\bbf^n$. (Write $V_0=W_0=\{0\}$ and $V_n=W_n=\bbf^n$.) Define
$d_{i,j}=\dim(V_i\cap W_j)$
for $i,j=0,1,2,\ldots,n$ and
$c_{i,j}=d_{i,j}-d_{i-1,j}-d_{i,j-1}+d_{i-1,j-1}$
for $i,j=1,2,\ldots,n$.

\begin{proposition} \ The $n\times n$ matrix $\{c_{i,j}\}$ is a permutation matrix.
\label{prop3.1}
\end{proposition}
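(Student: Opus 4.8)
The plan is to prove that $\{c_{i,j}\}$ is a permutation matrix by showing that every row and every column sums to $1$ and that all entries are nonnegative integers; this forces each row and column to have exactly one entry equal to $1$ and the rest $0$, which is precisely the statement that the matrix is a permutation matrix. The quantities $d_{i,j}=\dim(V_i\cap W_j)$ are the natural combinatorial invariants of a pair of flags (the rank of the relative position, essentially the ``rank array''), and the second differences $c_{i,j}$ measure how the intersection dimension jumps as one refines both flags; intuitively, the jump happens exactly once in each row and column because passing from $V_{i-1}$ to $V_i$ adds one dimension, which must be ``registered'' against $W_j$ for exactly one value of $j$.

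First I would record the two monotonicity-and-boundary facts: for fixed $i$, the function $j\mapsto d_{i,j}$ is nondecreasing with $d_{i,0}=0$ and $d_{i,n}=\dim V_i=i$; symmetrically for fixed $j$. Next, summing $c_{i,j}$ over $j=1,\ldots,n$ telescopes: $\sum_{j=1}^n c_{i,j}=(d_{i,n}-d_{i-1,n})-(d_{i,0}-d_{i-1,0})=i-(i-1)=1$, and likewise $\sum_{i=1}^n c_{i,j}=1$ for each $j$. So all row sums and column sums equal $1$. It then remains to show $c_{i,j}\ge 0$ and $c_{i,j}\in\bz$ for all $i,j$; integrality is automatic since each $d_{i,j}$ is an integer, so the real content is nonnegativity. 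Combined with the sums being $1$, nonnegativity of integer entries immediately gives that each row is a standard basis vector, hence the matrix is a permutation matrix.

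The main obstacle is therefore the inequality $c_{i,j}\ge 0$, i.e.
$$\dim(V_i\cap W_j)+\dim(V_{i-1}\cap W_{j-1})\ge \dim(V_{i-1}\cap W_j)+\dim(V_i\cap W_{j-1}).$$
I would prove this by a modular-law / submodularity argument applied to the lattice of subspaces. Set $A=V_i\cap W_j$ and $B=V_{i-1}\cap W_{j-1}$; note $B\subseteq A$, and also $V_{i-1}\cap W_j\subseteq A$ and $V_i\cap W_{j-1}\subseteq A$. Inside the ambient space $A$, the two subspaces $X=V_{i-1}\cap W_j$ and $Y=V_i\cap W_{j-1}$ satisfy $X\cap Y=V_{i-1}\cap W_{j-1}=B$ (since $V_{i-1}\subseteq V_i$ and $W_{j-1}\subseteq W_j$), and $X+Y\subseteq A$. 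Hence by the dimension formula $\dim X+\dim Y=\dim(X+Y)+\dim(X\cap Y)\le \dim A+\dim B$, which is exactly $c_{i,j}\ge 0$. This is the crux; the rest is the bookkeeping above.

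In summary, the proof has three short steps: (1) telescoping sums show all row and column sums of $\{c_{i,j}\}$ equal $1$; (2) the key inequality $c_{i,j}\ge 0$ follows from $\dim(X\cap Y)+\dim(X+Y)=\dim X+\dim Y$ applied to $X=V_{i-1}\cap W_j$, $Y=V_i\cap W_{j-1}$ after identifying $X\cap Y=V_{i-1}\cap W_{j-1}$; (3) nonnegative integers with row and column sums equal to $1$ form a permutation matrix. I expect step (2) to be the only place requiring genuine thought, and even there the argument is a one-line application of the Grassmann dimension formula once the right pair of subspaces is chosen.
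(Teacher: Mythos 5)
Your proof is correct and is essentially identical to the paper's: the same telescoping computation for the row and column sums, and the same nonnegativity argument via the Grassmann dimension formula applied to $X=V_{i-1}\cap W_j$ and $Y=V_i\cap W_{j-1}$, using $X\cap Y=V_{i-1}\cap W_{j-1}$ and $X+Y\subseteq V_i\cap W_j$. Nothing is missing.
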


\begin{proof} Since $V_{i-1}\cap W_{j-1}=(V_{i-1}\cap W_j)\cap (V_i\cap W_{j-1})$ and since
$V_i\cap W_j\supset (V_{i-1}\cap W_j)+(V_i\cap W_{j-1})$,
we have $c_{i,j}\ge 0$. On the other hand, $\sum_{i=1}^n c_{i,j}=d_{n,j}-d_{0,j}-d_{n,j-1}+d_{0,j-1}=j-(j-1)=1$ and  $\sum_{j=1}^n c_{i,j}=d_{i,n}-d_{i,0}-d_{i-1,n}+d_{i,0}=i-(i-1)=1$. Hence $\{c_{i,j}\}$ is a permutation matrix.
\end{proof}

We also have the following by the same arguments as above.

\begin{proposition} \ The following four conditions are equivalent$:$

{\rm (i)} $c_{i,j}=1$.

{\rm (ii)} $d_{i,j}-1=d_{i-1,j}=d_{i,j-1}=d_{i-1,j-1}$.

{\rm (iii)} $V_i\cap W_j\supsetneqq V_{i-1}\cap W_j=V_i\cap W_{j-1}=V_{i-1}\cap W_{j-1}$.

{\rm (iv)} $V_i\cap W_j\supsetneqq (V_{i-1}\cap W_j)+(V_i\cap W_{j-1})$.
\label{prop3.2}
\end{proposition}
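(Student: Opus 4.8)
The plan is to work throughout with the two subspaces $A=V_{i-1}\cap W_j$ and $B=V_i\cap W_{j-1}$ of $V_i\cap W_j$. As already observed in the proof of Proposition~\ref{prop3.1}, $A\cap B=V_{i-1}\cap W_{j-1}$ and $A+B\subseteq V_i\cap W_j$. Applying $\dim(A+B)=\dim A+\dim B-\dim(A\cap B)$ gives $\dim(A+B)=d_{i-1,j}+d_{i,j-1}-d_{i-1,j-1}$, hence
$$c_{i,j}=d_{i,j}-\dim(A+B)=\dim(V_i\cap W_j)-\dim\big((V_{i-1}\cap W_j)+(V_i\cap W_{j-1})\big).$$
Recall also from Proposition~\ref{prop3.1} that $c_{i,j}\in\{0,1\}$, since $c_{i,j}\ge 0$ and the entries in a given row sum to $1$.

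With this in hand, (i)$\Leftrightarrow$(iv) is immediate: since $A+B\subseteq V_i\cap W_j$, the displayed identity says that $c_{i,j}$ is the codimension of $(V_{i-1}\cap W_j)+(V_i\cap W_{j-1})$ in $V_i\cap W_j$, and as $c_{i,j}\in\{0,1\}$ this codimension is positive exactly when it equals $1$; that is, $c_{i,j}=1$ iff $V_i\cap W_j\supsetneqq (V_{i-1}\cap W_j)+(V_i\cap W_{j-1})$.

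For the other equivalences I would use that every ``edge difference'' of the array $\{d_{i,j}\}$ lies in $\{0,1\}$: from $\dim V_i/V_{i-1}=1$ one gets $d_{i,j}-d_{i-1,j},\,d_{i,j-1}-d_{i-1,j-1}\in\{0,1\}$, and from $\dim W_j/W_{j-1}=1$ one gets $d_{i,j}-d_{i,j-1},\,d_{i-1,j}-d_{i-1,j-1}\in\{0,1\}$. Writing $c_{i,j}=(d_{i,j}-d_{i-1,j})-(d_{i,j-1}-d_{i-1,j-1})=(d_{i,j}-d_{i,j-1})-(d_{i-1,j}-d_{i-1,j-1})$, the equation $c_{i,j}=1$ forces $d_{i,j}-d_{i-1,j}=1$, $d_{i,j-1}-d_{i-1,j-1}=0$, $d_{i,j}-d_{i,j-1}=1$ and $d_{i-1,j}-d_{i-1,j-1}=0$, which together are precisely (ii); conversely (ii) gives at once $c_{i,j}=(d_{i-1,j-1}+1)-d_{i-1,j-1}-d_{i-1,j-1}+d_{i-1,j-1}=1$, so (i)$\Leftrightarrow$(ii). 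Finally (ii)$\Leftrightarrow$(iii): since $V_{i-1}\cap W_{j-1}\subseteq V_{i-1}\cap W_j$ and $V_{i-1}\cap W_{j-1}\subseteq V_i\cap W_{j-1}$, the numerical equalities $d_{i-1,j}=d_{i-1,j-1}=d_{i,j-1}$ are equivalent to the subspace equalities $V_{i-1}\cap W_j=V_{i-1}\cap W_{j-1}=V_i\cap W_{j-1}$; given these, $d_{i,j}-1=d_{i-1,j-1}$ says $\dim(V_i\cap W_j)=\dim(V_{i-1}\cap W_j)+1$, which (using once more that $d_{i,j}-d_{i-1,j}\le 1$) is the same as $V_i\cap W_j\supsetneqq V_{i-1}\cap W_j$, i.e. (iii).

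There is no real obstacle here: everything reduces to the dimension formula for $A+B$ and the codimension-one structure of the two flags. The only place to be careful is the passage from a strict inclusion to a codimension-\emph{exactly}-one statement, both in (i)$\Leftrightarrow$(iv) and in (ii)$\Leftrightarrow$(iii); this requires invoking $c_{i,j}\le 1$ and the bound $\le 1$ on the relevant flag step, without which a bare strict inclusion would be strictly weaker than the asserted equalities.
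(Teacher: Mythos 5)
Your proof is correct and is essentially the argument the paper intends: Proposition \ref{prop3.2} is stated with only the remark that it follows ``by the same arguments as above,'' namely the identities $A\cap B=V_{i-1}\cap W_{j-1}$, $A+B\subseteq V_i\cap W_j$, and $c_{i,j}\in\{0,1\}$ from the proof of Proposition \ref{prop3.1}, which is exactly what you use. Your explicit attention to the passage from strict inclusion to codimension exactly one (via $c_{i,j}\le 1$ and the bound $d_{i,j}-d_{i-1,j}\le 1$) is the right point to be careful about and is handled correctly.
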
 

\begin{remark} \ (Bruhat decomposition of $G={\rm GL}_n(\bbf)$) Let $V_1\subset V_2\subset\cdots\subset V_{n-1}$ be the canonical full flag defined by
$V_i=\bbf e_1\oplus\cdots\oplus \bbf e_i$
for $i=1,\ldots,n-1$ with the canonical basis $e_1,\ldots,e_n$ of $\bbf^n$. The subgroup $B$ of $G$ defined by
$$B=\{g\in G\mid gV_i=V_i\mbox{ for }i=1,\ldots,n-1\}=\{\mbox{upper triangular matrices in }G\}$$
is a Borel subgroup of $G$.

Let $W_1\subset W_2\subset\cdots\subset W_{n-1}$ be an arbitrary full flag in $\bbf^n$. By Proposition {\ref{prop3.1}}, we can define a permutation $i=i(j)$ of $\{1,2,\ldots,n\}$ determined by
$i=i(j)\Longleftrightarrow c_{i,j}=1$.
Suppose $i=i(j)$. Then by Proposition {\ref{prop3.2}}, we can take vectors $v_i\in V_i\cap W_j$ for $i=1,\ldots,n$ (for $j=1,\ldots,n$) such that
$$v_i\notin V_{i-1}\cap W_j=V_i\cap W_{j-1}=V_{i-1}\cap W_{j-1}.$$
It follows that $v_1,\ldots,v_n$ is a basis of $\bbf^n$ such that
$V_i=\bbf v_1\oplus\cdots\oplus \bbf v_i$
for $i=1,\ldots,n$. It also follows that $v_{i(1)},\ldots,v_{i(n)}$ is a basis of $\bbf^n$ such that
$W_j=\bbf v_{i(1)}\oplus\cdots\oplus \bbf v_{i(j)}$
for $j=1,\ldots,n$. Define $n\times n$ matrices
$$g=(v_1v_2\cdots v_n)\mand w=\{c_{i,j}\}=(e_{i(1)}e_{i(2)}\cdots e_{i(n)}).$$
Then $g\in B$ and
$gwV_j=W_j$
for $j=1,\ldots,n$. Thus we have proved
$G=\bigsqcup_{w\in \mathcal{W}} BwB$
where $\mathcal{W}$ is the subgroup of $G$ consisting of all the permutation matrices.
\end{remark}

\subsection{${\rm Sp}_{2n}$-orbits on ${\rm GL}_{2n}/B$}

Let $\langle\ ,\ \rangle$ denote the alternating form on $\bbf^{2n}$ defined by
$$\langle e_i,e_j\rangle=\begin{cases} \delta_{i,2n+1-j} & \text{for $i\le n$}, \\
-\delta_{i,2n+1-j} & \text{for $i>n$}.
\end{cases}$$
Define a subgroup $H=\{g\in G\mid \langle gu,gv\rangle=\langle u,v\rangle \mbox{ for all }u,v\in\bbf^{2n}\}\cong {\rm Sp}_{2n}(\bbf)$ of $G={\rm GL}_{2n}(\bbf)$. Let
$V_1\subset V_2\subset\cdots\subset V_{2n-1}$
be a full flag in $\bbf^{2n}$. Then there corresponds another (``decreasing'') full flag
$V_1^\perp\supset V_2^\perp\supset\cdots\supset V_{2n-1}^\perp$
in $\bbf^{2n}$ defined by
$$V_i^{\perp}=\{u\in\bbf^{2n}\mid \langle u,v\rangle =0\mbox{ for all }v\in V_i\}.$$
Define $d_{i,j}=\dim(V_i\cap V_j^\perp)$ and $c_{i,j}=d_{i,j-1}-d_{i,j}-d_{i-1,j-1}+d_{i-1,j}$. Then we have the following two propositions by Proposition \ref{prop3.1} and Proposition \ref{prop3.2}.

\begin{proposition} \ $\{c_{i,j}\}_{i,j=1}^{2n}$ is a permutation matrix.
\end{proposition}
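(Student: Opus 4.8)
The plan is to deduce this directly from Proposition~\ref{prop3.1} by converting the ``decreasing'' flag $V_1^\perp\supset\cdots\supset V_{2n-1}^\perp$ into an honest increasing full flag. Concretely, I would set $W_j=V_{2n-j}^\perp$ for $j=0,1,\ldots,2n$, with the conventions $V_0=\{0\}$ and $V_{2n}=\bbf^{2n}$. Since $\langle\ ,\ \rangle$ is nondegenerate, $\dim W_j=2n-\dim V_{2n-j}=j$, and the inclusion $V_{2n-j}\subset V_{2n-j+1}$ gives $W_{j-1}=V_{2n-j+1}^\perp\subset V_{2n-j}^\perp=W_j$; in particular $W_0=\{0\}$ and $W_{2n}=\bbf^{2n}$, so $W_0\subset W_1\subset\cdots\subset W_{2n}$ is a full flag in $\bbf^{2n}$.

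Next I would apply Proposition~\ref{prop3.1} to the pair of full flags $\{V_i\}$ and $\{W_j\}$. Writing $d'_{i,j}=\dim(V_i\cap W_j)$ and $c'_{i,j}=d'_{i,j}-d'_{i-1,j}-d'_{i,j-1}+d'_{i-1,j-1}$, Proposition~\ref{prop3.1} says $\{c'_{i,j}\}$ is a permutation matrix. It then remains to match $\{c'_{i,j}\}$ with $\{c_{i,j}\}$. By definition $d'_{i,j}=\dim(V_i\cap V_{2n-j}^\perp)=d_{i,2n-j}$, and substituting $j\mapsto 2n-j$ into the two defining expressions shows $c'_{i,j}=c_{i,2n+1-j}$ for $i,j=1,\ldots,2n$. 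Hence $\{c_{i,j}\}$ is obtained from the permutation matrix $\{c'_{i,j}\}$ by reversing the order of the columns, and a column permutation of a permutation matrix is again a permutation matrix. This finishes the proof.

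The argument is essentially bookkeeping, so there is no serious obstacle; the one point that needs care is reconciling the sign pattern in the definition $c_{i,j}=d_{i,j-1}-d_{i,j}-d_{i-1,j-1}+d_{i-1,j}$ used here (which differs from the pattern $d_{i,j}-d_{i-1,j}-d_{i,j-1}+d_{i-1,j-1}$ of Proposition~\ref{prop3.1}) with the index reversal $j\mapsto 2n-j$, so that the two $\pm$ patterns line up exactly. I would also remark, for later use, that Proposition~\ref{prop3.2} transports in the same way under this reindexing, yielding the analogous four equivalent characterizations of $c_{i,j}=1$ directly in terms of the subspaces $V_i\cap V_j^\perp$.
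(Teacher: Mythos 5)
Your proof is correct and takes the same route as the paper, which simply asserts that this proposition follows from Proposition~\ref{prop3.1} applied to the pair of flags $\{V_i\}$ and $\{V_j^\perp\}$; you have merely written out the reindexing $W_j=V_{2n-j}^\perp$ and the verification $c'_{i,j}=c_{i,2n+1-j}$ that the paper leaves implicit. The bookkeeping checks out, including the reconciliation of the two sign patterns under $j\mapsto 2n-j$.
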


\begin{proposition} \ The following four conditions are equivalent$:$

{\rm (i)} $c_{i,j}=1$.

{\rm (ii)} $d_{i,j-1}-1=d_{i,j}=d_{i-1,j-1}=d_{i-1,j}$.

{\rm (iii)} $V_i\cap V_{j-1}^\perp\supsetneqq V_i\cap V_j^\perp=V_{i-1}\cap V_{j-1}^\perp=V_{i-1}\cap V_j^\perp$.

{\rm (iv)} $V_i\cap V_{j-1}^\perp\supsetneqq (V_i\cap V_j^\perp)+(V_{i-1}\cap V_{j-1}^\perp)$
\label{prop3.4}
\end{proposition}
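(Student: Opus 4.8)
The plan is to deduce Proposition \ref{prop3.4} from Proposition \ref{prop3.2} by reversing the indexing of the orthogonal flag. Since $\langle\ ,\ \rangle$ is nondegenerate, the map $V\mapsto V^\perp$ is an inclusion-reversing involution on subspaces of $\bbf^{2n}$ with $\dim V^\perp=2n-\dim V$; in particular $(V^\perp)^\perp=V$. Hence, setting $W_k=V_{2n-k}^\perp$ for $k=0,1,\ldots,2n$, one obtains an honest increasing full flag $\{0\}=W_0\subset W_1\subset\cdots\subset W_{2n}=\bbf^{2n}$ with $\dim W_k=k$, and $V_j^\perp=W_{2n-j}$ for every $j$. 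I would first verify this one genuinely routine point (that reversing the index produces a full flag), and then apply the results of Section 3.1 to the pair of flags $(V_\bullet,W_\bullet)$.

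Next I would set up the translation dictionary. Writing $d'_{i,k}=\dim(V_i\cap W_k)$ and $c'_{i,k}=d'_{i,k}-d'_{i-1,k}-d'_{i,k-1}+d'_{i-1,k-1}$ as in Proposition \ref{prop3.1}, the relation $V_j^\perp=W_{2n-j}$ gives $d_{i,j}=d'_{i,2n-j}$ for all $i,j$, and substituting $k=2n+1-j$ into the definitions yields $c_{i,j}=c'_{i,2n+1-j}$. (Applied to Proposition \ref{prop3.1} this already gives the previous proposition, since permuting the columns of a permutation matrix by $k\mapsto 2n+1-k$ leaves a permutation matrix.) Under this dictionary the four conditions of Proposition \ref{prop3.4} are precisely the four conditions of Proposition \ref{prop3.2} for the flag $W_\bullet$ with $j$ replaced by $k=2n+1-j$: condition (ii) turns into $d'_{i,k}-1=d'_{i,k-1}=d'_{i-1,k}=d'_{i-1,k-1}$ after using $d_{i,j-1}=d'_{i,k}$ and $d_{i,j}=d'_{i,k-1}$, while conditions (iii) and (iv) become the corresponding statements for $W_\bullet$ after the substitutions $V_{j-1}^\perp=W_k$, $V_j^\perp=W_{k-1}$. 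So every equivalence in Proposition \ref{prop3.4} is an immediate consequence of Proposition \ref{prop3.2}.

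Alternatively, and closer to the style of the rest of the section, one can simply repeat the arguments of Propositions \ref{prop3.1} and \ref{prop3.2} in this setting: the only inputs are the inclusions $V_{i-1}\cap V_j^\perp\subset V_i\cap V_j^\perp\subset V_i\cap V_{j-1}^\perp$ and $V_{i-1}\cap V_j^\perp\subset V_{i-1}\cap V_{j-1}^\perp\subset V_i\cap V_{j-1}^\perp$, together with $V_{i-1}\cap V_j^\perp=(V_i\cap V_j^\perp)\cap(V_{i-1}\cap V_{j-1}^\perp)$, all of which hold because $\{V_i\}$ is increasing and $\{V_j^\perp\}$ is decreasing. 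Feeding $A=V_i\cap V_j^\perp$ and $B=V_{i-1}\cap V_{j-1}^\perp$ (both inside $V_i\cap V_{j-1}^\perp$) into $\dim(A+B)=\dim A+\dim B-\dim(A\cap B)$ gives $c_{i,j}=\dim(V_i\cap V_{j-1}^\perp)-\dim(A+B)\ge 0$, whence $c_{i,j}=1$ iff $A+B$ has codimension one in $V_i\cap V_{j-1}^\perp$, which is (iv); the equivalences with (i), (ii), (iii) then follow exactly as in Proposition \ref{prop3.2}, using that each of the four relevant dimension differences lies in $\{0,1\}$. Either way there is essentially no obstacle: the argument is pure bookkeeping, and the only place asking for a line of care is the verification that nondegeneracy of $\langle\ ,\ \rangle$ makes $k\mapsto V_{2n-k}^\perp$ a bona fide full flag.
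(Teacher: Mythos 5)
Your proof is correct and follows the same route as the paper, which simply asserts that Proposition \ref{prop3.4} follows from Propositions \ref{prop3.1} and \ref{prop3.2} applied to the (reversed) decreasing flag $V_j^\perp$; your dictionary $W_k=V_{2n-k}^\perp$, $d_{i,j}=d'_{i,2n-j}$, $c_{i,j}=c'_{i,2n+1-j}$ just makes that reduction explicit. The alternative direct argument you sketch is also the same bookkeeping as in Proposition \ref{prop3.2}, so there is nothing to object to.
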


Since the orthogonal space of $V_i\cap V_j^\perp$ is $V_i^\perp+V_j$, we have
$\dim(V_i^\perp+V_j)=2n-d_{i,j}$.
So we have
\begin{align*}
d_{j,i} & =\dim(V_j\cap V_i^\perp) 
=\dim V_j+\dim V_i^\perp-\dim(V_j+V_i^\perp) \\
& =j+(2n-i)-(2n-d_{i,j})
=d_{i,j}+j-i
\end{align*}
and hence $c_{i,j}=c_{j,i}$.

\begin{lemma} \ Suppose that $c_{i,j}=1$. Then we have $\langle u,v\rangle \ne 0$ for all $u\in (V_i\cap V_{j-1}^\perp)-(V_{i-1}\cap V_{j-1}^\perp)$ and $v\in (V_j\cap V_{i-1}^\perp)-(V_{j-1}\cap V_{i-1}^\perp)$.
\label{lem3.6'}
\end{lemma}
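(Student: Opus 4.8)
The plan is to unwind the hypothesis $c_{i,j}=1$ into concrete membership statements about $u$ and $v$, and then to derive the nonvanishing of $\langle u,v\rangle$ from the fact that $V_{j-1}$ has codimension one in $V_j$.

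First I would invoke the symmetry $c_{i,j}=c_{j,i}$ established just before the statement, so that Proposition \ref{prop3.4} applies both to the pair $(i,j)$ and to the pair $(j,i)$. From the pair $(i,j)$ we obtain the equalities $V_i\cap V_j^\perp=V_{i-1}\cap V_{j-1}^\perp=V_{i-1}\cap V_j^\perp$, and from the pair $(j,i)$ we obtain $V_j\cap V_i^\perp=V_{j-1}\cap V_{i-1}^\perp=V_{j-1}\cap V_i^\perp$.

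Next I would translate the two hypotheses on $u$ and $v$. Since $u\in V_i\cap V_{j-1}^\perp$ but $u\notin V_{i-1}\cap V_{j-1}^\perp=V_i\cap V_j^\perp$, and $u$ already lies in $V_i$, it follows that $u\notin V_j^\perp$, while $u\in V_{j-1}^\perp$. Symmetrically, since $v\in V_j\cap V_{i-1}^\perp$ but $v\notin V_{j-1}\cap V_{i-1}^\perp$, and $v\in V_{i-1}^\perp$, it follows that $v\notin V_{j-1}$, whereas of course $v\in V_j$.

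Finally, because $\dim V_j-\dim V_{j-1}=1$ and $v\in V_j-V_{j-1}$, we have $V_j=V_{j-1}+\bbf v$. Now $u$ is orthogonal to all of $V_{j-1}$; were $\langle u,v\rangle=0$ as well, then $u$ would be orthogonal to $V_{j-1}+\bbf v=V_j$, i.e.\ $u\in V_j^\perp$, contradicting $u\notin V_j^\perp$. Hence $\langle u,v\rangle\ne 0$, as desired. I do not expect a genuine obstacle here: the argument is a direct consequence of Proposition \ref{prop3.4} together with the symmetry relation and the codimension-one step $V_j=V_{j-1}+\bbf v$. The only point requiring a moment's care is the passage from ``$u\notin V_{i-1}\cap V_{j-1}^\perp$'' to ``$u\notin V_j^\perp$'', which uses that $u$ already lies in $V_i$ and the identity $V_{i-1}\cap V_{j-1}^\perp=V_i\cap V_j^\perp$ coming from Proposition \ref{prop3.4}(iii); the corresponding statement for $v$ is obtained by interchanging the roles of $i$ and $j$.
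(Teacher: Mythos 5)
Your proposal is correct and follows essentially the same route as the paper: both use Proposition \ref{prop3.4}(iii) to identify $V_{i-1}\cap V_{j-1}^\perp$ with $V_i\cap V_j^\perp$, note that $u\perp V_{j-1}$ and $V_j=V_{j-1}\oplus\bbf v$, and conclude that $\langle u,v\rangle=0$ would force $u\in V_j^\perp$, contradicting the choice of $u$. The only cosmetic difference is that you also invoke the symmetric identity for the pair $(j,i)$, which is not actually needed (that $v\notin V_{j-1}$ already follows directly from $v\in V_{i-1}^\perp$ and $v\notin V_{j-1}\cap V_{i-1}^\perp$).
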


\begin{proof} Suppose $u\in (V_i\cap V_{j-1}^\perp)-(V_{i-1}\cap V_{j-1}^\perp)$ and $v\in (V_j\cap V_{i-1}^\perp)-(V_{j-1}\cap V_{i-1}^\perp)$. Then
$$\langle u,V_{j-1}\rangle =\{0\}\mand V_j=V_{j-1}\oplus \bbf v.$$
If $\langle u,v\rangle =0$, then
$$\langle u,V_j\rangle =\langle u,V_{j-1}\oplus \bbf v\rangle =\langle u,V_{j-1}\rangle =\{0\}$$
and hence $u\in V_i\cap V_j^\perp=V_{i-1}\cap V_{j-1}^\perp$ by Proposition \ref{prop3.4} (iii). But this contradicts the choice of $u$.
\end{proof}

\noindent {\it Proof of Proposition \ref{prop1.9}}. Suppose $c_{i,i}=1$. Then we can take an element $v\in (V_i\cap V_{i-1}^\perp)-(V_{i-1}\cap V_{i-1}^\perp)$ by Proposition \ref{prop3.4} (iii). By Lemma \ref{lem3.6'}, we have $\langle v,v\rangle\ne 0$. But this contradicts that $\langle\ ,\ \rangle$ is alternating.
\hfill$\square$

\bigskip
\noindent {\it Proof of Proposition \ref{prop1.10}}. (i) We will prove this by induction on $n$. Take a pair $(i,j)$ such that $i<j$ and that $c_{i,j}=1$. By Lemma \ref{lem3.6'}, we can take a $v_i\in V_i\cap V_{j-1}^\perp$ and a $v_j\in V_j\cap V_{i-1}^\perp$ such that $\langle v_i,v_j\rangle=1$. Put
$U=\bbf v_i\oplus\bbf v_j$. 
Then we have a direct sum decomposition
$\bbf^{2n}=U\oplus U^\perp$.
If $k\le i-1$, then $V_k\subset U^\perp$. If $i\le k\le j-1$, then
$$V_k=\bbf v_i\oplus (V_k\cap U^\perp)=(V_k\cap U)\oplus (V_k\cap U^\perp).$$
If $k\ge j$, then $V_k\supset U$. Hence for every $k=0,\ldots,2n$, we have
$V_k=(V_k\cap U)\oplus (V_k\cap U^\perp)$.
We also have
$V_\ell^\perp=(V_\ell^\perp\cap U)\oplus (V_\ell^\perp\cap U^\perp)$
and
$$V_k\cap V_\ell^\perp =(V_k\cap V_\ell^\perp\cap U)\oplus (V_k\cap V_\ell^\perp\cap U^\perp)$$
for $k,\ell=0,\ldots,2n$.

We can consider the full flag
$V_1\cap U^\perp\subset\cdots\subset V_{2n-1}\cap U^\perp$
in $U^\perp$ neglecting the two coincidences $V_{i-1}\cap U^\perp=V_i\cap U^\perp$ and $V_{j-1}\cap U^\perp=V_j\cap U^\perp$. Define
$d'_{k,\ell}=\dim V_k\cap U_\ell\cap U^\perp$, $d''_{k,\ell}=\dim V_k\cap U_\ell\cap U$, 
$c'_{k,\ell}=d'_{k,\ell-1}-d'_{k,\ell}-d'_{k-1,\ell-1}+d'_{k-1,\ell}$ and $c''_{k,\ell}=d''_{k,\ell-1}-d''_{k,\ell}-d''_{k-1,\ell-1}+d''_{k-1,\ell}$
for $k,\ell\in I=\{1,\ldots,2n\}$. Then
$d_{k,\ell}=d'_{k,\ell}+d''_{k,\ell}$ and $c_{k,\ell}=c'_{k,\ell}+c''_{k,\ell}$. 
For $k\in I-\{i,j\}$, we see that
 $V_{k-1}\cap U=V_k\cap U$. 
So we have $c''_{k,\ell}=0$ and
 $c_{k,\ell}=c'_{k,\ell}$ 
for $k,\ell\in I-\{i,j\}$.

By the assumption of induction, we can take a basis
$$v_1,\ldots,v_{i-1},v_{i+1},\ldots,v_{j-1},v_{j+1},\ldots,v_{2n}$$
of $U^\perp$ such that
$$V_k\cap U^\perp=\bigoplus_{\ell\in\{1,\ldots,k\}-\{i,j\}} \bbf v_\ell$$
and that $\langle v_k,v_\ell\rangle=c_{k,\ell}$ for $k,\ell\in I-\{i,j\}$. Thus the basis
$v_1,\ldots,v_{2n}$
of $\bbf^{2n}$ satisfies the desired properties. (Remark: We may take $(i,j)=(i_1,j_1)=(1,j_1)$ in the above proof. But we will need such a general argument as above in the proof of Proposition \ref{prop1.13}.)

(ii) Since $\bbf=\bbf_r$ consists of $r$ elements, we have $r-1$ choices of $v_1$ in $V_1=V_1\cap V_{j_1-1}^\perp$. Fix $v_1$. Then we have $r^{j_1-1}$ choices of $v_{j_1-1}$ in $V_{j_1}=V_{j_1}\cap V_0^\perp$ such that $\langle v_1,v_{j_1}\rangle =1$. Write
$$\ell_2=|\{i\mid 2<i\mbox{ and }\sigma(2)>\sigma(i)\}|.$$
Then we have $j_1=\sigma(2)=\ell_2+2$.

Fix $v_1$ and $v_{j_1}$. Then the subspaces $U=\bbf v_1\oplus \bbf v_{j_1}$ and $U^\perp$ in (i) are determined. Next we take $v_{i_2}\in V_{i_2}\cap U^\perp\cong\bbf$. So we have $r-1$ choices of $v_{i_2}$. Fix $v_{i_2}$. We see that
$$\dim(V_{j_2}\cap U^\perp)=\begin{cases} j_2-2 & \text{if $j_1<j_2$}, \\ j_2-1 & \text{if $j_1>j_2$.} \end{cases}$$
On the other hand, if we write
$$\ell_4=|\{i\mid 4<i\mbox{ and }\sigma(4)>\sigma(i)\}|,$$
then we have
$$\ell_4=\begin{cases} j_2-4 & \text{if $j_1<j_2$}, \\ j_2-3 & \text{if $j_1>j_2$.} \end{cases}$$
Hence we have $r^{\ell_4+1}$ choices of $v_{j_2}\in V_{j_2}\cap U^\perp$ such that $\langle v_{i_2},v_{j_2}\rangle=1$.

Repeating this procedure, we have $r-1$ choices of $v_{i_k}$ and $r^{\ell_{2k}+1}$ choices of $v_{j_k}$ if we fix $v_1,v_{j_1},\ldots,v_{i_{k-1}},v_{j_{k-1}}$. Here
$$\ell_{2k}=|\{i\mid 2k<i\mbox{ and }\sigma(2k)>\sigma(i)\}|$$
for $k=1,\ldots,n$. Since $\ell(\sigma)=\ell_2+\ell_4+\cdots+\ell_{2n}$, we have $(r-1)^nr^{n+\ell(\sigma)}$ choices of the bases $v_1,\ldots,v_{2n}$.
\hfill$\square$

\subsection{$Q_{2n}$-orbits on ${\rm GL}_{2n}/B$}

Let $H\cong {\rm Sp}_{2n}(\bbf)$ be as in the previous subsection. Let $Q_{2n}$ denote the subgroup of $H$ defined by
$Q_{2n}=\{g\in H\mid ge_{2n}=e_{2n}\}$.
Then $Q_{2n}$ stabilizes the hyperplane
$W=\bbf e_2\oplus\cdots\oplus \bbf e_{2n}=(\bbf e_{2n})^\perp$
in $\bbf^{2n}$.

Let $V_1\subset\cdots\subset V_{2n-1}$ be an arbitrary full flag in $\bbf^{2n}$. 
Let $S$ denote the subset of $I\times I$ defined by
$S=\{(i,j)\mid V_i\cap V_{j-1}^\perp\not\subset W\}$.
Define 
$$S_0=\{(i,j)\in S\mid V_i\cap V_j^\perp\subset W\mbox{ and }V_{i-1}\cap V_{j-1}^\perp\subset W\}.$$

\begin{lemma} \ Suppose $(i,j)\in S_0$. Then we have$:$

{\rm (i)} $c_{i,j}=1$.

{\rm (ii)} $V_i\cap V_{j-1}^\perp\cap W=V_{i-1}\cap V_{j-1}^\perp=V_i\cap V_j^\perp=V_{i-1}\cap V_j^\perp$.
\label{lem3.12}
\end{lemma}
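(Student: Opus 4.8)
The plan is to derive both parts directly from Proposition~\ref{prop3.4}, using only that $W$ is a hyperplane (so $\dim W=2n-1$) together with the evident monotonicity of the spaces $V_k\cap V_\ell^\perp$ in each index.

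First I would record the two inclusions $V_i\cap V_j^\perp\subseteq V_i\cap V_{j-1}^\perp$ (because $V_{j-1}\subseteq V_j$ forces $V_j^\perp\subseteq V_{j-1}^\perp$) and $V_{i-1}\cap V_{j-1}^\perp\subseteq V_i\cap V_{j-1}^\perp$ (because $V_{i-1}\subseteq V_i$). Since $(i,j)\in S_0$ we have $V_i\cap V_j^\perp\subset W$ and $V_{i-1}\cap V_{j-1}^\perp\subset W$, so the sum $(V_i\cap V_j^\perp)+(V_{i-1}\cap V_{j-1}^\perp)$ also lies in $W$; on the other hand $(i,j)\in S$ says $V_i\cap V_{j-1}^\perp\not\subset W$. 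Hence
$$V_i\cap V_{j-1}^\perp\supsetneqq (V_i\cap V_j^\perp)+(V_{i-1}\cap V_{j-1}^\perp),$$
which is exactly condition (iv) of Proposition~\ref{prop3.4}. Therefore $c_{i,j}=1$, proving (i), and simultaneously Proposition~\ref{prop3.4}(ii)--(iii) give the dimension equality $\dim(V_i\cap V_{j-1}^\perp)=\dim(V_{i-1}\cap V_{j-1}^\perp)+1$ and the chain $V_i\cap V_j^\perp=V_{i-1}\cap V_{j-1}^\perp=V_{i-1}\cap V_j^\perp$, which is three of the four equalities asserted in (ii).

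It remains to show $V_i\cap V_{j-1}^\perp\cap W=V_{i-1}\cap V_{j-1}^\perp$. One inclusion is free: $V_{i-1}\cap V_{j-1}^\perp\subseteq V_i\cap V_{j-1}^\perp$ by the first step and $V_{i-1}\cap V_{j-1}^\perp\subset W$ since $(i,j)\in S_0$, so $V_{i-1}\cap V_{j-1}^\perp\subseteq V_i\cap V_{j-1}^\perp\cap W$. For the reverse inclusion I would use that $W$ has codimension one: as $V_i\cap V_{j-1}^\perp\not\subset W$, we get $V_i\cap V_{j-1}^\perp+W=\bbf^{2n}$, whence $\dim(V_i\cap V_{j-1}^\perp\cap W)=\dim(V_i\cap V_{j-1}^\perp)-1=\dim(V_{i-1}\cap V_{j-1}^\perp)$ by the count just obtained. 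Matching dimensions turns the inclusion into an equality, completing (ii).

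There is no real obstacle here: the argument is a short deduction from Proposition~\ref{prop3.4}. The only point requiring a little care is the bookkeeping of inclusions versus strict inclusions, and the elementary observation that a codimension-one subspace not containing a given subspace $U$ meets $U$ in codimension one inside $U$ — this is precisely what converts the set-theoretic membership conditions defining $S_0$ into the sharp identification of $V_i\cap V_{j-1}^\perp\cap W$ in part (ii).
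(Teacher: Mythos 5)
Your proof is correct and follows essentially the same route as the paper: both derive condition (iv) of Proposition~\ref{prop3.4} from the definition of $S_0$ to get $c_{i,j}=1$, and then read off the equalities in (ii) from Proposition~\ref{prop3.4}(ii)--(iii). Your explicit codimension-one dimension count for $V_i\cap V_{j-1}^\perp\cap W=V_{i-1}\cap V_{j-1}^\perp$ just spells out what the paper leaves as ``clear.''
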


\begin{proof} (i) If $(i,j)\in S_0$, then
$(V_i\cap V_j^\perp)+(V_{i-1}\cap V_{j-1}^\perp)\subset W$.
So we have
$(V_i\cap V_j^\perp)+(V_{i-1}\cap V_{j-1}^\perp)\subsetneqq V_i\cap V_{j-1}^\perp$
and hence $c_{i,j}=1$ by Proposition \ref{prop3.4}.

(ii) By (i), it follows from Proposition \ref{prop3.4} that
$$V_i\cap V_{j-1}^\perp\supsetneqq V_{i-1}\cap V_{j-1}^\perp=V_i\cap V_j^\perp=V_{i-1}\cap V_j^\perp.$$
So the assertion is clear.
\end{proof}

It is clear that
\begin{equation}
(i,j)\in S\mbox{ and }i\le i',\ j\ge j'\Longrightarrow (i',j')\in S. \label{eq3.2}
\end{equation}
So the subset $S_0$ determines $S$ by
$$(i,j)\in S\Longleftrightarrow i\ge i_0\mbox{ and }j\le j_0\mbox{ for some }(i_0,j_0)\in S_0.$$
For example, if $n=2$ and $S_0=\{(2,1),(4,3)\}$, then
$$S=\{(2,1),(3,1),(4,1),(4,2),(4,3)\}.$$

\bigskip
\noindent {\it Proof of Proposition \ref{prop1.12}}. By Lemma \ref{lem3.12}, we may assume that $x_1<x_2<\cdots<x_s$. If $i<j$ and $y_j<y_i$, then it follows from $(x_i,y_i)\in S_0$ that $(x_j,y_j+1)\in S$ by (\ref{eq3.2}). But this contradicts that $(x_j,y_j)\in S_0$. Thus we have
$$y_1\le\cdots\le y_s.$$
By Lemma \ref{lem3.12}, the numbers $x_1,\ldots,x_s,y_1,\ldots,y_s$ are distinct.
\hfill$\square$

\bigskip
Write $I_{(A)}=I-\{x_1,\ldots,x_s,y_1,\ldots,y_s\}$.

\bigskip
\noindent {\it Proof of Proposition \ref{prop1.13}}. \ (i) We will prove this by induction on $n$. First assume that $s<n$. Then we can take a pair $(i,j)$ in $I_{(A)}$ such that $i<j$ and that $c_{i,j}=1$. By Lemma \ref{lem3.6'}, we can take a $v_i\in V_i\cap V_{j-1}^\perp$ and a $v_j\in V_j\cap V_{i-1}^\perp$ such that $\langle v_i,v_j\rangle =1$. If $(i,j)\notin S$, then
$$v_i\in V_i\cap V_{j-1}^\perp\subset W.$$
On the other hand, if $(i,j)\in S$, then there exists an $(x_t,y_t)\in S_0$ such that $i>x_t$ and $j<y_t$. By Lemma \ref{lem3.12}, there exists a $v\in V_{x_t}\cap V_{y_t-1}$ such that $v\notin W$. Since $x_t<i$, we have $\langle v,v_j\rangle =0$. If $v_i\notin W$, then we can replace $v_i$ by $v_i+\alpha v\in V_i\cap V_{j-1}^\perp\cap W$ with some $\alpha\in\bbf^\times$ since $V_{x_t}\subset V_i$ and $V_{y_t}^\perp\subset V_{j-1}^\perp$. Thus we may assume that $v_i\in W$. In the same way, we may also assume that $v_j\in W$.

Put $U=\bbf v_i\oplus\bbf v_j$ and consider the direct sum decomposition
$\bbf ^{2n}=U\oplus U^\perp$
as in the proof of Proposition \ref{prop1.10}. Since $U\subset W$, $e_{2n}$ is contained in $U^\perp$. So we may assume that we have chosen a basis
$$v_1,\ldots,v_{i-1},v_{i+1},\ldots,v_{j-1},v_{j+1},\ldots,v_{2n}$$
of $U^\perp$ satisfying the conditions (a), (b) and (c) for $U^\perp$ by the assumption of induction. Then the basis $v_1,\ldots,v_{2n}$ of $\bbf ^{2n}$ is a desired one.

Finally we consider the case of $s=n$. By Proposition \ref{prop1.10}, we can take a basis $v_1,\ldots,v_{2n}$ of $\bbf^{2n}$ such that
$V_i=\bbf v_1\oplus\cdots\oplus \bbf v_i$
for $i=1,\ldots,2n$ and that
$\langle v_i,v_j\rangle =c_{i,j}$
for $i<j$. By Lemma \ref{lem3.12}, $v_{x_1},\ldots,v_{x_s}$ are not contained in $W$. So we can normalize them so that
$$\langle v_{x_1},e_{2n}\rangle =\cdots =\langle v_{x_s},e_{2n}\rangle =1.$$
We can also normalize $v_{y_1},\ldots,v_{y_s}$ so that $\langle v_{x_t},v_{y_t}\rangle =\varepsilon_t$ where
$$\varepsilon_t=\begin{cases} 1 & \text{if $x_t<y_t$,} \\ -1 & \text{if $x_t>y_t$.} \end{cases}$$
If $y_t<x_t$, then $(y_t,x_t)\notin S$. So we have $v_{y_t}\in W$. On the other hand, if $y_t>x_t$ and $v_{y_t}\notin W$, then we can replace $v_{y_t}$ by $v_{y_t}+\alpha v_{x_t}\in W$ with some $\alpha\in\bbf^\times$. Thus the basis $v_1,\ldots,v_{2n}$ of $\bbf^{2n}$ satisfies the desired properties.

(ii) By Proposition \ref{prop1.10}, we have $(r-1)^nr^{n+\ell(\sigma)}$ choices of the bases without the condition (c). By the first condition
$$v_i\in W\mbox{ for }i\ne x_1,\ldots,x_s$$
in (c), the number of choices of $v_i$ becomes $1/r$ of the number without the condition (c) for each $i$ such that
\begin{equation}
(i,j)\notin S-S_0\mbox{ with }c_{i,j}=1 \label{eq3.2''}
\end{equation}
as in the proof of (i). Since there are $m=m(\mcf)$ indices $i$ satisfying the condition (\ref{eq3.2''}) by the definition, we divide the number by $r^m$. On the other hand, the second condition
$$\langle v_{x_1},e_{2n}\rangle=\cdots =\langle v_{x_s},e_{2n}\rangle=1$$
in (c) is the condition on the length of vectors $v_{x_1},\ldots,v_{x_s}$. So we divide the number by $(r-1)^s$. Thus we have $(r-1)^{n-s}r^{n+\ell(\sigma)-m}$ choices of the bases.
\hfill$\square$

\bigskip
\noindent {\it Proof of Theorem \ref{th1.14}}. (i) For each partition $I=I_{(A)}\sqcup I_{(X)}\sqcup I_{(Y)}$ and each $\{c_{i,j}\}\in C(I_{(A)})$, we construct a ``standard'' basis $u_1,\ldots, u_{2n}$ of $\bbf^{2n}$ as follows. We can take a unique subsequence $i_1<\cdots<i_{n-s}$ in $I_{(A)}$ such that $c_{i_1,j_1}=\cdots=c_{i_{n-s},j_{n-s}}=1$ with some $j_1,\ldots,j_{n-s}\in I_{(A)}$ and that $i_t<j_t$ for $t=1,\ldots,n-s$. Define
\begin{align*}
u_{i_1} & =e_{s+1},\ \ldots,\ u_{j_{n-s}}=e_n, \\
u_{j_1} & =e_{2n-s},\ \ldots,\ u_{j_{n-s}}=e_{n+1}, \\
u_{x_1} & =e_1+e_2,\ u_{x_2}=e_1+e_3,\ \ldots,\ u_{x_{s-1}}=e_1+e_s,\ u_{x_s}=e_1, \\
u_{y_1} & =\varepsilon_1e_{2n-1},\ u_{y_2}=\varepsilon_2e_{2n-2},\ \ldots,\ u_{y_{s-1}}=\varepsilon_{s-1}e_{2n-s+1}, \\
u_{y_s} & =\varepsilon_s(e_{2n}-e_{2n-1}-\cdots-e_{2n-s+1}).
\end{align*}
Then the basis vectors $u_1,\ldots, u_{2n}$ satisfy the properties:
\begin{equation}
\langle u_i,u_j\rangle =c_{i,j}\mbox{ for }i<j, \label{eq3.2'}
\end{equation}
\begin{equation}
u_i\in W\mbox{ for }i\ne x_1,\ldots,x_s \label{eq3.3}
\end{equation}
and
\begin{equation}
\langle u_{x_1},e_{2n}\rangle =\cdots=\langle u_{x_s},e_{2n}\rangle =1. \label{eq3.4'}
\end{equation}

Let $v_1,\ldots,v_{2n}$ be the basis of $\bbf^{2n}$ given in Proposition \ref{prop1.13} (i).
Let $g$ be the element of ${\rm GL}_{2n}(\bbf)$ defined by
$gv_i=u_i\mbox{ for }i=1,\ldots,2n$.
By (\ref{eq3.2'}) and Proposition \ref{prop1.13} (i) (b), $g$ is an element of $H\cong {\rm Sp}_{2n}(\bbf)$. By (\ref{eq3.3}), (\ref{eq3.4'}) and Proposition \ref{prop1.13} (i) (c), we have
\begin{align*}
W & =\bbf (u_{x_1}-u_{x_2}) \oplus\cdots\oplus \bbf (u_{x_{s-1}}-u_{x_s})\oplus \bigoplus_{i\notin\{x_1,\ldots,x_s\}}\bbf u_i \\
& =\bbf (v_{x_1}-v_{x_2}) \oplus\cdots\oplus \bbf (v_{x_{s-1}}-v_{x_s})\oplus \bigoplus_{i\notin\{x_1,\ldots,x_s\}}\bbf v_i.
\end{align*}
So the element $g$ stabilizes $W$. Hence we have $ge_{2n}=\beta e_{2n}$ with some $\beta\in\bbf^\times$. Moreover we have
$$\beta=\langle u_{x_1},\beta e_{2n}\rangle =\langle gv_{x_1},ge_{2n}\rangle =\langle v_{x_1},e_{2n}\rangle =1.$$
Thus we have proved $g\in Q_{2n}$. Since the flag $gV_1\subset\cdots\subset gV_n$ is written as $gV_i=\bbf u_1\oplus\cdots\oplus \bbf u_i\ (i=1,\ldots,2n-1)$ using the standard basis $u_1,\ldots,u_n$, we have proved (i).

(ii) is clear and (iii) follows from Proposition \ref{prop1.13} (ii).
\hfill$\square$

\subsection{Proof of Theorem \ref{th1.19}}

\noindent {\it Proof of Lemma \ref{lem1.18}}. Let $\xi(2k,s)$ be the number of the words with $2k$ letters consisting of
$$\mbox{a,b},\ldots,\mbox{A,B},\ldots,\mbox{X,Y}$$
with $|I_X|=|I_Y|=s$. There are $k-s$ pairs of $\mbox{a,b},\ldots,$ or $\mbox{A,B},\ldots$ and we can assign a signature $+1$ or $-1$ to each pair according as they are small or capital. So we have
$$\xi(2k,s)=2^{k-s}(2k-2s-1)(2k-2s-3)\cdots 1{(2k)!\over (s!)^2(2k-2s)!}={(2k)!\over (s!)^2(k-s)!}.$$
Thus we have the desired formula for $\xi(2k)=\sum_{s=0}^k \xi(2k,s)$. We can prove $\xi(2k-1)=\sum_{s=1}^k (2k-1)!/s!(s-1)!(k-s)!$ in the same way.
\hfill$\square$

\bigskip
\noindent {\it Proof of Theorem \ref{th1.19}}. (i) We can divide every word $w$ into the subword $w_1$ consisting of $\alpha,\beta,+,-$ and the subword $w_2$ consisting of the other letters. There are $4^{n-k}$ choices of $w_1$ and $\xi(k)$ choices of $w_2$ if the length of $w_2$ is $k$. Considering the number of partitions of $w$ into two subwords with $n-k$ letters and $k$ letters, we get the desired formula
$$|\Delta G\backslash M\times M\times M_0|= \sum_{k=0}^n 4^{n-k}{n\choose k}\xi(k).$$

The proof of (ii) is similar because we have only to consider words without $\alpha$ and $\beta$.
\hfill$\square$

\subsection{Proof of Theorem \ref{th1.20}}

\noindent {\it Proof of Theorem \ref{th1.20}}. (i) is proved easily.

(ii) By the same arguments as in the proof of Proposition \ref{prop2.3} (iii), we have
$|(N_{W_0}\cap G')V|=r^{bd+b(b-1)/2}=r^{((n-a)(n-a-1)-d(d-1))/2}$
and hence
\begin{equation}
|(Q\cap G')V|=r^{((n-a)(n-a-1)-d(d-1))/2}{[r]_{n-d}\over [r]_a[r]_b}. \label{eq3.4}
\end{equation}
Noting that $R_d\cap L_{W_1}\subset G'$, we have
$$|(R_d\cap G')V|=|(Q\cap G')(R_d\cap L_{W_1})V|=r^{((n-a)(n-a-1)-d(d-1))/2}{[r]_{n-d}\over [r]_a[r]_b}|(R_d\cap L_{W_1})V|.$$
On the other hand, we also have
$|(P\cap G')U_d|=r^{d(d-1)/2}[r]_n/ ([r]_d[r]_{n-d})$
by (\ref{eq3.4}). Combining with (\ref{eq2.16}), we get the desired formula
$$|G't|=|M^0||(P\cap G')U_d||(R_d\cap G')V|=|M^0|{r^{(n-a)(n-a-1)/2}[r]_n\psi_{c_0}^0(r)\over [r]_a[r]_b[r]_{c_+}[r]_{c_0}[r]_{c_-}}.\qquad\square$$

\section{Appendix}

Let $\bbf$ be an arbitrary field and let $\bbf^n=U_+\oplus U_-$ be the direct sum decomposition of $\bbf^n$ with
$U_+=\bbf e_1\oplus\cdots\oplus \bbf e_{m_+}$ and $U_-=\bbf e_{m_++1}\oplus\cdots\oplus \bbf e_n\ (n=m_++m_-)$.
Let $H$ denote the subgroup of $G={\rm GL}_n(\bbf)$ defined by
$$H=\{g\in G\mid gU_+=U_+,\ gU_-=U_-\}.$$

In this appendix, we will give a proof of the $H$-orbit decomposition on the full flag variety of $G={\rm GL}_n(\bbf)$.

Let $\pi_+:\bbf^n\to U_+$ and $\pi_-:\bbf^n\to U_-$ denote the projections with respect to the direct sum decomposition $\bbf^n=U_+\oplus U_-$. For a full flag $\mcf:V_1\subset\cdots\subset V_{n-1}$ in $\bbf^n$, define
$$d_{i,j}^+ =\dim(\pi_+(V_i)\cap V_j),\quad d_{i,j}^-=\dim(\pi_-(V_j)\cap V_i)$$
for $i,j=0,\ldots,n$ and
$$c_{i,j}^\pm =d_{i,j}^\pm-d_{i,j-1}^\pm-d_{i-1,j}^\pm+d_{i-1,j-1}^\pm,\quad c_{i,j}=c_{i,j}^++c_{i,j}^-$$
for $i,j=1,\ldots,n$.

\begin{lemma} \ {\rm (i)} $\{c_{i,j}\}$ is a permutation matrix.

{\rm (ii)} $i>j\Longrightarrow c_{i,j}^+=0$ and $i<j\Longrightarrow c_{i,j}^-=0$.

{\rm (iii)} $c_{i,j}=c_{j,i}$.
\end{lemma}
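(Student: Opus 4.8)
The plan is to handle the two ``half matrices'' $\{c_{i,j}^+\}$ and $\{c_{i,j}^-\}$ separately, running the nonnegativity and line-sum arguments parallel to the proof of Proposition \ref{prop3.1}, with one extra elementary input for the symmetry. First I would record two observations. \emph{(a)} The restriction $\pi_+|_{V_i}$ has kernel $V_i\cap U_-$, so $\dim\pi_+(V_i)=i-\dim(V_i\cap U_-)$, and symmetrically $\dim\pi_-(V_i)=i-\dim(V_i\cap U_+)$. \emph{(b)} If $j\le i$, then $\pi_+(V_i)\cap V_j=\pi_+(V_i)\cap V_j\cap U_+=V_j\cap U_+$, because $V_j\cap U_+\subseteq\pi_+(V_j)\subseteq\pi_+(V_i)$; symmetrically, if $i\le j$, then $\pi_-(V_j)\cap V_i=V_i\cap U_-$. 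In particular $d_{i,j}^+$ does not depend on $i$ once $i\ge j$, and $d_{i,j}^-$ does not depend on $j$ once $j\ge i$; plugging this into the alternating sums defining $c^\pm$ gives part (ii) at once: $c_{i,j}^+=0$ for $i>j$ and $c_{i,j}^-=0$ for $i<j$.

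For part (i), nonnegativity of $c_{i,j}^+$ is the ``cross'' computation from the proof of Proposition \ref{prop3.1}: with $A=\pi_+(V_i)\cap V_j$, $B=\pi_+(V_{i-1})\cap V_j$ and $C=\pi_+(V_i)\cap V_{j-1}$ one has $B,C\subseteq A$ and $B\cap C=\pi_+(V_{i-1})\cap V_{j-1}$, whence $c_{i,j}^+=\dim A-\dim B-\dim C+\dim(B\cap C)=\dim A-\dim(B+C)\ge 0$; similarly $c_{i,j}^-\ge 0$, so $c_{i,j}\ge 0$. For the line sums I would telescope: $\sum_{j}c_{i,j}^+=\dim\pi_+(V_i)-\dim\pi_+(V_{i-1})$ and $\sum_{j}c_{i,j}^-=\dim(V_i\cap U_-)-\dim(V_{i-1}\cap U_-)$, which add to $1$ by \emph{(a)}; symmetrically $\sum_{i}c_{i,j}^+=\dim(V_j\cap U_+)-\dim(V_{j-1}\cap U_+)$ and $\sum_{i}c_{i,j}^-=\dim\pi_-(V_j)-\dim\pi_-(V_{j-1})$, again adding to $1$. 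A matrix of nonnegative integers all of whose row and column sums equal $1$ is a permutation matrix, which is (i).

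For part (iii) the cleanest route is to show that the single array $D_{i,j}:=d_{i,j}^++d_{i,j}^-$ is symmetric. Since $c_{i,j}=D_{i,j}-D_{i-1,j}-D_{i,j-1}+D_{i-1,j-1}$ and the mixed second difference of a symmetric array is again symmetric, this will yield $c_{i,j}=c_{j,i}$. To prove $D_{i,j}=D_{j,i}$, one may assume $i\le j$; by \emph{(b)} we have $d_{i,j}^-=\dim(V_i\cap U_-)$ and $d_{j,i}^+=\dim(V_i\cap U_+)$, so it is enough to prove $d_{i,j}^+-d_{j,i}^-=\dim(V_i\cap U_+)-\dim(V_i\cap U_-)$ for $i\le j$. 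This is the one genuinely new computation and the step I expect to be the \emph{main obstacle}. The mechanism: for $v\in V_i\subseteq V_j$ one has $\pi_+(v)\in V_j\Longleftrightarrow\pi_-(v)\in V_j\cap U_-$, so $\pi_+(V_i)\cap V_j=\pi_+(K)$ with $K=(\pi_-|_{V_i})^{-1}(V_j\cap U_-)$; computing $\dim K=\dim(V_i\cap U_+)+\dim(\pi_-(V_i)\cap V_j)$ and $\ker(\pi_+|_K)=K\cap U_-=V_i\cap U_-$ gives $d_{i,j}^+=\dim\pi_+(K)=\dim(V_i\cap U_+)+d_{j,i}^--\dim(V_i\cap U_-)$, which is exactly the identity wanted. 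Combining, $D_{i,j}=d_{i,j}^++\dim(V_i\cap U_-)=\dim(V_i\cap U_+)+d_{j,i}^-=D_{j,i}$, which completes the argument.
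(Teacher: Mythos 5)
Your proof is correct. Parts (i) and (ii) follow the paper's argument essentially verbatim: nonnegativity of $c_{i,j}^{\pm}$ via the ``cross'' computation, telescoping line sums combined with $\dim\pi_+(V_i)+\dim(V_i\cap U_-)=\dim V_i$, and the observation that $d^+_{i,j}$ is independent of $i$ once $i\ge j$ (resp.\ $d^-_{i,j}$ of $j$ once $j\ge i$). Part (iii) is where you take a genuinely different, though equivalent, route. The paper rewrites $d^+_{i,j}=\dim\pi_+(V_i)+\dim V_j-\dim(\pi_+(V_i)+V_j)$, notes that the first two terms cancel in the mixed second difference, and concludes from the one-line identity $\pi_+(V_k)+V_\ell=\pi_-(V_k)+V_\ell$ for $k\le\ell$. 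You instead prove that the cumulative array $D_{i,j}=d^+_{i,j}+d^-_{i,j}$ is itself symmetric, via the preimage computation $\pi_+(V_i)\cap V_j=\pi_+\bigl((\pi_-|_{V_i})^{-1}(V_j\cap U_-)\bigr)$ and a kernel count. These are two phrasings of one fact: taking dimensions in the paper's identity $\pi_+(V_i)+V_j=\pi_-(V_i)+V_j$ (for $i\le j$) yields exactly your key relation $d^+_{i,j}-d^-_{j,i}=\dim(V_i\cap U_+)-\dim(V_i\cap U_-)$. Your version buys the marginally stronger and more structural statement that the ``integrated'' array $D$ is symmetric, after which symmetry of $c$ is formal; the paper's version buys brevity, since the sum identity is immediate. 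Both arguments are complete, and I see no gap in yours.
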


\begin{proof} (i) As in the proof of Proposition \ref{prop3.1}, we have $c_{i,j}^+\ge 0$ and $c_{i,j}^-\ge 0$. On the other hand, we have
\begin{align*}
\sum_{j=1}^n c_{i,j}^+ & =d_{i,n}^+-d_{i,0}^++d_{i-1,n}^+-d_{i-1,0}^+=\dim\pi_+(V_i)-\dim\pi_+(V_{i-1}) \\
\mand \sum_{j=1}^n c_{i,j}^- & =d_{i,n}^--d_{i,0}^-+d_{i-1,n}^--d_{i-1,0}^-=\dim(V_i\cap U_-)-\dim(V_{i-1}\cap U_-).
\end{align*}
Hence $\sum_{j=1}^n c_{i,j}=\dim V_i-\dim V_{i-1}=1$. In the same way, we also have $\sum_{i=1}^n c_{i,j}=1$. So the matrix $\{c_{i,j}\}$ is a permutation matrix.

(ii) If $i>j$, then $\pi_+(V_{i-1})\cap V_j=\pi_+(V_{i-1})\cap V_j\cap U_+=V_j\cap U_+$. In the same way, we have $\pi_+(V_i)\cap V_j=V_j\cap U_+$. Hence $c_{i,j}^+=0$. The second formula is similar.

(iii) Suppose $i<j$. Then it follows from (ii) that
\begin{align*}
c_{i,j}=c_{i,j}^+ & =-\dim(\pi_+(V_i)+V_j)+\dim(\pi_+(V_i)+V_{j-1}) \\
& \quad +\dim(\pi_+(V_{i-1})+V_j)-\dim(\pi_+(V_{i-1})+V_{j-1})
\end{align*}
since $d_{i,j}^+=\dim \pi_+(V_i)+\dim V_j-\dim(\pi_+(V_i)+V_j)$ for $i,j=0,\ldots,n$. It also follows from (ii) that
\begin{align*}
c_{j,i}=c_{j,i}^- & =-\dim(\pi_-(V_i)+V_j)+\dim(\pi_-(V_i)+V_{j-1}) \\
& \quad +\dim(\pi_-(V_{i-1})+V_j)-\dim(\pi_-(V_{i-1})+V_{j-1}).
\end{align*}
Since $\pi_+(V_k)+V_\ell=\pi_-(V_k)+V_\ell$ for $k\le\ell$, we have $c_{i,j}=c_{j,i}$.
\end{proof}

Define subsets
\begin{align*}
I_{(+)} & =\{k_1^+,\ldots,k_{m_+-s}^+\}=\{i\in I\mid c_{i,i}^+=1\}, \\
I_{(-)} & =\{k_1^-,\ldots,k_{m_--s}^-\}=\{i\in I\mid c_{i,i}^-=1\}, \\
I_{(1)} & =\{i_1,\ldots,i_s\}=\{i\in I\mid c_{i,j}=1\mbox{ for some }j>i\}, \\
I_{(2)} & =\{j_1,\ldots,j_s\}=\{j\in I\mid c_{i,j}=1\mbox{ for some }i<j\}
\end{align*}
of $I=\{1,\ldots,n\}$ with $k_1^+<\cdots<k_{m_+-s}^+,\ k_1^-<\cdots<k_{m_--s}^-,\ j_1<\cdots<j_s$ and
$$c_{i_t,j_t}=1\mbox{ for }t=1,\ldots,s.$$
Then $I=I_{(+)}\sqcup I_{(-)}\sqcup I_{(1)}\sqcup I_{(2)}$. Write $I_{(+)}\sqcup I_{(-)}=\{k_1,\ldots,k_{n-2s}\}$ with $k_1<\cdots<k_{n-2s}$. Define a permutation
$$\sigma: (1\,2\cdots n)\mapsto (i_s\cdots i_1k_1\cdots k_{n-2s}j_1\cdots j_s)$$
of $I$ and the inversion number $\ell(\sigma)$.  (Remark: Let $\tau$ denote the permutaion corresponding to the matrix $\{c_{i,j}\}$. Then we can prove $\ell(\tau)=s(n-s)-2\ell(\sigma)$.)

\begin{proposition} \ For every full flag $\mcf: V_1\subset\cdots\subset V_{n-1}$ in $\bbf^n$, define $c_{i,j}=c_{i,j}^++c_{i,j}^-$ as above.

{\rm (i)} We can take a basis $v_1,\ldots,v_n$ of $\bbf^n$ such that

\indent\indent {\rm (a)} $V_i=\bbf v_1\oplus\cdots\oplus \bbf v_i$ for $i=1,\ldots, n$.

\indent\indent {\rm (b)} $c_{i,i}^+=1\Longrightarrow v_i\in U_+,\ c_{i,i}^-=1\Longrightarrow v_i\in U_-$ and
$$i<j,\ c_{i,j}=1\Longrightarrow v_i\notin U_+\cup U_-,\ v_j=\pi_+(v_i).$$

{\rm (ii)} Define a basis $u_1,\ldots,u_n$ of $\bbf^n$ by
\begin{align*}
u_{k_t^+} & =e_t \qquad \mbox{ for }t=1,\ldots,m_+-s, \\
u_{k_t^-} & =e_{m_++t} \qquad \mbox{ for }t=1,\ldots,m_--s, \\
u_{i_t} & =e_{m_+-s+t}+e_{n-s+t} \qquad \mbox{ for }t=1,\ldots,s, \\
u_{j_t} & =e_{m_+-s+t} \qquad \mbox{ for }t=1,\ldots,s
\end{align*}
and define $g\in G$ by $gv_i=u_i$ for $i=1,\ldots,n$. Then $g\in H$.

{\rm (iii)} If $\bbf=\bbf_r$, then the number $\mathcal{N}(\mcf)$ of the bases satisfying the conditions in {\rm (i)} is
$$(r-1)^{n-s}r^{((m_+-s)(m_+-s-1)+(m_--s)(m_--s-1))/2+\ell(\sigma)}.$$
\label{prop4.3}
\end{proposition}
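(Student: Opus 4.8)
The plan is to establish the three parts in turn, following the pattern of the proofs of Propositions \ref{prop1.10} and \ref{prop1.13} and of the Bruhat-type construction in the Remark after Proposition \ref{prop3.2}. Throughout I use that $\{c_{i,j}\}$ is a symmetric permutation matrix (the Lemma preceding this proposition) together with the splitting $I=I_{(+)}\sqcup I_{(-)}\sqcup I_{(1)}\sqcup I_{(2)}$ recorded before the statement, noting that each pair $(i_t,j_t)$ satisfies $i_t<j_t$ and $c_{i_t,j_t}^+=1$.

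For part (i), I would build $v_1,\dots,v_n$ by processing the indices $i=1,\dots,n$ in increasing order. If $i\in I_{(+)}$, Proposition \ref{prop3.2}(iv) applied to $\{c_{k,\ell}^+\}$ (using $\pi_+(V_i)\cap V_i=U_+\cap V_i$) produces a vector $v_i\in U_+\cap V_i$ lying outside $(\pi_+(V_{i-1})\cap V_i)+(\pi_+(V_i)\cap V_{i-1})$, and this forces $v_i\notin V_{i-1}$; the case $i\in I_{(-)}$ is symmetric with $\pi_-$. If $i=i_t\in I_{(1)}$, Proposition \ref{prop3.2}(iv) gives $w\in\pi_+(V_{i_t})\cap V_{j_t}$ outside $(\pi_+(V_{i_t-1})\cap V_{j_t})+(\pi_+(V_{i_t})\cap V_{j_t-1})$; writing $w=\pi_+(v)$ with $v\in V_{i_t}$ I set $v_{i_t}=v$ and $v_{j_t}=w$, and check directly that $v_{i_t}\notin U_+\cup U_-$ (otherwise $w=0$ or $w\in V_{j_t-1}$), that $v_{i_t}\notin V_{i_t-1}$, and that $v_{j_t}\in V_{j_t}\setminus V_{j_t-1}$; the index $j_t\in I_{(2)}$ is then already assigned. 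A downward-free induction on $i$ shows $v_1,\dots,v_i$ are independent and span $V_i$, the inductive step being exactly the relation $v_i\notin V_{i-1}$ verified in each of the four cases. This yields (a) and (b).

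For part (ii), by (i) the vectors $\{v_i:i\in I_{(+)}\}\cup\{v_{j_t}:1\le t\le s\}$ all lie in $U_+$ and, being basis vectors, form a basis of $U_+$; likewise $\{v_i:i\in I_{(-)}\}\cup\{v_{i_t}-v_{j_t}=\pi_-(v_{i_t}):1\le t\le s\}$ is a basis of $U_-$. Applying $g$ and using the explicit formulas for the $u$'s gives $gv_i\in U_+$ for $i\in I_{(+)}$, $gv_{j_t}=e_{m_+-s+t}\in U_+$, $gv_i\in U_-$ for $i\in I_{(-)}$, and $g(v_{i_t}-v_{j_t})=u_{i_t}-u_{j_t}=e_{n-s+t}\in U_-$, so $gU_+\subseteq U_+$ and $gU_-\subseteq U_-$; by dimension $g\in H$. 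For part (iii), I would count admissible bases step by step as in Propositions \ref{prop1.10}(ii) and \ref{prop1.13}(ii). Having fixed an admissible prefix $v_1,\dots,v_{i-1}$ — any such prefix extends, since by (i) the remaining choices depend only on $\mcf$ — the number of admissible $v_i$ is $(r-1)r^{\dim(U_+\cap V_{i-1})}$ for $i\in I_{(+)}$ (using $\dim(U_+\cap V_i)=\dim(U_+\cap V_{i-1})+1$, valid because the relevant set is nonempty by (i)), symmetrically for $i\in I_{(-)}$, a quantity of the form $(r-1)r^{e_t}$ for $i=i_t$ (the $\pi_+|_{V_{i_t}}$-preimage of the excess set), and exactly one choice for $i\in I_{(2)}$. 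Thus the number of $(r-1)$ factors is $|I_{(+)}|+|I_{(-)}|+|I_{(1)}|=(m_+-s)+(m_--s)+s=n-s$, and the exponent of $r$ is a sum of local terms which, via the identity $\dim(U_+\cap V_k)=|I_{(+)}\cap[1,k]|+|\{t:j_t\le k\}|$ and its $U_-$-analogue, collapses to $\binom{m_+-s}{2}+\binom{m_--s}{2}+\ell(\sigma)$.

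The per-step membership checks in (i) and the computation in (ii) are routine. The one genuinely delicate point is the bookkeeping of the exponent of $r$ in (iii): matching the accumulated local contributions to $\binom{m_+-s}{2}+\binom{m_--s}{2}+\ell(\sigma)$ requires sorting the index pairs according to which of the blocks $I_{(+)},I_{(-)},I_{(1)},I_{(2)}$ each entry lies in and comparing with the inversion count of the permutation $\sigma:(1\,2\cdots n)\mapsto(i_s\cdots i_1\,k_1\cdots k_{n-2s}\,j_1\cdots j_s)$. I expect to organize this exactly as the analogous exponent computation was organized in Proposition \ref{prop1.10}(ii), by an induction that removes the pair with smallest head index $i_t$ and tracks how $\ell(\sigma)$ decreases.
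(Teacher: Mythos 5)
Parts (i) and (ii) of your proposal are correct and essentially the paper's own route: for each unit entry $c_{i,j}^+=1$ you pick $v\in(\pi_+(V_i)\cap V_j)\setminus(\pi_+(V_{i-1})\cap V_j)$, lift it to $v_i\in V_i$ when $i<j$ and set $v_j=\pi_+(v_i)$, and the membership checks ($v_i\notin U_+\cup U_-$, $v_i\notin V_{i-1}$) are exactly those in the paper. Your argument for (ii) --- that $\{v_i\}_{i\in I_{(+)}}\cup\{v_{j_t}\}_t$ is a basis of $U_+$ and $\{v_i\}_{i\in I_{(-)}}\cup\{v_{i_t}-v_{j_t}\}_t=\{v_i\}_{i\in I_{(-)}}\cup\{\pi_-(v_{i_t})\}_t$ is a basis of $U_-$, each carried by $g$ into $U_+$, resp.\ $U_-$ --- is a correct expansion of what the paper dismisses as ``clear.''

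Part (iii) is where your proof is not complete. The product-of-local-counts strategy is legitimate, and the local counts are indeed independent of the admissible prefix: they are $(r-1)r^{\dim(U_+\cap V_{i-1})}$ for $i\in I_{(+)}$, $(r-1)r^{\dim(U_-\cap V_{i-1})}$ for $i\in I_{(-)}$, $(r-1)r^{d^+_{i_t,j_t}-1+\dim(V_{i_t}\cap U_-)}$ for $i=i_t$, and $1$ for $i\in I_{(2)}$. But the entire content of (iii) is the identity expressing the sum of these exponents as $(m_+-s)(m_+-s-1)/2+(m_--s)(m_--s-1)/2+\ell(\sigma)$, and you do not prove it: you only say you ``expect'' to organize it by an induction removing the pair with smallest head index $i_t$. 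That scheme is not worked out and is awkward, since deleting a pair from the middle of the flag changes the ambient data in a less controlled way than deleting the top index. The paper instead inducts on $n$ by removing the index $n$: if $c^{\pm}_{n,n}=1$ then $V_{n-1}=(V_{n-1}\cap U_+)\oplus U_-$ (resp.\ its mirror), there are $(r-1)r^{m_\pm-1}$ choices of $v_n$, and $\ell(\sigma)=\ell(\sigma')+s$; if $c_{p,n}=1$ with $p<n$ then $p=i_s$, $n=j_s$, one passes to $W=(V_{n-1}\cap U_+)\oplus(V_{n-1}\cap U_-)$, there are $(r-1)r^{p-1}$ choices of $v_p$ with $v_n=\pi_+(v_p)$ forced, and $\ell(\sigma)=\ell(\sigma')+p-1$. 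Either carry out your exponent bookkeeping explicitly or switch to this top-index induction; as written, (iii) is asserted rather than proven.
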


\begin{proof} (i) Suppose $c_{i,j}^+=1$. Then we can take a $v\in \pi_+(V_i)\cap V_j$ such that $v\notin \pi_+(V_i)\cap V_{j-1}=\pi_+(V_{i-1})\cap V_j$. If $i=j$, then $v_i=v\in U_+$ satisfies $V_i=V_{i-1}\oplus \bbf v_i$. Suppose $i<j$ and take a $v_i\in V_i$ so that $\pi_+(v_i)=v$. Then $V_i=V_{i-1}\oplus \bbf v_i$ and $v_i\notin U_-$. If $v_i\in U_+$, then $v=v_i\in V_i\subset V_{j-1}$ a contradiction. Hence $v_i\notin U_+$. Take $v_j=v=\pi_+(v_i)$. Then $V_j=V_{j-1}\oplus \bbf v_j$.

If $c_{i,i}^-=1$, then we take a $v_i\in \pi_-(V_i)\cap V_i$ such that $v_i\notin \pi_-(V_i)\cap V_{i-1}=\pi_-(V_{i-1})\cap V_i$. Then $v_i\in U_-$ and $V_i=V_{i-1}\oplus \bbf v_i$.

(ii) is clear from (i).

(iii) We will prove this by induction on $n$. First suppose $c_{n,n}^+=1$. Then $V_{n-1}=(V_{n-1}\cap U_+)\oplus U_-$ and so the number of the bases $v_1,\ldots,v_{n-1}$ of $V_{n-1}$ satisfying the conditions in (i) is
$$(r-1)^{n-s-1}r^{((m_+-s-1)(m_+-s-2)+(m_--s)(m_--s-1))/2+\ell(\sigma')}.$$
by the assumption of induction. Here $\sigma'$ is the permutation
$$\sigma': (1\,2\cdots n-1)\mapsto (i_s\cdots i_1k_1\cdots k_{n-2s-1}j_1\cdots j_s).$$
For each basis $v_1,\ldots,v_{n-1}$ of $V_{n-1}$, we have $(r-1)r^{m_+-1}$ choices of $v_n\in U_+-(U_+\cap V_{n-1})$. Since
$$(m_+-s)(m_+-s-1)/2=(m_+-s-1)(m_+-s-2)/2+(m_+-s-1)$$
and since $\ell(\sigma)=\ell(\sigma')+s$, we have
$$\mathcal{N}(\mcf)=(r-1)^{n-s}r^{((m_+-s)(m_+-s-1)+(m_--s)(m_--s-1))/2+\ell(\sigma)}.$$
The case of $c_{n,n}^-=1$ is similar.

So we may assume $c_{p,n}=c_{p,n}^+=1$ with some $p<n$. Consider the subspace $W=(V_{n-1}\cap U_+)\oplus(V_{n-1}\cap U_-)$ of $\bbf^n$. Then the number of the bases $v_1,\ldots,v_{p-1},v_{p+1},\ldots,v_{n-1}$ of $W$ satisfying the conditions in (i) is
$$(r-1)^{n-s-1}r^{((m_+-s)(m_+-s-1)+(m_--s)(m_--s-1))/2+\ell(\sigma')}$$
by the assumption of induction where $\sigma'$ is the permutation
$$\sigma': (1\,2\cdots p-1\,p+1\cdots n-1)\mapsto (i_{s-1}\cdots i_1k_1\cdots k_{n-2s}j_1\cdots j_{s-1})$$
of $\{1\,\ldots, p-1,p+1,\ldots, n-1\}$. (Note that $p=i_s$ and $n=j_s$.) For each basis $v_1,\ldots,v_{p-1},v_{p+1},\ldots,v_{n-1}$ of $W$, we have $(r-1)r^{p-1}$ choices of $v_p\in V_p-V_{p-1}$.
Since $\ell(\sigma)=\ell(\sigma')+p-1$, we have the desired formula for $\mathcal{N}(\mcf)$ noting that $v_n=\pi_+(v_p)$.
\end{proof}

By this proposition, we can express orbits by ``$+-$ab-symbols'' as in Fig.1 (c.f. \cite{MO}). We can easily count the number of orbits:

\begin{corollary} \ $\displaystyle{|H\backslash M|=\sum_{s=0}^{\min(m_+,m_-)} {n!\over 2^ss!(m_+-s)!(m_--s)!}}$.
\end{corollary}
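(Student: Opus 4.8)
The final statement to prove is the Corollary counting $|H\backslash M|$ for $H={\rm GL}_{m_+}(\bbf)\times{\rm GL}_{m_-}(\bbf)$ acting on the full flag variety $M$ of ${\rm GL}_n(\bbf)$, asserting
$$|H\backslash M|=\sum_{s=0}^{\min(m_+,m_-)}\frac{n!}{2^s s!(m_+-s)!(m_--s)!}.$$

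The plan is to read off the orbit set from Proposition \ref{prop4.3}. That proposition attaches to each flag $\mcf$ an $n\times n$ permutation matrix $\{c_{i,j}\}$ which is symmetric ($c_{i,j}=c_{j,i}$ by part (iii) of the preceding lemma) and whose entries are subject to the sign constraint $c_{i,j}^+=0$ for $i>j$ and $c_{i,j}^-=0$ for $i<j$; moreover part (ii) of the proposition constructs, from the combinatorial data alone, an explicit ``standard'' flag in the $H$-orbit of $\mcf$, while part (i) shows any flag is $H$-conjugate to such a standard one. So first I would argue that the invariant $\{c_{i,j}\}$ together with the splitting of the diagonal fixed points into $I_{(+)}$ and $I_{(-)}$ is a complete invariant of the $H$-orbit: two flags lie in the same $H$-orbit if and only if they produce the same data. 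One direction is immediate since $d_{i,j}^\pm$ are manifestly $H$-invariant; the other is exactly Proposition \ref{prop4.3}(i)--(ii), which exhibits a common standard representative. Hence $H\backslash M$ is in bijection with the set of symmetric permutation matrices of degree $n$ equipped with a $2$-coloring of their fixed points (a point $i$ with $c_{i,i}=1$ is colored $+$ or $-$).

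Next I would count this combinatorial set. A symmetric permutation matrix corresponds to an involution $w$ of $\{1,\dots,n\}$; write $s$ for the number of $2$-cycles, so $w$ has $n-2s$ fixed points. For a fixed $s$ the number of involutions with exactly $s$ transpositions is $\binom{n}{2s}(2s-1)!! = \dfrac{n!}{2^s s!(n-2s)!}$. Now I must incorporate the coloring constraint: each of the $n-2s$ fixed points is labeled $+$ or $-$, but the labels are not free — they must be consistent with $\dim\pi_+(V_n)=m_+$, i.e.\ exactly $m_+-s$ of the fixed points carry the label $+$ (the $s$ transpositions each contribute $1$ to $\dim\pi_+$ via the $u_{i_t}$). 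So among the $n-2s$ fixed points we choose which $m_+-s$ are $+$: that is $\binom{n-2s}{m_+-s}$ choices, and this is nonzero only for $0\le s\le\min(m_+,m_-)$. Multiplying, the number of orbits with a given $s$ is
$$\frac{n!}{2^s s!(n-2s)!}\binom{n-2s}{m_+-s}=\frac{n!}{2^s s!(m_+-s)!(m_--s)!},$$
and summing over $s$ gives the claimed formula.

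The main thing to be careful about — the only real content beyond bookkeeping — is the completeness of the invariant, i.e.\ verifying that the standard representative produced in Proposition \ref{prop4.3}(ii) depends only on the combinatorial data $(\{c_{i,j}\}, I_{(+)}, I_{(-)})$ and not on the auxiliary choices of the $v_i$'s, so that two flags with identical data are genuinely $H$-conjugate (each to the same $g^{-1}$ of the standard flag). This is essentially built into the construction in the proposition, so the argument is short, but it should be spelled out that the map "flag $\mapsto$ data" is well defined, $H$-invariant, and has a section given by standardization. Once that is in place the count is the routine involution-counting above, and no further obstacle remains; I would also remark that the extreme case $s=\min(m_+,m_-)$ recovers the open orbit, consistent with Fig.~1.
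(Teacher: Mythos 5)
Your argument is correct and is essentially the paper's own (the paper merely remarks that orbits are parametrized by ``$+-$ab-symbols'' via Proposition \ref{prop4.3} and that the count is then easy): the orbit invariant is the symmetric permutation matrix together with the $\pm$-coloring of its fixed points constrained by $|I_{(+)}|=m_+-s$, and your count $\frac{n!}{2^s s!(n-2s)!}\binom{n-2s}{m_+-s}=\frac{n!}{2^s s!(m_+-s)!(m_--s)!}$ is exactly the enumeration of these symbols. The completeness and surjectivity of the invariant, which you rightly flag as the only substantive point, are indeed supplied by the explicit standard representative in Proposition \ref{prop4.3}(i)--(ii).
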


\begin{corollary} \ For each full flag $\mcf$ in $\bbf^n$, define $s$ and $\sigma$ as above. If $\bbf=\bbf_r$, then
$$|H\mcf|=(r-1)^sr^{s(n-s-1)-\ell(\sigma)}[r]_{m_+}[r]_{m_-}.$$
\end{corollary}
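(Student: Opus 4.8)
The plan is to compute $|H\mcf|$ by producing a set on which $H$ acts simply transitively and counting that set in two ways, using Proposition~\ref{prop4.3} throughout. Fix $\mcf$, its invariant matrix $\{c_{i,j}\}=\{c_{i,j}(\mcf)\}$, and the associated data $s=s(\mcf)$, $\sigma=\sigma(\mcf)$, the sets $I_{(+)},I_{(-)}$, and the pairs $(i_1,j_1),\ldots,(i_s,j_s)$ with $c_{i_t,j_t}=1$. Since the projections $\pi_\pm$ commute with every element of $H$, the numbers $d^\pm_{i,j}$, hence the whole matrix $\{c_{i,j}\}$, are $H$-invariant; combining this with Proposition~\ref{prop4.3}(i)--(ii) shows that $H\mcf$ equals the set of all full flags in $\bbf^n$ whose invariant matrix is $\{c_{i,j}\}$, and that $s$ and $\sigma$ (being functions of the matrix alone) are constant on $H\mcf$.

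Let $\mathcal{B}$ be the set of bases $v_1,\ldots,v_n$ of $\bbf^n$ such that the flag $V_i=\bbf v_1\oplus\cdots\oplus\bbf v_i$ has invariant matrix $\{c_{i,j}\}$ and condition (b) of Proposition~\ref{prop4.3}(i) holds. Sending a basis to the flag it generates defines a map $\mathcal{B}\to H\mcf$, surjective by Proposition~\ref{prop4.3}(i); its fibre over $\mcf'\in H\mcf$ is precisely the set of bases adapted to $\mcf'$ in the sense of that proposition, which has $\mathcal{N}(\mcf')$ elements, and $\mathcal{N}(\mcf')=\mathcal{N}(\mcf)$ because the formula of Proposition~\ref{prop4.3}(iii) depends only on $s,\sigma,m_+,m_-$, all the same for $\mcf'$ and $\mcf$. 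Hence $|\mathcal{B}|=|H\mcf|\cdot\mathcal{N}(\mcf)$.

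Next I would show that $H$ acts simply transitively on $\mathcal{B}$, so that $|\mathcal{B}|=|H|$. The action is free since a linear map is determined by its values on a basis. For transitivity, take $(v_i),(v'_i)\in\mathcal{B}$ and let $g\in{\rm GL}_n(\bbf)$ be the map with $gv_i=v'_i$ for all $i$; I claim $g\in H$. By condition (b) the $m_+$ vectors $\{v_i : i\in I_{(+)}\}\cup\{v_{j_t}=\pi_+(v_{i_t}) : 1\le t\le s\}$ lie in $U_+$, and being $|I_{(+)}|+s=m_+$ distinct members of the basis $(v_i)$ they span $U_+$; moreover $g$ carries each into $U_+$ (for $i\in I_{(+)}$ because $(v'_i)$ also satisfies (b), and $gv_{j_t}=v'_{j_t}=\pi_+(v'_{i_t})$), so $gU_+=U_+$. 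Symmetrically, $\{v_i : i\in I_{(-)}\}\cup\{v_{i_t}-v_{j_t}=\pi_-(v_{i_t}) : 1\le t\le s\}$ is a basis of $U_-$ (these are $m_-$ vectors, independent because the $v_i$ with $i\in I_{(-)}$ and the $v_{i_t}$, $v_{j_t}$ are distinct basis members), and $g$ carries it into $U_-$ because $g(v_{i_t}-v_{j_t})=v'_{i_t}-\pi_+(v'_{i_t})=\pi_-(v'_{i_t})$, so $gU_-=U_-$. Thus $g\in H$, and therefore $|H\mcf|=|H|/\mathcal{N}(\mcf)$.

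Finally I would substitute $|H|=|{\rm GL}_{m_+}(\bbf_r)|\,|{\rm GL}_{m_-}(\bbf_r)|=r^{(m_+(m_+-1)+m_-(m_--1))/2}(r-1)^{m_++m_-}[r]_{m_+}[r]_{m_-}$ together with the value of $\mathcal{N}(\mcf)$ from Proposition~\ref{prop4.3}(iii) and simplify: the $\ell(\sigma)$ terms leave a factor $r^{-\ell(\sigma)}$, the power of $r-1$ is $(r-1)^{(m_++m_-)-(n-s)}=(r-1)^s$ since $n=m_++m_-$, and the remaining power of $r$ is $\tfrac12\bigl(m_+(m_+-1)-(m_+-s)(m_+-s-1)\bigr)+\tfrac12\bigl(m_-(m_--1)-(m_--s)(m_--s-1)\bigr)$, which by the identity $m(m-1)-(m-s)(m-s-1)=s(2m-s-1)$ equals $\tfrac12 s\bigl((2m_+-s-1)+(2m_--s-1)\bigr)=s(n-s-1)$. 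This yields $|H\mcf|=(r-1)^s r^{s(n-s-1)-\ell(\sigma)}[r]_{m_+}[r]_{m_-}$. The only step needing genuine care is the verification that the comparison map $g$ lies in $H$, that is, isolating which basis vectors produced by condition (b) span $U_+$ and $U_-$ and checking $g$ respects them; everything else is bookkeeping on top of Proposition~\ref{prop4.3}, and summing $|H\mcf|$ over all $H$-orbits on the flag variety returns $|{\rm GL}_n(\bbf_r)/B|$ as a sanity check.
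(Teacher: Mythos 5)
Your proof is correct and follows essentially the same route as the paper, whose proof consists of computing $|H|$, noting the exponent identity $(m_+(m_+-1)+m_-(m_--1))/2-((m_+-s)(m_+-s-1)+(m_--s)(m_--s-1))/2=s(n-s-1)$, and writing $|H\mcf|=|H|/\mathcal{N}(\mcf)$. The only difference is that you make explicit the orbit--stabilizer/torsor step behind $|H\mcf|=|H|/\mathcal{N}(\mcf)$ (including the direct check that the comparison map between two adapted bases lies in $H$, which could equally be extracted from Proposition~\ref{prop4.3}(ii)), a step the paper leaves implicit.
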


\begin{proof} Note that
\begin{align*}
|H| & =(r^{m_+}-1)(r^{m_+}-r)\cdots(r^{m_+}-r^{m_+-1})(r^{m_-}-1)(r^{m_-}-r)\cdots(r^{m_-}-r^{m_--1}) \\
& =(r-1)^nr^{(m_+(m_+-1)+m_-(m_--1))/2}[r]_{m_+}[r]_{m_-}.
\end{align*}
Since $(m_+(m_+-1)+m_-(m_--1))/2-((m_+-s)(m_+-s-1)+(m_--s)(m_--s-1))/2=s(n-s-1)$, we have
$$|H\mcf|=|H|/\mathcal{N}(\mcf)=(r-1)^sr^{s(n-s-1)-\ell(\sigma)}[r]_{m_+}[r]_{m_-}.$$
\end{proof}

\end{document}